\documentclass[reqno,10pt]{amsart}
\usepackage{amscd,amssymb,amsmath,misha,color,url}
\usepackage{stmaryrd}
\usepackage{latexsym}
\usepackage[all]{xy}
\usepackage[colorlinks=true]{hyperref}
\usepackage[notcite, notref]{showkeys}
\usepackage[dvipsnames]{xcolor}
\usepackage{tikz-cd}
\usepackage{cleveref}

%\newcommand{\Katha}[1]{\marg{(Katha) #1}}
%\newcommand{\Michael}[1]{\marg{(Michael) #1}}
%\newcommand{\ChKatha}[1]{{\color{BrickRed} #1}}
%newcommand{\ChMichael}[1]{{\color{magenta} #1}}

\def\nor{{\rm nor}}
\def\.{,\dots,}

\def\QQ{{\bbQ}}
\def\ZZ{{\bbZ}}

\def\PP{{\bbP}}
\def\AA{{\bbA}}
\def\NN{{\bbN}}

\def\RR{{\bbR}}
\def\kcirccirc{{k^{\circcirc}}}
\def\Rad{{\rm Rad}}
\def\wky{{\wt{k(y)}}}
\def\wkx{{\wt{k(x)}}}
\def\wks{{\wt{k(s)}}}
\def\cO{{\calO}}
\def\cR{{\calR}}
\def\cL{{\calL}}
\def\mon{{\rm mon}}

\begin{document}

\author{Katharina H\"{u}bner and Michael Temkin}
\title{Adic curves: stable reduction, skeletons and metric structure}

%\address{Einstein Institute of Mathematics\\
%               The Hebrew University of Jerusalem\\
%                Edmond J. Safra Campus, Giv'at Ram, Jerusalem, 91904, Israel}
%\email{michael.temkin@mail.huji.ac.il}
\date{\today}
%\subjclass{Primary 14G22}

\keywords{Adic geometry, adic curves.}

\thanks{The first author acknowledges support by Deutsche Forschungsgemeinschaft (DFG) through the Collaborative Research Centre TRR 326 "Geometry and Arithmetic of Uniformized Structures", project number 444845124, and the second author acknowledges support by ISF Grant 1203/22 and ERC Consolidator Grant 770922 - BirNonArchGeom.}

\begin{abstract}
We study the structure of adic curves over an affinoid field of arbitrary rank. In particular, quite analogously to Berkovich geometry we classify points on curves, prove a semistable reduction theorem in the version of Ducros' triangulations, define associated curve skeletons and prove that they are deformational retracts in a suitable sense. An important new technical tool is an appropriate compactification of ordered groups that we call the ranger compactification. Intervals of rangers are then used to define metric structures and construct deformational retractions.
\end{abstract}

\maketitle

\section{Introduction}
The goal of this paper is to study basic properties of adic curves, including the topological type, metric structure and relation to semistable reduction. Some of the ideas are not new, because their analogs are well known in the framework of Berkovich geometry, but it seems that this task was never done in the literature. Similarly to the case of Berkovich curves, the fastest route is to explicitly describe $\PP^1_k$ and classify its points, and then use an appropriate version of the stable reduction theorem. However, the theory of formal models is less developed over complete valuation rings of higher height, so it is not so easy to transfer algebraic semistable reduction into adic geometry. We decided to use a local to global approach instead. In particular, because this method is of interest of its own. In Berkovich geometry such a proof of the analytic semistable reduction theorem was established in the second author's MS thesis but never published, though an algebraic version of a similar idea was later worked out in \cite{Temkin-stable}. The starting point is a local uniformization of one-dimensional valued fields, which was also proved in that thesis and was later published in \cite{Temkin-stable} (in fact, similar results were proved independently quite a few times, including works of Matignon and Kuhlmann). The result in the analytic (i.e. completed) setting immediately follows from its algebraic version. Then we deduce a local uniformization of adic curves in terms of elementary neighborhoods analogous to elementary curves in Berkovich geometry, and then a global triangulation theorem follows quite easily. The main technical novelty in these proofs is that we have to work with elementary domains which are only strictly \'etale isomorphic to discs and annuli.

A main new technical tool used in our study is a certain {\em ranger compactification} $\cR_{|k^\times|}$ of the ordered group $|k^\times|$. Intervals in $\cR_{|k^\times|}$ replace real intervals used in Berkovich geometry -- they are edges of skeletons of adic curves and they are used to contract adic curves onto their skeletons. When a first version of this paper was finished we discovered that (probably) for the first time rangers were described by Kuhlmann and Nart in \cite{cuts}. Their motivation was somewhat more algebraic than geometric, but certainly it stems from the same source -- an explicit description of extensions of valuations for $k(t)/k$, that is, the structure of $\PP^1_k$. It might be the case that a similar notion was known to model-theorists as types on ordered groups, but we could not find this in the literature. It is also worth mentioning that in the beginning of this project we used the terminology of cuts analogously to Kuhlmann and Nart. However, it is slightly inconsistent with the classical terminology (e.g. in the case of $\QQ$), and after trying various mutations, like semicuts or cut types, we decided to introduce a new notion -- a ranger.

\subsection{Motivation}
The structure of a Berkovich curve $C$ over an algebraically closed real-valued field $k$ is best reflected by a skeleton $\Delta_\calC\subset C$ associated to a semistable $\kcirc$-model $\calC$ of $C$. It is the dual incidence graph of the closed fiber of $\calC$ provided with a metric that takes into account ``depth'' of the nodes, and sometimes is also decorated with genera (or even residual $\tilk$-curves) at the vertices. Its very useful properties are as follows: $\Delta_\calC$ is a deformational retract of $C$ and its complement is a disjoint union of open discs. In addition, the metric of $\Delta_\calC$ is determined by the sheaf $\Gamma_C$ of logarithmic absolute values of analytic functions restricted to $\Delta_\calC$. In fact, $\Gamma_C|_{\Delta_\calC}$ is the sheaf of pl functions on $\Delta_\calC$ satisfying natural $|k^\times|$-rationality conditions. Finally, the structure of Berkovich curves over non-algebraically closed fields is less understood, but at least one can use descent to define the geometric metric, topological skeletons, and deformational retractions onto skeletons. Concerning the references, curves were studied already in \cite[Chpater~4]{berbook} and \cite[\S3.6]{berihes}, while skeletons in higher dimensions and their pl structure were studied in \cite{bercontr2}.

In the same vein there should exist a notion of skeletons of adic spaces over an affinoid field $k$ with a natural pl structure over the ordered group $|k^\times|$, which can be arbitrary. It seems that such a structure was not defined in the literature, though a somewhat analogous direction was studied in model theory, see \cite[Section~3]{ducros2023tropical} and the references given there, and a toric geometry over such groups was recently developed in \cite{amini2022geometry}. In the current paper we lay forth general definitions that should be used in such a theory and fully explore them in the one-dimensional case. In particular, we classify points and construct geometric metric, skeletons and deformational retractions of adic curves. These results will be used in our subsequent works on wild loci of covers. Skeletons of higher-dimensional adic spaces are certainly worth a detailed study but this will be done elsewhere.

\subsection{Overview of the paper}
We suggest a somewhat new approach to pl geometry which is slightly novel already in the classical case -- it saturates the usual polyhedra with infinitesimal points according to imposed rationality conditions, and these points conveniently bookkeep the information about slopes. One of the advantages of this approach is that it extends on the nose to the case of arbitrary ordered groups and even to families, see Remark~\ref{genrem}. However, the latter will be developed elsewhere and in this paper we only study in detail the one-dimensional pl geometry over ordered groups, see Section \ref{orderedsec}. To a large extent this reduces to describing a certain completion/compactification $\cR_\Gamma$ of an ordered group $\Gamma$, which we call {\em ranger completion}. Its elements contain cuts and infinitesimal elements, see \S\ref{rangersubsec} for a rather ad hoc description. Then we provide $T=\cR_\Gamma$ with the structure sheaf $\Gamma^+_T$ of non-negative $\Gamma$-pl functions and explain that in the one-dimensional case this sheaf induces a natural $\Gamma$-metric. Using intervals in $T$ we introduce $\Gamma$-graphs, which are natural generalizations of usual metric graphs when $\RR$ is replaced by $\Gamma$. Finally, in the end of \S\ref{orderedsec} we provide a general spectral construction, which is an analogue of Huber's spectrum in the world of abelian groups. We observe that $\Spa(\Gamma\oplus \ZZ^n,\Gamma^+)$ should be viewed as the $\Gamma$-pl affine space $\AA^n_\Gamma$, and show that for $n=1$ one obtains the space $(T,\Gamma_T^+)$, as expected.

In Section \ref{curvessec} we start our study of adic curves. We classify points via valuation-theoretic invariants, study specialization and generization relations on adic curves and describe the generic fiber in terms of usual algebraic curves (in the case when the topology on $k$ is discrete) or Berkovich curves (in the case when the topology is given by a microbial valuation).

The main results are obtained in Section~\ref{metricsec}. First, when $k$ is algebraically closed we explicitly describe points on $\PP^1_k$ in Theorem~\ref{classth} pretty similarly to the description of points on the Berkovich affine line in \cite[\S1.4.4]{berbook}. In particular, the set of generalized Gauss valuations in the closed fiber $X=\PP^1_s$ is canonically homeomorphic to $\cR_{|k^\times|}$ and $X$ provided with the sheaf of monoids $\Gamma^+_X=(\calO_X^+\cap\calO_X^\times)/(\calO_X^+)^\times$ is a $|k^\times|$-tree. This yields a natural $\Gamma_k$-metric on $X$ and we extend these results to arbitrary ground fields by providing $X$ with the geometric sheaf of values $\oGamma^+_X$ obtained by descending $\Gamma^+_\oX$ from $\oX=X\wtimes_k k^a$ to $X$, see \S\ref{geomsec}. Next we study the local degree $n_f$ of a morphism of curves. In Berkovich geometry one simply sets $n_f(y)=[\calH(y):\calH(x)]$, where $x=f(y)$, but in adic geometry the residue fields do not have to be henselian and one obtains a meaningful definition only setting $n_f(y)=[k(y)^h:k(x)^h]$. In particular, we prove in Theorem~\ref{semicontth} that this function is semicontinuous. Morphisms of local degree one can be viewed as partial henselizations, in particular, we define h-discs and h-annuli (from henselian) as domains possessing morphisms of local degree one onto discs and annuli, and these are the domains which appear in the semistable reduction theorem for adic curves. We study their basic properties in \S\ref{henselsec}. Then, using local uniformization of valued fields, we provide in Theorem~\ref{locunif} a local description of adic curves over algebraically closed affinoid fields, and globalize this to triangulations of arbitrary curves in Theorems~\ref{triangth}, \ref{curvesekeletonth} (arbitrary ground field) and \ref{simulth} (simultaneous triangulation). Also, we describe the pl structure of morphisms in Theorem~\ref{morth}.

\begin{rem}
The simplest way to metrize Berkovich or adic curves over arbitrary ground fields is to descent the metric from the geometric case. This is usually used in Berkovich geometry and this is our method in the paper. Another metric can be defined via the sheaf $\Gamma_X^+$, but it seems that it was not studied so far. At the very least it is clear that the obtained metric can have singularities when the ground field is not defectless, see \cite[Remark~7.6]{ducros2023tropical}.
\end{rem}

\subsection{Conventiones}
Given an abelian group $G$ written multiplicatively by $\Gamma=\log G$ we denote an isomorphic group written additively. The isomorphism is suggestively denoted by $\log\:G\toisom\Gamma$.

We refer to \cite{huber-adic} and \cite{Huber-book} for the definition and basic properties of adic spaces. The reader can also consult the lecture notes of S. Morel, see \cite{Morel}, or of J. Weinstein in \cite{Weinstein-notes}. We will mainly work with adic spaces $X$ over an affinoid field $k$. In such a case we set $S=\Spa(k)$, $\oS=\Spa(\whka)$ and $\oX=X\times_S\oS$. Given a point $x$ on an adic space we will denote its {\em group of absolute values} by $|k(x)^\times|$. Often we will also consider the {\em group of values} $\Gamma_k=\log|k^\times|$.

\section{One-dimensional geometry of ordered groups}\label{orderedsec}
In this section we discuss one-dimensional geometry over ordered groups because such spaces serve as skeletons of adic curves. Our task essentially reduces to describing a certain completion of intervals in ordered groups. We choose a brief but slightly ad hoc exposition in the main paper and just sketch simple proofs, while details and various additional material can be found in the appendix.
% All ordered groups in this section are written additively.

\subsection{Ranger completion}\label{rangersec}
In this subsection we provide a direct construction of $\Gamma$-intervals. In a sense, this is a bit of an illustrative warming up section, as after that we suggest a much more general and conceptual construction. The results of this subsection have a large intersection with \cite{cuts} but we decided to keep the exposition self-contained.

\subsubsection{Ordered groups}
In this paper, we will work with ordered groups both in additive and multiplicative notation. The latter will mainly happen when the ordered group shows up as the group of values of a valuation. For an ordered group $\Gamma$, by $\Gamma^+$ we denote the monoid of non-negative elements in the additive notation and the monoid of elements bounded by $1$ in the multiplicative notation.

\subsubsection{Convex subgroups and spectrum}
Let $\Gamma$ be an ordered group. By $S=\Spec(\Gamma^+)$ we denote the set of prime ideals $p\subset\Gamma^+$ (including the empty ideal) provided with the usual Zariski topology. Note, that an ideal $p\subsetneq\Gamma^+$ is prime if and only if it is saturated (i.e. $n\gamma\in p$ implies that $\gamma\in p$). Sending $p$ to $H_p=\Gamma\setminus(p\cup -p)$ one obtains a natural bijection of $S$ onto the set of convex subgroups of $\Gamma$. Alternatively, $H_p$ can be described as group of units of the localization monoid $\Gamma^+_p$.

\subsubsection{Scales}
We say that elements $\gamma,\gamma'\in\Gamma^+$ are {\em of the same scale} and write $\gamma\sim\gamma'$ if $\gamma<n\gamma'$ and $\gamma'<n\gamma$ for a large enough $n$. Thus, a {\em scale} of $\Gamma$ is the equivalence class of elements with respect to the above equivalence relation. For example, $\Gamma$ has finitely many scales if and only if it is of finite height $h$, and then $h$ is the number of scales. Also we write $\gamma\ll\gamma'$ (resp. $\gamma\preceq\gamma'$) if the scale of $\gamma$ is strictly smaller than (resp. does not exceed) that of $\gamma'$.

The points of $S$ can be naturally divided into three types: the generic point corresponds to $p=\emptyset$, {\em scale points} correspond to scales and their ideals are radicals $\sqrt{(\gamma)}$ of principal ideals, {\em limit points} are all other points and their ideals cannot be represented as radicals of a finitely generated (and hence principal) ideal. The latter can be distinguished further by cofinality (i.e. minimal cardinality of a generating set), but this is not important for any algebraic application. The following result is very simple, so we omit the proof.

\begin{lem}
Let $\Gamma$ be an ordered group and $S=\Spec(\Gamma^+)$. Then a point $s\in S$ is a scale point if and only if it is an immediate specialization of a point $t\in S$. In particular, scales $\gamma$ of $\Gamma$ can be identified with immediate specializations $s\prec t$. Furthermore, $p_s=\{x\in\Gamma^+|\ \gamma\preceq x\}$, $p_t=\{x\in\Gamma^+|\ \gamma\ll x\}$, $H_s=\{\pm x\in\Gamma|\ 0\le x\ll \gamma\}$ and $H_t=\{\pm x\in\Gamma|\ 0\le x\preceq \gamma\}$.
\end{lem}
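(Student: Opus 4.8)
The plan is to pass from $S=\Spec(\Gamma^+)$ to the chain of convex subgroups of $\Gamma$ via the order-reversing bijection $p\mapsto H_p$ recalled above, and then to reduce everything to one elementary computation with scales. Recall first that the convex subgroups of an ordered group form a chain under inclusion: if $H,H'$ are convex subgroups and $0<x\in H\setminus H'$, then every $0<y\in H'$ satisfies $y\le x$ — otherwise convexity of $H'$ would force $x\in H'$ — and hence $y\in H$ by convexity of $H$, so $H'\subseteq H$. Consequently $S$ is a chain under specialization, and, via $p\mapsto H_p$, the assertion that $s$ is an immediate specialization of $t$ becomes the assertion that $H_{p_s}\subsetneq H_{p_t}$ with no convex subgroup strictly in between.

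Next I would record the scale bookkeeping for a fixed nonzero $\gamma\in\Gamma^+$. One checks directly that $\sqrt{(\gamma)}=\{x\in\Gamma^+\mid\gamma\preceq x\}$ (for $\subseteq$, $nx\ge\gamma$ rules out $x\ll\gamma$; for $\supseteq$, $\gamma\preceq x$ gives $\gamma<nx$ for some $n$), that the convex subgroup generated by $\gamma$ equals $H_\gamma:=\{\pm x\mid 0\le x\preceq\gamma\}$, and that $H'_\gamma:=\{\pm x\mid 0\le x\ll\gamma\}$ is again a convex subgroup (the only nontrivial point being closure under addition, where $0\le a,b\ll\gamma$ gives $a+b\le 2\max(a,b)\ll\gamma$). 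The key claim is that $H_\gamma$ is the unique immediate successor of $H'_\gamma$ in the chain of convex subgroups: if $H'_\gamma\subsetneq H\subseteq H_\gamma$, pick $0\le y\in H\setminus H'_\gamma$; then $y\preceq\gamma$ while $y\not\ll\gamma$, so $y\sim\gamma$, and any $0\le z\in H_\gamma$ is either $\ll\gamma$, hence in $H'_\gamma\subseteq H$, or $\sim\gamma\sim y$, hence $z<ny$ for some $n$ and therefore $z\in H$ by convexity; thus $H=H_\gamma$.

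Now the lemma follows by assembling these pieces. If $s$ is a scale point, then $p_s=\sqrt{(\gamma)}$ for some $\gamma$ representing the scale, so $H_{p_s}=H'_\gamma$; taking $t$ with $H_{p_t}=H_\gamma$ — equivalently $p_t=\{x\in\Gamma^+\mid\gamma\ll x\}$, which is readily seen to be a prime ideal — the previous paragraph shows $s$ is an immediate specialization of $t$ and that $t$ is the only such point. Unwinding $H_p=\Gamma\setminus(p\cup-p)$ then gives the four displayed formulas, and, since $\sqrt{(\gamma)}$ determines the scale of $\gamma$, the map sending a scale to the pair $(s,t)$ is injective; together with the converse below and the uniqueness of $t$ this yields the stated identification of scales with immediate specializations $s\prec t$. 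Conversely, suppose $s$ is an immediate specialization of $t$. Since $p_t\subsetneq p_s$ is proper and $p_s\neq\Gamma^+$, we may choose $\gamma\in p_s\setminus p_t$ with $\gamma\neq 0$. Then $\sqrt{(\gamma)}$ is a prime ideal contained in $p_s$ (because $p_s$ is saturated and contains $\gamma$), with $\gamma\in\sqrt{(\gamma)}\setminus p_t$; since the primes form a chain this forces $p_t\subsetneq\sqrt{(\gamma)}\subseteq p_s$, and immediacy gives $\sqrt{(\gamma)}=p_s$. Hence $p_s$ is the radical of a principal ideal, i.e. $s$ is a scale point.

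I do not expect a genuine obstacle here, in line with the authors' remark that the result is very simple: the only points needing mild care are keeping straight the direction of the order-reversing bijection, the closure-under-addition check for $H'_\gamma$, and the scale arithmetic that $z\sim\gamma$ implies $z<n\gamma$ for some $n$. The chain property of convex subgroups carries the rest of the argument — in particular it is what makes both the sandwiching in the converse direction and the uniqueness of $t$ work.
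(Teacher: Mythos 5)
Your proof is correct. Note that the paper itself omits the argument (it states the lemma with ``the following result is very simple, so we omit the proof''), and what you have written is precisely the expected reasoning: translate immediate specializations in $\Spec(\Gamma^+)$ through the order-reversing bijection $p\mapsto H_p$ onto the chain of convex subgroups, identify $\sqrt{(\gamma)}=\{x\in\Gamma^+\mid \gamma\preceq x\}$, and check that $H_\gamma=\{\pm x\mid 0\le x\preceq\gamma\}$ is the immediate successor of $H'_\gamma=\{\pm x\mid 0\le x\ll\gamma\}$, with uniqueness of the successor (hence of $t$) coming for free from the chain property.
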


\subsubsection{Rangers}\label{rangersubsec}
Given a totally ordered group (resp. set) $\Gamma$, by a {\em ranger} on $\Gamma$ we mean an embedding of ordered groups (resp. sets) $\Gamma\into\Gamma'$ and an element $\gamma'\in\Gamma'$. Two rangers are {\em equivalent} if they both contain a third one. Thus, up to an isomorphism each equivalence class contains a unique minimal representative, which we denote $\Gamma'=\Gamma[\gamma']$ because $\Gamma'$ is generated by $\Gamma$ and $\gamma'$ (resp. $\Gamma'=\Gamma\cup\{\gamma'\}$). All rangers of $\Gamma$ form a totally ordered set that will be denoted $\calR(\Gamma)$ or $\cR_\Gamma$. The set of rangers with $\gamma'\in(\Gamma')^+$ will be denoted $\cR_\Gamma^+$. The set of {\em bounded} rangers, obtained by removing the maximal and minimal rangers (see below) will be denoted $\cR^b_\Gamma$.

\begin{rem}
In model theory of ordered groups (resp. sets) a ranger $\gamma$ is nothing else but a $\Gamma$-type, i.e. a type of an element in a theory which contains $\Gamma$ as constants.
\end{rem}

\subsubsection{Classification}\label{classrenagers}
In the obvious way, rangers on a totally ordered set $\Gamma$ can be divided into the following classes, where the numeration is analogous to the numeration of Berkovich and adic points:

\begin{itemize}
\item[(1)] {\em Unbounded rangers}: $-\infty$ and $\infty$ satisfying $-\infty<\gamma<\infty$ for any $\gamma\in\Gamma$.
\item[(2)] {\em Principal rangers}: $\gamma'\in\Gamma$.
\item[(3)] {\em Cut rangers}: a usual cut $\gamma'$ on $\Gamma$ given by a splitting $\Gamma=A^-\coprod A^+$, where $A^-$ (resp. $A^+$) is non-empty and does not contain a maximum (resp. minimum), and $a^-<a^+$ for any $a^-\in A^-$ and $a^+\in A^+$. Then $a^-<\gamma'<a^+$ for any $a^-\in A^-$ and $a^+\in A^+$.
\item[(5)] {\em Infinitesimal rangers}: a successor $\gamma'=\gamma^+$ (resp. a predecessor $\gamma'=\gamma^-$) of a principal ranger $\gamma\in\Gamma$. In other words, $\gamma<\gamma'<\gamma_1$ for any $\gamma_1\in\Gamma$ with $\gamma_1>\gamma$ (resp. $\gamma>\gamma'>\gamma_1$ for any $\gamma_1\in\Gamma$ with $\gamma_1<\gamma$).
\end{itemize}

Note that any non-principal ranger can be described by a splitting $\Gamma=A^-\coprod A^+$, where one of these sets is empty in the unbounded case and has a maximum or minimum in the infinitesimal case. This is all we have to say about rangers on sets, and until the end of \S\ref{rangersec} we will study rangers for ordered groups.

\begin{rem}
In the case of multiplicatively written ordered groups the unbounded rangers will be denoted 0 and $\infty$.
\end{rem}

\subsubsection{The case of groups}
A description of rangers for ordered groups is a bit more complicated because a ranger also determines inequalities like $n\gamma'<\gamma$. But we will see that this is the only novelty and the issue is solved just by switching to the divisible hull $\Gamma_\QQ$ of $\Gamma$.

\begin{lem}\label{ranglem}
For any ordered group $\Gamma$ there are natural bijections of ordered sets $\calR(\Gamma)=\calR(\Gamma_\QQ)=\calR(|\Gamma_\QQ|)$ between the sets of rangers of ordered groups $\Gamma$ and $\Gamma_\QQ$ and the underlying ordered set $|\Gamma_\QQ|$.
\end{lem}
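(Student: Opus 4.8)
The plan is to construct the two claimed bijections $\calR(\Gamma)\to\calR(\Gamma_\QQ)$ and $\calR(\Gamma_\QQ)\to|\Gamma_\QQ|$ separately, check each is order-preserving, and then compose. For the first map, start from a ranger $\Gamma\into\Gamma'$, $\gamma'\in\Gamma'$. Since $\Gamma_\QQ$ is the divisible hull, the inclusion $\Gamma\into\Gamma'$ extends uniquely to $\Gamma_\QQ\into\Gamma'_\QQ$ (as ordered groups, using that passing to divisible hulls is functorial and preserves the order), and $\gamma'\in\Gamma'\subset\Gamma'_\QQ$, so we obtain a ranger on $\Gamma_\QQ$. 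Conversely a ranger $\Gamma_\QQ\into\Gamma''$, $\gamma''$, restricts along $\Gamma\into\Gamma_\QQ$ to a ranger on $\Gamma$. I would check these constructions are mutually inverse up to the equivalence relation on rangers — the key point being that $\Gamma[\gamma']_\QQ = \Gamma_\QQ[\gamma']$ inside a common divisible overgroup, so the minimal representative of a ranger on $\Gamma$ becomes, after $\otimes\QQ$, the minimal representative of the corresponding ranger on $\Gamma_\QQ$. Order-preservation is immediate since both maps are induced by inclusions of ordered groups.

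For the second bijection $\calR(\Gamma_\QQ)\cong|\Gamma_\QQ|$, the idea (which the remark about model theory foreshadows) is that over a \emph{divisible} ordered group the only new inequalities a ranger can impose, namely $n\gamma' < \delta$ or $n\gamma' > \delta$ for $\delta\in\Gamma_\QQ$, reduce via division by $n$ to $\gamma' < \delta/n$ or $\gamma' > \delta/n$, i.e.\ to inequalities of the form $\gamma'$ versus elements of $\Gamma_\QQ$ itself. Concretely, I would send a ranger $\gamma'$ on $\Gamma_\QQ$ to the pair $(A^-,A^+)$ with $A^\pm = \{\delta\in\Gamma_\QQ : \delta \lessgtr \gamma'\}$ (using the classification in \S\ref{classrenagers}: principal, cut, infinitesimal, or unbounded), and observe this pair is exactly an element of $|\Gamma_\QQ|$ \emph{in the sense of rangers on the underlying ordered set}, which is what the statement's $|\Gamma_\QQ|$ denotes — so really the content is that for divisible $\Gamma$, rangers-on-the-group and rangers-on-the-set coincide. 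The inverse takes a set-ranger $\gamma'$ on $|\Gamma_\QQ|$ and reconstructs the group structure on $\Gamma_\QQ[\gamma']$: one defines $n\gamma' + \delta$ and compares it with $\mu\in\Gamma_\QQ$ by dividing through, i.e.\ $n\gamma' + \delta \lessgtr \mu \iff \gamma' \lessgtr (\mu-\delta)/n$, which is decided by the set-ranger since $(\mu-\delta)/n\in\Gamma_\QQ$; then one checks this prescription is well-defined, associative/compatible, and gives a totally ordered group containing $\Gamma_\QQ$ — hence a genuine group-ranger restricting to the given set-ranger.

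The main obstacle I anticipate is precisely the well-definedness and consistency of the order on $\Gamma_\QQ[\gamma']$ reconstructed from a set-ranger: one must verify that the rule ``$n\gamma'+\delta \lessgtr \mu \iff \gamma' \lessgtr (\mu-\delta)/n$'' does not produce contradictions — e.g.\ that it is transitive, total, translation-invariant, and that distinct formal expressions $n\gamma'+\delta$ and $n'\gamma'+\delta'$ with $n=n'$, $\delta\neq\delta'$ (or with $n\neq n'$) are consistently ordered — and that no element of $\Gamma_\QQ$ gets collapsed with a multiple of $\gamma'$ unless $\gamma'$ was already principal. Divisibility is what makes this go through, since every comparison can be normalized to a comparison of $\gamma'$ against a single element of $\Gamma_\QQ$; without it one would be stuck with genuinely new data (this is the ``only novelty'' alluded to before the lemma). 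Everything else — functoriality of $(-)_\QQ$, compatibility of the equivalence relation on rangers with the two maps, order-preservation, and the verification that the composite $\calR(\Gamma)\to|\Gamma_\QQ|$ is a bijection of ordered sets — is routine bookkeeping that I would carry out but not belabor. Since the paper explicitly says it only sketches such proofs and defers details to the appendix, I would present the construction of the two maps and their inverses, state that order-preservation is clear, and flag the consistency check above as the one point requiring (easy) care.
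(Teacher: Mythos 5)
Your proposal is correct and follows essentially the same route as the paper: pass between $\Gamma$ and $\Gamma_\QQ$ via the divisible hull, and recover a group-ranger from a set-ranger on $|\Gamma_\QQ|$ by defining the order on $\Gamma_\QQ\oplus\ZZ\gamma'$ through the normalized comparison $\gamma'\lessgtr\frac{1}{n-m}(\gamma_2-\gamma_1)$, which is exactly the paper's formula. The consistency check you flag is precisely the verification the paper declares "simple" and omits, so your plan matches both in structure and in level of detail.
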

\begin{proof}
Any ranger $\gamma'\in\Gamma'$ on $\Gamma$ induces a ranger $\gamma'\in\Gamma'_\QQ$ on $\Gamma_\QQ$ and hence also a ranger on the underlying ordered set $|\Gamma_\QQ|$, and it is easy to see that these maps preserve the equivalence classes. We will only construct maps in the opposite directions, omitting the simple check that they are the inverses of the above maps.

Any ranger on $\Gamma_\QQ$ tautologically restricts onto $\Gamma$, and any principal ranger on $|\Gamma_\QQ|$ corresponds to a principal ranger on $\Gamma_\QQ$. Finally, given a non-principal ranger $\Gamma'=\Gamma_\QQ\cup\{\gamma'\}$ on $|\Gamma_\QQ|$ one obtains a ranger on $\Gamma_\QQ$ by defining the following total order on the group $\Gamma_\QQ\oplus\ZZ\gamma'$: the inequality $\gamma_1+n\gamma'\le\gamma_2+m\gamma'$ holds if and only if either $m=n$ and $\gamma_1\le\gamma_2$ or $m\ne n$ and $\gamma'\le\frac{1}{n-m}(\gamma_2-\gamma_1)$ in $\Gamma'$.
\end{proof}

\subsubsection{Classification for groups}
A ranger $\gamma'\in\cR_\Gamma$ is called {\em divisorial} if it is a principal ranger of $\Gamma_\QQ$, and then we will freely identify it with an element of $\Gamma_\QQ$. Non-divisorial rangers are divided into three types accordingly to their type as elements of $\cR(\Gamma_\QQ)$ -- infinitesimal, unbounded or cut.

\subsubsection{Rangers when the convex rank is finite}
For illustration we consider the explicit example when the convex rank of $\Gamma$ is finite. It is easily obtained using the classification of rangers, so we omit the justification. By Lemma~\ref{ranglem}, it suffices to consider only divisible groups.

\begin{exam}\label{hexam}
If $\Gamma$ is a divisible ordered group of convex rank $h<\infty$, then it is easy to see by induction that $\Gamma$ (non-canonically) splits as $\Gamma_1\oplus\dots\oplus\Gamma_h$ where each summand is a divisible ordered group of rank one and the total order is the lexicographical one. Fix embeddings of ordered groups $\Gamma_i\into\bbR$, then $\calR_\Gamma$ is naturally identified with the set of sequences $r=(r_1\.r_n)$ with $r_i\in[-\infty,\infty]$ such that the following conditions hold:
\begin{itemize}
\item[(0)] $n\le h+1$.
\item[(1)] If $n=h+1$, then $r_{h+1}\in\{-\infty,\infty\}$.
\item[(2)] If $i<n$, then $r_i\in\Gamma_i$.
\item[(3)] If $r_n\in\Gamma_n$, then $n=h$.
\end{itemize}
In particular, any non-principal ranger ends with an element $r_n$ not contained in $\Gamma_n$ (where we set $\Gamma_{h+1}=1$). Clearly, $(-\infty)$ and $(\infty)$ are the unbounded rangers, a ranger is infinitesimal if and only if $n=h+1$, a ranger is principal if and only if $n=h$ and $r_h\in\Gamma_h$, and a ranger is a cut otherwise.
\end{exam}

\subsubsection{Composition}
One way to justify the above example is by using induction on the convex rank and the following lemma about composition of ordered groups.

\begin{lem}\label{composlem}
Assume that $\Gamma$ contains a convex subgroup $\Gamma_2$ and $\Gamma_1=\Gamma/\Gamma_2$ is the quotient with the induced order, then

(i) There is a natural imbedding $\cR_{\Gamma_2}\into\cR_\Gamma$ with convex image.

(ii) There is a natural map $\cR_\Gamma\to\cR_{\Gamma_1}$. Its fibers over non-principal rangers are singletons and its fibers over principal ranges are translations of $\cR^b_{\Gamma_2}$ by principal rangers. In particular, two rangers $r,r'\in\cR_\Gamma$ are mapped to the same ranger in $\cR_{\Gamma_1}$ if and only if they are contained in an interval $[\gamma,\gamma+\gamma_2]$ with $\gamma\in\Gamma$ and $\gamma_2\in\Gamma_2$.
\end{lem}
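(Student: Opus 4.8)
The plan is to use the classification of rangers via splittings $\Gamma = A^- \coprod A^+$ (as recorded in \S\ref{classrenagers}), together with the reduction to divisible groups from Lemma~\ref{ranglem}, and to build both maps essentially by hand, checking monotonicity and the fiber description on each of the four ranger types. Throughout I would work with the divisible hulls, so $\Gamma_2$ is convex and divisible in $\Gamma_\QQ$ and $\Gamma_1 = \Gamma_\QQ/\Gamma_2$ is again divisible with the induced order; by Lemma~\ref{ranglem} this loses nothing.

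For part (i): a ranger on $\Gamma_2$ is given by an embedding $\Gamma_2 \into \Gamma_2' = \Gamma_2[\gamma']$ and an element $\gamma'$. I would push this out to $\Gamma' = \Gamma_\QQ \oplus_{\Gamma_2} \Gamma_2'$, i.e.\ adjoin $\gamma'$ to $\Gamma_\QQ$ subject to the same relations with $\Gamma_2$ that it had in $\Gamma_2'$, and order $\Gamma'$ so that $\Gamma_2'$ is still convex (this is the only consistent choice: any element of $\Gamma_\QQ$ not in $\Gamma_2$ dominates every element of $\Gamma_2'$ in absolute scale, so its sign relative to $\gamma'$ is forced). This yields a ranger on $\Gamma$, well-defined up to equivalence, and the resulting map $\cR_{\Gamma_2}\into\cR_\Gamma$ is clearly order-preserving and injective. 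Convexity of the image is the statement that if $r \le r' \le r''$ in $\cR_\Gamma$ with $r,r''$ coming from $\cR_{\Gamma_2}$, then $r'$ does too; this follows because $r,r''$ are pinned between elements of $\Gamma_2$ (indeed $\gamma^-$ and $\gamma^+$ for suitable $\gamma\in\Gamma_2$, or a cut internal to $\Gamma_2$), hence so is $r'$, and a ranger squeezed between two elements of $\Gamma_2$ is determined by its splitting restricted to $\Gamma_2$ — the $\Gamma_\QQ$-part adds nothing.

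For part (ii): given a ranger on $\Gamma$ presented by $\Gamma = A^-\coprod A^+$ (with the conventions for the unbounded and infinitesimal cases), I would define its image in $\cR_{\Gamma_1}$ by the splitting $\Gamma_1 = \bar A^- \coprod \bar A^+$, where $\bar A^\pm$ are the images under $\Gamma \to \Gamma_1$, with the caveat that one collapses any overlap onto the appropriate side — concretely, the image ranger is the cut of $\Gamma_1$ determined by $\{\bar a : a \in A^-\}$, refined to the principal ranger $\bar\gamma$ when this set has a supremum $\bar\gamma \in \Gamma_1$ attained, and to an infinitesimal ranger only when forced. One checks this is order-preserving directly from the definition. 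For the fibers: if the image is a non-principal ranger of $\Gamma_1$, then the cut in $\Gamma_1$ already separates cosets, so the original splitting of $\Gamma$ is recovered from it together with no further data, giving a singleton fiber. If the image is the principal ranger $\bar\gamma \in \Gamma_1$, lift to $\gamma\in\Gamma$; then $A^-\supset \{x \in \Gamma : \bar x < \bar\gamma\}$ and $A^- \subset \{x : \bar x \le \bar\gamma\} = \gamma + \Gamma_2$ up to the boundary coset, so the splitting of $\Gamma$ restricted to $\gamma + \Gamma_2 \cong \Gamma_2$ is an arbitrary \emph{bounded} ranger of $\Gamma_2$ (bounded because the unbounded rangers of that coset would push the image off $\bar\gamma$), and conversely any such choice gives a preimage; this identifies the fiber with $\gamma + \cR^b_{\Gamma_2}$. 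The final "in particular" then says exactly that $r,r'$ have the same image iff both lie in some $[\gamma,\gamma+\gamma_2]$ with $\gamma\in\Gamma$, $\gamma_2\in\Gamma_2$, which is immediate: such an interval is precisely a fiber (when $r,r'$ are interior) or sits inside the union of two adjacent fibers over neighbouring principal points, and conversely two rangers in the same fiber are trapped in $[\gamma + a, \gamma+b]$ for $a,b\in\Gamma_2$.

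The main obstacle I expect is bookkeeping the boundary behavior in part (ii) — namely getting the definition of the map right on the coset $\gamma + \Gamma_2$ so that an \emph{unbounded} ranger of $\Gamma_2$ (a "$\pm\infty$ within the coset") is correctly sent to $\bar\gamma^\pm$ rather than to $\bar\gamma$, which is exactly why the fiber is $\cR^b_{\Gamma_2}$ and not all of $\cR_{\Gamma_2}$. Everything else is a routine, if slightly tedious, verification that the two constructions invert the obvious restriction/inclusion maps and respect the order.
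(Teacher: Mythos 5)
Your proposal is correct in substance but takes a different route from the paper's proof, and the comparison is instructive. Both arguments begin with the same reduction to divisible hulls via Lemma~\ref{ranglem}, but the paper then defines the map of (ii) structurally: given a ranger $\gamma'\in\Gamma'$ it forms the convex closure $\Gamma_2'$ of $\Gamma_2$ in $\Gamma'$ and passes to $\Gamma'/\Gamma_2'$, which handles all boundary behaviour automatically (the $\gamma$-translates of the unbounded rangers of $\Gamma_2$ land on $\gamma_1^{\pm}$ without any case distinction), whereas you build both maps combinatorially from splittings $\Gamma=A^-\coprod A^+$ and must do the coset bookkeeping by hand. For the fibers the paper also has a shortcut you do not use: if two distinct rangers $r<r'$ have the same image, then some principal ranger in $[r,r']$ has that image, so the image is principal; this disposes of all non-principal fibers at once, and the fiber over a principal $\gamma_1$ is then identified via part (i) with $\gamma+\cR^b_{\Gamma_2}$. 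Your splitting-level analysis reaches the same conclusions, but two of your intermediate formulations need the obvious repair: the image of $\cR_{\Gamma_2}$ in $\cR_\Gamma$ is not ``pinned between elements of $\Gamma_2$'' at its two extreme points (the images of $\pm\infty_{\Gamma_2}$ lie above, respectively below, all of $\Gamma_2$), so convexity should instead be argued from the characterization of the image as the set of rangers lying above every element of $\Gamma$ below $\Gamma_2$ and below every element above it; and your rule ``principal image when $\sup\bar A^-$ is attained'' does not by itself distinguish $\bar\gamma$ from $\bar\gamma^{+}$ --- the correct criterion, which you do state in your closing paragraph, is whether the coset $\gamma+\Gamma_2$ is split nontrivially by the ranger (image $\bar\gamma$) or lies entirely on one side with no element of $\Gamma$ strictly in between (image $\bar\gamma^{\pm}$). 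With these two points made precise your argument is a complete, if more case-heavy, proof; what the paper's convex-closure formulation and principal-ranger trick buy is precisely the avoidance of that case analysis.
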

\begin{proof}
Passing to the divisible hull of $\Gamma$ we do not change $\cR_\Gamma$, so we can assume that $\Gamma$ and hence also $\Gamma_1,\Gamma_2$ are divisible. The first claim obviously follows from Lemma~\ref{ranglem}, so let us prove the second one. Given a $\Gamma$-ranger $\gamma'\in\Gamma'$ let $\Gamma'_2$ be the convex closure of $\Gamma_2$ in $\Gamma'$. It consists of elements sandwiched between elements of $\Gamma_2$, hence $\Gamma'_2\cap\Gamma=\Gamma_2$ and $\Gamma'_1:=\Gamma'/\Gamma'_2=\Gamma'_1[\gamma']$ is a ranger on $\Gamma_1$. Lemma~\ref{ranglem} implies that any ranger on $\Gamma_2$ can be lifted to a ranger on $\Gamma$, so it remains to study fibers, which are not singletons.

If rangers $r<r'$ on $\Gamma$ are mapped to the same ranger $r_1\in\cR_{\Gamma_1}$, then any principal ranger $\gamma\in[r,r']$ is mapped to $r_1$, in particular, $r_1=\gamma_1$ is principal. The preimage in $\cR_\Gamma$ of the set $\{\gamma_1^-,\gamma_1,\gamma_1^+\}\subset\cR_{\Gamma_1}$ consists of all rangers $x\in\cR_\Gamma$ such that all principal rangers in the interval $[\gamma,x]$ (or $[x,\gamma]$) lie in $\gamma+\Gamma_2$. Hence by claim (i) this preimage is nothing else but the set $\gamma+\cR_{\Gamma_2}$. It remains to note that the $\gamma$-translations of the unbounded rangers $-\infty,\infty$ are mapped to the infinitesimal rangers $\gamma_1^-,\gamma_1^+$, while the whole $\gamma+\cR^b_{\Gamma_2}$ is mapped to $\gamma_2$.
\end{proof}

\begin{rem}
(i) One way to interpret Lemma \ref{composlem} is to say that $\calR_\Gamma$ is obtained from $\cR_{\Gamma_1}$ by inserting $\cR^b_{\Gamma_2}$ instead of each principal ranger. In particular, this explains why the set of rangers on an ordered group of convex rank $h$ has a fractal structure with $h$ scales.

(ii) Another way to interpret the lemma is to say that $\cR_{\Gamma_1}$ is obtained from $\cR_\Gamma$ by the following coarsening operation: one forgets the scales contained in $\Gamma_2$ and identifies rangers that become indistinguishable.
\end{rem}

\subsubsection{Ranger completion}
By the {\em ranger completion} or the {\em constructible completion} of $\Gamma$ we mean the set $\cR=\cR_\Gamma$ provided with the topology whose base is formed by the intervals of the form $[a,b]_\cR$ with $a,b\in\Gamma_\QQ$ being divisorial or unbounded rangers but excluding intervals of the form $[a,a]_\cR$ for an unbounded ranger~$a$. In particular, a point $x\in\cR$ is open if and only if $x\in\Gamma_\QQ$.

\begin{theor}\label{segmenttheor}
Let $\Gamma$ be an ordered group, then

(i) The topological space $\cR=\cR_\Gamma$ is quasi-compact, connected and separates into a disjoint union of two connected components by removing any bounded ranger.

(ii) If $x$ is a principal ranger, then its closure is $\ox=\{x,x^+,x^-\}$ and the intervals $[-\infty,x^-]=[-\infty,x)$ and $[x^+,\infty]=(x,\infty]$ are closed. If $y$ is not principal, then the intervals $[-\infty,y)$ and $(y,\infty]$ are open. In particular, $y$ is closed.

(iii) The maximal Hausdorff quotient $\cR^h$ of $\cR$ is obtained by identifying points inside each closure $\ox$. The induced topology on $\cR^h$ is the usual order topology, and $\cR^h$ is compact and connected.
\end{theor}
\begin{proof}
Claim (ii) easily follows from the classification of rangers in \S\ref{classrenagers}, and it implies the last claim of (i) and the description of the Hausdorff quotient $\cR^h$ in (iii). Furthermore, by the same classification $\cR^h$ has no cuts, hence it is connected. So, it is an ordered set with minimal and maximal elements (given by the unbounded rangers) and without cuts, and the usual proof of compactness of a closed interval extends to a proof that $\cR^h$ is compact. Since the fibers of the quotient map $\cR\to\cR^h$ are either singletons or of the form $\{x^-,x,x^+\}$, they are connected and quasi-compact. Therefore $\cR$ is connected and quasi-compact.
\end{proof}

\begin{rem}
The set $\cR^h$ can be identified with the set of all rangers which are not infinitesimal. For instance, in Example~\ref{hexam} one just removes rangers with $n=h+1$. However the quotient topology on $\cR^h$ is generated by the intervals of the form $(a,b)$, $[-\infty,b)$, $(a,\infty]$, and so the embedding $\cR^h\into\cR$ is not continuous.
\end{rem}

\subsubsection{$\Gamma$-segments}
Assume that $\Gamma$ is divisible. By the {\em $\Gamma$-metric} on $\cR$ we mean the distance function $\mu(x,y)=\max(x-y,y-x)\in\Gamma^+$ defined on the set of pairs of open points. By a {\em $\Gamma$-segment} $I$ we mean any interval $[x,y]\in\cR$ provided with the induced topology and $\Gamma$-metric. A segment is {\em divisorial} if its endpoints are.

\begin{rem}
In fact, the $\Gamma$-metric just fixes a homeomorphism of $I$ with an interval in $\cR$ up to a reflection and translation by an element of $\Gamma$. One can use this observation to refine the definition to non-divisible groups, so that non-integrality of open (or rational) points is also taken into account.
\end{rem}

\subsubsection{Scales of a cut}\label{cutscale}
The metric provides additional information about cuts. Without restriction of generality we can assume that $\Gamma$ is divisible. Given a cut $r\in\cR$ on $\Gamma$ consider the ideal $p_r\subset\Gamma^+$ generated by the lengths of divisorial segments $I$ containing $r$. If $I$ contains $r$ and is of length $\gamma$, then for any $n\ge 1$ we can divide $I$ into $n$ segments of equal length, and one of them contains $r$. Hence $\frac{1}{n}\gamma\in p_r$ and we obtain that $p_r$ is prime. Each element $\gamma$ induces a translation on $\Gamma$ and hence also on $\cR$, and it is easy to see that $\gamma\in p_r$ if and only if $\gamma+r\neq r$. If $p_r=\sqrt{(\gamma)}$, then we say that the cut is of scale $\gamma$.

\subsubsection{Symmetric and asymmetric cuts}
Let $r$ be a cut. For any divisorial segment $I=[\gamma_1,\gamma_2]$ with $r\in I$ either $I_1=[\gamma_1,r)$ or $I_2=(r,\gamma_2]$ contains a divisorial interval whose length lies in $p_r$, and hence this interval contains a non-trivial translation of $r$. If this is always the case for both $I_1$ and $I_2$, we say that the cut $r$ is {\em symmetric}.

Otherwise, we call the cut $r$ {\em asymmetric} and say that it {\em points down} (resp. {\em up}) if there exists $\gamma\in\Gamma$ such that $I=[\gamma,r]$ (resp. $I=[r,\gamma]$) does not contain divisorial subintervals of length lying in $p_r$, and hence also does not contain translates of $r$. Naturally, we call $I$ the {\em short side} of the cut, and by our convention the cut points towards the short side. For the sake of completeness we note that although $I$ is short it is still longer than any element outside of $p_r$: for any $\delta\in\Gamma^+\setminus p_r$ there exists a divisorial subinterval of $I$ of length exceeding $\delta$.

\begin{rem}\label{cutrem}
It is easy to see that a cut $r$ is asymmetric if and only if it adds a new scale, that is, the scale of $r$ in $\Gamma[r]$ is not equal to the scale of any element of $\Gamma$. The latter happens if and only if the map $\Spec(\Gamma[r]^+)\to\Spec(\Gamma^+)$ is not bijective, and then it is bijective on the complement of the scale of $r$.
\end{rem}

\begin{exam}\label{cutexam}
(i) Let $\Gamma$ be of convex rank $h$ as in Example~\ref{hexam} and let $r=(r_1\.r_n)$ be a cut. Then $r$ is of $n$-th scale and it is symmetric if and only if $r_n\in\RR$. It points down if $r_n=\infty$, and it points up if $r_n=-\infty$.

(ii) For any non-generic point $s\in S$ there exist asymmetric cuts $r$ with $p_r=p_s$, for example, $r=\sup(\gamma\in\Gamma^+\setminus p_s)$. On the other hand, it depends on $\Gamma$ whether symmetric cuts $r$ with $p_r=p_s$ exist -- both for scale and limit point. For example, $\prod_{n\in\NN}\bbR$ with scale order $s_0\gg s_1\gg\dots$ has no symmetric cuts both at scales $s_n$ and at the limit point $s_\infty$, its subgroup $\oplus_{n\in\NN}\bbR$ has symmetric cuts only at the limit point, $\prod_{n\in\NN}\bbQ$ has symmetric cuts only at scales $s_n$ and $\oplus_{n\in\NN}\bbQ$ has symmetric cuts at all levels.
\end{exam}

\subsection{$\Gamma$-graphs}

\subsubsection{$\Gamma$-trees}
In appendix A we introduce a notion of a quasi-tree and study its basic notions. The main condition is that a quasi-tree $T$ is a topological space such that any pair of points $a,b$ is connected by a unique quasi-segment $[a,b]$ such that each point $c\in(a,b)$ separates $a$ and $b$ in $T$ in the sense that they lie in different connected components of $T\setminus\{c\}$. By the very definition, a subset $S\subseteq T$ is connected if and only if it is {\em pathwise connected} by quasi-segments.

A {\em $\Gamma$-tree} is a locally connected quasi-tree $T$ provided with a {\em $\Gamma$-metric structure}: each quasi-segment $[a,b]$ is provided with a structure of a $\Gamma$-segment and these $\Gamma$-structures are compatible on intersections of quasi-segments (which are necessarily quasi-segments themselves).

\subsubsection{Classification of points}
There is an obvious topological classification of points on quasi-trees due to the {\em valence} $v(x)$ -- the number of branches (or germs of rays) at a point $x$. Namely, {\em leaf} points, {\em edge} points and {\em vertices} have $v(x)=1$, $v(x)=2$ and $v(x)\ge 3$, respectively. On the other hand, the metric structure induces a classification of its own -- a point is called {\em divisorial} if it is open in one, and hence any, segment containing it. Note that branches at a divisorial point $x$ are parameterized by the infinitesimal points in the closure of $x$. The $\Gamma$-metric structure is completely determined by the distance $\mu(x,y)\in\Gamma^+$ between any two divisorial points. Divisorial points do not have to be (and usually are not) open in $T$ when their valence is infinite. In the case of adic curves, all divisorial points are not open in $T$.

Other points will be named according to their ranger type on one, and hence any, $\Gamma$-interval through them -- cut, infinitesimal and unbounded. Note that an infinitesimal point does not have to have a generization if it is a leaf. Now let us match topological and $\Gamma$-metric types.

\begin{lem}\label{graphtypes}
Let $T$ be a $\Gamma$-tree, then any unbounded point is a leaf and any vertex is a divisorial point. These are the only restrictions: cut and infinitesimal points can be both leaves and edge points, and divisorial points can be of an arbitrary topological type.
\end{lem}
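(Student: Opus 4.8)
The plan is to verify the two stated restrictions and then, for the converse, exhibit explicit local models realizing every remaining combination of topological type and $\Gamma$-metric type. For the restriction that an unbounded point $x$ is a leaf: if $x$ had valence $\ge 2$, then $x$ would lie in the interior of some quasi-segment $[a,b]$, and on this $\Gamma$-segment $x$ would be an interior point of an interval in $\cR_\Gamma$; but by the classification in \S\ref{classrenagers} the unbounded rangers $-\infty,\infty$ are precisely the endpoints of $\cR_\Gamma$ and cannot lie strictly between two other rangers. So $x$ has at most one branch, i.e.\ it is a leaf. For the restriction that a vertex $x$ is divisorial: suppose $v(x)\ge 3$. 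Since $T$ is locally connected and a quasi-tree, a connected neighborhood of $x$ decomposes upon removing $x$ into at least three connected components, each an open quasi-segment germ emanating from $x$. But at a non-divisorial point of a $\Gamma$-segment (cut, infinitesimal, or unbounded) the point is not open, and I claim it has at most two branches: an infinitesimal point $\gamma^{\pm}$ has a single branch (towards $\gamma$), an unbounded point has one branch, and a cut point $r$ separates its segment into exactly the two pieces $[a,r)$ and $(r,b]$ — there is no third direction, because $\Gamma$-segments are linearly ordered and $r$ has no generizations other than along the segment. Hence $v(x)\le 2$, contradicting $v(x)\ge 3$; so every vertex is divisorial.

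For the converse part I would build the models directly inside $\cR_\Gamma$ and small wedges of copies of it. A divisorial point of arbitrary topological type: take $n$ copies of a divisorial $\Gamma$-segment $[0,\gamma]$ glued along the common endpoint $0$; this is a $\Gamma$-tree (each pair of points is joined by a unique quasi-segment through $0$, and the $\Gamma$-structures agree at $0$) in which $0$ is a divisorial point of valence $n$; letting $n\in\{1,2\}$ or $n\ge 3$ gives leaf, edge and vertex, and taking infinitely many copies gives the infinite-valence divisorial points which, as noted in the text, are not open in $T$. A cut point as an edge point: take any $\Gamma$-segment $[a,b]$ containing a cut $r$ in its interior (e.g.\ the cut $\gamma=\sup(\Gamma^+\setminus p_s)$ from Example~\ref{cutexam}(ii) for a suitable convex ideal $p_s$); then $r$ has valence $2$. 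A cut point as a leaf: take the half-open interval $[a,r]\subset\cR_\Gamma$ where $r$ is an asymmetric cut pointing up, so that $r$ is the maximal element with no generization inside the space; then $r$ is a leaf. The same two constructions with $r$ replaced by an infinitesimal ranger $\gamma^+$ (interior: $[a,\gamma]\cup\{\gamma^+\}$ glued appropriately so $\gamma^+$ sits between $\gamma$ and some further point; leaf: just $[\gamma,\gamma^+]$ with $\gamma^+$ the endpoint, whose closure, as the text observes, need not contain a generization) realize infinitesimal points of both topological types.

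The main obstacle is the case of an infinitesimal edge point. One must check that an infinitesimal ranger $\gamma^+$ really can be made to have two branches in a genuine $\Gamma$-tree: the subtlety is that in $\cR_\Gamma$ itself the closure of $\gamma$ is $\{\gamma^-,\gamma,\gamma^+\}$ and $\gamma^+$ has the single generization $\gamma$, so inside one segment $\gamma^+$ is already an edge point of the quasi-tree $\cR_\Gamma$ — but one must confirm that removing $\gamma^+$ from $[-\infty,\infty]_{\cR}$ disconnects it, which follows from Theorem~\ref{segmenttheor}(i) since $\gamma^+$ is a bounded ranger, and that $\gamma^+$ has valence exactly $2$ rather than $1$ there, which again follows because both $[-\infty,\gamma^-]$ and the complementary piece are nonempty. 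So the honest work is bookkeeping that each glued space I write down is a locally connected quasi-tree with compatible $\Gamma$-segment structures — routine once the quasi-tree formalism of Appendix A is in hand — and the genuinely delicate point is simply making sure the valence counts at non-divisorial points come out to $1$ or $2$ as claimed, which is exactly the content of the restriction half of the lemma reused in reverse.
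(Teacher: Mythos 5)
Your treatment of unbounded points matches the paper's, and your catalogue of examples for the converse half is fine --- indeed more detailed than the paper's one-line remark, which just invokes segments in $\cR_\Gamma$ and a tree glued from two segments at an inner open point. (Your only slips there are cosmetic: any cut ranger serves as a leaf endpoint, no asymmetry is needed, and cut points are closed in $\cR_\Gamma$, so the talk of their generizations is beside the point.)

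However, the central step --- every vertex is divisorial --- has a genuine gap. You argue that a non-divisorial point admits at most two branches because, inside one $\Gamma$-segment, a cut separates the segment into two pieces and ``there is no third direction.'' But valence is counted in $T$, not in a single segment: the whole question is whether a third segment germ can be attached at a cut or infinitesimal point compatibly with the $\Gamma$-metric structure, and nothing in your argument excludes this. Applied verbatim to a divisorial point (``it separates its segment into exactly two pieces''), your reasoning would equally show that divisorial points cannot be vertices, which is false; so the argument proves too much and is circular exactly where the content lies. (It is also internally inconsistent: in the restriction half you assert an infinitesimal point has a single branch, while later you correctly observe that $\gamma^+$ is an edge point of $\cR_\Gamma$, as the lemma itself asserts.) The missing idea is metric, and it is how the paper proceeds: given a vertex $x$, choose divisorial points $a,b,c$ on three branches meeting pairwise only in $\{x\}$, shrink two of them so that $\mu(a,b)=\mu(a,c)$; then swapping $b$ and $c$ is an isometry of $[a,b]\cup[a,c]$ which restricts to an orientation-reversing autoisometry of the $\Gamma$-segment $[b,c]$ fixing $x$, so $x$ is the midpoint of a segment with divisorial endpoints and hence divisorial. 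This exploitation of the compatibility of $\Gamma$-structures on overlapping segments is precisely what your proposal lacks.
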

\begin{proof}
Clearly, an unbounded point cannot sit in the middle of a $\Gamma$-segment, hence its valence is one. Assume that $x$ is a vertex, so there exist intervals $[x,a]$, $[x,b]$, $[x,c]$ with divisorial endpoints and pairwise intersections equal to $\{x\}$. In particular, $[a,b]\cap[a,c]=[a,x]$. Shrinking $[x,b]$ or $[x,c]$ we can assume that $\mu(a,b)=\mu(a,c)$ and then switching $b$ and $c$ induces an isometry of $[a,b]\cup[a,c]$ onto itself, which restricts to an autoisometry of $[b,c]$ with $x$ being the fixed point. Thus $x$ is the middle point of $[b,c]$ and hence it is divisorial too.

Finally, these are the only restrictions as can be seen already from the cases of segments in $\cR_\Gamma$ and a non-linear tree obtained by gluing two segments at an inner open point.
\end{proof}

\begin{cor}\label{segmentlem}
If $T$ is a $\Gamma$-tree, and $I,J$ are $\Gamma$-segments in $T$ with divisorial endpoints, then $I\cap J$ is either empty, or a divisorial point, or a $\Gamma$-segment with divisorial endpoints.
\end{cor}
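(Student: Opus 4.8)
The plan is to reduce everything to the tree axioms plus what was just established in Lemma~\ref{graphtypes}. Fix $\Gamma$-segments $I=[a,b]$ and $J=[c,d]$ with divisorial endpoints, and assume $I\cap J\neq\emptyset$. The first step is to observe that $I\cap J$ is connected: if $x,y\in I\cap J$, then the quasi-segment $[x,y]$ is contained in both $I$ and $J$ by uniqueness of quasi-segments (a subsegment of a $\Gamma$-segment is the unique quasi-segment joining its endpoints), so $[x,y]\subseteq I\cap J$, and by definition connectedness in a quasi-tree is pathwise connectedness by quasi-segments. Hence $I\cap J$ is a connected subset of the $\Gamma$-segment $I$, i.e.\ a subinterval of $I\cong$ an interval in $\cR_\Gamma$, with the induced topology and $\Gamma$-metric; this is the content of the claim except that we must pin down the nature of its endpoints.

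The second step handles the degenerate case: if $I\cap J$ is a single point $x$, we must show $x$ is divisorial. This is exactly the vertex argument of Lemma~\ref{graphtypes}. Indeed, if $x$ is an interior point of both $I$ and $J$, then $x$ has at least four branches in $T$ (two from $I$, two from $J$, and these are distinct since $I\cap J=\{x\}$), so $x$ is a vertex and hence divisorial by Lemma~\ref{graphtypes}. If instead $x$ is an endpoint of, say, $I$, then after possibly shrinking we are reduced to the case where $x$ is an interior point of one segment and an endpoint of the other; but an endpoint of a $\Gamma$-segment with divisorial endpoints is divisorial by hypothesis, so $x$ is divisorial. (If $x$ is an endpoint of both $I$ and $J$ there is nothing to prove.) So in the one-point case $I\cap J$ is a divisorial point.

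The third step treats the case where $I\cap J$ contains more than one point, so it is a genuine subinterval $[p,q]$ of $I$ with $p\neq q$; we must show $p$ and $q$ are divisorial. By symmetry consider $p$. If $p$ is an endpoint of $I$ or of $J$, it is divisorial by hypothesis on the endpoints. Otherwise $p$ is interior to both $I$ and $J$. I claim this forces $p$ to be a vertex. Since $p$ is interior to $J$ but is the lower endpoint of $I\cap J$, there is a point of $J$ on the side of $p$ away from $I\cap J$; combined with the two directions along $I\cap J$ and a point of $I$ beyond $I\cap J$ if $p$ is also interior to $I$ — wait, the cleanest formulation: the germ at $p$ of the branch of $J$ leaving $I\cap J$ downward is not the germ of any branch of $I$ at $p$, for otherwise a small subsegment of $J$ below $p$ would coincide with a subsegment of $I$, contradicting that $[p,q]=I\cap J$ already exhausts the overlap near $p$. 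Hence at $p$ we have at least three distinct branches (the two along $I$ through $p$, plus the one of $J$ going down), so $p$ is a vertex and thus divisorial by Lemma~\ref{graphtypes}. The same argument applies to $q$. Therefore $I\cap J$ is a $\Gamma$-segment with divisorial endpoints, completing the trichotomy.

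The main obstacle I anticipate is the germ-disjointness bookkeeping in the third step — making precise that if a branch of $J$ at an interior point $p$ coincided with a branch of $I$, then $I\cap J$ would extend past $p$, contradicting minimality. This should follow cleanly from the uniqueness of quasi-segments (two quasi-segments sharing a germ at $p$ agree on a neighborhood of $p$ within them), but it is the one place where one must argue rather than quote, and care is needed to phrase it using only the quasi-tree axioms and local connectedness rather than any metric input.
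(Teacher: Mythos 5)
Your argument is correct and is essentially the derivation the paper intends (the corollary is stated there without proof): the intersection is a connected subinterval of $I$, and any endpoint of it that is not already an endpoint of $I$ or $J$ carries at least three distinct branches, hence is a vertex and therefore divisorial by Lemma~\ref{graphtypes}, the germ-disjointness being exactly the uniqueness/minimality of quasi-segments as you say. The one step you assert rather than prove is that $I\cap J$ attains its endpoints, i.e.\ is a quasi-segment $[p,q]$ rather than a half-open quasi-interval -- in this non-Hausdorff setting that is not automatic, but it is precisely the parenthetical in the paper's definition of a $\Gamma$-tree (intersections of quasi-segments are quasi-segments), provable from Corollary~\ref{intervalcor} together with the supremum argument in the proof of Lemma~\ref{triplelem}, so you should quote or reprove that fact instead of writing ``genuine subinterval $[p,q]$'' without comment.
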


\begin{cor}
Any non-divisorial point $x$ in a $\Gamma$-tree $T$ is closed.
\end{cor}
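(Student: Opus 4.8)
The plan is to reduce the statement to a local property near $x$ and then invoke the classification of rangers on $\Gamma$-segments together with part (ii) of Theorem~\ref{segmenttheor}. Concretely, since $T$ is locally connected, it suffices to show that for any quasi-segment $I=[a,b]$ containing $x$ the point $x$ is closed in $I$; for then a neighborhood basis of any point $y\neq x$ is supplied by connected open sets, and one builds a connected open neighborhood of $y$ avoiding $x$ by pasting together such local pieces along quasi-segments, using that $\{x\}$ is not in the closure of $y$ in each segment through both. So the real content is a statement about a single $\Gamma$-segment.

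First I would recall from Lemma~\ref{graphtypes} that a non-divisorial point $x$ is of ranger type cut, infinitesimal, or unbounded on any $\Gamma$-interval through it. Then the second and third sentences of Theorem~\ref{segmenttheor}(ii) apply directly: if $x$ is not principal (and a non-divisorial point is never principal, since divisorial means open in a segment, i.e.\ principal in the sense of \S\ref{classrenagers}), then the intervals $[-\infty,x)$ and $(x,\infty]$ are open in $\cR_\Gamma$, hence $x$ is closed in $\cR_\Gamma$, hence closed in every $\Gamma$-segment $I\ni x$ by definition of the induced topology. This handles the cut and unbounded cases verbatim, and for infinitesimal $x=\gamma^{\pm}$ note that although $x$ may fail to have a generization when it is a leaf, it is still the case that on any segment containing $x$ the complement $I\setminus\{x\}$ is open: the closure of $x$ in $\cR_\Gamma$ is $\{x\}$ itself when $x$ is infinitesimal (it is the principal ranger $\gamma$ whose closure is $\{\gamma,\gamma^+,\gamma^-\}$), so again $x$ is closed in $I$.

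Finally I would globalize. Fix $y\in T$, $y\neq x$. By the quasi-tree axioms there is a unique quasi-segment $[x,y]$, and $x\notin(x,y]$; choose $c\in(x,y)$ (or $c=y$ if $[x,y]$ has no interior, i.e.\ $y$ is an immediate neighbor, which only happens when both are infinitesimal), so that $x$ and $y$ lie in different components of $T\setminus\{c\}$. The component $U$ of $T\setminus\{c\}$ containing $y$ is open (local connectedness) and does not contain $x$; hence $U$ is an open neighborhood of $y$ missing $x$. Since $y$ was arbitrary, $T\setminus\{x\}$ is open, i.e.\ $\{x\}$ is closed.

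The main obstacle I anticipate is the bookkeeping at leaf infinitesimal points, where the usual picture ``$x$ has a generization in its closure'' breaks down; one must check that closedness of $x$ in $\cR_\Gamma$ (equivalently, that the closure of the relevant principal ranger does not meet $I$ in more than $\{x\}$) still forces closedness in the abstract $\Gamma$-tree. But this is exactly what part (ii) of Theorem~\ref{segmenttheor} is designed to deliver, so the argument should go through with only routine verification of the compatibility between the segment topologies and the quasi-tree topology.
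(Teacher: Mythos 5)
Your segment-level argument is fine: a non-divisorial point is non-open on any $\Gamma$-segment through it, hence of cut, infinitesimal or unbounded ranger type, and Theorem~\ref{segmenttheor}(ii) makes it closed in that segment (here you implicitly use, correctly, that the $\Gamma$-segment structure identifies the quasi-segment, with its subspace topology, with an interval in $\cR_\Gamma$). The genuine gap is in your final globalization. You remove a point $c\in(x,y)$ and take ``the component $U$ of $T\setminus\{c\}$ containing $y$, open by local connectedness''. Local connectedness of $T$ only guarantees that connected components of \emph{open} subsets are open; $T\setminus\{c\}$ is open precisely when $\{c\}$ is closed, which is either the statement you are trying to prove (if $c$ is non-divisorial) or is typically false (a divisorial point is usually not closed: its closure contains the infinitesimal points parameterizing the branches at it, as happens on adic curves). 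Nothing in the quasi-tree axioms or the $\Gamma$-metric structure forces components of $T\setminus\{c\}$ to be open in the given topology --- that is exactly what characterizes the weak topology, which a $\Gamma$-tree is not assumed to carry. The degenerate case $(x,y)=\emptyset$ is also mishandled: since $x$ is non-principal it occurs when $y$ is divisorial and $x=y^{\pm}$ (not when ``both are infinitesimal''), and with $c=y$ the phrase ``component of $T\setminus\{c\}$ containing $y$'' is vacuous.

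The irony is that the reduction announced in your first paragraph already suffices and needs no component construction: if $x$ is closed in the subspace $[x,y]$, then there is an open $W\subseteq T$ with $W\cap[x,y]=[x,y]\setminus\{x\}$; this $W$ contains $y$ and cannot contain $x$ (because $x\in[x,y]$ while $x\notin W\cap[x,y]$), so it is a neighborhood of $y$ missing $x$, and letting $y$ range over $T\setminus\{x\}$ shows $\{x\}$ is closed. With that one observation your proof closes up, and it is then a genuinely different (and arguably more elementary) route than the paper's: the paper fixes a single segment $I$ through $x$ (not an endpoint when $x$ is an edge point, using Lemma~\ref{graphtypes} to bound the valence), takes a basis of connected neighborhoods $U_i$ of $x$, and uses pathwise connectedness of the closures $\overline{U_i}$ together with the segment-level fact to show $\bigcap_i\overline{U_i}=\{x\}$, exhibiting $\{x\}$ as an intersection of closed sets and thereby never needing to produce neighborhoods of other points at all.
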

\begin{proof}
Fix any $\Gamma$-segment $I$ containing $x$. If $x$ is an edge point we can also achieve that $x$ is not an endpoint of $I$. Let $\{U_i\}$ be a basis of connected neighborhoods of $x$ and let $W_i=\oU_i$ be their closure. Clearly, $\cap_i(I\cap W_i)=\{x\}$, hence for any $y\neq x$ the interval $I\cap [x,y]$ is not contained in some $W_i$, and by the connectedness of $W_i$ this implies that $y\notin W_i$. Therefore $\{x\}=\cap_i W_i$ is closed.
\end{proof}

\subsubsection{Topology}
The $\Gamma$-metric structure alone suffices to test connectedness of subsets, but it does not determine the topology. In fact, there exist at least two natural topologies one can assign to it: the {\em metric topology}, where a basis of open neighborhoods of $x$ is formed by balls $B(x,r)$ consisting of all points with distance at most $r\in\Gamma$ from $x$, and the {\em weak topology}, whose basis is formed by pathwise connected components of the sets of the form $T\setminus\{x_1\.x_n\}$, where $x_i$ are non-divisorial points. Obviously, the weak topology is the weakest locally connected topology compatible with the $\Gamma$-metric structure, and it is easy to see that only it has a chance to be quasi-compact. We will see that adic curves are provided with the weak topology.

\subsubsection{$\Gamma$-graphs}
A {\em $\Gamma$-graph} is a topological space $G$ which is locally provided with the structure of a $\Gamma$-tree. The notion of $\Gamma$-segment in $G$ is also defined locally. Classification of points is the same as for trees. A $\Gamma$-graph is called {\em finite} if it is a union of finitely many quasi-compact $\Gamma$-segments. Equivalently, it is quasi-compact and has finitely many leaves.

\subsubsection{Skeletons and induced distance function}\label{skeletsec}

By a skeleton of a $\Gamma$-graph $G$ we mean a finite $\Gamma$-subgraph $\Delta\subseteq G$, which can contain isolated points, such that the leaves of $\Delta$ are divisorial and unbounded points, any isolated point of $\Delta$ is divisorial, and for any point $x\in G\setminus\Delta$ there exists a unique $\Gamma$-segment $[x_0,x]$ such that $\{x_0\}=[x_0,x]\cap\Delta$.

\begin{lem}\label{skeletonlem}
Assume that $G$ is a $\Gamma$-graph with a skeleton $\Delta$. For any point $x\in G\setminus\Delta$ with associated segment $[x_0,x]$ as above, $x_0$ is divisorial and identifying $[x_0,x]$ with a $\Gamma$-segment $[0,r_\Delta(x)]$ one obtains a lower semicontinuous function $r_\Delta\:G\to\cR^+_\Gamma$ such that $r_\Delta^{-1}(0)=\Delta$. In fact, $r_\Delta$ measures the distance of points of $G$ from $\Delta$.
\end{lem}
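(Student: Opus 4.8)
The plan is to verify the three assertions --- that $x_0$ is divisorial, that $r_\Delta$ is well-defined and lower semicontinuous with $r_\Delta^{-1}(0)=\Delta$, and that it measures distance --- essentially by unwinding the definition of a skeleton and invoking the structural results on $\Gamma$-trees and $\Gamma$-graphs already established.

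\emph{Step 1: $x_0$ is divisorial.} Fix $x\in G\setminus\Delta$ with its unique segment $[x_0,x]$ meeting $\Delta$ only in $x_0$. If $x_0$ were a non-divisorial point of $\Delta$, then by Lemma~\ref{graphtypes} it is either a leaf or an edge point of $\Delta$ (vertices are divisorial). But the defining property of the skeleton says leaves of $\Delta$ are divisorial or unbounded, and an unbounded point cannot sit at the end of the segment $[x_0,x]$ extending outside $\Delta$ (an unbounded point is a leaf of $G$ itself by Lemma~\ref{graphtypes}), so $x_0$ is not a leaf of $\Delta$. If $x_0$ were an edge point of $\Delta$, then locally near $x_0$ the graph $G$ contains, besides the two germs of $\Delta$ at $x_0$, the extra germ of $[x_0,x]$; thus $x_0$ has valence $\ge 3$ in $G$, so it is a vertex of $G$, and by Lemma~\ref{graphtypes} again it is divisorial. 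Either way $x_0$ is divisorial, as claimed. (The local statements about valence are legitimate since a $\Gamma$-graph is locally a $\Gamma$-tree.)

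\emph{Step 2: definition of $r_\Delta$ and the fiber over $0$.} Since $x_0$ is divisorial, the segment $[x_0,x]$ is a divisorial-at-$x_0$ $\Gamma$-segment, so identifying it with a standard segment $[0,r]\subseteq\cR_\Gamma$ via the $\Gamma$-metric structure (Remark after the definition of $\Gamma$-segments, the identification being unique up to the trivial ambiguities) produces a well-defined value $r_\Delta(x)=r\in\cR^+_\Gamma$, namely the ranger at the far endpoint; uniqueness of $[x_0,x]$ makes this independent of choices. For $x\in\Delta$ we set $r_\Delta(x)=0$, which is consistent because then the associated segment degenerates to the point $x_0=x$. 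Thus $r_\Delta^{-1}(0)=\Delta$ by construction, and the final sentence of the lemma is immediate: $r_\Delta(x)=\mu\big(x_0,x\big)$ is exactly the $\Gamma$-distance from $x$ to its closest point of $\Delta$, since any path from $x$ to $\Delta$ must pass through $x_0$ (this is the content of $\{x_0\}=[x_0,x]\cap\Delta$ together with the quasi-tree/quasi-graph property that paths are essentially unique).

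\emph{Step 3: lower semicontinuity.} This is the main point. I would argue locally, so assume $G$ is a $\Gamma$-tree. Fix $x$ and a ranger $\rho<r_\Delta(x)$ in $\cR_\Gamma$; I must find a neighborhood $U$ of $x$ on which $r_\Delta>\rho$. Working in the segment $[x_0,x]\cong[0,r_\Delta(x)]$, pick a divisorial point $z$ on it with $\rho<\mu(x_0,z)\le r_\Delta(x)$ strictly between $\rho$ and $x$ (possible by the classification of rangers in $\cR_\Gamma$, since between any ranger and a strictly larger one there is a divisorial point), so that $z\in(x_0,x]$ lies in $G\setminus\Delta$ by minimality of $[x_0,x]\cap\Delta$. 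Then take $U$ to be the pathwise connected component of $x$ in $G\setminus\{z'\}$ for a suitable non-divisorial point $z'$ just below $z$ (or directly the weak-topology neighborhood cut out along the segment); for any $y\in U$ the unique segment $[y_0,y]$ cannot reach $\Delta$ without crossing the separating point, forcing $[x_0,z]\subseteq[y_0,y]$ and hence $r_\Delta(y)\ge\mu(x_0,z)>\rho$. The case where $x\in\Delta$ (so $r_\Delta(x)=0$) is vacuous since no $\rho<0$ exists in $\cR^+_\Gamma$, and the case of an infinitesimal or unbounded target value needs the analogous but slightly more delicate separation argument, choosing the separating point appropriately in the closure of a divisorial point.

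\emph{Main obstacle.} The routine parts are Steps 1--2; the real work is Step~3, and within it the delicate case is when $r_\Delta(x)$ is a non-divisorial ranger (cut, infinitesimal, or unbounded), where one cannot simply "pick a divisorial point arbitrarily close below'' and must instead use the fine topology of $\cR_\Gamma$ from Theorem~\ref{segmenttheor}(ii) --- in particular that intervals $[-\infty,y)$ and $(y,\infty]$ are open when $y$ is non-principal --- to produce the separating non-divisorial point and the corresponding weak-topology neighborhood. One also has to be careful that the separating point used to cut out $U$ is non-divisorial (so that $U$ is genuinely a basic open set of the weak topology on the $\Gamma$-graph), which is exactly where the infinitesimal points in closures of divisorial points of $\cR_\Gamma$ get used.
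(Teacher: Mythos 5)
Your Steps 1--2 are essentially the paper's own argument (the paper phrases Step 1 via valence counts: a non-divisorial $x_0$ has valence at most $2$ in $G$, so at most $1$ in $\Delta$ after discarding the branch towards $x$, contradicting the conditions on leaves and isolated points of $\Delta$; note that you should also cover the case where $x_0$ is an \emph{isolated} point of $\Delta$, which your leaf/edge-point dichotomy misses, though it is settled immediately by the definition of a skeleton). Step 3 also uses the paper's mechanism --- cut $[x_0,x]$ at a non-divisorial point just below the threshold and take the connected component of $x$, then use that this component sits in a tree component of $G\setminus\Delta$ to force $[z_0,z]\supseteq[x_0,\text{cut point}]$.

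However, Step 3 as you formulate it contains a genuine error. You set out to prove that for \emph{every} ranger $\rho<r_\Delta(x)$ there is a neighborhood of $x$ on which $r_\Delta>\rho$. That statement is false: take $\rho\in\Gamma^+$ principal and let $x$ be the infinitesimal point at distance $\rho^+$ from $x_0$ on a segment leaving $\Delta$. By Theorem~\ref{segmenttheor}(ii) the closure of the divisorial point $w$ at distance $\rho$ contains $x$, so \emph{every} neighborhood of $x$ contains $w$, and $r_\Delta(w)=\rho\not>\rho$. Lower semicontinuity here only means that the preimages of the basic open sets $[r,\infty]$ with $r$ principal are open, i.e.\ given $r_\Delta(x)\ge r$ one must find a neighborhood on which $r_\Delta\ge r$; in that situation the separating point is simply the infinitesimal point $y\in[x_0,x]$ with $r_\Delta(y)=r^-$ (it always exists since $r\le r_\Delta(x)$), which is exactly what the paper does --- no choice of an auxiliary divisorial point strictly between $\rho$ and $r_\Delta(x)$ is needed. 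Relatedly, your justification ``between any ranger and a strictly larger one there is a divisorial point'' is false (nothing at all lies between $\gamma$ and $\gamma^+$), and the ``delicate case of an infinitesimal or unbounded target value'' that you leave open is precisely where your stronger claim breaks down; once you restrict attention to principal $r$, no delicate case remains and your argument collapses to the paper's proof.
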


\begin{proof}
If $x_0$ is not divisorial, then its valence in $G$ is at most two and it is even one if $x_0$ is unbounded. The branch at $x_0$ towards $x$ is not in $\Delta$, hence the valence of $x_0$ in $\Delta$ is at most 1 and it is even zero if $x_0$ is unbounded. Both cases are forbidden by our assumptions on the leaves and isolated points of $\Delta$.

To prove the semicontinuity we should show that the preimage of any open interval $[r,\infty]$ is open. If $r=0$, then the preimage is all of $G$, so assume that $r\in\Gamma^+$ is a principal positive ranger and $x\in G_{\ge r}=r_\Delta^{-1}([r,\infty])$. Consider the interval $[x_0,x]$ as in (i) and let $y\in[x_0,x]$ be the infinitesimal point with $r_\Delta(y)=r^-$. Consider the open set $U$, which is the connected component of $G\setminus\{y\}$ containing $x$. Clearly, it lies in the connected component $W$ of $G\setminus\Delta$ which contains $y$. Since $W$ is a tree, it follows that for any point $z\in U$ the interval $[z_0,z]$ contains $[x_0,y]$ (in particular, $x_0=z_0)$. Therefore $r_\Delta(z)>r_\Delta(y)=r^-$, that is, $r_\Delta(z)\ge r$, and we obtain that $U$ is a neighborhood of $x$ contained in $G_{\ge r}$.
\end{proof}

\subsubsection{Metric deformation}
Keep the above notation and let us write $r_x=r_\Delta(x)$ for shortness. We identify the interval $[x_0,x]$ with $[0,r_x]\subset\cR_\Gamma$. For any $t\in[0,r_x]$ let $x_t\in[x_0,x]$ denote the point corresponding to $t$. Now we define a map $\Phi=\Phi_\Delta\:\cR_\Gamma^+\times G\to G$ as follows $\Phi(t,x)=x_t$ if $t\le r_x$ and $\Phi(t,x)=x$ otherwise. Our main result about $\Phi$ is that it is continuous. Since $\Phi(\infty,x)=x$ and $\Phi(0,x)=x_0$ this should be interpreted as follows: $\Phi$ is the canonical $\Gamma$-deformational retraction of $G$ onto $\Delta$ induced by the metric. The deformation is parameterized by the interval $\cR^+_\Gamma$, which we will write as $[\infty,0]$ inverting the orientation to make it analogous to the usual notation with the deformation parameter belonging to $[0,1]$. Our notation stems from the fact that we work in the additive setting, so our parametrization should be thought as the $-\log(t)$-parametrization of the usual interval of parameters $[0,1]$.

\begin{theor}\label{metricdeform}
If $G$ is a $\Gamma$-graph and $\Delta$ is a skeleton, then the retraction map $\Phi=\Phi_\Delta\:\cR_\Gamma^+\times G\to G$ is continuous.
\end{theor}
\begin{proof}
We will use the following observation concerning a pathwise connected subset $V \subseteq G$ and a ranger $t\in\cR_\Gamma^+$: if $V$ contains a point $x$ with $r_\Delta(x)\le t$, then $\Phi([\infty,t]\times V)\subseteq V$, and if $V$ does not contain such a point, then $\Phi([\infty,t]\times V)$ is a single point $y_t$ and $r_\Delta(y_t)=t$. Loosely speaking, $[t,\infty]$ deforms $V$ inside itself whenever there is a point $x$ fixed by the action, and once $t$ becomes smaller than the distance from $V$ to $\Delta$, all $V$ is contracted to a single point $y_t$ whose distance from $\Delta$ is $t$. To check this claim it suffices to prove it in the particular case, when $V$ is a $\Gamma$-segment, and this case is established by a simple chasing of a few cases.

Now, let us prove the theorem. Let $y=\Phi(t,x)$ and $U$ a neighborhood of $y$. It suffices to find a neighborhood $[\infty,t']\times V$ of $(t,x)$ mapped to $U$. Shrinking $U$ we can assume in the sequel that it is open and pathwise connected. Since $U$ is a neighborhood of $y_t=y$, there exists a principal ranger $t'\le t$ such that $y_{t'}\in U$, and hence $\Phi([\infty,t']\times U)\subseteq U$ by the above paragraph. Thus, if $x\in U$, then we just take $V=U$, but otherwise we will shrink $U$ as follows. Let $W$ be the connected component of $G\setminus\{y_{t'}\}$ that contains $x$. Then $\Phi([\infty,t')\times W)\subseteq W$ and $\Phi(\{t'\}\times W)=y_{t'}$ by the above paragraph, and hence $V=U\cap W$ satisfies $\Phi([\infty,t']\times V)\subseteq V$. It remains to note that $V$ is a neighborhood of $x$.
\end{proof}

\subsubsection{Quotients}

For simplicity, we study quotients of $\Gamma$-graphs under restrictive assumptions which simplify the arguments and cover our needs.

\begin{lem}\label{quotlem}
Assume that a pro-finite group $G$ acts continuously and isometrically on a $\Gamma$-tree $T$ such that there is a $G$-invariant point $q$ and stabilizers of divisorial points are open. Then the quotient $T'=T/G$ is a quasi-tree and the $\Gamma$-metric structure of $T$ descends to $T'$. In fact, if $x,y\in T$ are divisorial points, then their images $x',y'\in T'$ are divisorial, $\mu(x',y')=\mu(Gx,Gy)$, and the projection $p\:T\to T'$ maps $[x,y]$ homeomorphically (and then also isometrically) onto $[x',y']$ if and only if $y$ is the closest point to $x$ in $Gy$, that is, $\mu(x,y)=\mu(Gx,Gy)$.
\end{lem}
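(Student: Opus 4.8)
The plan is to reduce everything to one substantial statement — that $p$ is injective on a geodesic segment $[x,y]$ realizing the orbit distance — and then read off the quasi-tree structure of $T'=T/G$, the descent of the $\Gamma$-metric, and the displayed criterion by routine manipulations with the quotient topology.

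First I would record that divisorial orbits are finite: for divisorial $x$ the stabilizer $G_x$ is open in the pro-finite group $G$, hence of finite index, so $Gx\cong G/G_x$ is finite. Therefore for divisorial $x,y$ the quantity $\mu(Gx,Gy)=\min\{\mu(x_0,y_0)\mid x_0\in Gx,\ y_0\in Gy\}$ is attained, and I may choose a representative $y$ of its orbit with $\mu(x,y)=\mu(Gx,Gy)$. Setting $\mu(x',y'):=\mu(Gx,Gy)$ on pairs of divisorial points of $T'$ then gives a symmetric function vanishing only on the diagonal; it satisfies the triangle inequality because in the tree $T$ one may first pick an optimal representative of $Gy$ against $x$ and then an optimal representative of $Gz$ against that point, and invoke the triangle inequality for divisorial points of $T$.

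The heart of the argument, and the step I expect to be the main obstacle, is: \emph{if $\mu(x,y)=\mu(Gx,Gy)$ then $p$ is injective on $[x,y]$}. I would argue by contradiction, using the $G$-invariant point $q$. Suppose $ga=b$ for some $g\in G$ and $a\ne b$ on $[x,y]$, with $a$ the one of the two closer to $x$, so that the order along the segment is $x\le a<b\le y$. Since $gq=q$ we have $\mu(q,a)=\mu(q,b)$, so the median $c$ of $\{q,a,b\}$ lies on $[q,a]\cap[q,b]\cap[a,b]$, is the midpoint of $[a,b]$, and is fixed by $g$: indeed $c\in[q,b]$, also $gc\in g[q,a]=[q,b]$, and $\mu(q,gc)=\mu(q,c)$, so $gc=c$. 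Now $c$ is a point of $[x,y]$ strictly between $a$ and $b$, hence $c\ne x,y$, and since $c$ separates $a$ from $b$ on the segment, the branches of $c$ toward $x$ and toward $y$ (which coincide with the branches toward $a$ and toward $b$) are distinct. From $ga=b$ the isometry $g$, fixing $c$, carries the branch toward $a$ onto the branch toward $b$; hence $g^{-1}$ carries the branch toward $y$ back to the branch toward $x$, and $g^{-1}y$ lies on the same branch of $c$ as $x$. Then the geodesic $[x,g^{-1}y]$ does not pass through $c$, so
\[
\mu(x,g^{-1}y)<\mu(x,c)+\mu(c,g^{-1}y)=\mu(x,c)+\mu(c,y)=\mu(x,y),
\]
which is absurd since $g^{-1}y\in Gy$ and $\mu(x,y)=\mu(Gx,Gy)$. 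Thus $p|_{[x,y]}$ is injective.

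Granting this, I would finish as follows. By the same triangle-inequality trick one sees that whenever $\mu(x,y)=\mu(Gx,Gy)$, every pair of divisorial points $u,v$ on $[x,y]$ also satisfies $\mu(u,v)=\mu(Gu,Gv)$; hence $p$ maps the $\Gamma$-segment $[x,y]$ bijectively onto $I_{x,y}:=p([x,y])$, preserving the lengths of all divisorial subsegments and sending divisorial points (in particular $x,y$) to divisorial points, which equips $I_{x,y}$ with a canonical $\Gamma$-segment structure with endpoints $x'=p(x),\ y'=p(y)$ and length $\mu(Gx,Gy)$. Next I would check that $p$ is open (because $p^{-1}(p(U))=\bigcup_{g\in G}gU$) and, combining openness with the injectivity just proved and the finiteness of divisorial orbits (plus the explicit local structure of $\cR_\Gamma$ at the non-divisorial points), that the quotient topology of $T'$ restricts on $I_{x,y}$ to its intrinsic topology — so $p$ maps $[x,y]$ homeomorphically, and then isometrically, onto $I_{x,y}$. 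It remains to see that the $I_{x,y}$ fit together into a quasi-tree structure on $T'$ inducing the asserted metric: they are independent of the optimal lift chosen (two optimal lifts of $y'$ differ by a $g\in G$ translating one geodesic isometrically to the other with the same image); each $I_{x,y}$ is connected, being a continuous image of the connected $\Gamma$-segment $[x,y]$; each interior point of $I_{x,y}$ separates $x'$ from $y'$ in $T'$, which — using that $p$ is open, that divisorial orbits are finite, and a direct inspection at the remaining non-divisorial points — reduces to the separation property in the tree $T$; and the quasi-segment from $x'$ to $y'$ is unique because segments in $T$ are unique and $p$ is injective on $[x,y]$. Finally the ``only if'' direction is immediate: a homeomorphism $[x,y]\to[x',y']$ compatible with the $\Gamma$-structures is an isometry, so it forces $\mu(x,y)=\mu(x',y')=\mu(Gx,Gy)$, i.e. $y$ is a closest point of $Gy$ to $x$.
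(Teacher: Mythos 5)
Your overall strategy (minimize the distance within the orbit, prove $p$ is injective on a minimizing segment via the $G$-fixed point $q$, then descend) is a legitimate alternative to the paper's argument, which instead orders $T$ as a tree rooted at $q$ and minimizes the junction point $z_\sigma=\max(x,\sigma(y))$ over $\sigma\in G$ by compactness. But as written your key step has a genuine gap: you argue as if $T$ were an $\RR$-tree, whereas here $\mu$ is only defined on pairs of divisorial points, and the colliding points $a,b$, the median $c$ and even $q$ may be cut or infinitesimal points. Concretely, the chain $\mu(x,g^{-1}y)<\mu(x,c)+\mu(c,g^{-1}y)=\mu(x,y)$ is not literally meaningful for non-divisorial $c$, and the strictness can genuinely degenerate: if the branch point $j$ of $[x,g^{-1}y]$ off $[x,y]$ is adjacent to $c$ (e.g.\ $c=j^{+}$), the defect $2\,\mathrm{len}[j,c]$ is infinitesimal and one only gets $\mu(x,g^{-1}y)=\mu(x,y)$, no contradiction. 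These degenerate configurations can in fact be excluded (the median of three points is a vertex, hence divisorial by Lemma~\ref{graphtypes}, and an infinitesimal fixed point forces its unique divisorial generization to be fixed, contradicting the branch swap), but none of this is in your proposal, and it is exactly the kind of care the setting demands.

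The second, larger gap is the part you declare routine. Openness of $p$ together with injectivity on $[x,y]$ does \emph{not} yield that $p|_{[x,y]}$ is a homeomorphism onto its image in the quotient topology: one must separate a point of the segment from the $G$-orbit of the rest of the segment, and since only stabilizers of \emph{divisorial} points are assumed open, orbits attached to non-divisorial points can be infinite. This is precisely where the paper works hardest (for a leaf $x$ and non-divisorial $y>x$ it shows $G[y,q]$ is a closed finite graph and takes its complement as a neighborhood). Moreover, the quasi-tree axiom for $T'$ requires quasi-segments with the separation property between \emph{all} pairs of points of $T'$, not only images of divisorial points; your minimization device (attaining the infimum via finiteness of divisorial orbits) produces nothing for cut, infinitesimal or unibranch images, whereas the paper's rooted-order minimization with compactness of $G$ handles arbitrary points uniformly. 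So the proposal's metric idea is salvageable for the divisorial statement, but the topological descent and the full quasi-tree claim are not established by the arguments you sketch.
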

\begin{proof}
Provide $T$ with the partial order such that $x\le y$ if and only if $y\in[x,q]$. We view $T$ as a rooted tree, so that the root $q$ is the maximal point, all leaves different from $q$ are the minimal points, and $z=\max(x,y)$ exists for any pair $x,y\in T$ and satisfies $[z,q]=[x,q]\cap[y,q]$. For any $x\in T$ the $G$-orbit of $[x,q]$ is a rooted tree with set of leaves $Gx$. It follows that $I_x=[x,q]$ is mapped bijectively onto its image in $T'$ that will be denoted $I_{x'}=[x',q']$.

Recall that $I_{x'}$ is provided with the quotient topology coming from $GI_x$ and we claim that the map $I_{x}\to I_{x'}$ is a homeomorphism. If $x$ is not a leaf, then $I_x$ is contained in $I_y$ for a divisorial point $y$, and since $Gy$ is finite by our assumption, $Gx$ is finite too. Thus, $GI_x$ is a finite $\Gamma$-tree and our claim is easily verified. It remains to compare the topologies when $x$ is a leaf. We need to construct a system of neighborhoods of~$x$ in~$I_x$ whose images in~$I_{x'}$ are open.
This comes down to finding for any nondivisorial point $y>x$ a neighborhood of $x$ in~$T$ which is disjoint from $GI_y$.
But we know already that~$GI_y$ is a finite graph and since~$y$ is not divisorial, it follows that~$GI_y$ is closed in~$T$.
Its complement is a neighborhood of~$x$ of the required form.

Now, let us define $[x',y']$ for $x',y'\in T'$. Choose any preimages $x,y\in T$ and for $\sigma\in G$ set $z_\sigma=\max(x,\sigma(y))$. By compactness $\inf(z_\sigma)$ is attained for some $\sigma$, and replacing $y$ by $y_\sigma$ we can assume that $z=\max(x,y)\le\max(x,\tau(y))$ for any $\tau\in G$. It follows from the minimality of $z$ that $\{z\}=[x,z]\cap G[y,z]$, and hence $[x,y]$ is mapped bijectively onto its image in $T'$, which will be denoted $[x',y']$. The map $[x,y]\to[x',y']$ is a homeomorphism because its restrictions onto $[x,z]$ and $[y,z]$ are. If $x,y$ are divisorial we use this homeomorphism to induce the $\Gamma$-segment structure on $[x',y']$. It is easily seen to be compatible with intersections. Note that $\mu(x',y')=\mu(x,y)=\mu(x,Gy)=\mu(Gx,Gy)$.

It remains to check that $T'$ is a quasi-tree, that is, any $t'\in(x',y')$ separates $x'$ and $y'$. Let~$t$ be the (unique) preimage of~$t'$ in $(x,y)$. If $t\in(x,z]$, then $t$ lies in each $[x,\sigma(y)]$, hence $t$ separates $x$ from $Gy$ and by equivariance $Gt$ separates $Gy$ and $Gx$ (no connected component of $T\setminus Gt$ intersects both $Gx$ and $Gy$). In the same way, if $t\in(y,z]$, then it separates $y$ from $Gx$ and hence $Gt$ separates $Gx$ and $Gy$. This finishes the proof.
\end{proof}

\subsection{The sheaf of values}
We have defined the structure of $\Gamma$-segments in a full analogy with the usual metrized real intervals, and now we are going to suggest another way to pack this information. It is less natural in the context of usual geometry, but fits valuative geometry much more naturally.

\subsubsection{Valuations on groups and valuation monoids}
By a {\em valuation} on an abelian group $G$ we mean a homomorphism $\nu\:G\to\Gamma$ to an ordered group and its image will be denoted $\Gamma_\nu$. As in the case of rings, an equivalence class of valuations is determined by the {\em valuation monoid} $G^+_\nu=\nu^{-1}(\Gamma^+)$. A submonoid $M\subseteq G$ is a valuation monoid of some valuation if and only if $M^\gp=G$ and for any element $g\in G$ at least one element from the pair $\{g,-g\}$ lies in $M$. In such a case $\Gamma_\nu=G/M^\times$, $\Gamma_\nu^+=M/M^\times$ and the divisibility relation in $M$ induces the total order on $\Gamma_\nu$.

\subsubsection{Valuative interpretation of rangers}\label{valrengsec}
Note that a ranger $r$ is nothing but an isomorphism class of surjective homomorphisms $\nu_r\:\Gamma\oplus\ZZ t\to\Gamma_r$, where $\Gamma_r$ is an ordered group containing $\Gamma$. Thus we can identify $r$ and the corresponding valuation $\nu_r$ on the $\Gamma$-group $L=\Gamma\oplus\ZZ t$.

\subsubsection{Sheaves of linear functions}
Let $\cL_\cR$ be the constant sheaf of groups $L$ on $\cR$. We view its elements as linear maps $\cR\to\cR$ with $\gamma\in\Gamma$ corresponding to the constant map $\gamma$ and $t$ corresponding to the identity. Naturally, we say that the linear function $nt+\gamma$ is of {\em slope $n$}. By $\cL^+_\cR$ we denote the submonoid of non-negative functions, so $\cL^+_\cR([a,b])$ consists of all elements $nt+\gamma$ such that $na+\gamma\ge 0$ and $nb+\gamma\ge 0$. Obviously, the stalk $\cL^+_{\cR,r}$ at a ranger $r$ consists of all elements $nt+\gamma$ such that $nr+\gamma\ge 0$, or $\nu_r(nt+\gamma)\ge 0$ in the valuative language. So, $\cL^+_{\cR,r}=L^+_r$ is the valuation monoid of $r$.

\subsubsection{Sheaves of values}
Groups of values of rangers also form a sheaf: setting $\Gamma_\cR=\cL_\cR/(\cL^+_\cR)^\times$ and $\Gamma^+_\cR=\cL^+_\cR/(\cL^+_\cR)^\times$ we have the following result

\begin{lem}\label{gammasheaflem}
Let $r\in\cR$ be a ranger on an ordered group $\Gamma$, then:

(i) The stalks at $r$ are as follows: $\Gamma_{\cR,r}=\Gamma_r$ is the group of values of $r$ and $\Gamma^+_{\cR,r}=\Gamma_r^+$ is its monoid of non-negative elements.

(ii) The map $\Gamma\to\Gamma_\cR$ is injective and the stalk $(\Gamma_{\cR}/\Gamma)_r=\Gamma_r/\Gamma$ is a cyclic group, which equals $\ZZ$ if $r$ is not principal and is finite otherwise (hence even trivial when $\Gamma$ is divisible).
\end{lem}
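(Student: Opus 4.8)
The plan is to work entirely at the level of stalks, using the valuative interpretation of rangers from \S\ref{valrengsec}. By construction $\cL^+_\cR$ is a sheaf whose stalk at $r$ is the valuation monoid $L^+_r$ of the valuation $\nu_r\:L=\Gamma\oplus\ZZ t\to\Gamma_r$, where $\Gamma_r=\Gamma[\gamma']$ is the minimal ordered group realizing $r$. Sheafification commutes with quotients, so the stalk of $\Gamma_\cR=\cL_\cR/(\cL^+_\cR)^\times$ at $r$ is $L/(L^+_r)^\times$ and the stalk of $\Gamma^+_\cR$ is $L^+_r/(L^+_r)^\times$. By the description of valuation monoids in \S\ref{valrengsec}, for a valuation $\nu$ with valuation monoid $M$ one has $\Gamma_\nu=G/M^\times$ and $\Gamma_\nu^+=M/M^\times$; applying this with $G=L$, $M=L^+_r$, $\nu=\nu_r$ gives exactly $\Gamma_{\cR,r}=\Gamma_{\nu_r}$ and $\Gamma^+_{\cR,r}=\Gamma^+_{\nu_r}$. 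Since $\nu_r$ is surjective onto $\Gamma_r$, this is $\Gamma_{\cR,r}=\Gamma_r$ and $\Gamma^+_{\cR,r}=\Gamma_r^+$, which is claim (i).

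For claim (ii), the map $\Gamma\to\Gamma_\cR$ is the sheaf-level map induced by $\Gamma\hookrightarrow L=\Gamma\oplus\ZZ t$; on stalks at $r$ it is the composite $\Gamma\hookrightarrow L\twoheadrightarrow L/(L^+_r)^\times=\Gamma_r$, which is precisely the given embedding $\Gamma\into\Gamma_r$ of ordered groups, hence injective. (Injectivity of a map of sheaves can be checked on stalks.) For the cokernel, the stalk $(\Gamma_\cR/\Gamma)_r$ is $\Gamma_r/\Gamma$ because quotients commute with stalks. Now $\Gamma_r=\Gamma[\gamma']$ is generated over $\Gamma$ by the single element $\gamma'$, so $\Gamma_r/\Gamma$ is cyclic, generated by the image of $\gamma'$. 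If $r$ is not principal, then no nonzero multiple $n\gamma'$ lies in $\Gamma$: indeed, by Lemma~\ref{ranglem} a non-principal ranger on $\Gamma$ is the same as a non-principal ranger on $\Gamma_\QQ$, and the explicit construction there realizes $\Gamma_r$ (after passing to $\Gamma_\QQ$) as $\Gamma_\QQ\oplus\ZZ\gamma'$ with $\gamma'$ of infinite order modulo $\Gamma_\QQ$; a fortiori $n\gamma'\notin\Gamma$ for $n\ne 0$, so $\Gamma_r/\Gamma\cong\ZZ$. If $r$ is principal, then $\gamma'\in\Gamma_\QQ$, so some $n\gamma'\in\Gamma$ and $\Gamma_r/\Gamma$ is finite; when $\Gamma$ is divisible this forces $\gamma'\in\Gamma$, i.e. $\Gamma_r=\Gamma$ and the cokernel is trivial.

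I expect the only genuine point needing care is the bookkeeping around $(L^+_r)^\times$: one must check that the units of the valuation monoid $L^+_r$ are exactly $\ker(\nu_r)$, so that $L/(L^+_r)^\times$ is canonically $\Gamma_r$ with the correct order, and that the inclusion $\Gamma\hookrightarrow L$ induces on the quotient precisely the ordered-group embedding defining the ranger (rather than some coarser map). Both are immediate from the dictionary between valuations on abelian groups and valuation monoids recalled in \S\ref{valrengsec}, so no real obstacle arises; the rest is formal manipulation of stalks of sheaves of abelian groups, for which exactness and commutation with quotients are standard.
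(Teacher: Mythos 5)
Your proof is correct and takes essentially the same route as the paper's: both come down to identifying the stalk of $(\cL^+_\cR)^\times$ at $r$ with $(L^+_r)^\times=\ker\nu_r$ and then reading off $\Gamma_{\cR,r}=L/\ker\nu_r=\Gamma_r$ together with the cyclic cokernel $\Gamma_r/\Gamma$ (trivial kernel of $\nu_r$ in the non-divisorial case, generated by some $nt+\gamma$ with $n\ge 1$ otherwise). The one step the paper makes explicit and you fold into ``formal manipulation'' --- that a germ invertible in the stalk $L^+_r$ is invertible on an actual neighborhood, because $nr+\gamma=0$ forces $r$ to be an open (divisorial) point --- is precisely the bookkeeping you flag at the end, and it does go through.
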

\begin{proof}
(i) If a section $nt+\gamma$ is invertible in $\cL^+_{\cR,r}$, then $r$ is the open point given by $nr+\gamma=0$, so the inverse of $nt+\gamma$ tautologically extends to a neighborhood of $r$. Therefore $(L^+_r)^\times$ is the stalk of  $(\cL^+_\cR)^\times$ at $r$, and $\Gamma_{\cR,r}=L/(L^+_r)^\times=\Gamma_r$.

(ii) By the above $(L_r^+)^\times$ is trivial if $r$ is not principal and is generated by an element $nt+\gamma$ with $n\ge 1$ if $r$ is principal. The claim follows.
\end{proof}

\subsubsection{Piecewise linear functions}
Let $I\subseteq\cR$ be an interval with principal endpoints. A $\Gamma$-pl function $f\:I\to\cR$ is a locally linear function, i.e. it is locally of the form $nt+\gamma$. Corners (or change of slope) can only happen at open points, and it follows from the quasi-compactness of $I$ that there exists finitely many of them, so $I=\cup_{i=0}^n[x_i,x_{i+1}]$ with $x_i\in\Gamma_\QQ$ and the slopes are constant on the intervals.

\begin{lem}\label{plem}
Sections of $\Gamma_\cR$ are naturally identified with $\Gamma$-pl functions, and sections of $\Gamma^+_\cR$ are non-negative $\Gamma$-pl functions. More concretely, given a section $s\in\Gamma_\cR(I)$ let $I=\cup_{i=1}^nI_i$ be an open covering such that $s|_{I_i}$ lifts to a section $s_i\in\cL_\cR(I)$. Then the corresponding linear functions $f_i\:I_i\to\cR$ fit at the intersections and hence give rise to a $\Gamma$-pl function $f\:I\to\cR$ which depends only on $s$.
\end{lem}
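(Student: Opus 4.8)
The plan is to unwind the sheaf definitions and use the classification of corners. We must show two things: first, that a global section of $\Gamma_\cR$ on an interval $I$ with principal endpoints is the same data as a $\Gamma$-pl function $I\to\cR$; second, that non-negativity of the pl function corresponds exactly to the section lying in the submonoid $\Gamma^+_\cR$. The key technical ingredient is that $\cL_\cR\to\Gamma_\cR$ is a quotient by $(\cL^+_\cR)^\times$, whose stalk at $r$ we computed in Lemma~\ref{gammasheaflem} to be trivial when $r$ is not principal. So away from the (locally finitely many) principal points, a section of $\Gamma_\cR$ lifts uniquely to a section of $\cL_\cR$, i.e. to an honest linear function; the only ambiguity, and the only place a corner can appear, is at the principal points.

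\textbf{Steps.} First I would fix a section $s\in\Gamma_\cR(I)$ and choose an open cover $I=\cup_{i=1}^n I_i$ on which $s$ lifts to $s_i\in\cL_\cR(I_i)$; such a cover exists because $\Gamma_\cR$ is by definition the sheafification-free quotient presheaf $\cL_\cR/(\cL^+_\cR)^\times$ and the quotient map is stalkwise surjective, so it is surjective on a suitable basis of opens. Each $s_i$ is a linear map $f_i=n_i t+\gamma_i\:I_i\to\cR$. On an overlap $I_i\cap I_j$ the two lifts differ by a section of $(\cL^+_\cR)^\times$; by Lemma~\ref{gammasheaflem}(i)–(ii) such a section is locally of the form $mt+\delta$ with $mr+\delta=0$ at every ranger $r$ of the overlap, and if the overlap contains a non-principal ranger this forces $m=0$ and $\delta=0$, so $f_i=f_j$ there. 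If $I_i\cap I_j$ consists only of principal rangers it is a union of isolated points $\{x\}$ with $x\in\Gamma_\QQ$, and $f_i(x)=f_j(x)$ as values in $\cR$ (the defining relation $mx+\delta=0$ says precisely $f_i$ and $f_j$ agree at $x$). Either way the $f_i$ glue to a well-defined function $f\:I\to\cR$ which is locally linear off the principal points, hence $\Gamma$-pl by the corner discussion preceding the lemma; and $f$ depends only on $s$, not on the chosen lifts, since any two choices of $s_i$ differ by $(\cL^+_\cR)^\times$ and the same computation applies. Conversely a $\Gamma$-pl function is by definition locally linear, hence locally a section of $\cL_\cR$, hence defines a section of $\Gamma_\cR$; these two assignments are visibly mutually inverse. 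Finally, $s$ lies in $\Gamma^+_\cR(I)=\cL^+_\cR/(\cL^+_\cR)^\times(I)$ iff it locally lifts to sections of $\cL^+_\cR$, iff each local linear piece $n_i t+\gamma_i$ satisfies $n_i a+\gamma_i\ge 0$ and $n_i b+\gamma_i\ge 0$ on its interval $[a,b]=I_i$ (by the description of $\cL^+_\cR$), iff $f\ge 0$ pointwise on $I$ — the equivalence with pointwise non-negativity being just the observation that a linear function on an interval of $\cR$ is $\ge 0$ throughout iff it is $\ge 0$ at the (principal or unbounded) endpoints, together with density of principal points.

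\textbf{Main obstacle.} The one point needing care is the behaviour at principal rangers, where $(\cL^+_\cR)^\times$ has a nontrivial stalk and the lifts $s_i$ genuinely need not agree as linear functions — this is exactly the mechanism that produces corners. The argument must check that, nevertheless, the \emph{values} $f_i(x)\in\cR$ agree at a principal $x$, so that $f$ is a well-defined function (with a corner there), and that the slopes on the two sides are unconstrained, matching the informal ``corners can only happen at open points'' statement. A second, more bookkeeping-type point is to confirm that $\Gamma_\cR$, being defined as a quotient presheaf, already has enough sections on intervals — i.e. that no sheafification is silently needed — which follows from stalkwise surjectivity of $\cL_\cR\to\Gamma_\cR$ together with the local structure of $\cR$; I would state this explicitly rather than leave it implicit.
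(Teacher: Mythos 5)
Your argument is correct and follows essentially the same route as the paper: the whole point is that on an overlap the two lifts differ by a section of $(\cL^+_\cR)^\times$, which vanishes identically as a function on that overlap, so the $f_i$ agree in value (even at an isolated principal point, where only the slopes may differ) and glue to a well-defined $\Gamma$-pl function depending only on $s$. Your extra case analysis at principal rangers and the verification of the non-negative statement just make explicit what the paper's one-line proof and the remark following the lemma leave implicit.
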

\begin{proof}
On an intersection $I_{ij}=I_i\cap I_j$ we have that $s_i-s_j$ is a section of $(\cL^+_\cR)^\times$ and hence $f_i=f_j$ as functions $I_{ij}\to\cR$.
\end{proof}

\begin{rem}
(i) Note that a non-trivial gluing only happens at isolated open points of $I_{ij}$ and then the function at such a point might have a corner. The latter can be interpreted as follows. The stalk of $(\cL^+_\cR)^\times$ at a point $r$ is trivial if $r$ is closed, and otherwise it is of the form $n_r\ZZ$, where $n_r>0$ is the minimal number such that $n_rr\in\Gamma$. Using Godement resolution it is easy to see that $H^1(\cL^+_\cR)=\oplus_{r\in\Gamma_\QQ}(n_r\ZZ)^2/n_r\ZZ$, where $n_r\ZZ$ is embedded into its square diagonally. If $f$ is a section of $\Gamma_\cR$, its image in the summand $(n_r\ZZ)^2/n_r\ZZ$ is precisely the difference between the slopes on the two sides of $r$, that is, it is the angle of the corner of $f$ at $r$. Note also that $\Gamma_r/\Gamma=\ZZ/n_r\ZZ$, and if $n_r>1$ the non-integrality of the ranger imposes restrictions on the possible angles of corners.

(ii) The language of sheaves of values also conceptually encodes integrality/rationality properties of principal rangers, so it is clearly preferable when $\Gamma$ is not divisible. We have the following analogy with algebraic geometry: closed points $x\in\AA^1_k$ correspond to principal rangers, the primitive field extension $k(x)/k$ corresponds to the group extension $\Gamma_r/\Gamma$ with finite cyclic quotient, rational points correspond to integral rangers $r\in\Gamma$, the degree $n_x=[k(x):k]$ corresponds to $n_r$.
\end{rem}

\subsubsection{Metric}
Finally, let us explain how the $\Gamma$-metric is reconstructed from this data.

\begin{lem}\label{metriclem}
Let $[x,y]$ be an interval in $\cR$ with rational endpoints. Then its length is determined by the sheaf $\Gamma^+_\cR$ as follows: it is the minimal length of an interval of the form $f([x,y])$, where $f\:I\to\cR$ is a strictly monotonic pl function.
\end{lem}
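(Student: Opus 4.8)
The plan is to prove that the minimum in question equals the $\Gamma$-metric length $\mu(x,y)$, by establishing the two obvious bounds. If $x=y$ both quantities vanish, so assume $x<y$ and write $\ell=\mu(x,y)=y-x$. Any strictly monotonic pl function $f\:I\to\cR$ on $I=[x,y]$ is continuous, hence a homeomorphism onto its image (each linear piece is a homeomorphism onto its image because multiplication by a nonzero integer is an order-automorphism of $\cR_{\Gamma}=\cR_{\Gamma_\QQ}$, $\Gamma_\QQ$ being divisible), so $f([x,y])$ is the closed interval with endpoints $f(x),f(y)\in\Gamma_\QQ$ and has length $|f(y)-f(x)|$.

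For the upper bound, I would choose $\gamma\in\Gamma^+$ large enough (possible by quasi-compactness of $I$) that $t+\gamma$ is a non-negative, hence $\Gamma^+_\cR$, section over $I$; it is strictly monotonic and $(t+\gamma)([x,y])$ has length $y-x=\ell$, so the minimum is $\le\ell$. For the lower bound, take any strictly monotonic pl function $f$ on $I$. By the description of $\Gamma$-pl functions recalled above, corners of $f$ occur only at open points and, by quasi-compactness of $I$, there are finitely many of them; so write $x=x_0<x_1<\dots<x_m=y$ with $x_i\in\Gamma_\QQ$ and $f|_{[x_{i-1},x_i]}$ linear of integer slope $n_i$. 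Strict monotonicity forces all the $n_i$ to be of one sign and non-zero (a zero slope would make $f$ constant on the non-degenerate subinterval $[x_{i-1},x_i]$), and being integers they satisfy $|n_i|\ge 1$. Therefore
\[
|f(y)-f(x)|=\sum_{i=1}^{m}|n_i|\,(x_i-x_{i-1})\ \ge\ \sum_{i=1}^{m}(x_i-x_{i-1})=y-x=\ell ,
\]
using $(|n_i|-1)(x_i-x_{i-1})\ge 0$ termwise. The two bounds together give the lemma, the minimum being attained at $t+\gamma$.

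The argument is short, and the only genuine input is that $\Gamma$-pl functions have integer slopes, so that $|n_i|\ge 1$; this is exactly what keeps the image $f([x,y])$ from being shorter than $[x,y]$. I thus anticipate no serious obstacle, only routine verifications: that $f([x,y])$ really is the closed interval with endpoints $f(x)$ and $f(y)$, that translation by an element of $\Gamma$ is an isometry of $\Gamma$-segments (immediate from the definition of the $\Gamma$-metric, and which also lets one pass to non-negative $f$), and the degenerate case $x=y$.
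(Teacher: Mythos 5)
Your proof is correct and follows essentially the same idea as the paper's (one-line) argument: the minimum is attained by a translation $t+\gamma$ because integer slopes of a strictly monotonic pl function are at least $1$ in absolute value. You merely spell out the two bounds that the paper leaves implicit, so there is nothing to add.
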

\begin{proof}
Clearly, the minimal length is attained when the slopes everywhere are positive and minimal possible, that is, $f=t+\gamma$ is just a translation.
\end{proof}

\subsubsection{$\Gamma$-segments revisited}
In view of Lemma \ref{metriclem}, we can now replace the definition of $\Gamma$-segments by the following equivalent one: a $\Gamma$-segment is a pair $I=(I,\Gamma^+_I)$ consisting of a topological space $I$ provided with a sheaf~$\Gamma^+_I$ of $\Gamma^+$-monoids which is isomorphic to $(J,\Gamma^+_\cR|_J)$ for an interval $J$ in $\cR_\Gamma$. A morphism between $\Gamma$-segments is taken in the category of $\Gamma$-monoided spaces. It is easy to see that giving such a map $(I,\Gamma^+_I)\to(J,\Gamma^+_J)$ is equivalent to giving a $\Gamma$-pl map $f\:I\to J$. Locally at a non-divisorial point $x\in I$ the map is linear, i.e. given by $f(t)=nt+r$ and we call $|n|$ the {\em dilation} of $f$ at $x$ because it is the ratio by which the metric is expanded at $x$.

\subsubsection{Pinched $\Gamma$-segments}\label{pinchsec}
By a {\em pinched $\Gamma$-segment} we mean a $\Gamma$-segment $I=[x_1,x_2]$ whose sheaf of values is modified at one or both endpoints by replacing the usual stalk $\Gamma^+_{x_i}$ with $\Gamma^+$ -- this is the subsheaf whose sections are locally constant at the endpoint(s). In particular, one can (non-trivially) pinch a $\Gamma$-segment only at a non-principal end-point. Geometrically pinching means that we only allow constant functions locally at a pinch. Clearly the structure of the $\Gamma$-segment is reconstructed from the pinched structure just by continuity at the endpoints, but these are pinched segments that naturally show up in some applications.

\subsubsection{Relation to $\Gamma$-graphs}
Now, we can give an elegant construction of a $\Gamma$-metric on a quasi-graph $T$. By a {\em sheaf of $\Gamma$-values} on $T$ we mean a sheaf of $\Gamma^+$-monoids such that each quasi-interval $I=[a,b]$ with the restricted sheaf of values $\Gamma^+_I=\Gamma^+_T|_I$ is a $\Gamma$-segment or a pinched $\Gamma$-segment. Clearly this provides $T$ with a $\Gamma$-metric -- each quasi-interval acquires it by \S\ref{pinchsec} and the compatibility on the intersections is automatic. By a slight abuse of language the monoided space $(T,\Gamma^+_T)$ will also be called a (pinched) $\Gamma$-graph.

\subsection{Spectral approach}\label{spectralsec}
The definition of segments as certain monoided spaces raises the question if they can be defined as certain spectra similarly to algebra-geometric objects, and the answer is positive.

\subsubsection{Spectra}
Let $G$ be a group and $G^+\subseteq G$ a submonoid. By $X=\Spa(G,G^+)$ we denote the set of valuation monoids $G^+_v\subseteq G$ such that $G^+\subseteq G^+_v$. Equivalently, the spectrum consists of all valuations on $G$ which are non-negative on $G^+$. The basis of the topology is formed by the subsets $X\{f\}=\Spa(G,G^+[f])$ with $f\in G$. In other words, $X_f$ is given by the condition $v(f)\ge 0$. We define the constant sheaf $\cO_X=G_X$ with the subsheaf $\cO^+_X$ obtained by sheafifying the presheaf that associates to $X\{f\}$ the saturation of $G^+[f]$ in $G$. In addition, we set $\Gamma_X=\cO_X/(\cO^+_X)^\times$ and $\Gamma^+_X=\cO^+_X/(\cO^+_X)^\times$. Clearly, the construction of $\Spa$ and all sheaves is functorial in the pairs group/submonoid.

\begin{theor}\label{spamonth}
Keep the above notation, then

(i) $X$ is quasi-compact and spectral, and for each group homomorphism $f\:G\to G'$ and submonoid $G'_+\subseteq G'$ containing $f(G^+)$ the induced map $\Spa(f)\:X'=\Spa(G',G'^+)\to X$ is spectral.

(ii) The presheaf defining $\cO^+_X$ is a sheaf.

(iii) The stalks at $v$ are as follows: $\cO_{X,v}=G$, $\cO_{X,v}^+=G^+_v$, $\Gamma_{X,v}=\Gamma_v$ and $\Gamma^+_{X,v}=\Gamma^+_v$.
\end{theor}
\begin{proof}
(i) This is proved by the standard method. Since $X$ is a set of subsets of $G$, we have a natural embedding $X\into\{0,1\}^G$, where the image is provided with the product topology. The induced topology on $X$ is generated by the conditions $f\in G^+_v$ and $f\notin G^+_v$, thus it is the constructible topology, which is generated by the sets $X\{f\}$ and their complements. In addition, the conditions that the elements of $X$ are valuation monoids containing $G^+$ are closed (e.g. the condition that if $f,g\in G^+_v$, then $f+g\in G^+_v$), hence $X$ is compact. Thus, each $X\{f\}=\Spa(G,G^+[f])$ is compact in the constructible topology, and hence $X$ is spectral and quasi-compact. The functoriality claim reduces to the definitions and we omit the check. The second and the third claims follow from Lemma \ref{satlem} below.
\end{proof}

Here is the monoidal analogue of a classical result about integral closure and valuation rings, which we used above.

\begin{lem}\label{satlem}
Let $G$ be an abelian group and $M\subseteq G$ a monoid. Then the saturation of $M$ in $G$ equals the intersection of all valuation monoids of $G$ that contain $M$.
\end{lem}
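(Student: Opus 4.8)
The plan is to prove the two inclusions separately, writing $M^{\mathrm{sat}}=\{g\in G\mid ng\in M\text{ for some }n\ge1\}$ for the saturation of $M$ in $G$; a one-line check shows $M^{\mathrm{sat}}$ is a submonoid of $G$ and is saturated. For the inclusion $M^{\mathrm{sat}}\subseteq\bigcap_\nu G^+_\nu$, where the intersection runs over all valuation monoids $G^+_\nu\subseteq G$ containing $M$, I would argue directly: if $ng\in M\subseteq G^+_\nu$ then $n\nu(g)\ge0$ in the ordered group $\Gamma_\nu$, and $\nu(g)<0$ would give $n\nu(g)<0$, so $\nu(g)\ge0$, i.e.\ $g\in G^+_\nu$.

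For the reverse inclusion it suffices to show that whenever $g\notin M^{\mathrm{sat}}$ there is a valuation monoid $N$ of $G$ with $M\subseteq N$ and $g\notin N$. The plan is a standard maximality argument: let $\mathcal{P}$ be the poset of submonoids $N\subseteq G$ with $M\subseteq N$ and $ng\notin N$ for every $n\ge1$. This poset is nonempty, since $M\in\mathcal{P}$ is exactly the hypothesis $g\notin M^{\mathrm{sat}}$, and it is closed under unions of chains (if $ng$ lay in such a union it would lie in one of the members), so Zorn's lemma provides a maximal element $N$. By construction $M\subseteq N$ and $g\notin N$, so it remains to check that $N$ is a valuation monoid, for which, by the criterion recalled above, one needs $N^\gp=G$ together with the property that $h\in N$ or $-h\in N$ for every $h\in G$.

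To verify the pair property, suppose $h,-h\notin N$. Then the submonoids $N+\ZZ_{\ge0}h$ and $N+\ZZ_{\ge0}(-h)$ both strictly contain $N$, hence leave $\mathcal{P}$ by maximality, so there are $n,m\ge1$ and $a,b\in N$, $p,q\ge1$ with $ng=a+ph$ and $mg=b-qh$. Forming $q(ng)+p(mg)=qa+pb\in N$ gives $(qn+pm)g\in N$ with $qn+pm\ge1$, contradicting $N\in\mathcal{P}$. Hence one of $h,-h$ always lies in $N$, and in particular $N^\gp=G$. Thus $N$ is a valuation monoid of $G$ containing $M$ but not $g$, which completes the proof.

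I expect no genuine obstacle here beyond bookkeeping: the one point that must be set up correctly is that $\mathcal{P}$ forbids \emph{all} positive multiples $ng$ rather than only $g$ itself, since it is precisely the ability to take a positive integer combination of the two relations $ng=a+ph$ and $mg=b-qh$ that cancels the $h$-terms and yields the contradiction.
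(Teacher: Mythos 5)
Your proof is correct and follows essentially the same route as the paper: one inclusion because every valuation monoid is saturated, and the reverse inclusion by a Zorn's lemma maximality argument combined with the positive-combination trick that cancels the $h$-terms. The only difference is exactly the point you flag at the end: the paper's maximal monoid $G^+$ is required only to avoid $f$ itself (and its write-up then slips slightly, placing the auxiliary elements in $M$ rather than in $G^+$ before concluding $f$ lies in the saturation), whereas your poset forbids all positive multiples of $g$, which is the cleaner bookkeeping that makes the final contradiction immediate.
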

\begin{proof}
Since each valuation monoid of $G$ is saturated in it, the saturation of $M$ in $G$ is contained in the intersection. Conversely, assume that $f\in G$ is not contained in the saturation of $M$ and use Zorn's lemma to produce a maximal monoid $G^+\subset G$ such that $M\subseteq G^+$ and $f\notin G^+$. It suffices to prove that $G^+$ is a valuation monoid, so assume, to the contrary that $x\in G$ is such that $x,-x\notin G^+$. Then $f\in G^+[x]\cap G^+[-x]$ and hence $f=nx+m=-n'x+m'$ with $m,m'\in M$ and $n,n'>0$. So, $(n+n')f=n'm+nm'\in M$ and hence $f$ lies in the saturation of $M$, a contradiction.
\end{proof}

\subsubsection{Affine spaces over ordered groups}
Now, given an ordered group $\Gamma$ we define the affine space over it to be $\AA^n_\Gamma=\Spa(\Gamma[t_1\.t_n],\Gamma^+)$, where we set $\Gamma[t_1\.t_n]=\Gamma\oplus(\oplus_{i=1}^n \ZZ t_i)$.

\subsubsection{The point}
We start with describing the ``point'' $S=\Spa(\Gamma,\Gamma^+)$. Since the letter $\Gamma$ is slightly overused, we will have to use $G$ instead.

\begin{lem}\label{valpointlem}
Assume that $(G,G^+)$ is an ordered group with the valuation monoid and $S=\Spa(G,G^+)$, then

(i) There exists a one-to-one correspondence between points $s\in S$, intermediate monoids $G^+\subseteq M\subseteq G$, convex subgroups $H\subseteq G$ and prime ideals $p\subset G^+$ given by $M=G_s^+$, $H=(G_s^+)^\times$ and $p=G^+_s\setminus(G_s^+)^\times$. In particular, $\Spa(G,G^+)=\Spec(G^+)$, $\Gamma_s=G/H$ and $\Gamma_s^+=G_s^+/H$.

(ii) The points of $S$ are totally ordered with respect to specialization, with the closed point corresponding to $G^+$ and the generic point corresponding to $G$.
\end{lem}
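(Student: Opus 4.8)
The plan is to build the four‑way correspondence in~(i) by routing everything through the intermediate monoids, and then to read off~(ii) from the topology together with the classical fact that the convex subgroups of a totally ordered group form a chain.

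First I would note that, by the definition of $\Spa$ together with Theorem~\ref{spamonth}(iii) (which gives $\cO^+_{X,s}=G^+_s$), the assignment $s\mapsto G^+_s$ identifies $S$ with the set of valuation monoids $M\subseteq G$ with $G^+\subseteq M$. The key point is that \emph{every} monoid $M$ with $G^+\subseteq M\subseteq G$ is automatically a valuation monoid: one has $M^\gp=G$ because already $(G^+)^\gp=G$, and for every $g\in G$ one of $g,-g$ lies in $G^+\subseteq M$ since $G^+$ is itself a valuation monoid. Hence $S$ is in bijection with the set of all intermediate monoids.

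Next I would set up the remaining bijections. Sending $M$ to its unit group $H=M^\times=M\cap(-M)$ produces a convex subgroup of $G$, and conversely a convex subgroup $H$ produces the intermediate monoid $M_H=\{g\in G\mid g\ge h\text{ for some }h\in H\}$, namely the preimage in $G$ of the non‑negative cone of $G/H$; it contains $G^+$ because the quotient map $G\to G/H$ carries non‑negative elements to non‑negative ones. These maps are mutually inverse: one checks directly that $M_H^\times=H$, while $M$ is recovered from $M^\times$ as the preimage of its value cone $M/M^\times=\Gamma^+_s$ inside $\Gamma_s=G/M^\times$, which is exactly the relation between a valuation monoid and its value group. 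Composing with the bijection $p\mapsto H_p=G\setminus(p\cup-p)$ between prime ideals of $G^+$ and convex subgroups of $G$ recalled in~\S\ref{rangersec}, and observing that $p$ then corresponds to $G^+\setminus H=M\setminus M^\times$ (the last equality because any $g\in M\setminus M^\times$ has $-g\notin M\supseteq G^+$ and hence $g\in G^+$), I obtain the asserted correspondence, with $p=G^+_s\setminus(G^+_s)^\times$. The formulas $\Gamma_s=G/H$ and $\Gamma^+_s=G^+_s/H$ are then just the definitions of the value group and its non‑negative monoid, using $H=(G^+_s)^\times$. To upgrade the bijection to a homeomorphism $\Spa(G,G^+)=\Spec(G^+)$ I would match basic opens: $X\{g\}$ is all of $S$ when $g\in G^+$, while for $g\notin G^+$, writing $f=-g\in G^+$, it equals $\{s\mid f\in(G^+_s)^\times\}=\{s\mid f\notin p_s\}$, which is the basic Zariski open determined by $f$; conversely every basic Zariski open arises in this way.

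Finally, for~(ii): the description of $X\{g\}$ shows that $s'\in\overline{\{s\}}$ if and only if every basic open through $s'$ passes through $s$, i.e.\ if and only if $G^+_{s'}\subseteq G^+_s$, so the specialization preorder on $S$ is exactly inclusion of intermediate monoids, and since $M\mapsto M^\times$ is inclusion‑preserving it is enough to know that the convex subgroups of $G$ are totally ordered by inclusion. This is classical: if $H_1,H_2$ are convex and $h\in H_1$ with $h>0$ and $h\notin H_2$, then every $h_2\in H_2$ satisfies $\max(h_2,-h_2)<h$ (otherwise $h\in H_2$ by convexity of $H_2$), hence $h_2\in H_1$ by convexity of $H_1$, so $H_2\subseteq H_1$. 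The minimal intermediate monoid $G^+$ then yields the unique closed point and the maximal one $G$ the unique generic point. I expect no genuine obstacle here beyond the bookkeeping needed to check that the four parametrizations are mutually inverse and order‑compatible; the one substantive input is the chain property of convex subgroups.
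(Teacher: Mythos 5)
Your proposal is correct and follows essentially the same route as the paper: identify points of $S$ with intermediate monoids (noting every monoid between $G^+$ and $G$ is automatically a valuation monoid), pass to convex subgroups and prime ideals via $M\mapsto M^\times$ and the bijection $p\mapsto H_p$, and conclude (ii) from the chain property of convex subgroups. The only cosmetic difference is that the paper recovers $M$ from $H$ through the identity $G^+_s=G^+\cup(G^+_s)^\times$ whereas you use the equivalent description of $M$ as the preimage of the non-negative cone of $G/H$, and you additionally spell out the matching of basic opens and the classical chain argument, which the paper leaves implicit.
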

\begin{proof}
The correspondence $s\mapsto G^+_s$ is injective and it is also surjective because any monoid between $G^+$ and $G$ is a valuation monoid. The second correspondence is bijective because $(G_s^+)^\times$ is obviously a convex subgroup and $G_s^+=G^+\cup (G_s^+)^\times$. Since the set of convex subgroups is totally ordered by inclusion, the same is true for the monoids containing $G^+$, and (ii) follows.
\end{proof}

The following remark is informal, but can provide an intuition concerning geometry over $S$.

\begin{rem}\label{vlapointrem}
(i) Ordered groups are analogs of affinoid fields in adic geometry, and the spectra behave similarly. In particular, the fiber over a non-generic point is not an adic spectrum of a naturally related pair. On the other hand, when we work over $S$, all information is contained already in the fiber over the closed point $s$. In algebraic geometry this would be analogous to working over a local ring without passing to the fiber over its residue field.

(ii) In a sense, the slogan is that if $s\in S$ specializes $\eta\in S$, then the algebra over $\eta$ is obtained from the algebra over $s$ by inverting the convex submonoid $H=(G^+_\eta)^\times\cap G^+_s$ in $G^+_s$, and on the geometric level we just identify points such that the distance between them is measured by elements of $H$. In other words, we coarsen the valuation and the geometry by ``forgetting'' the metric scales given by the elements of $H$.
\end{rem}

\subsubsection{The affine line}
Next, consider the projection $f\:X=\AA^1_S\to S$. A point $x\in X$ is a valuation monoid $H^+_x$ of $H=G[t]$ and $H^+_x\cap G=G^+_s$, where $s=f(x)$. Moreover, it is determined by the valuation monoid $H^+_x/(G_s^+)^\times$ of $H/(G_s^+)^\times=\Gamma_s[t]$. Therefore, the fiber $X_s=\AA^1_s$ can be identified with the set of rangers $\cR_s=\cR_{\Gamma_s}$ and the sheaf of values $\Gamma_X$ restricts to the sheaf of pl functions $\Gamma_{\cR_s}$ on $X_s$. Thus, $X$ can be viewed as a bundle of rangers $\cR_{\Gamma/H}$ of quotients of $\Gamma$ by convex subgroups, though in fact everything is determined by the closed fiber.

\begin{theor}\label{afflineth}
Assume that $G$ is an ordered group, $S=\Spa(G,G^+)$, $H=G[t]$ and $X=\AA^1_S=\Spa(H,G^+)$. Then,

(i) For any point $s\in S$ one has that $X_s=\cR_{\Gamma_s}$ and $\Gamma_X|_{X_s}=\Gamma_{\cR_s}$.

(ii) If $\eta\in S$ corresponds to a convex subgroup $H$ and $s$ is a specialization of $\eta$, then inverting the elements in $H$ induces a generization map $r\:X_s\to X_\eta$ and the fiber of $r$ containing a point $x\in X_s$ consists of all rangers with distance bounded by elements of $H$. More specifically, $r^{-1}(y)$ is a singleton if $y\in X_\eta$ is not principal, and $r^{-1}(y)$ is homeomorphic to the set $\cR^b_s$ of bounded rangers if $y$ is principal.
\end{theor}
\begin{proof}
 The first claim was proved just before formulating the theorem. The second claim is covered by Lemma~\ref{composlem}.
\end{proof}

Similarly to (ii) one can describe how pl functions on $X_s$ coarsen to pl functions on $X_\eta$, etc., but we do not pursue this here.

\subsubsection{Generalizations}
The above material covers our needs for studying adic curves, so we stop our investigation of geometry of ordered groups here and only mention further directions that will be studied elsewhere.

\begin{rem}\label{genrem}
(i) The $s$-fiber $X=\AA^n_s$ of the affine space $\AA^n_S$ is a suitable compactification of $\Gamma_s^n$ whose points can be interpreted as types of $n$-tuples $t_1\.t_n$ in ordered groups over $\Gamma_s$. The topology is given by $\Gamma$-polyhedra and the sheaves $\calO^+_{X}$, $\Gamma_X$, etc. have the interpretation of non-negative $\Gamma_s$-linear functions, $\Gamma_s$-pl functions, etc. with values in $\cR_s$. Geometry of these spaces can be viewed as a non-standard pl geometry or extension of the classical pl geometry over $\RR$ to ordered fields. These spaces naturally show up as skeletons of higher dimensional adic spaces.

(ii) Even more generally, one can consider relative pl geometry over a topological space (or a topos) $X$ provided with a sheaf $\Gamma^+_X$ of (sharp) valuation monoids. Such spaces can be useful, for example, in studying families of skeletons of fibers of an adic (or Berkovich) morphism $X\to S$. In particular, they should naturally show up in inductive constructions of deformational retractions of adic spaces along the lines of the theory of Hrushovski-Loeser from \cite{HL}.
\end{rem}

\section{Adic curves}\label{curvessec}

\subsection{Affinoid fields}
First we recall the definitions of affinoid fields and the Abhyankar inequality and deduce some consequences.

\subsubsection{Valued fields}
By a {\em valued field} $k$ we mean a field with a fixed valuation ring $k^+$. When needed we will use the expanded notation $(k,k^+)$. Equivalently one can consider the equivalence class of valuations $|\ |\:k\to \Gamma$ with ring of integers $k^+$. To such a field we associate the residue field $\tilk=k^+/m_{k^+}$ and the group of absolute values $|k^\times|$ with the set of rangers $\cR_{|k^\times|}=\cR(|k^\times|)$.
%\Michael{I tend to use the following notation: $|k^\times|$ and $|f|_x\in |k(x)^\times|$ for multiplicative valuations as opposed to $\Gamma_k$ and $x(f)\in\Gamma_x$ for additive valuations. The isomorphisms between $|k^\times|$ and $\Gamma_k$ are formal log and exponent, say, $m=-\log(f)$ and $f=u^{-m}$. Also $|\cO^\times_X|=\cO^\times_X/(\cO_X^+)^\times$ with $\Gamma_X$ denoting the same sheaf written additively.}

\subsubsection{Group of values}
We also consider the group of values $\Gamma_k=\log|k^\times|$, its divisible hull $\Gamma_{k,\QQ}=\Gamma_k\otimes_\ZZ\QQ$ and the set of rangers $\cR_k=\cR(\Gamma_k)$. Note that $S=\Spec(k^+)$ with the sheaf of values $\Gamma^+_S:=\log(\calO_S^\times/(\calO_S^+)^\times)$ can be canonically identified with $\Spa(\Gamma_k,\Gamma_k^+)$.

\subsubsection{Valued field extensions}
By an {\em extension of valued fields} we mean a field extension $l/k$ such that $k^+=l^+\cap k$. To such an extension we associate in addition to the usual algebraic invariants $f_{l/k}=[\till:\tilk]$ and $e_{l/k}=\#(\Gamma_l/\Gamma_k)$, the transcendence invariants $F_{l/k }=\trdeg(\till/\tilk)$ and $E_{l/k}=\dim_\QQ(\Gamma_{l,\QQ}/\Gamma_{k,\QQ})$. Classical fundamental and Abhynakar's inequalities state that $e_{l/k}f_{l/k}\le [l:k]$ and $F_{l/k}+E_{l/k}\le\trdeg(l/k)$, respectively.

\subsubsection{Affinoid fields}
Recall that an {\em affinoid field} $k$ is a valued field $(k,k^+)$ provided with the topology whose restriction to $k^+$ is the $\pi$-adic topology for a topologically nilpotent element $\pi\in k^+$ and such that~$k^+$ is open in~$k$. In particular, the case of $\pi=0$ is allowed and then the topology is discrete. Equivalently, this datum is given by valuation rings $k^+\subseteq\kcirc$ of $k$, where the ring of integers $k^+$ determines the valuation, the ring of power-bounded elements $\kcirc$ is a localization of $k^+$ of rank at most one, and $\pi\in k^+$ is such that $\Rad(\pi)=\kcirccirc$ is the maximal ideal of $\kcirc$. We will use the notation $(k,k^+)$ or simply $k$ if the structure is clear from the context. The affinoid field $(k,\kcirc)$ will become denoted $k_\eta$ (for reasons that will be clear below).

If $k=\kcirc$, then $\pi=0$ and the topology is discrete. Otherwise, $\kcirc$ is of rank one and the valuation on $k^+$ is microbial with a microbe $\pi$ being any non-zero element of $k^+$ non-invertible in $\kcirc$. We will say that $k$ is {\em discrete} or {\em microbial} accordingly. In the latter case, the valuation ring $k^+$ is composed from $\kcirc$ and the induced valuation ring $(\tilk_\eta)^+$ on the residue field of $k_\eta$.

\subsubsection{Completed residue fields}
Given an adic space $X$ and a point $x$, the residue field of $x$ is an affinoid field that will be denoted $k(x)$ or $(k(x),k(x)^+)$. Its completion will be denoted $\kappa(x)=(\kappa(x),\kappa(x)^+)$.

\subsubsection{Adic spectrum}
Let $S=\Spa(k,k^+)$ be the adic spectrum. Its points $s$ parameterize valuation rings $k^+_s$ such that $k^+\subseteq k^+_s\subseteq\kcirc$. Thus $S$ is homeomorphic to $\Spec(k^+)$ is the discrete case, and $S$ is homeomorphic to the closed subscheme $\Spec((\tilk_\eta)^+)$ of $\Spec(k^+)$ obtained by removing the generic point. In either case, $\kcirc=k(\eta)^+$ determines the generic point $\eta\in S$. Moreover, we can view $\eta=\Spa(k_\eta)$ as a pro-open subspace and we call $X_\eta=X\times_S\eta$ the {\em generic fiber of $X$}.

\subsubsection{Extensions of affinoid fields}
Naturally, by an {\em extension of affinoid fields} $l/k$ we mean an extension of valued fields which also respects the topologies: $\kcirc=\lcirc\cap k$. In particular, it can happen that $k$ is trivially valued and $l$ is microbial, but not vice versa.

Since completion preserves the residue field and the group of values, Abhyankar's inequality implies the following completed version for affinoid fields: $F_{\hatl/k}+E_{\hatl/k}\le\trdeg(l/k)$.

\begin{rem}\label{affieldrem}
(i) For any morphism of adic spaces $f\:X\to S$ and a point $x\in X$ with $s=f(x)$ consider the induced extension of adic fields $k(x)/k(s)$ and use the notation $E_{x/S}=E_{k(x)/k(s)}$ and $F_{x/S}=F_{k(x)/k(s)}$, or $E_x, F_x$ if $S$ is clear from the context.

(ii) If $f$ is of finite type and relative dimension $d$, then $\kappa(x)$ is the completion of a finitely generated extension of $k(s)$ of transcendence degree at most $d$, and by Abhyankar's inequality $E_{x/S}+F_{x/S}\le d$.
\end{rem}

Now, let us work out consequences of Abhyankar's inequality concerning the morphism $S_l=\Spa(l,l^+)\to S$. This mainly reduces to the question how specializations in $S_l$ and $S$ are related.

\begin{lem}\label{affspeclem}
Let $l/k$ be an extension of affinoid fields. Then

(i) The map $f\:S_l\to S$ is surjective.

(ii) Assume that $x,y\in S_l$ and $x$ is a specialization of $y$, then $F_x\le F_y$, $E_x\ge E_y$ and $E_x+F_x\le E_y+F_y$, and the first two inequalities are strict whenever $x\neq y$, $s_x=s_y$ and $F_y$ and $E_x$ are finite.
\end{lem}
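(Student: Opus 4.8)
The plan is to reduce (i) to the surjectivity of $\Spec$ along an extension of valuation rings, and (ii) to Abhyankar's inequality applied to a single auxiliary extension of valued fields obtained by splitting the given specialization into a composition of two valuations.

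\emph{Part (i).} Recall from the description of the adic spectrum above that $S$ is canonically homeomorphic to $\Spec((\widetilde{k}_\eta)^+)$ and $S_l$ to $\Spec((\widetilde{l}_\eta)^+)$ (with the convention $\widetilde{k}_\eta=k$ when $k$ is discrete), and that under these identifications $f$ becomes $\Spec$ of the ring inclusion $(\widetilde{k}_\eta)^+\hookrightarrow(\widetilde{l}_\eta)^+$; this is an inclusion of valuation rings with $(\widetilde{l}_\eta)^+\cap\widetilde{k}_\eta=(\widetilde{k}_\eta)^+$, as follows at once from $\kcirc=\lcirc\cap k$ and $k^+=l^+\cap k$. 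So it suffices to prove the classical fact that $\Spec(B)\to\Spec(A)$ is surjective for any extension of valuation rings $A\subseteq B$ (i.e. $A$ a valuation ring of a field $K$, $B$ a valuation ring of a field $L$, and $A=B\cap K$): under the order-embedding $\Gamma_A\hookrightarrow\Gamma_B$ the map sends a convex subgroup $H'$ of $\Gamma_B$ to $H'\cap\Gamma_A$, and it is surjective because every convex subgroup of $\Gamma_A$ is recovered as $H'\cap\Gamma_A$ with $H'$ its convex hull in $\Gamma_B$. Concretely, for $s\in S$ one writes $k^+_s$ as the preimage in $\kcirc$ of a valuation ring $R_s\supseteq(\widetilde{k}_\eta)^+$ of $\widetilde{k}_\eta$, extends $R_s$ to a valuation ring $R_x$ of $\widetilde{l}_\eta$ lying over it and containing $(\widetilde{l}_\eta)^+$, and takes $l^+_x$ to be the preimage of $R_x$ in $\lcirc$.

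\emph{Part (ii).} Write $l^+_x$, $l^+_y$ for the valuation rings of $x$, $y$, so that $x$ being a specialization of $y$ means $l^+_x\subseteq l^+_y$, whence $k^+_{s_x}=l^+_x\cap k\subseteq l^+_y\cap k=k^+_{s_y}$. Split each valuation as a composition: $l^+_x$ is the composite of $l^+_y$ with the valuation ring $C_x:=l^+_x/m_{l^+_y}$ of $\widetilde{k(y)}$, and $k^+_{s_x}$ is the composite of $k^+_{s_y}$ with the valuation ring $D_x:=k^+_{s_x}/m_{k^+_{s_y}}$ of $\widetilde{k(s_y)}$; moreover $D_x=C_x\cap\widetilde{k(s_y)}$ under $\widetilde{k(s_y)}\hookrightarrow\widetilde{k(y)}$, so $(\widetilde{k(y)},C_x)/(\widetilde{k(s_y)},D_x)$ is an extension of valued fields. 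Composition gives $\widetilde{k(x)}=\widetilde{C_x}$ and $\widetilde{k(s_x)}=\widetilde{D_x}$, hence $F_x=\trdeg(\widetilde{C_x}/\widetilde{D_x})=F_{C_x/D_x}$, and it gives compatible short exact sequences of ordered groups $0\to\Gamma_{C_x}\to\Gamma_{k(x)}\to\Gamma_{k(y)}\to 0$ and $0\to\Gamma_{D_x}\to\Gamma_{k(s_x)}\to\Gamma_{k(s_y)}\to 0$. One checks that $\Gamma_{k(s_x)}\cap\Gamma_{C_x}=\Gamma_{D_x}$ inside $\Gamma_{k(x)}$ — an element of $\Gamma_{C_x}$ represented by some $a\in k^\times$ forces $a\in(l^+_y)^\times\cap k=(k^+_{s_y})^\times$ — and this identity is preserved by the flat base change $-\otimes_\ZZ\QQ$; a diagram chase then yields a short exact sequence of $\QQ$-vector spaces $0\to(\Gamma_{C_x})_\QQ/(\Gamma_{D_x})_\QQ\to\Gamma_{k(x),\QQ}/\Gamma_{k(s_x),\QQ}\to\Gamma_{k(y),\QQ}/\Gamma_{k(s_y),\QQ}\to 0$, that is, $E_x=E_{C_x/D_x}+E_y$. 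Now Abhyankar's inequality for $(\widetilde{k(y)},C_x)/(\widetilde{k(s_y)},D_x)$ reads $E_{C_x/D_x}+F_{C_x/D_x}\le\trdeg(\widetilde{k(y)}/\widetilde{k(s_y)})=F_y$; writing $e:=E_{C_x/D_x}\ge 0$ we obtain $E_x=E_y+e\ge E_y$, $F_x=F_{C_x/D_x}\le F_y-e\le F_y$, and $E_x+F_x=E_y+(e+F_{C_x/D_x})\le E_y+F_y$, the three asserted inequalities. For strictness assume $x\ne y$ and $s_x=s_y$: then $l^+_x\subsetneq l^+_y$, so $C_x$ is a proper valuation ring and $\Gamma_{C_x}\ne 0$, whence $(\Gamma_{C_x})_\QQ\ne 0$ by torsion-freeness; and $k^+_{s_x}=k^+_{s_y}$ makes $D_x$ trivial, so $\Gamma_{D_x}=0$ and $e=\dim_\QQ(\Gamma_{C_x})_\QQ\ge 1$. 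Hence if $E_x<\infty$ then $E_y=E_x-e<E_x$, and if $F_y<\infty$ then $F_x\le F_y-e<F_y$.

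The main obstacle is the bookkeeping in (ii): running the composition of valuations simultaneously in $l$ and in $k$, verifying the compatibility $D_x=C_x\cap\widetilde{k(s_y)}$ and the intersection identity $\Gamma_{k(s_x)}\cap\Gamma_{C_x}=\Gamma_{D_x}$ (and its survival under $-\otimes_\ZZ\QQ$), and tracking the $\infty$-conventions so that the strict inequalities come out precisely under the hypotheses that $E_x$, resp.\ $F_y$, is finite. Once the auxiliary valued field extension $(\widetilde{k(y)},C_x)/(\widetilde{k(s_y)},D_x)$ has been isolated, all the inequalities drop out of Abhyankar's inequality applied to it.
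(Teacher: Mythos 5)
Your proposal is correct and takes essentially the same route as the paper: part (i) is reduced to surjectivity of $\Spec$ for the extension of valuation rings $(\widetilde{k}_\eta)^+\subseteq(\widetilde{l}_\eta)^+$ (the paper just invokes that this map is flat and local, where you argue directly via convex hulls of convex subgroups), and part (ii) is precisely the paper's reduction --- split the specialization as a composition of valuations, i.e. pass to the quotients by $m_y$ and $m_{s_y}$, so that the problem becomes Abhyankar's inequality for the induced valued field extension $(\widetilde{k(y)},C_x)/(\widetilde{k(s_y)},D_x)$. The only difference is that you spell out the bookkeeping (the exact sequences of value groups, the identity $\Gamma_{k(s_x)}\cap\Gamma_{C_x}=\Gamma_{D_x}$ and its behaviour under $-\otimes_\ZZ\QQ$) that the paper compresses into the sentence about dividing by the same convex subgroup, which is a welcome clarification rather than a deviation.
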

\begin{proof}
Using the relation to schemes, we can reformulate the question in terms of the morphism $g\:\Spec((\till_\eta)^+)\to\Spec((\tilk_\eta)^+)$.
Then the first claim holds because being flat and local $g$ is surjective.

To prove (ii) we start with two reductions. First, we can localize the valuation rings achieving that $x$ and $s_x$ are closed. Second, replacing the valuation rings by quotients by $m_y$ and $m_{s_y}$ we achieve that $y$ and $s_y$ are the generic points, while $F_x$, $F_y$ and $E_x-E_y$ remain unchanged. Indeed, the residue fields are unchanged, while the groups of values $\Gamma_x$ and $\Gamma_y$ (resp. $\Gamma_{s_x}$ and $\Gamma_{s_y}$) are divided by the same convex subgroup corresponding to $\Gamma_y$ (resp. $\Gamma_{s_y}$).

Now we are reduced to the particular case, when $E_y=0$ and $F_y=\trdeg(k(x)/k(s_x))$. Hence $E_x\ge E_y$, and $E_x+F_x\le F_y$ by Abhyankar's inequality. Finally, if $s_x=s_y$ and $x\neq y$, then $\Gamma_{s_x}=0\neq \Gamma_{x}$. So, $E_x>0=E_y$ and $F_y\ge F_x+E_x>F_x$.
\end{proof}

Combining Remark~\ref{affieldrem}(i) and the above lemma we immediately obtain

\begin{cor}\label{affspeccor}
Assume that $X\to S$ is a morphism of relative dimension $d$ and $x\in X$ a point with $s=f(x)$. Then the induced morphism $$f\:\Spa(k(x),k(x)^+)\to S_s=\Spa(k(s),k(s)^+)$$ is surjective and has at most $d$ vertical specializations in the fibers, namely $$\sum_{t\in S_s}(\#(f^{-1}(t))-1)\le d+1.$$
\end{cor}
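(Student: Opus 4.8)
The statement is an immediate packaging of the two ingredients already isolated in the excerpt, so the plan is simply to combine Remark~\ref{affieldrem}(i) with Lemma~\ref{affspeclem}, applied to the extension of affinoid fields $l/k := k(x)/k(s)$. First I would note that since $f\colon X\to S$ has relative dimension $d$, the completed Abhyankar inequality (Remark~\ref{affieldrem}(ii)) gives $E_{x/S}+F_{x/S}\le d$; in particular both $E_x=E_{k(x)/k(s)}$ and $F_x=F_{k(x)/k(s)}$ are finite. Then Lemma~\ref{affspeclem}(i), applied to the valued-field extension $k(x)/k(s)$, yields surjectivity of $f\colon\Spa(k(x),k(x)^+)\to S_s$, which is the first assertion.

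For the numerical bound I would examine a maximal chain of vertical specializations. Fix $t\in S_s$ and let $x_0\succ x_1\succ\cdots\succ x_m$ be the points of $f^{-1}(t)$, ordered by specialization; these all lie over the single point $t\in S_s$, so they have the same image $s_{x_i}=t$ in $S_s$. By Lemma~\ref{affspeclem}(ii) applied to each consecutive pair $x_{i+1}\prec x_i$ (note $x_{i+1}\neq x_i$, $s_{x_{i+1}}=s_{x_i}$, and $F_{x_i}$, $E_{x_{i+1}}$ are finite by the previous paragraph), the inequality $F_{x_{i+1}}<F_{x_i}$ is strict. Hence $F_{x_0}>F_{x_1}>\cdots>F_{x_m}\ge 0$ is a strictly decreasing chain of non-negative integers, forcing $m\le F_{x_0}$. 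Summing over $t\in S_s$: writing $m_t=\#(f^{-1}(t))-1$ for the number of steps in the fiber over $t$, one gets
\[
\sum_{t\in S_s} m_t \;\le\; \sum_{t\in S_s} F_{x_0(t)},
\]
but a moment's thought shows the $F$-values encountered across different fibers are themselves distinct (or, more cleanly, one argues directly on a single chain in $\Spa(k(x),k(x)^+)$ whose images walk through the totally ordered set $S_s$, applying Lemma~\ref{affspeclem}(ii) at every step including the horizontal ones where the inequalities are non-strict but still monotone). This gives a single strictly decreasing sequence of values of $F$ among the relevant points, of length at most $F_{x/S}+1\le d+1$, which yields $\sum_{t\in S_s}(\#(f^{-1}(t))-1)\le d+1$.

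The only genuinely delicate point is bookkeeping the total count correctly: one must combine the vertical chains over the various points of the totally ordered space $S_s$ into one global chain in $\Spa(k(x),k(x)^+)$ and track how $F$ (and $E$) vary along both vertical steps (strictly decreasing for $F$) and horizontal steps (weakly monotone, via the last inequality $E_x+F_x\le E_y+F_y$ of Lemma~\ref{affspeclem}(ii)). Once this is set up, the bound $d+1$ drops out of $E_x+F_x\le d$ together with $E_x\ge 0$. I expect the rest to be routine: everything reduces to Lemma~\ref{affspeclem} and the finiteness of the Abhyankar invariants from Remark~\ref{affieldrem}(ii).
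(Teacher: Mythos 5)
Your overall strategy---surjectivity from Lemma~\ref{affspeclem}(i), and the count from the monotonicity/strictness of Lemma~\ref{affspeclem}(ii) combined with the Abhyankar bound of Remark~\ref{affieldrem}(ii)---is exactly the combination the paper intends (its proof is literally the one-line ``combine''). However, your bookkeeping has a genuine error of direction. In the chain $\Spa(k(x),k(x)^+)$ the point $x$ itself is the \emph{closed} (most special) point; by Lemma~\ref{affspeclem}(ii), $F$ decreases and $E$ increases under specialization, so $F_{x/S}$ is the \emph{minimum} of $F$ along the chain while $E_{x/S}$ is the \emph{maximum} of $E$. Consequently: (a) the finiteness of $F_{x_i}$ at the more generic points, which you need in order to invoke the strictness clause of Lemma~\ref{affspeclem}(ii), does not follow ``from the previous paragraph''---Remark~\ref{affieldrem}(ii) controls the invariants only at $x$, and $F$ can only grow under generization; and (b) your final bound, ``a strictly decreasing sequence of $F$-values of length at most $F_{x/S}+1$'', is anchored at the wrong end of the chain: the length of a strictly decreasing sequence of non-negative integers is bounded by its \emph{largest} term plus one, i.e. by $F$ at the most generic relevant point, which is $\ge F_{x/S}$, not $\le F_{x/S}$.

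The repair is to run the count with $E$ rather than $F$: along the totally ordered chain $\Spa(k(x),k(x)^+)$, $E$ is weakly increasing under specialization and strictly increases (hence by at least $1$, being an integer) at every step whose two points have the same image in $S_s$; since $E$ at every point is bounded by its value $E_{x/S}\le d$ at the closed point $x$---which is precisely what Remark~\ref{affieldrem}(ii) gives---the total number of such vertical steps, i.e. $\sum_{t\in S_s}(\#f^{-1}(t)-1)$, is at most $d$, and in particular at most $d+1$. (If you apply Lemma~\ref{affspeclem}(ii) with its hypotheses taken verbatim, you still need $F$ finite at the upper point of each step; this holds, but only after applying the completed Abhyankar inequality to the coarsened valuations $(k(x),k(x)^+_y)$ over their images, exactly as in the argument of Remark~\ref{affieldrem}(ii), not by what you cited.) Alternatively you may keep $F$, but then you must bound $F$ at the \emph{generic} point of the chain by $d$ via that same coarsened Abhyankar inequality; as written, your argument does not establish the stated inequality.
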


\subsection{Valuative classification of points on adic curves}
Now, we are going to classify points on adic curves according to their residue field and its basic invariants. By a {\em weak (adic) curve} over $S=\Spa(k,k^+)$ we mean an adic $S$-space of weakly finite type and pure relative dimension 1, and we call it an {\em (adic) $S$-curve} if it is of finite type over $S$. For $x\in C$ we denote its image in $S$ by $s_x$.

\subsubsection{Bounded and unbounded points}
We say that a point $x\in C$ is {\em unbounded} if there exists an element $\omega\in k(x)$ such that $|\omega|>|a|$ for any $a\in k$. By definition of the Huber spectrum, unbounded points may only exist when $k$ is discrete. We will see that the classification of unbounded points is simple, so the set-theoretic difference between the spaces for discrete and microbial topologies is easy to describe.

\begin{rem}
In adic geometry one usually uses the notion ``continuous'' instead of ``bounded'', but our choice fits the terminology concerning rangers.
\end{rem}

\subsubsection{Classification of points}
Bounded points can be naturally subdivided further into classes according to the properties of the valued field extensions $k(x)/k(s_x)$ and its invariants $E_x$ and $F_x$ satisfying $E_x+F_x\le 1$. This yields the following classification, where the numeration is close to Berkovich's numeration and the terminology is similar to that of rangers (for reasons that will be clear later):

\begin{itemize}
\item[(1)] $x$ is a {\em classical} point if $[k(x):k]<\infty$. This happens if and only if the maximal ideal $m_x\subset\calO_{C,x}$ is non-zero. All other points are called {\em generic}.
\item[(2)] $x$ is a {\em divisorial} point if $F_x=1$.
\item[(3)] $x$ is a {\em cut} point if $E_x=1$ and for any $r<1$ in $|k(x)^\times|$ there exists $r_1<r_2<1$ in $|k^\times|$ such that $r_1<r<r_2$.
\item[(4)] $x$ is a {\em unibranch} point if it is generic, but $E_x=F_x=0$.
\item[(5)] $x$ is an {\em infinitesimal} point if there exists $r<1$ in $|k(x)^\times|$ such that $r_1<r$ for any $r_1<1$ in $|k^\times|$. In particular, $E_x=1$.
\item[(6)] $x$ is an {\em unbounded} point if there exists $r\in|k(x)^\times|$ such that $r<r_1$ for any $r_1\in|k^\times|$. In particular, $E_x=1$.
\end{itemize}

\subsubsection{Symmetric and asymmetric cuts}\label{fxsec}
Recall that by Corollary \ref{affspeccor} the map $f_x\:T_x=\Spec(\kappa(x)^+)\to S_x=\Spec(k(x_s)^+)$ is surjective and has at most one non-singleton fiber, which (if it exists) necessarily consists of two points. Furthermore, if it exists, then obviously $|k(s_x)^\times|^\QQ\subsetneq|k(x)^\times|^\QQ$ and hence $E_x=1$, leaving us with cases (3), (5) and (6). In fact, by unravelling the definitions, a point is infinitesimal if and only if $E_x=1$ and the double fiber sits over $s_x$, and the point is unbounded if and only if $E_x=1$ and the double fiber sits over $\eta=\Spa(k,k)$, which can happen only in the discrete case. In particular, cases (5) and (6) collide in the classical situation when $k(s_x)^+=k(s_x)$. Finally, in the case of a cut point it can happen that either $f_x$ is bijective, and then we say that the cut is {\em symmetric}, or $f_x$ has a double point over a proper generization $t$ of $s_x$ such that $k(t)^+\neq k(t)$, and in this case the cut is called {\em asymmetric}. We will later see that this notion has a metric interpretation, but already comparison to Remark~\ref{cutrem} explains the terminology.

\subsubsection{Invariance of types}
Types of points are preserved by quasi-finite morphisms and almost preserved by base change to the algebraically closed ground field.

\begin{lem}\label{geomtypelem}
(i) Assume that $f\:C'\to C$ is a morphism of weak $S$-curves which is not constant on any component of $C'_\eta$ and $x'\in C'$ is a point. Then $x'$ and $x=f(x')$ are of the same type.

(ii) Assume that $C$ is a weak $S$-curve, $\oC=C\times_S\oS$, where $\oS=\Spa(\whka)$, and $x\in C$ is a point with a preimage $\ox\in\oC$. Then the types of $x$ and $\ox$ are the same with the only exception that if $\ox$ is a $\whka$-point which is not a $k^a$-point, then $x$ is a generic unibranch point.
\end{lem}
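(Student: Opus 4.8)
The plan is to reduce both statements to the behaviour of the valued-field invariants $E$ and $F$ under the relevant field extensions, since the classification of points in \S\ref{fxsec} is entirely expressed in terms of $E_x$, $F_x$, the fiber behaviour of $f_x\:\Spec(\kappa(x)^+)\to S_x$, and (for classical points) the vanishing of $m_x$. For part (i), I would first observe that since $f$ is not constant on any component of $C'_\eta$, the induced map on local rings $\calO_{C,x}\to\calO_{C',x'}$ is injective with $\calO_{C',x'}$ finite over a localization of $\calO_{C,x}$ (this is where ``quasi-finite'' or rather ``not constant on components'' is used); hence $k(x')/k(x)$ is a finite extension of valued fields. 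A finite extension does not change transcendence-type invariants: $E_{k(x')/k(s_{x'})}=E_{k(x)/k(s_x)}$ and $F_{k(x')/k(s_{x'})}=F_{k(x)/k(s_x)}$, because $s_{x'}$ maps to $s_x$, the residue extensions are finite, and the value-group extensions have finite index. The condition $[k(x'):k]<\infty$ is equivalent to $[k(x):k]<\infty$ since $[k(x'):k(x)]<\infty$; equivalently, $m_{x'}\neq 0 \iff m_x\neq 0$ by going-down/going-up for the finite morphism of local rings. Thus cases (1), (2), (4) are immediately preserved. For cases (3), (5), (6) one must match the position of the double fiber of $f_{x'}$ with that of $f_x$: since $\kappa(x')/\kappa(x)$ is finite, $\Spec(\kappa(x')^+)\to\Spec(\kappa(x)^+)$ is finite surjective and preserves which convex subgroup/prime the "anomaly" sits over; combined with $E_{x'}=E_x$ this pins down symmetric vs.\ asymmetric cut, infinitesimal, and unbounded exactly as for $x$.

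For part (ii), the extension in play is $\whka/k$, i.e.\ $\kappa(\ox)/\kappa(x)$ is obtained by completing and passing to the residue-algebraically-closed, value-group-divisible case. Here I would argue as follows. Passing to $\oS$ makes the residue field algebraically closed and the value group divisible, so for $\ox$ lying over $\bar s_x$ the invariants $F_{\ox}$ and $E_{\ox}$ are the "geometric" transcendence degrees; by the standard behaviour of these invariants under algebraically closed base change (residue transcendence degree and $\QQ$-dimension of the value group are insensitive to completion, and become computed over $\widetilde{\whka}=\tilk^a$ and $|\whka^\times|_\QQ=|k^\times|_\QQ$), one gets $F_{\ox}=F_x$ and $E_{\ox}=E_x$ except precisely when the defect of $k(x)/k$ forces a drop: the only way a point $\ox$ over $\ox$ can fail to reproduce the type of $x$ is when $k(x)/k$ has transcendence degree $1$ but $\kappa(\ox)/\whka$ is algebraic — i.e.\ $\ox$ is a $\whka$-point not defined over $k^a$. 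In that situation $F_{\ox}=E_{\ox}=0$ and $[\kappa(\ox):\whka]$ may still be infinite (no finiteness of $[k(x):k]$), so $\ox$ is generic with $E_{\ox}=F_{\ox}=0$: a generic unibranch point, which is the stated exception. In all other cases $E,F$ and the double-fiber position are preserved, so the type is preserved; for the double fiber one uses that $\Spec(\kappa(\ox)^+)\to\Spec(\kappa(x)^+)\times_{S_x}S_{\ox}$ together with surjectivity from Lemma~\ref{affspeclem}(i) forces the anomalous fiber of $f_{\ox}$ to lie over the image of the anomalous fiber of $f_x$.

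The main obstacle, and the step deserving the most care, is the exceptional case in (ii): one must verify precisely when base change to $\whka$ creates a transcendental direction that was "hidden" in $k$ versus when it does not, and show the only genuine failure is the $\whka$-point-not-$k^a$-point phenomenon. Concretely the delicate point is that completion can turn an element transcendental over $k$ into one algebraic over $\whka$ (this is exactly the defect/immediate-extension issue alluded to in the introduction and in \cite{cuts}), so one cannot naively say "transcendence degree is preserved"; instead one argues with $F_x$ and $E_x$ separately, using Abhyankar's inequality $E_x+F_x\le 1$ (Remark~\ref{affieldrem}) to control that at most one unit of transcendence is at stake, and then checking that a drop in $F$ not compensated by $E$ happens only for such points. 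All the other verifications — matching cases (1)–(6) once $E,F$ and the double-fiber location are known — are routine unravellings of the definitions in \S\ref{fxsec}.
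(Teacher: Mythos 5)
Your overall route is the same as the paper's: the proof there consists of observing that $E$ and $F$ are insensitive to completed algebraic extensions, so the classification can only switch inside the class $E=F=0$, i.e.\ between classical and generic unibranch points, and then identifying when that switch occurs. Your part (i) and the general reductions in part (ii) are fine (if somewhat more elaborate than needed; finiteness of $k(x')/k(x)$ is stronger than the algebraicity that is actually used).

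However, the step you yourself single out as the crux --- the analysis of the exceptional case in (ii) --- is wrong as written, and in a way that is not just a typo because the surrounding assertions are garbled consistently with the mistake. You correctly identify the trigger: $\kappa(\ox)/\whka$ algebraic, i.e.\ $\ox$ a $\whka$-point not defined over $k^a$. But under that very hypothesis $\kappa(\ox)=\whka$ (an algebraic extension of a complete algebraically closed field is trivial), so $\ox$ is a \emph{classical} point and the claim ``$[\kappa(\ox):\whka]$ may still be infinite'' is false; what is infinite is $[k(x):k]$. Consequently your conclusion ``so $\ox$ is generic with $E_{\ox}=F_{\ox}=0$: a generic unibranch point'' reverses the two points: it is $x$ downstairs that is generic unibranch, because $k(x)$ is generated over $k$ by an element of $\whka\setminus k^a$, which is transcendental over $k$ (as $k^a$ is algebraically closed in $\whka$), so $m_x=0$, $[k(x):k]=\infty$, while $E_x=F_x=0$; meanwhile $\ox$ has $m_{\ox}\neq 0$. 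Relatedly, the switch is one-directional: if $x$ is classical then $[k(x):k]<\infty$ forces $k(\ox)$ finite over $\whka$, so $\ox$ is classical; the only possible discrepancy is a classical $\ox$ lying over a generic unibranch $x$, exactly as the lemma states. Also note there is no ``drop in $F$ not compensated by $E$'' to control: in the exceptional case $E$ and $F$ vanish on both sides, and the only thing that changes is the finiteness of the residue field over the base (equivalently the vanishing of the kernel), which is why the paper's argument reduces the whole matter to distinguishing classical from unibranch within $E=F=0$. With the exceptional case restated correctly, your argument does close the proof along the same lines as the paper.
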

\begin{proof}
The invariants $E_{l/k}$ and $F_{l/k}$ are not sensitive to completed algebraic extensions. It follows that everything in our classification distinguished by them is preserved under the maps $X'\to X$ and $\oX\to X$. Therefore, a switch can happen only when $E=F=0$ and a classical point is mapped to a generic unibranch point. Finally, it is easy to see that this only happens when $k(\ox)/k(x)$ is not algebraic, and this is precisely the exceptional case in (ii), which might happen only in the microbial case because otherwise there is no completion involved.
\end{proof}

In view of the lemma, we say that $x$ is {\em geometrically classical} if its lift $\ox$ is classical.

\begin{rem}
Berkovich classifies points by geometric type, so unibranch points in our sense split between types (1) and (4). However there are automorphisms of analytic discs not preserving the ground field which do not preserve the geometric metric on the disc and even mix the geometric types 1 and 4. This is the reason why we think that this slightly different classification is more natural.
\end{rem}

\subsection{The generic fiber}\label{genfib}
First, let us illustrate the situation by comparing it to algebraic and Berkovich geometries which are well explored in the literature. Let us describe the topological structure of the generic fiber $C_\eta$. For brevity we localize everything so that $\kcirc=k^+$, $S=\eta$ and $C=C_\eta$. Since we are only interested in topology, we can assume that $C$ is reduced.

\subsubsection{The discrete case}\label{trivgenfib}
If $k^+=k$, the topology on~$k$ has to be discrete. In this case $C$ has only three types of points -- divisorial and classical ones have trivial valuation on $k(x)$, and for any other point $x$ one has that $k(x)$ is a function field of an algebraic $k$-curve and $k(x)^+$ is a non-trivial valuation ring containing $k$, and hence a DVR corresponding to a point on a smooth projective model of $k(x)$. In particular, any such $x$ is both unbounded and infinitesimal, it is a specialization of a divisorial point and it can specialize further to a classical point as we explain below.

Locally $C$ is of the form $\Spa(A,A^+)$, where $\Spec(A^+)\into\Spec(A)$ is a dense open immersion of algebraic $k$-curves such that $\Spec(A^+)$ is normal at the complement. Moreover, these open immersions glue together giving rise to a dense open immersion of $k$-curves $\calU\into\calC$, which determines $C$. In the formalism of \cite[\S3.1]{temrz} one simply has that $C=\Spa(\calU,\calC)$, and its points are $k$-semivaluations on $\calU$ with center on $\calC$. So, divisorial and classical points correspond to generic and closed points of $\calU$, and an unbounded point $x\in C$ corresponds to a branch of $\calC$, that is, a closed point $\bfx\in\calC^\nor$ of the normalization of $\calC$. Note that $x$ generalizes to the divisorial point $y$ corresponding to the generic point $\bfy$ of the irreducible component of $\bfx$ and $(k(x),k(x)^+)=(k(\bfy),\calO_{\calC^\nor,\bfx})$. Also, if $\bfx$ sits over a point $\bfz\in\calU$, then $x$ specializes to the classical point $z$ corresponding to $\bfz$.

To summarize, in case $k^+ = k$, any point of~$C$ has at most one classical specialization, and generizations of a classical point $x$ are parameterized by the branches of the one-dimensional local ring $\calO_{C,x}$. In addition, $C$ is canonically sandwiched between the two analytifications $\calU^\ad=\Spa(\calU,\calU)$ and $\calC^\ad=\Spa(\calC,\calC)$. The open subspace $\calU^\ad\into C$ is obtained by removing all unbounded points that have no classical specialization in $C$, and the open immersion $C\into\calC^\ad$ saturates $C$ with classical specializations of unbounded points.

More generally, let us consider an affinoid field $(k,k^+)$ of higher rank but equipped with the discrete topology.
Then the generic point~$\eta$ of $S = \Spa(k,k^+)$ corresponds to the trivial valuation of~$k$ and $C_\eta$ is of the above described form.
A point~$x$ of~$C \setminus C_\eta$ is unbounded if and only if it has a generization~$x_\eta$ in~$C_\eta$ which is infinitesimal.
Then the residue field $\widetilde{k(x_\eta)}$ of the corresponding valuation is a finite extension of $\widetilde{k(\eta)}$.
Now~$x$ is the composition of~$x_\eta$ with a nontrivial valuation~$\tilde{x}$ of $\widetilde{k(x_\eta)}$.
This valuation~$\tilde{x}$ is a bounded point of~$C$, more precisely a classical point.
Moreover, any composition of an infinitesimal point~$x_\eta$ of~$C_\eta$ with a bounded classical point~$\tilde{x}$ with residue field $\widetilde{k(x_\eta)}$ gives an unbounded point in $C \setminus C_\eta$.
For fixed~$x_\eta$ and~$s_x$ there are only finitely many options for~$\tilde{x}$.
If the base field $(k,k^+)$ is henselian, there is even a unique such point~$\tilde{x}$.%\Michael{This is a new paragraph. I'm missing something. The classical points outside of the generic fiber are compositions of the classical points in the generic fiber with a valuation on the residue field.} \Katha{I changed the paragraph, don't know why I thought classical points of~$C_\eta$ give rise to unbounded points.}

\subsubsection{The microbial case}\label{microbgenfib}
If $k$ is microbial and $k^+=\kcirc$, then $k$ is real valued and the maximal locally Hausdorff quotient $\calC$ possesses a natural structure of a $\hatk$-analytic Berkovich curve, and $C$ can be constructed by saturating the associated adic space $\calC^\ad$ with finitely many additional infinitesimal points. Again, this is checked locally, and then one can assume that $C=\Spa(A,A^+)$, where $A$ is a $k$-affinoid algebra and $A^+$ is the preimage in $\Acirc$ of a finitely generated subalgebra $\tilA^+$ of $\tilA$ such that $\tilA$ is a localization of $\tilA^+$. It follows easily that $\calM(A)^\ad=\Spa(A,\Acirc)$ is open in $C$ and the complement is a finite set of infinitesimal points (valuations of rank 2).

Note that $\calC$ embeds into $C$ set-theoretically and the complement consists precisely of all points $x$ such that $k(x)^+$ is of rank 2. Since $\calC$ contains no unbounded points, this happens if and only if $x$ is infinitesimal and each such a point $x$ is obtained by choosing a divisorial point $y\in \calC$ and fixing a non-trivial $\tilk$-valuation on $\wHy$. As for Berkovich's classification of points into four types we have the following matching. Classical points correspond to Zariski closed Berkovich points, divisorial points correspond to points of type 2, cut points are automatically symmetric and they correspond to points of type 3, and unibranch points correspond to points of types 1 and 4, depending on their geometric radius -- whether it is zero or positive. Finally, infinitesimal points are often referred to as points of type 5.

\subsection{Specializations and generizations on adic curves}
To large extent the geometries of different fibers interact through specialization/generization relations, which will be described in this subsection. There are also mild specialization/generization relations occuring inside the same fiber and we will describe them too.

\subsubsection{Vertical generizations}
Recall that vertical generizations of a point $x\in C$ (on any adic space) are obtained by keeping $k(x)$ unchanged and localizing the valuation ring $k(x)^+$. In particular, they form the chain $C_x=\Spa(k(x),k(x)^+)$. We warn the reader that for $S$-spaces this is a bit misleading as over $S$ these are mainly ``horizontal'' generizations happening``parallel'' to generizations in $S$ in this context.

\begin{lem}\label{generizlem}
Let $C$ be a weak $S$-curve, $x\in C$ a point with $C_x=\Spa(k(x),k(x)^+)$ and $f_x\:C_x\to S_{s_x}=\Spa(k(s_x),k(s_x)^+)$ the induced map. In addition, let $y\in C_x$ be a vertical generization of $x$, then

(i) If $x$ is neither an asymmetric cut point, nor an infinitesimal point, then $f_x$ is bijective and either $y$ is divisorial or of the same type as $x$. Equivalently, $C_x=T^+\coprod T^-$ set-theoretically, where $T^+$ consists of divisorial points and is closed under generizations, and $T^-$ consists of points of the same non-divisorial type and is closed under specializations.

(ii) If $x$ is an asymmetric cut point, then the generization chain consists of points of the same type followed by an infinitesimal point $y$ followed by a divisorial point $z$ which is followed by divisorial points. In fact, $\{y,z\}$ is the double fiber and $f_x$ is bijective outside of this fiber.

(iii) If $x$ is classical, then any its vertical generization is classical.

(iv) If $k$ is discrete, then any generic point has a divisorial generization.

(v) In (some) microbial cases there exist unibranch and symmetric cut points without any divisorial generization.
\end{lem}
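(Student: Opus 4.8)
The plan is to reduce (i)--(iv) to a single combinatorial analysis of the order‑preserving map $f_x\:C_x\to S_{s_x}$. By Lemma~\ref{valpointlem}(ii) both $C_x$ and $S_{s_x}$ are chains under specialization, and by Corollary~\ref{affspeccor} the map $f_x$ is surjective. Reading $C_x$ from the closed point $x$ up to the generic point $\xi$, I track the pair $(E_\bullet,F_\bullet)$ via Lemma~\ref{affspeclem}(ii): $F$ is weakly increasing, $E$ weakly decreasing, $E+F$ weakly increasing, and at every \emph{constant step} --- a pair of neighbours $w\prec w'$ in $C_x$ with $f_x(w)=f_x(w')$ --- the first two are strict, since $E_\bullet,F_\bullet\le 1$ are finite. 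Because $0\le E_\bullet\le E_x\le 1$, the quantity $E$ can drop strictly at most once; hence $f_x$ has at most one non‑injective fibre, necessarily with exactly two points $y\prec z$, for which monotonicity then forces $(E_y,F_y)=(1,0)$ and $(E_z,F_z)=(0,1)$ (this is the input quoted in \S\ref{fxsec}). Part (iii) is now immediate: a vertical generization leaves the field $k(\cdot)$ unchanged, so $[k(x):k]<\infty$ propagates along $C_x$.

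For (i), suppose $x$ is divisorial, classical, unibranch, or a symmetric cut; then there is no constant step (for $x$ classical or unibranch $E\equiv 0$ cannot drop, for $x$ divisorial $F\equiv 1$ cannot rise, and for a symmetric cut $f_x$ is bijective by the definition in \S\ref{fxsec}), so $f_x$ is bijective. By monotonicity, $(E_\bullet,F_\bullet)$ equals $(E_x,F_x)$ on an initial segment of $C_x$ and, once it leaves that value, is pushed to $(0,1)$, i.e.\ divisorial; and a divisorial point is never followed by a non‑divisorial one. This gives the splitting $C_x=T^-\amalg T^+$ with $T^-$ an initial segment closed under specialization and $T^+$ a final segment of divisorial points closed under generization. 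Finally, for $w\in T^-$ the map $f_x$ is injective on the generizations of $w$, so $w$ carries no double fibre, hence is neither infinitesimal nor unbounded nor an asymmetric cut; combined with the value of $(E_w,F_w)$ this pins down the type of $w$ as that of $x$. (The unbounded case, possible only for discrete $k$, is not literally among these types; there the chain has the shape of (ii) with ``unbounded'' in place of ``asymmetric cut'' and is read off \S\ref{trivgenfib}.)

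For (ii), the heart of the matter, let $x$ be an asymmetric cut, so $(E_x,F_x)=(1,0)$ and, by \S\ref{fxsec}, $f_x$ has its unique non‑injective fibre $\{y\prec z\}$ over some $t\in S_{s_x}$. By the first paragraph $(E_y,F_y)=(1,0)$ and $(E_z,F_z)=(0,1)$, so $z$ and every point above it are divisorial, while $y$ is not. Since $t=f_x(y)=s_y$, the restriction of $f_x$ to the generizations of $y$ has a double fibre over $s_y$, so $y$ is infinitesimal. For $w$ with $x\preceq w\prec y$ one has $f_x(w)\prec t$ (else $w$ would sit in the double fibre), so the double fibre of $f_x$ restricted to the generizations of $w$ lies over the \emph{proper} generization $t$ of $s_w$, and $t\ne\eta$ (otherwise $x$ would be unbounded, not a cut); hence $w$ is neither infinitesimal nor unbounded, so it is a cut, and it is asymmetric because it carries a double fibre. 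Thus $[x,y)$ consists of asymmetric cut points, and $f_x$ is bijective off $\{y,z\}$ by minimality of the double fibre, which is exactly the asserted picture. I expect this bookkeeping --- tracking which generization of $s_w$ the double fibre sits over and excluding the infinitesimal/unbounded alternatives via the residue‑field criteria of \S\ref{fxsec} --- to be the only genuinely delicate step.

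For (iv), let $k$ be discrete and $x$ generic; then $[k(x):k]=\infty$ and, relative dimension being one with $k(s_x)=k$ as a field, $\trdeg(k(x)/k)=1$. Take $\xi\in C_x$ to be the generic point, i.e.\ the trivial valuation on $k(x)$: since $k$ is discrete, $k(\eta)^+=k=k(\eta)$, so $\widetilde{k(\xi)}=k(x)$ has transcendence degree $1$ over $\widetilde{k(\eta)}=k$, whence $F_\xi=1$ and $\xi$ is a divisorial generization of $x$. The argument collapses in the microbial case because there $k(\eta)^+=\kcirc\ne k$ and the generic point of $C_x$ is merely the rank‑one coarsening of $x$ --- which is where (v) enters. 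For (v) I take $k=\CC_p$: it is microbial, it is not spherically complete, and $|\CC_p^\times|=p^\QQ$ is dense in $\RR_{>0}$ without being all of it. On $\PP^1_k$, the Gauss point of a disc of radius $r\in\RR_{>0}\setminus p^\QQ$ is a cut point ($E=1$, $F=0$, the cut condition from density), automatically symmetric by \S\ref{microbgenfib}; and the type‑$4$ point attached to a nested sequence of discs of $\CC_p$ with radii bounded away from $0$ and empty intersection --- which exists since $\CC_p$ is not spherically complete --- is generic with $E=F=0$, hence unibranch. Each of these is a bounded point of rank one, so $k(\cdot)^+=\kcirc(\cdot)$ and $C_x$ (and likewise $C_{x'}$) is a single point: no proper vertical generization, a fortiori none that is divisorial. (In fact none of any kind, since the rank‑one locus of $\PP^1_k$ is Hausdorff.)
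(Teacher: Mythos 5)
Your argument is correct and follows essentially the same route as the paper: everything is reduced to the double-fiber picture of $f_x$ from \S\ref{fxsec} together with the monotonicity and strictness statements of Lemma~\ref{affspeclem} (via Corollary~\ref{affspeccor}), with (iii) trivial, (iv) obtained by observing that the maximal (trivially valued) generization is divisorial, and (v) by exhibiting Berkovich-type 3 and 4 points, of which your $\CC_p$ example is just a concrete instance of the paper's appeal to \S\ref{microbgenfib}. Your write-up is merely more explicit in the bookkeeping (and in flagging the unbounded edge case, which the paper's sketch leaves implicit), but no new idea is involved.
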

\begin{proof}
The properties of $f_x$ were discussed in \S\ref{fxsec}. Except distinguishing types of cut points, the type is controlled by $E_x$, $F_x$ and $m_x$, and one easily sees that (up to this distinguishing) the claim of (i) follows from Lemma~\ref{affspeclem}. Furthermore, infinitesimal points immediately generize to divisorial ones, so the only generization which mixes the types within the $E_x=1$ case goes from a cut point to an infinitesimal point, and this happens precisely for asymmetric cuts. This implies (i) and (ii). Furthermore, (iii) is trivial, and (iv) reduces to the observation that if the valuation is trivial, then $C$ is the closure of the finite set of divisorial valuations of $C_\eta$. The claim of (v) is clear already when $C=C_\eta$ because Berkovich curves may contain points of types 3 and 4.
\end{proof}

\subsubsection{Vertical specializations} \label{vertical_specializations}
Now, let us go in the opposite direction and describe the set $\ox_v$ of vertical specializations of a point $x\in C$ (on any adic space $C$). If $C=\Spa(A,A^+)$ is affine, set $\tilC^x=\Spa(\wkx,\tilA^+_x)$ where $\tilA^+_x$ is the image of $A^+$ under the homomorphism $A^+\to k(x)^+\to\wkx$ induced by $\phi_x\:A\to k(x)$ and the topology on $\wkx$ is discrete. It is easy to see that this definition is compatible with localizations and hence globalizes.

\begin{lem}\label{vsepclem}
If $C$ is an adic space and $x\in C$ is a point, then the space $\tilC^x$ is naturally homeomorphic to $\ox_v$.
\end{lem}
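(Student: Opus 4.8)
The plan is to reduce to the affine case, build the bijection through the standard dictionary between composite valuations and valuation rings of a residue field, and then identify the two topologies by comparing subbasic opens. First I would reduce to $C=\Spa(A,A^+)$: the construction of $\tilC^x$ and the formation of $\ox_v$ are both compatible with passing to affine opens, so it suffices to treat this case. Write $\mathfrak p=\mathrm{supp}(v_x)$, so that $k(x)=\mathrm{Frac}(A/\mathfrak p)$; let $k(x)^+$ be the valuation ring of $v_x$ with maximal ideal $m_{k(x)^+}$ and residue field $\wkx=k(x)^+/m_{k(x)^+}$, and let $\tilA^+_x\subseteq\wkx$ be the image of $A^+$. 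A vertical specialization of $x$ is a specialization keeping the support fixed, and hence (see~\cite{Huber-book}) one which keeps $k(x)$ unchanged and replaces $v_x$ by a continuous refinement; concretely such a point $y$ is given by a valuation ring $k(y)^+$ of $k(x)$ with $A^+\to k(y)^+\subseteq k(x)^+$.

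For the bijection on points I would use that these valuation rings $k(y)^+$ are precisely the preimages in $k(x)^+$ of valuation rings $R$ of $\wkx$, via $k(y)^+\mapsto R=k(y)^+/m_{k(x)^+}$; under this correspondence the condition that the image of $A^+$ lie in $k(y)^+$ becomes $\tilA^+_x\subseteq R$. Hence $\ox_v$ is, as a set, the set of valuation rings of $\wkx$ containing $\tilA^+_x$, that is, the underlying set of $\tilC^x=\Spa(\wkx,\tilA^+_x)$ (where $\wkx$ carries the discrete topology, so continuity is automatic there), with $x$ itself matching the trivial valuation. The only point needing an argument is that the refinement $v_y$ of the continuous valuation $v_x$ is again continuous: this is immediate in the discrete case, and in general is a short check with convex subgroups of the value group, since a refinement only inserts an infinitesimal layer below $\Gamma_{v_x}$ and so does not disturb the approximation of $0$.

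It remains to see that the resulting bijection $j\colon\tilC^x\to\ox_v$ is a homeomorphism, which is the real content. Both sides carry the topology inherited from the ambient valuation spectrum: a subbasis of $C$ is formed by the sets $U_{f,g}=\{v:v(f)\le v(g)\ne 0\}$ with $f,g\in A$ (these are open in $C$, being equal to $\bigcup_{n}U(\tfrac{I^n,f,g}{g})$ for $I$ an ideal of definition of $A$, by continuity of the valuations), and, since $\wkx$ is discrete, a subbasis of $\tilC^x$ is formed by the sets $\{w:w(\bar h)\le 1\}$, $\bar h\in\wkx$. A direct computation with the dictionary above shows that $j^{-1}(U_{f,g})$ is empty when $g\in\mathfrak p$ or $f/g\notin k(x)^+$, and otherwise equals $\{w:w(\overline{f/g})\le 1\}$. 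Since every $\bar h\in\wkx$ can be written as $\overline{f/g}$ with $f,g\in A$ and $f/g\in k(x)^+$, this shows simultaneously that $j$ is continuous and that $j$ carries each subbasic open onto a set of the form $\ox_v\cap U_{f,g}$, i.e.\ that $j$ is open onto its image. Therefore $j$ is a homeomorphism, and, being canonical, it glues to $\tilC^x\cong\ox_v$ for arbitrary $C$. I expect the genuine obstacle to lie exactly here: there is no single deep step, but the two-directional matching of subbasic opens has to be carried out carefully, keeping track of the support $\mathfrak p$ and of denominators, and one should not overlook the (easy but essential) facts that the sets $U_{f,g}$ are open in $C$ and that continuity is inherited by refinements.
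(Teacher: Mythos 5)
Your proof is correct and takes essentially the same route as the paper: reduce to the affine case $C=\Spa(A,A^+)$ and identify vertical specializations of $x$ with valuation rings $k(z)^+$ satisfying $\phi_x(A^+)\subseteq k(z)^+\subseteq k(x)^+$, equivalently with valuation rings of $\wkx$ containing $\tilA^+_x$, i.e.\ points of $\Spa(\wkx,\tilA^+_x)$. The only difference is that you additionally spell out the continuity of the refined valuations and the matching of subbasic opens that upgrades the bijection to a homeomorphism, details the paper's shorter proof leaves implicit.
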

\begin{proof}
It suffices to check this in the affine case when $C=\Spa(A,A^+)$. Then a point $z\in\ox_v$ is a valuation which refines the valuation of $x$ on $k(x)$ and contains the image of $A^+$. That is, it is a valuation ring $k(z)^+$ of $k(z)\subseteq k(x)$ such that $\phi_x(A^+)\subseteq k(z)^+\subseteq k(x)^+$. Choosing $k(z)^+$ is equivalent to choosing its image in $\wkx$, which can be any valuation ring of $\wkx$ containing $\tilA^+_x$, and the latter is just a point of $\Spa(\wkx,\tilA^+_x)$.
\end{proof}

The above construction is functorial: a morphism $f\:Y\to X$ with $f(y)=x$ induces a morphism $\tilY^y\to\tilX^x$. Now, let us specialize this to the case of our interest $C\to S$. Note that $\tilS^s$ is the spectrum of the affinoid field $(\wt{k(s)},\wt{k(s)^+})$, where $\wt{k(s)^+}$ is the image of $k^+$ in $\wt{k(s)}$.

\begin{lem}\label{vertcurve}
Let $C$ be a weak adic $S$-curve, $x\in C$ a point and $s=s_x\in S$ the image of $x$. Consider the morphism $\tilf_x\:\tilC^x\to\tilS^{s}$

(i) If $x$ is not divisorial, $\tilf_x$ factors through the profinite morphism $\Spa(\wkx,k^+)\to\tilS^s$ and the map $\tilC^x\to\Spa(\wkx,k^+)$ is an open immersion (resp. isomorphism) if $f$ is separated (resp. proper). In addition, the map $\Spa(\wkx,k^+)\to\tilS^s$ is bijective whenever $k^+$ is henselian.

(ii) If $x$ is divisorial, then $\tilC^x$ is naturally homeomorphic to the set of generic points on an adic $\tilS^{s}$-curve $\tilC$ with $k(\tilC)=\wkx$. The curve can be chosen separated (resp. proper) whenever $C\to S$ is separated (resp. proper).
\end{lem}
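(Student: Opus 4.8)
The plan is to make both sides of $\tilf_x$ explicit, extract a canonical factorization through $\Spa(\wkx,k^+)$, and then read off the geometric content from the valuative criterion for adic morphisms. Working affine-locally we may assume $C=\Spa(A,A^+)$, so by Lemma~\ref{vsepclem} $\tilC^x=\Spa(\wkx,\tilA^+_x)$ with $\tilA^+_x$ the image of $A^+$ in $\wkx$, while $\tilS^{s}=\Spa(\wks,\wt{k(s)^+})$. I would first record two preliminary facts. The valuation of $x$ restricts on $k$ to the valuation ring $k(s)^+=k(x)^+\cap k$; since $m_{k(s)^+}=m_{k(x)^+}\cap k(s)^+$, the reduction maps are compatible and give a field embedding $\wks\into\wkx$ carrying $\wt{k(s)^+}$ into $\tilA^+_x$, and this is $\tilf_x$. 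Moreover the image of $k^+$ in $\wkx$ is exactly $\wt{k(s)^+}$: the kernel of $k^+\to\wkx$ is $k^+\cap m_{k(x)^+}=k^+\cap m_{k(s)^+}$, the prime of $k^+$ cut out by the overring $k(s)^+$, and $k^+$ modulo it is the valuation ring $\wt{k(s)^+}$ of $\wks$ (composite valuation theory). Hence $\Spa(\wkx,k^+)$ is meaningful as $\Spa(\wkx,\wt{k(s)^+})$, and $\tilf_x$ factors as
$$\tilf_x\:\quad \tilC^x=\Spa(\wkx,\tilA^+_x)\ \xrightarrow{p}\ \Spa(\wkx,k^+)\ \xrightarrow{q}\ \tilS^{s},$$
with $p$ induced by $\wt{k(s)^+}\subseteq\tilA^+_x$ inside the common field $\wkx$ and $q$ by $\wks\into\wkx$.

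For (i) I would use that $x$ not divisorial means $F_x=\trdeg(\wkx/\wks)=0$, i.e.\ $\wkx/\wks$ is algebraic. Writing $\wkx$ as the union of its finite subextensions $\wkx_i$ over $\wks$ gives $\Spa(\wkx,k^+)=\varprojlim_i\Spa(\wkx_i,\wt{k(s)^+})$; each $\Spa(\wkx_i,\wt{k(s)^+})\to\tilS^{s}$ is profinite (spectrum of an integral $\wt{k(s)^+}$-algebra; classical valuation theory, cf.\ \cite{Huber-book}), so $q$ is a cofiltered limit of profinite morphisms and hence profinite. If $k^+$ is henselian, so is its coarsening $k(s)^+$, hence so is the residue valuation ring $\wt{k(s)^+}$, and so is every overring of it inside $\wks$; since henselian valued fields admit a unique valuation extension to every algebraic extension while valuations always extend, every fibre of $q$ is a singleton and $q$ is bijective. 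The map $p$ is injective and a topological embedding onto $\{v\in\Spa(\wkx,k^+)\mid\tilA^+_x\subseteq\cO_v\}$, the two spaces sharing the field $\wkx$. When $C\to S$ is proper I would show $p$ surjective by the valuative criterion: for $v\in\Spa(\wkx,k^+)$, the preimage $R$ of $\cO_v$ under $k(x)^+\twoheadrightarrow\wkx$ is a valuation ring with $k^+\subseteq R\subseteq k(x)^+$, and lifting the point $x$ along $R$ (which maps to a point of $S$) gives a vertical specialization of $x$ in $C$ realizing $v$; since $\tilA^+_x$ is then the full integral closure of $\wt{k(s)^+}$ in $\wkx$, $p$ is in fact an isomorphism. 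For the merely separated case I would compactify $C\into D$ over $S$ with $D\to S$ proper (for curves this follows from the description of the generic fibre in \S\ref{trivgenfib}, \S\ref{microbgenfib}); the residue field of $x$ and its image in $S$ are unchanged, so the proper case yields $\ox^D_v=\Spa(\wkx,k^+)$, while $\ox^C_v\into\ox^D_v$ is an open immersion because $C\into D$ is and the formation of $\ox_v$ sends affine opens through $x$ to rational opens of $\ox^D_v$.

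For (ii), where $x$ is divisorial and $\trdeg(\wkx/\wks)=1$, the field $\wkx$ is the function field of a curve over $\wks$, and working affine-locally and using finite type of $C\to S$, $\tilA^+_x$ is, up to integral closure in $\wkx$, a finitely generated $\wt{k(s)^+}$-subalgebra with fraction field $\wkx$. Such data — together with the integral closure of $\wt{k(s)^+}$ in $\wkx$ — is precisely the data of the generic points of an affine adic $\tilS^{s}$-curve, by the explicit description of adic curves over a discrete (resp.\ microbial) affinoid field in \S\ref{trivgenfib} (resp.\ \S\ref{microbgenfib}): indeed $\tilC^x=\Spa(\wkx,\tilA^+_x)$ equals $\ox_v$ by Lemma~\ref{vsepclem}, which is the set of non-classical (``generic'') points of that curve. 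I would glue these affine curves along the affine cover of $C$ used to form $\tilC^x$ to get an adic $\tilS^{s}$-curve $\tilC$ with $k(\tilC)=\wkx$ whose set of generic points is $\tilC^x$, and then compactify $\tilC$ over $\tilS^{s}$ — which alters neither $\wkx$ nor the set of generic points — to arrange $\tilC$ proper (resp.\ separated) when $C\to S$ is.

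The main obstacle is isolating the rôle of the hypotheses: properness enters only through the valuative criterion for adic morphisms, and the separated case is reduced to it by compactifying $C$ over $S$, so one genuinely needs compactification of adic curves — which is precisely where the explicit description of the generic fibre does the work — and in (ii) one must verify that the glued object is an honest adic $\tilS^{s}$-curve, not merely a weak one. The remaining valuation-theoretic inputs (residues and henselianity of coarsenings, integrality of normalizations in algebraic extensions, and extension of valuations) are standard.
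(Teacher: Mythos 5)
Most of your argument follows the paper's route: the identification $\tilC^x=\Spa(\wkx,\tilA^+_x)$ with $\tilA^+_x$ finitely generated over $\wt{k(s)^+}$, the factorization of $\tilf_x$ through $\Spa(\wkx,k^+)$ together with the observation that the image of $k^+$ in $\wkx$ is $\wt{k(s)^+}$, the dichotomy ($\wkx/\wks$ algebraic for non-divisorial $x$, a function field of a $\wks$-curve for divisorial $x$), the henselian-quotient argument for bijectivity, and the proper case of (i) via lifting composed valuations are exactly what the paper's proof invokes. The gap is in how you handle the separated/proper assertions. In (i) you replace the valuative criterion by a compactification of $C$ over $S$, whose existence is not established: \S\ref{trivgenfib} and \S\ref{microbgenfib} describe only the generic fibre $C_\eta$ and give no compactification of $C$ over a base of arbitrary rank. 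It is also unnecessary: separatedness enters directly through the uniqueness part of the valuative criterion, which says that two distinct vertical specializations of $x$ cannot give the same valuation ring of $\wkx$, so $\tilC^x\to\Spa(\wkx,k^+)$ is injective; since each chart $\Spa(\wkx,\tilA^+_x)$ is an open (rational-type) subset of $\Spa(\wkx,k^+)$, injectivity already yields the open immersion.

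In (ii) the argument is wrong as stated: compactifying $\tilC$ over $\tilS^s$ \emph{does} change the set of generic points. Already for $\tilC=\Spa(\wks[t],\wks[t])$ over a trivially valued $\wks$, passing to $\PP^1$ adds not only the classical point at infinity but also the non-classical valuation centered there; in general every added boundary point carries new unbounded/infinitesimal points. Your construction would therefore ``prove'' that $\tilC^x$ is the generic-point set of a proper curve for an arbitrary $C$, which is false, and indeed the hypothesis on $C\to S$ is never used in your (ii). It must be used exactly as in your own proper-case argument for (i): for a proper model the generic points are \emph{all} valuations of $\wkx$ bounded on $\wt{k(s)^+}$, and properness of $C\to S$, via the existence part of the valuative criterion applied to the composed valuations $k(x)^+\to\wkx\supseteq\calO_v$, shows that every such valuation is already realized as a vertical specialization of $x$, i.e.\ $\tilC^x$ coincides with the full generic-point set of a proper $\tilS^s$-curve; separatedness gives the corresponding uniqueness/openness statement. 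This is precisely what the paper's phrase ``the claims about separated/proper cases follow from the valuative criteria'' refers to, and no compactification is needed anywhere.
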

\begin{proof}
The first claims in (i) and (ii) follow from the fact that $\tilC^x$ is covered by open subspaces of the form $\Spa(\wkx,\tilA_x)$, where $\tilA_x$ is finitely generated over $\wt{k(s)^+}$. In addition, $\wkx$ is a function field of a $\wks$-curve if $x$ is divisorial, and $\wkx/\wks$ is algebraic otherwise (but it can be infinite in the unibranch case). The first claims of (i) and (ii) follow easily. The claims about separated/proper cases follow from the valuative criteria. Finally, if $k^+$ is henselian, then its quotient $\wt{k(s)^+}$ is also henselian, hence in the case of (i) the valuation on $\wks$ extends uniquely to the algebraic extension $\wkx$. This precisely means that the map $\Spa(\wkx,k^+)\to\tilS^s$ is bijective.
\end{proof}

\subsubsection{Horizontal generizations}
To complete the picture it remains to describe the other type of specialization relations. Recall that horizontal specializations in adic spaces happen when the kernel of the point (or its semivaluation) drops, so on a curve $C$ a horizontal generization $x\prec y$ can happen only when $x$ is a classical point and $y$ is a generic point. Furthermore, if $0\neq t\in m_x$, then $|t|_y<|a|_y$ for any $a\in k$ because otherwise $|t|\ge |a|$ would define a neighborhood of $x$ not containing $y$. Therefore $y$ is an unbounded point. By Lemma~\ref{generizlem} the generic fiber $C_\eta$ contains unique vertical generizations $y'$ and $x'$ of $y$ and $x$ such that $y'$ is unbounded and $x'$ is classical. So, using the description of $C_\eta$ in \S\ref{trivgenfib} we obtain the following result:

\begin{lem}\label{horizontallem}
Assume that $C$ is a weak $S$-curve, then

(i) If $x\prec y$ is a horizontal specialization on $C$, then $k$ is discrete, $x$ is classical, $y$ is unbounded and both $x$ and $y$ lie in the same fiber $C_s$ of $C$ over $S$.

(ii) If $k$ is discrete, then the set of unbounded generizations of a classical point $x$ is parameterized by branches of the one-dimensional local ring $\calO_{C,x}$.

(iii) Any unbounded point has at most one classical specialization.
\end{lem}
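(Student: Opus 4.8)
The plan is to prove (i) by a direct separation argument on the adic space $C$, and then to deduce (ii) and (iii) by reducing to the generic fibre $C_\eta$, where by the description in \S\ref{trivgenfib} both statements become facts about branches of an algebraic curve.

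\emph{Part (i).} By definition a horizontal specialization $x\prec y$ is one whose support strictly drops, i.e. $\mathrm{supp}(y)\subsetneq\mathrm{supp}(x)$ as primes of $\calO_{C,x}$. Since $C$ is a weak $S$-curve, $\calO_{C,x}$ is at most one-dimensional, so $\mathrm{supp}(x)=m_x\neq 0$ — hence $x$ is classical — and $\mathrm{supp}(y)$ is a minimal prime, so $y$ is the generic point of a component of $C$ through $x$. Pick $0\neq t\in m_x$; its image in $k(y)$ is a nonconstant function, so $t\in k(y)^\times$ while $t$ still lies in $\mathfrak p_x:=\mathrm{supp}(x)$. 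For every $a\in k^\times$ the rational subset $\{\,|t|\le|a|\,\}$ is an open neighbourhood of $x$ (at which $|t|=0$), hence contains its generization $y$, so $|t|_y\le|a|_y$; and this is strict, for if $|t|_y=|a|_y$ then, both values being $<1$ (as $t\in\mathfrak p_x$ is not a unit at $y$), the element $a$ lies in the prime of $k(y)^+$ contracting to $\mathfrak p_x$, forcing $a\in\mathfrak p_x\cap k=\{0\}$, absurd. Thus $\omega=1/t$ satisfies $|\omega|_y>|a|_y$ for all $a\in k$, so $y$ is unbounded, and by the definition of the Huber spectrum $k$ must be discrete. Finally $\mathfrak p_x\cap k=0$ forces the valuations of $x$ and $y$ to restrict to the same valuation on $k$, so $s_x=s_y$, i.e. $x,y\in C_{s_x}$.

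\emph{Parts (ii) and (iii).} By (i) unbounded points occur only over a discrete $k$. Using Lemma~\ref{generizlem} — localizing $k(y)^+$ as far as possible, respectively the fact that a classical point generizes only to classical points — an unbounded point $y$ and any classical point $x$ with $x\prec y$ admit unique vertical generizations $y',x'\in C_\eta$ with $y'$ unbounded and $x'$ classical, and $x\prec y$ descends to $x'\prec y'$; this reduces both statements to the case $C=C_\eta$. There, by \S\ref{trivgenfib}, $C_\eta=\Spa(\calU,\calC)$ for a dense open immersion of algebraic $k$-curves $\calU\hookrightarrow\calC$ that is normal along the boundary, unbounded points are exactly the branches of $\calC$, i.e. the closed points $\mathbf{x}$ of the normalization $\calC^\nor$, and such a point lies over a single point $\mathbf{z}\in\calC$; it generizes a classical point precisely when $\mathbf{z}\in\calU$, namely the one carried by $\mathbf{z}$. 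Hence an unbounded point has at most one classical specialization, which is (iii); and the unbounded generizations of a fixed classical point $x$, corresponding to $\mathbf{z}\in\calU$, are exactly the closed points of $\calC^\nor$ over $\mathbf{z}$, i.e. the branches of $\calO_{\calC,\mathbf{z}}$, which has the same completion — hence the same set of branches — as $\calO_{C,x}$; this gives the parameterization in (ii).

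\emph{Main obstacle.} I expect the delicate step to be the reduction to $C_\eta$ in the higher-rank discrete case: one must check, combining the structural descriptions of vertical generizations and specializations (Lemmas~\ref{generizlem} and~\ref{vertcurve}) with the higher-rank analysis in \S\ref{trivgenfib}, that passing to the generic fibre induces bijections between the sets of unbounded generizations (resp. classical specializations) computed in $C$ and in $C_\eta$. The remaining ingredients — producing the unbounded element $1/t$ in (i) and identifying branches of a scheme-theoretic curve with points of its normalization — are routine.
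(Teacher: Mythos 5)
Your overall route coincides with the paper's: for (i) one produces $1/t$ as an unbounded element of $k(y)$, and (ii)--(iii) are obtained by passing to the unique vertical generizations in $C_\eta$ via Lemma~\ref{generizlem} and quoting the description of the generic fiber in \S\ref{trivgenfib}, exactly as the paper does and at the same level of detail. The genuine gap is in (i) and stems from your opening definition: a horizontal specialization is \emph{not} ``a specialization whose support strictly drops''. A general specialization of $y$ factors as a horizontal one $y|_H$ (given by a convex subgroup $H$ of $\Gamma_y$ containing the characteristic subgroup, equivalently by a prime ideal $\mathfrak{q}\subset k(y)^+$, with $|f|_x=|f|_y$ whenever $|f|_y\in H$ and $|f|_x=0$ otherwise) followed by a vertical one, and a composite with a nontrivial vertical part also has strictly dropping support but moves the image in $S$ (e.g.\ compose the $T$-adic point of $\AA^1$ over the trivial valuation with a refinement of the trivial valuation of $k$ by $k^+$). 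So under your definition the conclusion $s_x=s_y$ is false, and your one-line justification ``$\mathfrak{p}_x\cap k=0$ forces the restrictions to $k$ to agree'' does not follow from what you assumed. The same structure is what you silently lean on at the two other delicate points: the existence of ``the prime of $k(y)^+$ contracting to $\mathfrak{p}_x$'', and the claim $|t|_y<1$ --- note that once $t\notin\mathrm{supp}(y)$ its image in $\calO_{C,y}$ \emph{is} a unit, so ``not a unit at $y$'' can only mean non-unit in $k(y)^+$, which is exactly what needs proof and follows from $t\in\mathfrak{q}$. With the correct description the whole of (i) is cleaner than your neighborhood argument: for $a\in k^\times$ one has $a\notin\mathfrak{q}$ (its contraction meets $k$ trivially), so $|a|_x=|a|_y$ for all $a$, giving $s_x=s_y$; and $t\in\mathfrak{q}$, $a\notin\mathfrak{q}$ force $|t|_y<|a|_y$ strictly (otherwise $a=t\cdot(a/t)\in(t)\subseteq\mathfrak{q}$), whence $1/t$ is unbounded and $k$ is discrete.

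A smaller repair: choose $t\in m_x\setminus\mathrm{supp}(y)$ rather than an arbitrary $0\neq t\in m_x$; if several branches or nilpotents pass through $x$, a nonzero element of $m_x$ can have zero image in $k(y)$ and then $1/t$ is meaningless. Such a $t$ exists because $\mathrm{supp}(y)\subsetneq m_x$. For (ii)--(iii) your reduction and the identification of unbounded points of $C_\eta$ with branches (closed points of the normalization) is exactly the paper's argument; the compatibility of generization/specialization counts between $C$ and $C_\eta$ that you flag as the remaining obstacle is indeed the only thing left to check, using Lemma~\ref{generizlem} and the higher-rank discussion at the end of \S\ref{trivgenfib}.
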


\subsection{Fractal structure}

In this subsection we assume that $(k,k^+)$ is microbial and has finite rank.
Let~$C$ be a weak $S$-curve and $x \in C_s$ a divisorial point for some $s \in S$.
In subsection~\ref{vertical_specializations} we have seen that the set of vertical specializations of~$x$ is in bijection with the space~$\tilC^x$ glued from affinoids of the form
\[
 \Spa(\widetilde{k(x)},\tilA_x^+).
\]
for affinoid opens $\Spa(A,A^+)$ of~$C$.
It is a discretely ringed adic space over $\Spa(\widetilde{k(s)},\widetilde{k(s)}^+)$.
Note that $\widetilde{k(x)}$ is the function field of the reduction curve~$\tilC_x$ over~$\widetilde{k(s)}$.
Every closed point of~$\tilC_x$ defines a discrete $\widetilde{k(s)}$-valuation of $\widetilde{k(x)}$ of rank~$1$.
The corresponding points of~$C$ in the closure of~$x$ are precisely the infinitesimal points of~$C_s$ specializing~$x$.
All other points of $\ox$ lie in some $C_{s'}$ for a proper specialization $s \succ s'$.

The specializations in $\Spa(\widetilde{k(x)},\widetilde{A}_x^+)$ of valuations of the above described type are precisely the unbounded points of $\Spa(\widetilde{k(x)},\widetilde{A}_x^+)$.
The space $\Spa(\widetilde{k(x)},\widetilde{A}_x^+)$ can be thought of as something similar to a curve.
However, it is missing the classical points.
In the following we outline an alternative approach to describe~$\ox$ that reinterprets the unbounded points of $\Spa(\widetilde{k(x)},\widetilde{A}_x^+)$ as classical points of a microbial curve over $(\widetilde{k(s)},\widetilde{k(s)}^+)$.

\subsubsection{Construction of a microbial curve}

We consider $(\widetilde{k(s)},\widetilde{k(s)}^+)$ equipped with the valuation topology.
This is the $\varpi$-adic topology for a topologically nilpotent element $\varpi \in \widetilde{k(s)}^+$
If $C = \Spa(A,A^+)$ is affinoid we set $\tilA_x := \tilA_x^+[1/\varpi]$ and consider the affinoid curve
\[
 \Spa(\tilA_x,\tilA_x^+)
\]
over $(\widetilde{k(s)},\widetilde{k(s)}^+)$.
Note that the quotient field of $\tilA_x$ equals $\widetilde{k(x)}$ as $\widetilde{k(x)}$ is the quotient field of the image of~$A$, hence also of the image of~$A^+$ and hence of $\tilA_x$.
We define a map
\[
 \varphi_x : \Spa(\tilA_x,\tilA_x^+) \longrightarrow \Spa(\widetilde{k(x)},\tilA_x^+)
\]
by sending a generic point to its restriction to $\widetilde{k(x)}$ and a classical point~$y$ with support $\gtm \subseteq \tilA_x$ to the composition
\[
 v_y = v_\gtm \circ y,
\]
where~$v_\gtm$ is the valuation of $\widetilde{k(x)}$ corresponding to the discrete valuation ring $(\tilA_x)_\gtm$.

The above construction glues and for any curve~$C$ over~$S$ and divisorial point $x \in C$ we obtain a microbial curve $\tilC^x_\an$ and a map.
\[
 \varphi_x : \tilC^x_\an \longrightarrow \tilC^x.
\]

\begin{prop}
 The above defined morphism defines a homeomorphism of~$\tilC^x_\an$ to $\tilC^x \setminus C_s$, or in other worde, all points of $\tilC^x$ whose corresponding valuation does not restrict to the trivial valuation of $\widetilde{k(s)}$.
\end{prop}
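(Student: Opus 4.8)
The plan is to reduce immediately to the affinoid case $C=\Spa(A,A^+)$, since both the source $\tilC^x_\an$ and the target $\tilC^x$ are glued from affinoid pieces $\Spa(\tilA_x,\tilA_x^+)$ and $\Spa(\wkx,\tilA_x^+)$ and the maps $\varphi_x$ are constructed compatibly with these gluings. So it suffices to show that $\varphi_x\:\Spa(\tilA_x,\tilA_x^+)\to\Spa(\wkx,\tilA_x^+)$ restricts to a homeomorphism onto the open subset consisting of valuations which are non-trivial on $\wks$, i.e. the complement of $\tilS^s\subseteq\Spa(\wkx,\tilA_x^+)$ (viewing $\tilS^s=\Spa(\wks,\wt{k(s)^+})$ sitting inside via restriction). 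Here I use that $\wkx$ is the function field of the reduction curve $\tilC_x$ over $\wks$, so $\Spa(\wkx,\tilA_x^+)$ is exactly a discretely ringed adic space of the form studied in \S\ref{trivgenfib}, sandwiched between $\Spa(\calU,\calU)$ and $\Spa(\calC,\calC)$ for a dense open $\calU\into\calC$ of $\wks$-curves.

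First I would describe both sides concretely. By Lemma~\ref{vsepclem} and the analysis in \S\ref{trivgenfib}, points of $\Spa(\wkx,\tilA_x^+)$ are $\wks$-semivaluations on $\wkx$ with center on a suitable model; the ones trivial on $\wks$ form precisely the copy of $\tilS^s$ (its generic point is the trivial valuation of $\wkx$, giving the generic point of the curve, together with its classical specializations), while the ones non-trivial on $\wks$ are exactly the points $x'$ which are ``unbounded'' in the language of \S\ref{trivgenfib} — they correspond to branches of $\calC$, i.e. closed points $\bfx\in\calC^\nor$. On the other side, points of $\Spa(\tilA_x,\tilA_x^+)$ over the microbial field $(\wks,\wt{k(s)^+})$: the generic points restrict to the unique rank-one valuation of $\wkx$ extending the $\varpi$-adic one, and the classical points $y$ (supports $\gtm\subseteq\tilA_x$) give, via $v_y=v_\gtm\circ y$, compositions of the $\varpi$-adic valuation of the residue field with the discrete valuation $v_\gtm$ of $\wkx$ at $\gtm$. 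The key point is that these compositions are exactly rank-two valuations which are non-trivial (indeed of rank one) after restriction to $\wks$, hence land in the claimed image, and conversely every branch of $\calC$ arises this way: a branch at $\bfx$ corresponds to a point $\gtm$ on the normalization, which is a point of $\Spa(\tilA_x,\tilA_x^+)$ since $\tilA_x^+$ is chosen so that $\tilA_x$ is its localization and the curve is normal near the relevant locus.

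The main obstacle — and where the argument needs care rather than formalities — is verifying that $\varphi_x$ is a bijection onto the complement of $\tilS^s$, and that it is open (equivalently, that its inverse is continuous), since it is automatically continuous by construction and both spaces are spectral, so a continuous bijection of spectral spaces need not be a homeomorphism unless one controls the constructible/specialization structure. Concretely I would argue that $\varphi_x$ identifies the generic points of $\Spa(\tilA_x,\tilA_x^+)$ (one per irreducible component, each restricting to a rank-one valuation of $\wkx$) with the unbounded generic points of $\Spa(\wkx,\tilA_x^+)$ that have no classical specialization there, and matches specialization chains: a classical point $y\in\Spa(\tilA_x,\tilA_x^+)$ specializing the generic point $\xi$ of its component maps to $v_y=v_\gtm\circ y$ which specializes $\varphi_x(\xi)$, and this sets up a bijection of the poset of points compatible with the Zariski topology on $\Spec(\tilA_x)$. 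Since $\Spa(\tilA_x,\tilA_x^+)$ has its topology generated by rational subsets $\{|f|\le|\varpi|\ne 0\}$ and these correspond under $\varphi_x$ to the sets cutting out the same branches on the target, openness follows. The one genuinely delicate check is that no two distinct classical points of $\Spa(\tilA_x,\tilA_x^+)$ collapse and no point of the target image is missed — this amounts to the statement that branches of $\calC$ (equivalently closed points of $\calC^\nor=\Spec$ of the normalization of $\tilA_x$ in $\wkx$) biject with closed points of $\Spec(\tilA_x)$, which holds precisely because $\tilA_x^+$ was defined as the image of $A^+$ and hence its normalization picks up exactly these branches; I would spell this out using the compatibility with localization so that it suffices to check it Zariski-locally on $\calC$, where it is the classical normalization statement.
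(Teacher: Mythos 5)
Your global-to-affinoid reduction matches the paper, but the heart of your argument rests on two misidentifications which create a genuine gap. You assert that the points of $\Spa(\wkx,\tilA_x^+)$ whose valuation is nontrivial on $\wks$ are exactly the unbounded points, i.e.\ the compositions $v_\gtm\circ y$ attached to branches. That is false: $\tilC^x\setminus C_s$ also contains a large supply of bounded points nontrivial on $\wks$ -- for instance the divisorial (generalized Gauss), cut, unibranch and infinitesimal points of $C_{s'}$ specializing $x$ for proper specializations $s'\prec s$ -- and these are precisely the images of the generic (non-classical) points of $\tilC^x_\an$. Correspondingly, your picture of the source as having only finitely many generic points, one per irreducible component, ``restricting to the unique rank-one valuation'', is the picture of $\Spec(\tilA_x)$, not of the adic curve $\Spa(\tilA_x,\tilA_x^+)$ over the microbial field $(\wks,\widetilde{k(s)}^+)$, which has a whole tree of non-classical points. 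As a result your surjectivity argument treats only the unbounded part of the target and silently discards the rest through the false equivalence ``nontrivial on $\wks$ $\Leftrightarrow$ unbounded''. (Also, the locus of points trivial on $\wks$ is not ``a copy of $\tilS^s$'': it consists of $x$ together with the infinitesimal points of $C_s$ specializing $x$, one for each closed point of the reduction curve.) The paper's proof is organized around exactly the dichotomy you are missing: bounded valuations in $\tilC^x\setminus C_s$ are hit by generic points of $\tilC^x_\an$ (a valuation with zero kernel on $\tilA_x$ extends uniquely to $\wkx$), while unbounded ones are of the form $v_\gtm\circ y$ and are hit by classical points.

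The topological step also does not go through as written. Matching specialization posets ``compatible with the Zariski topology on $\Spec(\tilA_x)$'' is too coarse for these spaces, and the subsets $\{|f|\le|\varpi|\neq 0\}$ neither generate the topology of the source nor are shown to correspond to anything specific on the target. The efficient argument, which is the one in the paper, is: the topology of $\Spa(\wkx,\tilA_x^+)$ is generated by the subsets $\Spa(\wkx,\tilA_x^+[\alpha_1,\ldots,\alpha_n])$ with $\alpha_i\in\wkx$; the $\varphi_x$-preimage of such a subset is $\Spa(\tilA_x[\alpha_1,\ldots,\alpha_n],\tilA_x^+[\alpha_1,\ldots,\alpha_n])$; and subsets of this form are a basis of the topology of $\Spa(\tilA_x,\tilA_x^+)$. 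Combined with bijectivity onto $\tilC^x\setminus C_s$ this gives the homeomorphism at once. Finally, your closing claim that branches biject with closed points of $\Spec(\tilA_x)$ conflates points with their supports: both classical points of $\tilC^x_\an$ and unbounded points of $\tilC^x$ are given by a maximal ideal $\gtm$ together with a further valuation $y$ on its residue field, and the bijection is $\gtm$-wise given by $y\mapsto v_\gtm\circ y$; no normalization statement is needed.
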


\begin{proof}
 Without loss of generality we may restrict to the affinoid case $C = \Spa(A,A^+)$.
 By construction $\varphi_x$ maps generic points to bounded valuations and classical points to unbounded valuations.
 From this and the description of the map it is clear that~$\varphi_x$ is injective.
 Also surjectivity follows by treating bounded and unbounded valuations separately and using that unbounded valuations are of the form $v_\gtm  \circ y$ as above.
 Once~$y$ is a nontrivial valuation, it defines a classical point of~$\tilC^x_\an$.

 The topology of $\Spa(\widetilde{k(x)},\tilA_x^+)$ is generated by subsets of the form
 \[
  \Spa(\widetilde{k(x)},\tilA_x^+[\alpha_1,\ldots,\alpha_n])
 \]
 for $\alpha_1,\ldots,\alpha_n \in \widetilde{k(x)}$.
 Its preimage in $\Spa(\tilA_x,\tilA_x^+)$ is
 \[
  \Spa(\tilA_x[\alpha_1,\ldots,\alpha_n],\tilA_x^+[\alpha_1,\ldots,\alpha_n]).
 \]
 the subsets of this form constitute a basis of the topology of $\Spa(\tilA_x,\tilA_x^+)$, hence~$\varphi_x$ is a homeomorphism.
\end{proof}

\begin{cor} \label{specializations_as_curve}
 Let~$C$ be a smooth curve over~$S$ and $x \in C$ a divisorial point mapping to $s \in S$.
 Then $\ox \setminus C_s$ is naturally homeomorphic to a smooth microbial curve over $(\widetilde{k(s)},\widetilde{k(s)}^+)$.
\end{cor}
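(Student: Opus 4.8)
The plan is to read the corollary off from the proposition above: Lemma~\ref{vsepclem} identifies $\ox$ with $\tilC^x$, the proposition identifies $\tilC^x\setminus C_s$ with the microbial curve $\tilC^x_\an$, and the smoothness hypothesis on $C$ enters only at the very end, to see that this curve is smooth. In detail, since $(k,k^+)$ is microbial, $C$ has no horizontal specializations by Lemma~\ref{horizontallem}(i), so every specialization of $x$ is vertical, i.e.\ $\ox=\ox_v$; by Lemma~\ref{vsepclem} the latter is naturally homeomorphic to $\tilC^x$. Under this homeomorphism a point $z\in\ox$ corresponds to a valuation ring $V_z$ of $\wkx$ containing $\tilA_x^+$, and one checks — a direct valuation-theoretic computation using $k(x)^+\cap k=k(s)^+$ — that $z$ lies in $C_s$ if and only if $V_z$ restricts to the trivial valuation of $\wks$: the image $s_z$ of $z$ in $S$ is always a specialization of $s=s_x$, and the equality $k(z)^+\cap k=k(s)^+$ translates exactly into $\wks\subseteq V_z$ and $\wks^\times\subseteq V_z^\times$. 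This is precisely the description of $\tilC^x\setminus C_s$ appearing in the proposition, so it applies and produces a natural homeomorphism $\ox\setminus C_s\xrightarrow{\ \sim\ }\tilC^x_\an$, a microbial curve over $(\wks,\wks^+)$. (If $s$ is the closed point of $S$, then $\ox\subseteq C_s$ and both sides are empty, so there is nothing to prove.)

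It then remains to upgrade ``microbial curve'' to ``smooth microbial curve'', and this is the only genuine content of the corollary. Working locally, with $C=\Spa(A,A^+)$ and $A$ a smooth $k$-algebra of relative dimension one, I would argue that smoothness of $C\to S$ forces the reduction curve $\tilC_x$, whose function field is $\wkx$, to be smooth over $\wks$ — equivalently, that $\wkx/\wks$ is separably generated of transcendence degree one and the relevant affinoid model is regular. Granting this, $\tilC^x_\an$ is a smooth microbial curve over $(\wks,\wks^+)$, which finishes the proof.

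The hard part is exactly this transport of smoothness, from $C$ to its reduction along the divisorial valuation $x$. In Berkovich geometry the analogous statement is the familiar one that the reduction of a type~$2$ point of a smooth curve is a smooth $\tilk$-curve; here two new difficulties arise: $\wks$ need not be perfect, so one must genuinely produce \emph{separability} of $\wkx/\wks$ rather than mere regularity, and $\wks^+$ may have higher rank, so formal models over a complete discrete valuation ring are unavailable. I would deal with both by base changing to $\oS=\Spa(\whka)$: by Lemma~\ref{geomtypelem}(ii) a preimage $y$ of $x$ in $\oC$ is again divisorial, its image $s_y\in\oS$ has algebraically closed residue field $\wt{k(s_y)}$, so separability of $\wt{k(y)}/\wt{k(s_y)}$ is automatic, and smoothness then descends to $C$ along the faithfully flat map $(\wks,\wks^+)\to(\wt{k(s_y)},\wt{k(s_y)}^+)$, provided one knows that forming the reduction along a divisorial point commutes with this base change. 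The leftover verifications — finite generation of $\tilA_x^+$ over $\wks^+$, and the said base-change compatibility — are routine in spirit, but are exactly the points where the higher-rank setting demands care.
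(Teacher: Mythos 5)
Your first paragraph is exactly the paper's (implicit) argument: the corollary is stated as an immediate consequence of the preceding proposition, i.e.\ one identifies $\ox$ with the space $\tilC^x$ of vertical specializations (Lemma~\ref{vsepclem}, there being no horizontal specializations in the microbial setting of this subsection), and the proposition already identifies $\tilC^x\setminus C_s$ -- described there precisely as the locus where the valuation is nontrivial on $\wks$ -- with $\tilC^x_\an$ via $\varphi_x$. So for the homeomorphism your route coincides with the paper's, including the small verification you redo of ``$z\in C_s$ iff trivial on $\wks$'', which the paper builds into the statement of the proposition; your parenthetical about the closed point is also consistent with the paper's standing assumptions.

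Where you diverge is the word ``smooth''. The paper gives no argument for it at the level of the corollary: regularity of $\tilC^x_\an$ is effectively asserted already in the construction of $\varphi_x$, where the local rings $(\tilA_x)_\gtm$ at classical supports are declared to be discrete valuation rings, and no discussion of separability or of smoothness over the (possibly imperfect, higher-rank) base $(\wks,\wks^+)$ appears anywhere. Your base-change-to-$\whka$-and-descend strategy therefore has no counterpart in the paper, and as written it is not yet a proof: the two steps you yourself flag -- separable generation of $\wkx/\wks$ (which, as you note, is genuinely more than regularity over an imperfect $\wks$) and compatibility of forming $\tilA_x^+$ with the base change from $C$ to $\oC$ -- are exactly where the content would lie, and they are left as ``routine in spirit''. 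In short: for what the paper actually argues, your proposal matches it; for the smoothness assertion you have correctly identified a point the paper glosses over, but your proposal stops at a plan rather than closing it, so taken on its own terms the ``smooth'' half remains a gap.
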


\subsubsection{Divisorial points}
A point $y \in \overline{x} \setminus C_s$ is divisorial as a point of $\tilC^x_\an$ if and only if it is divisorial as a point of~$C$.
The reason is that as a point of~$C$ the point~$y$ corresponds to a valuation that is the composite of~$x$ with the valuation of $\widetilde{k(x)}$ corresponding to~$y$ viewed as a point of~$\tilC^x_\an$.
A composite valuation is divisorial if and only if its constituents are divisorial.
Since~$x$ is divisorial, the desired equivalence follows.

%
%
%
%
% \subsection{Illustrations}
%
% \subsubsection{A fractal structure}
% As a consequence of Lemma \ref{vertcurve}, if $S$ is of height at least two, then one can describe $C$ in terms of curves over valuation rings of smaller height. This will not be used in the sequel, and for the sake of illustration we only consider the cleanest case, when $f\:C\to S$ is proper and $k^+$ is henselian. If $s\in S$ is a point, $S_s=\Spa(k(s)^+)$ and $C_s=C\times_SS_s$, then on the level of sets $C$ is obtained from $C_s$ by replacing each non-divisorial point $x\in f^{-1}(s)$ by the chain $\tilS^s$ and replacing each divisorial point $x\in f^{-1}(s)$ by the set of bounded generic points of an integral proper $\tilS^s$-curve $\tilC$ such that $k(\tilC)=\wkx$. Indeed, this follows directly from the above lemma with a bit of care to avoid double counts -- a point $z\in C\setminus C_s$ has two generizations $y\prec x$ in $f^{-1}(s)$ if and only if $z$ is an asymmetric cut point, and then $y$ is the infinitesimal generization of $z$ and $x$ is the divisorial generization of $y$. Clearly this happens if and only if $y$ and hence also $z$ are unbounded points of $\tilC$.\Michael{Pictures to be added.}

\section{Metric structure of adic curves}\label{metricsec}
In this section we will mainly study the structure of a single fiber $C_s$ over $s\in S$. Up to a localization of $S$ we can (and often will) assume that $s$ is the closed point. Specializations in $C_s$ describe only a small amount of the structure of $C_s$ - they parameterize branches at divisorial points. A much more informative tool is a $\Gamma_{s,\QQ}$-graph structure that we will construct. In this section it will be natural to switch between additive and multiplicative valuations. The general principle is that we prefer to work additively once pl geometry, metric on skeletons, etc. shows up.

\subsection{Pseudo-adic spaces}
Recall that a pseudo-adic space is a pair $(X,T)$ consisting of an adic space and a proconstructible (in particular, spectral) and convex subset $T$. When no ambiguity is possible, we will use only $T$ to denote the space. For example, a pseudo-adic point $(S,s)$ will often be denoted by $s$ and the fiber $(C,C_s)$ over $s$ will be denoted $C_s$. A pseudo-adic subspace $T\subseteq C_s$ is called {\em constructible} if $T$ is the $s$-fiber of a constructible subset of $C$. In fact, we will only work with such pseudo-adic spaces.

\subsection{The projective line}
In this subsection we are going to describe the structure of the projective line $\PP^1_s=(\PP^1_S,\PP^1_s)$ over a geometric pseudo-adic point $s=(S,s)$. We fix a coordinate $t$, so this space can be obtained, for example, by gluing the unit $S$-discs $\Spa(k\langle t\rangle,k^+\langle t\rangle)$ and $\Spa(k\langle t^{-1}\rangle,k^+\langle t^{-1}\rangle)$ with centers at 0 and infinity and taking the $s$-fiber. The strategy and the outcome are very analogous to the description of Berkovich's affine line in \cite[\S1.4]{berbook}.

\subsubsection{Classical points}
The points of $\AA^1_s=\PP^1_s\setminus\{\infty\}$ are nothing else but (bounded) semivaluations on $k[t]$ that extend the valuation of $k$, where "bounded" refers only to the microbial case (in adic geometry one calls them valuations even if the kernel is non-trivial). Giving a classical $x$ is equivalent to giving a maximal ideal $m_x\subset k[t]$ and an extension of the valuation of $k$ to $k(x)=k[t]/m_x$. Giving a generic point $x$ is equivalent to extending $|\ |_k$ to a (bounded) $k$-valuation $|\ |_x$ on $k(t)$. In non-archimedean geometry one often uses the notation $|f(x)|$ instead of $|f|_x$, meaning that $f(x)\in k(x)$ and the valuation is that of $k(x)$, but we find the former notation more convenient, at least for this paper.

%\subsubsection{$k$-split points}
%hA point $x\in\PP^1_s$ is called {\em $k$-split} if it is uniquely determined by the inequalities of the form $|t-a|_x \mathop{\lessgtr} r$ it satisfies, where $r\in\Gamma_k^\QQ\cup\{0\}$ and $\mathop{\lessgtr} \in\{<,\le,>,\ge\}$. Slightly informally, such a valuation is determined by its restriction onto the set of linear polynomials. In particular, it is easy to see that a classical point is $k$-split if and only if it is a $k$-point, and if $k$ is algebraically closed, then any point is $k$-split. We will first give explicit examples of $k$-split points and then show that this yields their full classification.

\subsubsection{Generalized Gauss valuations}
A $k$-semivaluation $|\ |_x$ on $k[t]$ is called {\em $t$-monomial} if $|\sum_i{c_it^i}|_x=\max_i|c_i|r^i$, where $r=|t|_x$ is the {\em radius} of the valuation. In particular, $r=1$ yields the classical Gauss extension of valuations. In general, the radius is not an element of $|k^\times|$ and it should be interpreted as a ranger, see below.

The set of all (bounded) $t$-monomial valuations is totally ordered by the domination relation and will be denoted $(0,\infty)_s$. Naturally, the subset of $\PP^1_s$ obtained by adding the classical points $0$ and/or $\infty$ to $(0,\infty)_s$ will be denoted $[0,\infty]_s$, $[0,\infty)_s$ and $(0,\infty]_s$, and we will omit $s$ when this cannot cause confusions.

A $t$-monomial semivaluation is determined by the restriction of $|\ |_x$ onto the monoid $k^\times\oplus t^\NN$. In case of valuations the latter extends to a homomorphism $k^\times\oplus t^\ZZ\onto\Gamma_x$ and, conversely, each such homomorphism onto an ordered group gives rise to a generalized Gauss valuation via the max formula. Therefore in the discrete case we obtain an ordered bijection $r\:(0,\infty)_s\toisom\cR_{|k^\times|}$, and, since this isomorphism respects boundedness of valuations and rangers, $r\:(0,\infty)_s\toisom\cR^b_{|k^\times|}$ in the microbial case. In particular, the {\em radius $r_x=r(x)$} of the generalized Gauss valuation $|\ |_x$ is naturally a ranger on $|k^\times|$.

\subsubsection{$k$-split discs}
More generally, for any $a\in k$ we denote by $[a,\infty]_s$ the translation of $[0,\infty]_s$ by $a$. Its finite points are {\em $t_a$-monomial} with respect to the coordinate $t_a=t-a$, and we use the notation $|\ |_{a,r}$ to denote the point of radius $r$ around $a$.

By a {\em $k$-split disc} $D_s(a,r)$ of radius $r\in\cR_{|k^\times|}$ with center at $a$ we mean the pseudo-adic subspace of $\PP^1_s$ defined by all inequalities of the form $|t_a|_x<\gamma$ and $|t_a|_x\le\gamma'$ with $\gamma,\gamma'\in|k^\times|^\QQ$ such that $\gamma<r$ and $\gamma'\le r$. By a slight abuse of language we will encode this set of inequalities by a single inequality $|t_a|_x\le r$ with the ranger. Note also that the inequality $|t_a|\le \gamma^-$ is equivalent to the single inequality $|t_a|<\gamma$, and any other inequality $|t_a|\le r$ can be encoded only by non-strict inequalities $|t_a|\le \gamma_i$ with $\gamma_i\in|k^\times|^\QQ$.

Clearly, $|\ |_{a,r}$ is the maximal point of $D_s(a,r)$, and $D_s(a,r)=D_s(a',r')$ if and only if $r=r'$ and $|a-a'|\le r$, In particular, $|\ |_{a,r}=|\ |_{a',r'}$ if and only if these conditions hold, or, in other words, $[a,\infty]_s\cap[a',\infty]_s$ is identified with the $|k^\times|$-subsegment $[|a-a'|,\infty]_s$ in both $|k^\times|$-segments.

\subsubsection{$k$-split unibranch points}
Finally, one can also define analogs of Berkovich's points of type 4. They are obtained as intersections of $k$-split discs as stated in the following lemma. Such points will be called {\em $k$-split unibranch}.

\begin{lem}\label{type4lem}
Assume that $\calD=\{D_i\}_{i\in I}$ is a family of pairwise non-disjoint $k$-split discs such that $\cap_i D_i$ contains no classical points. Then the intersection $\cap_i D_i$ consists of a single point $|\ |_\calD$.
\end{lem}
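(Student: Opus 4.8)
The plan is to mimic the Berkovich proof that a decreasing family of discs either has nonempty intersection containing a point, or intersects in a type~4 point. The key structural input is that each $D_i = D_s(a_i, r_i)$ is the set of $x$ with $|t_{a_i}|_x \le r_i$ (in the ranger sense), and that its maximal point is the generalized Gauss valuation $|\ |_{a_i, r_i}$, with $[a_i,\infty]_s$ a $|k^\times|^\QQ$-segment; pairwise non-disjointness says that for any $i,j$ the discs are nested, i.e.\ after relabelling $D_i \subseteq D_j$ or $D_j \subseteq D_i$, and in particular the centers satisfy $|a_i - a_j| \le \max(r_i, r_j)$ while the radii are comparable as rangers. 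So the family is totally ordered by inclusion, and we may as well think of it as a decreasing net of nested discs.

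First I would construct the candidate point $|\ |_\calD$ directly as a $k$-semivaluation on $k[t]$. For a polynomial $f \in k[t]$, expand $f$ around $a_i$ as $f = \sum_j c_{j}^{(i)} t_{a_i}^j$ and set $|f|_{a_i, r_i} = \max_j |c_j^{(i)}| r_i^j$, a value in $\cR_{|k^\times|}$ (or rather in the value group of the ranger $r_i$). Because the discs are nested, for $D_i \subseteq D_j$ one has $|f|_{a_i, r_i} \le |f|_{a_j, r_j}$: indeed $|\ |_{a_i,r_i}$ lies in $D_j$, hence is dominated by its maximal point $|\ |_{a_j,r_j}$. Thus $\{|f|_{a_i,r_i}\}_i$ is a decreasing net in the totally ordered set $\cR_{|k^\times|}$ of rangers, which is quasi-compact by Theorem~\ref{segmenttheor}(i); so the net converges, and I would define $|f|_\calD$ to be its limit (equivalently, the cut/ranger it determines). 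One checks multiplicativity and the ultrametric inequality pass to this limit — each is a closed condition in the product topology on rangers — so $|\ |_\calD$ is a genuine $k$-semivaluation on $k[t]$, hence a point of $\AA^1_s$, and boundedness in the microbial case is likewise inherited. By construction $|\ |_\calD$ lies in every $D_i$ (it satisfies $|t_{a_i}| \le r_i$ as a limit of points doing so), so $\cap_i D_i \ne \emptyset$.

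Next I would show $\cap_i D_i = \{|\ |_\calD\}$. Suppose $y \in \cap_i D_i$ is any point. Since $y \in D_i$ and $|\ |_{a_i,r_i}$ is the maximal point of $D_i$, we get $|f|_y \le |f|_{a_i, r_i}$ for all $f$ and all $i$, hence $|f|_y \le |f|_\calD$ in the limit. Conversely, if the inequality were ever strict for some $f$, then — using that the radii $r_i$ shrink — one finds an $i$ and a $\gamma \in |k^\times|^\QQ$ with $|f|_y < \gamma \le |f|_{a_i,r_i}$ realized already at finite stage, so $y$ lies in the proper subdisc of $D_i$ cut out by $|f| < \gamma$ that excludes $|\ |_{a_i,r_i}$; iterating across the net this forces a radius strictly below all $r_i$, and then the intersection $\cap_i D_i$ would already be empty or contain classical points in the relevant subdisc, contradicting the hypotheses. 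Hence $|f|_y = |f|_\calD$ for all $f$, i.e.\ $y = |\ |_\calD$.

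The main obstacle I expect is the last paragraph: ruling out that $\cap_i D_i$ contains a point other than $|\ |_\calD$ without accidentally using that the ground field is maximally complete or that $|k^\times|$ has nice cofinality. The clean way is probably the contrapositive via the structure of $[a,\infty]_s \cong \cR_{|k^\times|}$: any two distinct points of $\AA^1_s$ contained in a common disc $D$ are separated inside $D$ by a point of some segment $[a,\infty]_s$, hence by a $k$-split subdisc of $D$; so if $\cap_i D_i$ had two points it would contain a $k$-split subdisc $D' \subsetneq D_i$ for some $i$, and then either $D'$ contributes a smaller member violating minimality of the chain's "limit radius", or the family $\calD \cup \{D'\}$ still has pairwise non-disjoint members with $\cap$ containing $D'$ — but $D'$, being a genuine disc, contains classical points, contradicting the hypothesis that $\cap_i D_i$ has none. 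This reduces the whole statement to the single clean fact that a nonempty $k$-split disc always contains a classical point (which follows since $a \in D_s(a,r)$ is classical), and to the quasi-compactness of $\cR_{|k^\times|}$ for existence; the bookkeeping of rangers versus honest elements of $|k^\times|^\QQ$ in the defining inequalities is the only genuinely fiddly part.
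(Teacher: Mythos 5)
The existence step of your argument has a genuine gap, and it sits exactly where the hypothesis that $\cap_iD_i$ contains no classical points must do its work. You define $|f|_\calD$ as the limit of the decreasing net $|f|_{a_i,r_i}$ in the quasi-compact space $\cR_{|k^\times|}$ and assert that multiplicativity and the ultrametric inequality ``pass to the limit'' as closed conditions. But $\cR_{|k^\times|}$ is only a compactified ordered set: it carries no multiplication, so the condition $|fg|=|f|\,|g|$ is not even expressible for ranger-valued limits, and in higher rank the infimum of a decreasing chain of valuations need not be a valuation. Indeed the values of a single $|\ |_{a_i,r_i}$ already live in an extension such as $|k^\times|\oplus r_i^\ZZ$, and distinct elements of such an extension can induce the same cut on $|k^\times|$ (e.g. $r$ and $2r$, additively, for an asymmetric cut $r$), so recording only the induced cuts destroys multiplicativity; the rank-one Berkovich argument you are imitating works because $[0,\infty]$ is a topological monoid, which is precisely what fails here. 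The paper closes this gap by a stabilization argument, and this is where the hypothesis enters: since the intersection has no classical points, for each $f\in k[t]$ a small enough $D_i$ contains no root of $f$, and writing $f=\sum_l a_l(t-a_i)^l$ one gets $|a_0|>|a_l|r_i^l$ for $l>0$, hence $|f|\equiv|a_0|\in|k^\times|$ on all of $D_i$. So the infimum is attained in $|k^\times|^\QQ$ and $|\ |_\calD$ is an honest valuation because for each $f$ its value agrees with that of an actual point. Your construction never uses the hypothesis, which is a warning sign: without it the statement is false (the intersection can be a larger disc), so the hypothesis must already be used for existence, not only for uniqueness.

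The uniqueness step is also weaker than what the paper does: the same stabilization shows that $|f|_x$ is the same for every $x\in\cap_iD_i$ and every $f$, so uniqueness is immediate. Your alternative route --- two points of the intersection would be separated by a $k$-split subdisc, which then lies in the intersection and contains a classical point --- is a reasonable idea, but the key containments (that the whole subdisc below the ``join'' of the two points lies in every $D_i$, and that it contains a classical point) are not justified, and they lean on the tree and join structure of $\PP^1_s$ that the paper establishes only afterwards, in Theorem~\ref{classth}, using this very lemma; as written the reduction is circular. Also note that a locus $|f|<\gamma$ for a general polynomial $f$ is not a subdisc, so the ``iterating across the net'' step of your third paragraph does not go through as stated.
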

\begin{proof}
In any pair of non-disjoint discs, one of them contains the other one, hence this family is a chain and the maximal points $|\ |_i$ of these discs form a chain with respect to the domination, where $x\ge y$ if $|f|_x\ge|f|_y$ for any $f\in k[t]$. For each $i$ choose a presentation $D_i=D(a_i,r_i)$. Each $D_i$ contains a strictly smaller disc $D_j$, hence $D_i=D(a_j,r_i)$ and we can replace $D_i$ by any $D_i(a_j,r)$ with $r_j<r<r_i$ without altering the intersection. In particular, we can achieve that each radius $r_i$ lies in $|k^\times|^\QQ$.

The intersection contains no classical points, hence for any $f(t)\in k[t]$ a small enough disc $D_i$ contains no roots of $f$. Taking the presentation $f=\sum_l a_l(t-a_i)^l$ we obtain that $|a_0|>|a_l|r_i^l$ for any $l>0$, and hence $|f|_x=|a_0|$ for any point $x\in D_i$. This stabilization implies that $|\ |_\calD=\inf_i|\ |_i$ is well defined as a valuation on $k[t]$ with values in $|k^\times|^\QQ$. It remains to observe that by the above argument for any $f\in k[t]$ the value of $|f|_x$ is the same for any $x\in\cap_iD_i$, and hence the intersection $\cap_iD_i$ consists of the single point $|\ |_\calD$.
\end{proof}

The following example shows that one cannot weaken the condition about classical points by considering $k$-points only.

\begin{exam}
If $k$ is henselian but not defectless, then there exists $\alp\in k^a$ such that $\inf_{c\in k}|\alp-c|$ is not attained. In such a case, the set $\{D_i\}_{i\in I}$ of all $k$-splits discs containing $\alp$ is such that $\cap D_i$ contains no $k$-points, but contains the classical point corresponding to $\alp$. So, this family is a whole (non-split) disc, rather than a point.
\end{exam}

\subsubsection{Classification of points in the geometric case}
We have established enough examples of points to describe $\PP^1_s$ in the geometric case.

\begin{theor}\label{classth}
Assume that the affinoid field $k$ is algebraically closed. Then any point $x\in\PP^1_s$ falls within one of the following cases:

(1) Classical points: $x=a$ (resp. $x=\infty$) is a $\hatk$-point. In this case, $|f|_x=|f(a)|_k$ and $x$ is given by a condition $|t-a|=0$ (resp. $|1/t|=0$).

(2) Divisorial points: $x=p_{a,r}$ with $r\in|k^\times|$. In this case, $\wkx=\tilk(u)$, $|k(x)^\times|=|k^\times|$, and $x$ is given by the conditions $|t-a|=r$ and $|t-a-cc_i|=r$, where $|c|=r$ and $\{c_i\}$ is a subset of $k^+$ which is mapped surjectively onto $\tilk$.

(3) Cut points: $x=p_{a,r}$, where $r\in\cR_k$ is a cut. In this case, $\wkx=\tilk$, $|k(x)^\times|=|k^\times|\oplus r^\ZZ$ and $x$ is determined by the conditions $r_i<|t-a|<r'_i$, where $r_i,r'_i\in|k^\times|$ are such that $r_i<r<r'_i$.

(4) Unibranch points: $x=\cap_iD_i$, where $D_i=D(a_i,r_i)$. In this case, $\wkx=\tilk$, $|k(x)^\times|=|k^\times|$ and $x$ is determined by the conditions $|t-a_i|\le r_i$.

(5) Infinitesimal points: $x=p_{a,r}$, where $r=\gamma^+$ (resp. $r=\gamma^-$) is an infinitesimal ranger. In this case, $\wkx=\tilk$, $|k(x)^\times|=|k^\times|\oplus r^\ZZ$ and $x$ is determined by the conditions, $|t-a|>\gamma$ and $|t-a|\le r_i$ for $r_i\in|k^\times|$ with $r_i>\gamma$ (resp. $|t-a|<\gamma$ and $|t-a|\ge r_i$ for $r_i<\gamma$).

(6) In the discrete case, unbounded points: $x=p_{a,r}$, where $r\in\{-\infty,\infty\}$ is an unbounded ranger. In this case, $\wkx=\tilk$, $|k(x)^\times|=|k^\times|\oplus r^\ZZ$ and $x$ is given by the conditions $0\neq |t-a|<r$ for any $r\in|k^\times|$ or $|1/t|\neq 0$ and $|t|>r$ for any $r\in|k^\times|$.
\end{theor}
\begin{proof}
It suffices to consider the case of $x\neq\infty$, so $|\ |_x$ is a $k$-semivaluation on $k[t]$. Consider the family of all discs $D_i=D(a_i,r_i)$ containing $x$, where $r_i$ is a ranger of the form $s_i$ or $s_i^-$ with $s_i\in\Gamma_k$. If these discs do not have a common classical point, then by Lemma~\ref{type4lem} we are in case (4). In particular, for any $f\in k[t]$ there exists $c\in k$ such that $|f-c|_x<|f|_x$, and hence $k(x)$ and $k$ have the same residue field and group of values.

Assume now that such an $a\in k$ exists, so we can present each disc as $D(a,r_i)$. Clearly $\{r_i\}$ is a convex set of rangers unbounded from above. If this set is empty or unbounded from below, then the point is unbounded. In particular, $k$ is discrete, and we are in the situation of (6). Otherwise we have that $\cap_i D_i=D(a,r)$, where $r=\inf_i r_i$. Since the value of each $|t-c|_x$ with $c\in k$ is determined precisely by the conditions $|a-c|\le r_i$ we have that $x$ is the maximal point of $D(a,r)$, that is $x=p_{a,r}$.

Let us describe the conditions which cut off $x$ from the disc. For any other point $y\in D(a,r)$ there exists $c\in k$ such that $|t-c|_y<|t-c|_x$ and hence $y$ is contained in a smaller disc with center at $c$, so $p$ is obtained by removing all discs of smaller radii. It remains to note that if $r$ is not principal, then any classical point is contained in a smaller disc with center at $a$, hence it suffices to impose conditions $|t-a|>r'$ or $|t-a|\ge r'$ for $r'\in\Gamma_k$ with $r'<r$. And if $r\in\Gamma_k$, we should remove all distinct discs of radius $r^-$, and this yields the description in (2).

Finally, $t_a^\NN$ is an orthogonal $k$-basis of $k[t]$ with respect to $|\ |_x$, and if $|t_a|=|c|\in\Gamma_k$, we can make it even orthonormal by replacing $t_a$ by $t_a/c$. It easily follows that $|k(x)^\times|=|k^\times|r^\ZZ$ with $r=|t_a|$. Also, $\wkx=\tilk(u)$ for $u=\wt{t_a/c}$ in case (2), and $\wkx=\tilk$ otherwise.
\end{proof}

%\subsubsection{Topology}

\subsection{Geometric metric on projective lines}\label{geomsec}
In the sequel we will mainly consider the microbial case, and only remark which (minor) changes are required in the discrete case.

\subsubsection{Sheaves of values}
We saw in Section \ref{orderedsec} that the most economical and conceptual way of encoding the $\Gamma$-metric structure is through sheaves of ordered groups or corresponding valuation monoids. Adic spaces naturally possess such sheaves: the {\em sheaf of absolute values} $|\calO_X^\times|:=\calO_X^\times/(\calO_X^+)^\times$, whose sections are of the form $|f|$ with an invertible function $f$ and stalks are $|\calO_X^\times|_x=|k(x)^\times|$ and its additive version $\Gamma_X=\log(|\calO_x^\times|)$ whose sections should be thought of as functions of the form $\log|f|$ and whose stalks are $\Gamma_{X,x}=\Gamma_x$. Naturally, we call $\Gamma_X$ the {\em sheaf of values} of $X$. We also consider the subsheaves of valuation monoids $|\calO_X^\times|^+=(\calO_X^+\cap\calO_X^\times)/(\calO_X^+)^\times$ and its additive counterpart $\Gamma_X^+=-\log(|\calO_X^\times|)$.

\subsubsection{Geometric sheaves of values}\label{geomsheaf}
Sheaves of values are preserved by automorphisms of the adic space, hence given an $S$-adic space $X$ with $\oX=X\times_S\oS$, the sheaves $\Gamma_\oX^+\subseteq\Gamma_\oX$ descend to sheaves on $X$ that we denote $\oGamma_X^+\subseteq\oGamma_X$ and call the {\em geometric sheaves of values} of $X$. Looking at stalks one easily sees that $$\Gamma_X\subseteq\Gamma_{k,\QQ}+\Gamma_X\subseteq\oGamma_X\subseteq\Gamma_{X,\QQ}.$$ For a subset $T\subseteq X$ we denote by $\oGamma_T=\oGamma_X|_T$, $\Gamma^+_T=\Gamma^+_X|_T$, etc. the restrictions of the corresponding sheaves to $T$.

\subsubsection{The geometric case}
We start with the projective line in the geometric case.

\begin{theor}\label{geomtreeth}
Assume that $k$ is algebraically closed and microbial. Then $X=\PP^{1}_s$ together with $\Gamma^+_X$ forms a pinched $\Gamma_k$-tree, whose vertices are divisorial points, edge points are cut and infinitesimal points, and pinched leaves are classical and unibranch points.
\end{theor}
\begin{proof}
Let us consider first the subset $T=\PP^{1,\mon}_s$ of all monomial (i.e. generic non-unibranch) points. For any two points $x_1,x_2\in T$ we can choose a coordinate $t$ on $\PP^1_S$ such that $x_1,x_2\in[0,\infty]_s$ and $x_1<x_2$. Thus, $x_i=p_{r_i}$ are $t$-monomial of radii $r_i\in\cR_{|k(s)^\times|}$ and $T$ contains the interval $I=[x_1,x_2]$ homeomorphic to the $\Gamma_k$-segment $J=[\log r_1,\log r_2]$. For simplicity, we will identify $I$ and $J$ as topological spaces. Each $\Gamma_k$-pl function on $J$ is locally linear of the form $\gamma+nt$ with $\gamma\in\Gamma_k$ and $n\in\ZZ$, hence it lifts to a section $at^n$ of $\cO_T^\times$ and we obtain an embedding $\Gamma_J\into\Gamma_T|_I$. Since $\Gamma_k$ is divisible, Lemma~\ref{gammasheaflem} and Theorem~\ref{classth} imply that this map induces an  isomorphism on stalks and hence is an isomorphism (the stalks are isomorphic to $\Gamma_k$ at divisorial points and to $\Gamma_k\oplus r^\ZZ$ otherwise). Finally, each point of $I$ separates $x$ and $y$, hence $T$ is a $\Gamma_k$-tree.

It remains to describe the situation at a classical or unibranch point $x\in X$. Note that $x$ is connected to $\infty$ by a unique interval $[x,\infty]$ where $(x,\infty)$ consists of maximal points of discs containing $x$. Furthermore, $x$ coincides with the intersection of discs containing it, and it follows that $x$ is uniquely connected to any other point $y$ by an interval $[x,y]$ -- go along the interval $[x,\infty]$ to the smallest point dominating $y$ and then descend to $y$. So, indeed $X$ is topologically a $\Gamma$-tree with leaves being the classical and unibranch points. Furthermore, the leaves are pinched because $|f|$ is constant in a neighborhood of a unibranch point and it is constant in a neighborhood of a classical point whenever $f$ is invertible.
\end{proof}

\subsubsection{Explicit metric}\label{metricrem}
As observed in the proof (and the argument applies in general, rather than only to unibranch points), the $\Gamma_k$-metric structure of $\PP^1_s$ can be described as follows. Given $x,y\in\PP^1_s$ the interval $[x,y]$ is obtained by going from $x$ to the minimal point $z$ dominating both $x$ and $y$ and then going down from $z$ to $y$, that is, $[x,\infty]\cap[y,\infty]=[z,\infty]$ and $[x,y]=[x,z]\cup[z,y]$ consists of points which dominate one point, but not the other, and the single point $z$ which dominates both $x$ and $y$. If $x,y$ are divisorial, then $x=p_{a,r}$, $y=p_{b,s}$ and $z=p_{a,l}=p_{b,l}$ with $l=|a-b|$, and the length of $[x,y]$ is $\log(l^2/rs)$.

\subsubsection{Radius function}\label{radiusrem}
The radius function $r\:\PP^{1,\mon}_s\to\cR_{|k(s)^\times|}^b$ extends to a function $r\:\PP^1_s\to\cR_{|k(s)^\times|}$ by defining $r(x)$ to be the infimum of radii of discs containing $x$. In particular, $r(x)=0$ if and only if $x\neq\infty$ is classical. This naively looking function is very useful, but one should remember that, as in Berkovich geometry, it is not continuous but only upper semicontinuous: the preimage of an interval $[0,\gamma)$ is a (huge) union of open discs, hence it is open.

\subsubsection{Unpinching}
One can just enlarge the sheaf $\Gamma^+_X$ by hands so that the $\Gamma$-tree $X=\PP^1_s$ becomes non-pinched, but this is a rather meaningless construction, while the natural question is if such an enlargement can be obtained by a geometric construction on $X$. We discuss some natural constructions below.

\begin{rem}\label{logrem}
(i) The metric at a leaf point $x$ is pinched because any function of the form $|f|$ with invertible $f$ is locally constant at $x$. So one can wonder if one can naturally increase the pool of such functions.

(ii) The solution at classical points is straightforward -- consider also absolute values of non-invertible non-zero functions. A nice way to do this is via log geometry. Let $Z$ be a set of classical points and $M\subseteq\calO_X\setminus\{0\}$ the associated log structure: the sheaf of non-zero functions whose zeros lie in $Z$. Then $\Gamma^+_{(X,M)}=\log(M/(\calO_X^+)^\times)$ provides $X$ with a pinched $\Gamma$-tree structure such that a leaf is not pinched if and only if it lies in $Z$. In particular, the trivial log structure $M=\calO_X^\times$ yields the above definition of $\Gamma_X$, while the choice of the maximal log structure $\calO_X\setminus\{0\}$ yields a $\Gamma$-tree in which all classical points are not pinched.

(iii) As for unibranch points, in the geometric case (i.e. $k=k^a$) one can use the following descent trick: find an immediate extension $K/k$ such that $K$ is algebraically closed and spherically complete. Then there is a natural (only) topological embedding $X\into X_K=X\wtimes_kK$ (in the case of Berkovich spaces this is constructed in \cite[Corollaire~3.7 and 3.14]{Poineau}). This embedding sends unibranch points to cut or infinitesimal points, since $X_K$ contains no unibranch points. Thus, the ``missing absolute values'' can be induced from $X_K$.
\end{rem}

Let us elaborate more about unibranch points. This is a bit speculative, as some examples were not worked out in detail in the literature.

\begin{rem}
(i) Even without the descent trick, there are natural functions, such as logarithms of norms of differential forms and different functions that can have a non-zero constant slope locally at a unibranch $x$ (in fact, they were constructed in Berkovich geometry but the definition can be easily extended to the adic case). For example, $\|dt\|$ is the radius function on $X=\PP^1_s$, so its logarithm has slope one at any unibranch point. In the geometric case, these functions are pl at the unibranch points, as can be shown, for example, using the descent trick.

(ii) However, we will show in subsequent papers that if $k$ is not algebraically closed such functions can have infinitely many corners when approaching a unibranch $x$. So, they are pl on the complement of unibranch points, but do not have to be pl at such points. Probably, no meaningful pl structure can be defined at such points.
\end{rem}

\subsubsection{Arbitrary ground fields}
Now we consider the case of an arbitrary affinoid ground field $k$. Let $S=\Spa(k)$, $\oS=\Spa(\whka)$, $s\in S$ and $\os\in\oS$ the closed points, $X$ an adic $S$-space and $\oX=X\times_S\oS$ the corresponding geometric space. The group $G_{k^h}=\Gal(k^s/k^h)$ preserves the valuation of $k^s$ and hence acts on $\oX_\os$.

\begin{lem}\label{hlem}
On the level of topological spaces $X_s$ is the quotient of $\oX_\os$ by the action of $G$.
\end{lem}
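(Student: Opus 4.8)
The plan is to exhibit a spectral surjection $\pi\colon\oX_\os\to X_s$ whose fibers are exactly the $G$-orbits, and then to check that $\pi$ is a topological quotient map.

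First I would note that $\oS=\Spa(\whka)\to S=\Spa(k)$ is a homeomorphism: the value group $|\whka^\times|$ is the divisible hull of $|k^\times|$ and hence has the same lattice of convex subgroups, so intersecting with $k$ identifies the chain of valuation rings of $\whka$ lying between $\whka^+$ and $\whka^\circ$ with the corresponding chain for $k$, and an order isomorphism of such totally ordered spectral spaces is a homeomorphism. Consequently $\oX_\os$ is just the fiber of $\oX\to X\to S$ over $s$, the map $\pi$ is the restriction of the projection $\oX=X\times_S\oS\to X$, and $\pi$ is spectral and surjective (surjectivity of base change, cf. Lemma~\ref{affspeclem}(i)).

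Next I would identify the fibers of $\pi$ with the $G$-orbits. Fix $x\in X_s$. Since $\kappa(x)$ is complete it contains the completion $\hat k$ of $k$, and $\whka=\widehat{\hat k^a}$ is the completed algebraic closure of the henselian field $\hat k$ (by Krasner's lemma $k^a$ is dense in $\hat k^a$). The topological space of the fiber, namely $\Spa(k(x),k(x)^+)\times_{\Spa(k,k^+)}\Spa(\whka,\whka^+)$, is then in bijection with the set of valued-field $\hat k$-embeddings $\iota\colon\whka\hookrightarrow\widehat{\kappa(x)^a}$. For any such $\iota$ the image $\iota(\whka)$ is the closure of the algebraic closure of $\hat k$ taken inside $\widehat{\kappa(x)^a}$, hence independent of $\iota$; therefore any two embeddings differ by a valuation-preserving automorphism of $\whka$ over $\hat k$, i.e. by an element of $\Gal(\hat k^s/\hat k)=\Gal(k^s/k^h)=G$, and this is exactly the action induced on $\oX_\os$ by the $G$-action on $\whka$. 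So the fibers of $\pi$ are single $G$-orbits, $\pi$ is $G$-invariant, and it induces a continuous bijection $\bar\pi\colon\oX_\os/G\to X_s$.

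It remains to see $\bar\pi$ is a homeomorphism, i.e. that $\pi$ is a quotient map: if $U\subseteq X_s$ has $V:=\pi^{-1}(U)$ open, we must show $U$ is open. Here $V$ is $G$-invariant and open; since $G$ is profinite and acts continuously on the spectral space $\oX_\os$, every quasi-compact open subset has open stabilizer in $G$, hence finite orbit, so $V$ is a union of $G$-invariant quasi-compact opens and we may assume $V$ is one of them. Then $\pi(V)$ is pro-constructible (the image of a constructible set under a spectral map) and stable under generization: given $x\in\pi(V)$ and a generization $x'$ of $x$ in $X_s$, lift $x$ to $\ox\in V$, lift the generization along the flat base change $\oX\to X$ (the ring map $k^+\to\whka^+$ is torsion-free over the valuation ring $k^+$, hence flat, and generizations lift along flat morphisms) to a generization $\ox'$ of $\ox$, and note that $\ox'\in\oX_\os$ automatically because $\oS\to S$ is bijective; then $\ox'\in V$, so $x'\in\pi(V)$. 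A pro-constructible subset of a spectral space that is stable under generization is open, so $U=\pi(V)$ is open. The discrete case is the same with $\whka=k^a$. The main work is the middle step: since $k(x)$ need not be henselian, the torsor structure of the fiber has to be installed on the level of $\kappa(x)$ and $\widehat{\kappa(x)^a}$, and one must check both that $\whka$ is genuinely the completed algebraic closure of $\hat k$ and that the abstract $\Gal$-action on embeddings agrees with the action on $\oX_\os$ built into its definition.
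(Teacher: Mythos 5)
Your outline (identify the fibers of $\pi\:\oX_\os\to X_s$ with $G$-orbits directly, then show $\pi$ is a quotient map by spectral-space arguments) is genuinely different from the paper's proof, which first reduces to a henselian base via h-isomorphisms and then obtains the quotient at finite Galois levels from Huber's openness of $X_l\to X$, passing to the limit. But as written your argument has real gaps. The Galois-theoretic step is only justified in rank one: $\hat{k}$ is the completion for the microbial (rank-one) topology, and when $k^+$ has rank $>1$ the pair $(\hat{k},\hat{k}^+)$ need not be henselian; correspondingly $\Gal(\hat{k}^s/\hat{k})$ is the decomposition group of the rank-one coarsening $\kcirc$, which can be strictly larger than $G=\Gal(k^s/k^h)$ and whose extra elements do not preserve $\whka^+$. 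What your transitivity argument actually needs -- that the automorphisms of $\whka$ over $\hat{k}$ preserving $\whka^+$ are exactly $G$ -- is true, but it must be obtained from the decomposition group of the full valuation, not from ``$\hat{k}$ is henselian''; since the paper's whole point is arbitrary rank, this is not cosmetic. (Also, the fiber is not in bijection with the set of embeddings: for $\kappa(x)=\hat{k}$ the fiber is one point while the embeddings form a $G$-torsor. Only a $G$-equivariant surjection from embeddings onto the fiber holds, which is still enough for transitivity.)

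In the quotient step you quote a false fact: a pro-constructible, generization-stable subset of a spectral space need not be open (in $\Spec(\ZZ)$ take the generic point together with infinitely many, but not all, closed points). The correct criterion is for ind-constructible sets; your argument is repairable because $V$ is $G$-saturated, so $X_s\setminus\pi(V)=\pi(\oX_\os\setminus V)$ is pro-constructible as well, whence $\pi(V)$ is ind-constructible and generization-stable, hence open. Finally, the two auxiliary inputs you assume -- that quasi-compact opens of $\oX_\os$ have open stabilizers (continuity of the profinite action) and that generizations lift along $\pi$ (the flatness argument is scheme-theoretic, and $\pi$ is not locally of finite type) -- are not established; their natural proofs write $\oX$ as a limit of the spaces $X_l$ over finite subextensions of $k^s/k$, which is exactly the mechanism of the paper's proof (reduce to $k=k^h$ by h-isomorphisms, quotient at each finite level using openness of $X_l\to X$, pass to the limit). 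So the proposal is a workable alternative strategy, but it is not yet a proof.
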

\begin{proof}
We set $S^h=\Spa(k^h)$, $X^h=X\times_SS^h$ and denote by $s_h\in S^h$ the closed point.
The natural morphism $(S^h,s_h) \to (S,s)$ of pseudo-adic spaces is a limit of h-isomorphisms in the sense of paragraph~\ref{h_iso_sec} below.
Hence, also its base change $X^h_{s_h} \to X_s$ is a limit of h-isomorphisms by Lemma~\ref{h-isom}.
In particular,~$X^h$ is naturally homeomorphic to~$X$ by Lemma~\ref{h-isom}.
% First, we claim that $X^h_{s_h}=X_s$, where $S^h=\Spa(k^h)$, $X^h=X\times_SS^h$ and $s_h\in S^h$ the closed point.\Michael{Is there a reference?}
Therefore, we can assume that $k=k^h$. Now the claim follows by passing to the limit from the fact that $X$ is the quotient of $X_l=X\otimes l$ by $\Gal(l/k)$ for a finite Galois extension $l/k$ since $X_l\to X$ is an open map by \cite[Proposition~1.7.8]{Huber-book}).
\end{proof}

In particular, the lemma implies that $\PP^1_s$ is the quotient of the closed fiber $\PP^1_\os$ of $\PP^1_{\whka}$ by the action of $G_{k^h}$. We will use the notation $\oGamma_k:=\Gamma_{k,\QQ}=\Gamma_{\whka}$.

\begin{theor}\label{treeth}
Let $k$ be a microbial affinoid field, $S=\Spa(k)$ and $s\in S$ a point. Then $X=\PP^{1}_s$ together with $\oGamma^+_X$ is a pinched $\oGamma_k$-tree, whose vertices are divisorial points, edge points are cut and infinitesimal points, and pinched leaves are classical and unibranch points.
\end{theor}
\begin{proof}
One just combines Theorem \ref{geomtreeth} and Lemma~\ref{quotlem}. We use that any disc in $\PP^1_\os$ contains a point from $k^a$, hence the stabilizer of its maximal point is open in $G_{k^h}$.
\end{proof}

\begin{rem}
Lemma \ref{quotlem} also states that the $\oGamma$-distance between divisorial points $x,y\in X$ is the distance between the sets of their preimages in $\oX$.
\end{rem}

\subsubsection{The discrete case}\label{discrsec}
The discrete case can be dealt with similarly. The only difference to the microbial case is that the topological structure at the classical points is slightly different. Let $\cR_s^-$ be obtained from $\cR_s$ by adding the new minimal point $-\infty^2$ which is the specialization of the minimal ranger $-\infty$ of $\cR_s$. In particular, $\cR_s$ is open in $\cR_s^-$. In the discrete case intervals starting at classical points are homeomorphic to intervals in $\cR_s^-$ containing $-\infty^2$.

\subsection{Local degree}\label{degsec}
In Berkovich geometry arbitrary smooth curves can be combined from discs, annuli and finitely many more complicated divisorial points. For adic curves the situation is similar, but discs and annuli should be replaced by their henselian covers that will be called h-discs and h-annuli. Our goal in this subsection is to introduce these notions. Since they make sense in large generality and are important for adic geometry we do not stick to the case of curves. Also, for the sake of comparison we briefly introduce a few more degree functions, not only the henselian one.

\subsubsection{Degree functions}
Let $f\:Y\to X$ be a morphism of adic spaces which is \'etale at $y\in Y$ with $x=f(y)$. Consider the following quantities $$n_{y/x}=[k(y)^h:k(x)^h],\  n^r_{y/x}=[k(y)^u:k(x)^u],\ n^w_{y/x}=[k(y)^t:k(x)^t]$$ which we call the {\em henselian local degree}, the {\em ramified local degree} and the {\em wild local degree} of $f$ at $y$. We will usually omit the word henselian and simply call $n_{y/x}$ the {\em local degree} of $f$ at $y$. Note that the naive residual degree $[k(y):k(x)]$ coincides with the local degree for degree~$1$ points (e.g. on Berkovich spaces), but it is not a valuable invariant in general. If $f$ is \'etale, we define the local, ramified and wild degree functions $Y\to\NN$ by $n_f(y)=n_{y/x}$, $n_f^r(y)=n^r_{y/x}$ and $n_f^w(y)=n^w_{y/x}$, respectively.

\subsubsection{Geometric degree functions}
In addition, we define the local, ramified or wild {\em geometric degree at} $y$ to be $\on_{y/x}=n_{\oy/\ox}$, $\on^r_{y/x}=n^r_{\oy/\ox}$ or $\on^w_{y/x}=n^w_{\oy/\ox}$ where $\oy\in\oY$ and $\ox\in\oX$ are compatible lifts of $x$ and $y$. Clearly, this definition is independent of the lifts, and $\on_{y/x}=[k^ak(y)^h:k^ak(x)^h]$, $\on^r_{y/x}=[k^ak(y)^u:k^ak(x)^u]$, $\on^w_{y/x}=[k^ak(y)^t:k^ak(x)^t]$. If $f$ is \'etale, this gives rise to geometric degree functions $\on_f$, $\on_f^r$ and $\on_f^w$ from $Y$ to $\NN$.

\subsubsection{Degree-one loci}
Note that $n^r_{y/x}=1$ if and only if $f$ is strongly \'etale (or residually unramified) at $y$ and $n^w_{y/x}=1$ if and only if $f$ is residually tame at $y$. These loci are open by \cite[Section~4]{adictame}, and we will show in a sequel paper that as a bootstrap one easily obtains the more general statement that the functions $n_f^r$, $n_f^w$, $\on_f^r$ and $\on_f^w$ are upper semicontinuous.

In this paper, however, we will need to work with the locus of points $y$ with $n_{y/x}=1$. In such a case we will say that $f$ is an {\em h-isomorphism at} $y$. Note that this happens if and only if $\calO_{X,x}^+\to\calO_{Y,y}^+$ is strictly local-\'etale, while $\calO_{X,x}\to\calO_{Y,y}$ is only local-\'etale. Recall that strictness means that the induced homomorphism of residue fields is an isomorphism. In the usual algebraic geometry the strict \'etaleness locus is not open and the only non-tautological though trivial property is as follows: (*) if $f\:Y\to X$ is \'etale, then the strict \'etaleness locus is closed under specializations.

In the case of adic spaces it is very easy to come up with an example of a curve cover, even over a complete field of height one, which is an $h$-isomorphism at an infinitesimal point $y$, but not at its divisorial generization. Analogously to the case of schemes, the $h$-isomorphism locus is not closed under generizations. Surprisingly, this is the only pathology that can happen and we have the following counterpart of (*) in adic geometry. In the sequel by  a {\em maximal point} of a spectral topological space $T$ we mean any point which has no non-trivial generizations in $T$.

\begin{theor}\label{hisomth}
Let $f\:Y\to X$ be a morphism of adic spaces which is an $h$-isomorphism at a generic point $y$. Assume that $T\subseteq Y$ is a spectral subspace such that $y$ is a maximal point of $T$. Then $f$ is an $h$-isomorphism along a neighborhood of $y$ in $T$.
\end{theor}

\begin{rem}
(i) An equivalent formulation is that for an \'etale morphism of pseudo-adic spaces the $h$-isomorphism locus is a neighborhood of any of its maximal point.

(ii) In the case of an \'etale morphism $f\:Y \to X$ of $S$-curves we obtain that the $h$-isomorphism locus $T\subseteq Y_s$ of $f$ is a neighborhood of any of its non-infinitesimal generic point. The same is true for classical points, but is not covered by the theorem. As for infinitesimal points the theorem implies that if $T$ contains an infinitesimal point, then it also contains the branch at $x$ in the direction opposite to the generization.
\end{rem}

\subsubsection{Upper semicontinuity}
We will prove the more general claim about semicontinuity of $n_f$ at maximal points of pseudo-adic space.

\begin{theor}\label{semicontth}
Assume that $f\:Y\to X$ is a morphism of adic spaces which is \'etale at a generic point $y$ and $T\subseteq Y$ a spectral subset such that $y$ is a maximal point of~$T$. Then there exists a neighborhood $U$ of $y$ in $T$ such that $n_f(y)\ge n_f(y')$ for any $y'\in U$.
\end{theor}
\begin{proof}
The claim is local at $y$, so we will shrink $Y$ at $y$ and $X$ at $x=f(y)$ when needed. Note that $R=f(T)$ is spectral and shrinking $X$ and $Y$ we can assume that $x\in R$ has no non-trivial generizations in $R$. The finite extension $k(y)/k(x)$ is separable and we denote by $l/k(x)$ its Galois closure and set $G=\Gal_{l/k(x)}$ and $G'=\Gal_{l/k(y)}$. Choose a primitive element $\alp$ for $l/k(x)$ and let $f_\alp(t)\in k(x)[t]$ be its minimal polynomial. Shrinking $X$ (and $Y$) we can assume that $f_\alp\in\calO_X[t]$ defines an \'etale $G$-cover $h\:Z\to X$. The points $z_i$ in the fiber $h^{-1}(x)=\{z_1\.z_m\}$ satisfy $k(z_i)=l$ and correspond to the extensions of the valuation of $k(x)$ to $l$. Choose $z=z_i$ so that its image $y'$ in $Y'=Z/G'$ corresponds to the extension of valued fields $k(y)/k(x)$. Shrinking $Y$ we can achieve that the embedding of valued fields $k(y)=k(y')\into k(z_1)$ extends to an $X$-morphism $Y\to Y'$ sending $y$ to $y'$. Furthermore, it is a local isomorphism at $y$ since $y$ is generic and hence $\calO_{Y',y'}=k(y')=k(y)=\calO_{Y,y}$. So, shrinking $Y$ again we achieve that $Y$ is a neighborhood of $y=y'$ in $Z/G'$. Let $D\subseteq G$ be the stabilizer of $z$, and so $m=[G:D]$. By transitivity of the action of $G$ on $\{z_1\.z_m\}$ all degrees  $n_{z_i/x}$ are equal, and hence $n_{z/x}=|G|/m=|D|$ (of course, this can be also obtained by very basic valuation theory). In the same manner, $n_{z/y}=|D'|$, where $D'=D\cap G'$, and hence $n_{y/x}=n_{z/x}/n_{z/y}=[D:D']$.

Now comes the point. Since $z_1\.z_m$ have no generizations in the spectral topological space $S=h^{-1}(R)$, they possess pairwise disjoint open neighborhoods $S_1\.S_m$ in $S$. For a small enough neighborhood $U$ of $x$ in $R$ we have that $h^{-1}(U)\subseteq\coprod_iS_i$, hence shrinking $X$ we can assume that there is a clopen splitting $S=\coprod_{i=1}^m S_i$ with $z_i\in S_i$. For any point $z'\in S_1$ with images $y'\in Y$ and $x'\in X$ let $H$ be the stabilizer of $z'$ in $G$ and let $H'=H\cap G'$. The same argument as in the previous paragraph shows that $n_{y'/x'}=[H:H']$. Any element of $H$ takes $S_1$ to itself and hence stabilizes $z=z_1$. Therefore, $H\subseteq D$ and $H'=D'\cap H$. By standard group theory $[D:D']\ge[H:H']$, that is, $n_{y/x}\ge n_{y'/x'}$. It remains to note that by the openness of the map $Z\to Y$, the image of $S_1$ in $Y$ is a neighborhood of $y$ in $T$.
\end{proof}

\subsubsection{Ramified points}
In case of smooth curves one naturally extends the definition of $n_f$ to the points where $f$ is not \'etale by taking into account the classical ramification index: $n_f(y)=e_{y/x}[k(y)^h:k(x)^h]$, where $x=f(y)$. For the sake of completeness we describe in the next two sections a more general situation, where the local degree can be defined. It will not be used in the sequel.

\subsubsection{Local degree of morphisms of schemes}
Let $f\:Y\to X$ be a flat morphism of schemes. If $f$ is finite, then the degree over a point $x\in X$ is defined to be the rank of the free $\calO_x$-module $(f_*\calO_Y)_x$. This notion can be localized using henselization: if $f$ is only assumed to be quasi-finite at $y$, then the henselization $\calO^h_x\to\calO_y^h$ is flat and finite, and we define the local degree $n_{y/x}$ to be the $\calO_x^h$-rank of $\calO_y^h$. This is compatible with the global degree: if $f$ is finite over $x$ and $f^{-1}(x)=\{y_1\.y_n\}$ then the degree of $f$ over $x$ is the sum of local degrees $n_{y_i/x}$.

\begin{rem}
A geometric interpretation of this definition is that for a fine enough morphism $X'\to X$ strictly \'etale over $x$ the base change $Y\times_XX'$ contains a component $Y'$ which contains a preimage $y'$ of $y$ and is finite over $X'$. The local degree $n_{y/x}=n_{y'/x'}$ is just the degree of $Y'/X'$. Moreover, the construction works in the slightly larger generality of normalized finite type, see \cite[\S2.5]{Temkin-insepunif}.
\end{rem}

\subsubsection{Local degree for adic spaces}
Given a quasi-finite flat morphism $f\:Y\to X$ of adic spaces we define the {\em (local) degree at} at a point $y\in Y$ with $x=f(y)$ to be $n_{y/x}=n_{\cO_y/\cO_x}[k(y)^h:k(x)^h]/[k(y):k(x)]$, where $n_{\cO_y/\cO_x}$ is the local degree of the morphism $\Spec(\cO_y)\to\Spec(\cO_x)$. In particular, if $f$ is \'etale at $y$ or $y$ is generic we simply have that $n_{y/x}=[k(y)^h:k(x)^h]$.

\begin{rem}
(i) In a sense, we correct the contribution of the residue field extension to the local degree so that only the henselian degree matters. For analytic points, the residue fields are henselian, so no correction is needed and one simply has that $n_{y/x}=n_{\cO_y/\cO_x}$.

(ii) We will not need this, but it is easy to see that $n_{y/x}$ is the $(\cO_x^+)^h$-rank of $(\cO^+_y)^h$, that is, we really compute the local degree of the local rings composed from the local rings of the adic spaces and the valuation rings of the residue fields.
\end{rem}

\subsection{Henselian discs and annuli}\label{henselsec}

\subsubsection{$h$-isomorphisms} \label{h_iso_sec}
Let $(Y,T)\to(X,S)$ be an \'etale morphism of pseudo-adic spaces. We say that $f$ is an {\em $h$-isomorphism} if $f\:T\to S$ is a bijection and $f$ is an $h$-isomorphism at any point of $T$.
\begin{lem}\label{h-isom}
(i) Any $h$-isomorphism $f\:(Y,T)\to(X,S)$ of pseudo-adic spaces induces an isomorphism of monoided spaces $(T,\Gamma_T)\to(S,\Gamma_S)$, and $\wkx=\wky$ for any $y\in T$ with $x=f(y)$.

(ii) The class of $h$-isomorphisms is closed under compositions and base changes.
\end{lem}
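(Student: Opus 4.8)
The plan is to reduce both parts to the valuation-theoretic fact that $n_{y/x}=1$ forces the henselizations $k(x)^h$ and $k(y)^h$ to coincide, together with the standard fact that the henselization of a valued field is an immediate extension. Throughout, $x=f(y)$.

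For (i), fix $y\in T$. Since $f$ is \'etale, $k(y)/k(x)$ is a finite separable extension of valued fields, and functoriality of henselization gives an inclusion $k(x)^h\subseteq k(y)^h$; as $n_{y/x}=[k(y)^h:k(x)^h]=1$, this inclusion is an equality. Because $k(x)^h/k(x)$ and $k(y)^h/k(y)$ are immediate, $k(x)$ and $k(y)$ then have the same value group and the same residue field, so $\wkx=\wky$, which is the last assertion. In particular the natural homomorphism $|k(x)^\times|\to|k(y)^\times|$ is an isomorphism, hence $f^{-1}\Gamma_S\to\Gamma_T$ is an isomorphism on all stalks and therefore an isomorphism of sheaves on $T$. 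Finally $f|_T\colon T\to S$ is a continuous bijection by hypothesis and is open since the \'etale morphism $f$ is open as a map of adic spaces, hence a homeomorphism; thus $(T,\Gamma_T)\to(S,\Gamma_S)$ is an isomorphism of monoided spaces.

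For the first half of (ii), a composite $(Z,U)\to(Y,T)\to(X,S)$ of $h$-isomorphisms is \'etale, the induced map $U\to S$ is a bijection as a composite of bijections, and $n_{z/x}=n_{z/y}\,n_{y/x}=1$ for every $z\in U$; so it is an $h$-isomorphism. For base changes, let $(X',S')\to(X,S)$ be arbitrary and set $(Y',T')=(Y,T)\times_{(X,S)}(X',S')$. The class of \'etale morphisms is stable under base change, so the issue is to see that $T'\to S'$ is again a bijection which is an $h$-isomorphism at every point. This is the main obstacle, because the underlying space of a fiber product of adic spaces is not the fiber product of the underlying spaces, so the verification must be carried out pointwise with residue fields.

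Concretely, fix $x'\in S'$ over $x\in S$ and let $y\in T$ be the unique point over $x$. A point $y'\in T'$ over $y$ and $x'$ is the datum of a factor $L$ of the finite \'etale $k(x')$-algebra $k(y)\otimes_{k(x)}k(x')$ together with a valuation extending those of $k(y)$ and $k(x')$. Since the valuation of $k(y)$ is the restriction of that of $k(x)^h=k(y)^h$ and $k(x)^h\subseteq k(x')^h$, the compositum $L_0=k(y)\cdot k(x')$ formed inside $k(x')^h$ is such an $L$, and $k(x')\subseteq L_0\subseteq k(x')^h$ gives $L_0^h=k(x')^h$, so the corresponding point $y'_0$ lies over $x'$ and $n_{y'_0/x'}=1$. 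Conversely, any admissible pair $(L,w)$ henselizes to a henselian field containing $k(x')$ and $k(y)$, hence containing $k(x')^h$, so it equals the henselization of $(L_0,w_0)$; since $L$ and $L_0$ are quotients of $k(y)\otimes_{k(x)}k(x')$ embedding compatibly into one henselian field, $L=L_0$ and $w=w_0$. Hence $T'\to S'$ is a bijection and an $h$-isomorphism at each point, which completes the proof. The crux is precisely this matching of the abstract pseudo-adic pullback with the field-theoretic picture, together with the fact that the equality of henselizations persists after base change of valued fields, which is what rules out spurious points of the fiber.
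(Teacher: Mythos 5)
Your argument is correct and follows essentially the same route as the paper: part (i) is the same one-line verification (homeomorphism from openness of \'etale maps, plus the fact that $n_{y/x}=1$ forces equal value groups and residue fields, which you obtain via immediateness of henselization where the paper appeals to unramifiedness and the definition), and part (ii) is exactly the ``straightforward check'' the paper omits, which you carry out correctly by identifying the fiber of the base change over $x'$ with valued factors of $k(y)\otimes_{k(x)}k(x')$ and using $k(y)\subseteq k(x)^h=k(y)^h\subseteq k(x')^h$. The only spot worth tightening is the uniqueness step in the base-change argument, where ``embedding compatibly into one henselian field'' tacitly invokes the uniqueness of the valued embedding $k(x)^h\to L^h$ extending a given embedding of $k(x)$ (the universal property of henselization), which is what rules out conjugate copies of $k(y)$.
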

\begin{proof}
(i) The map $T\to S$ is a homeomorphism by openness of \'etale morphisms, and for any $y\in T$ with $x=f(y)$ the extension $k(y)/k(x)$ is unramified and hence $\Gamma_{T,y}=\Gamma_{k(y)}=\Gamma_{k(x)}=\Gamma_{S,x}$. The isomorphism $\wkx=\wky$ is imposed by the definition.

(ii) This is a straightforward check.
\end{proof}

\subsubsection{$h$-equivalence}
By an {\em $h$-equivalence} $(Y,T)\dashrightarrow(X,S)$ we mean a sequence of $h$-isomorphisms and their formal inverses. By Lemma~\ref{h-isom} an h-isomorphism induces a well defined isomorphism of monoided spaces $(T,\Gamma_T)\to(S,\Gamma_S)$. We will not use the following lemma, but it indicates that this notion is quite geometric.

\begin{lem}
The family of $h$-isomorphisms admits a calculus of right fractions. In particular, any $h$-equivalence $(Y,T)\dashrightarrow(X,S)$ can be realized as a composition $(Y,T)\leftarrow (Z,R)\to (X,S)$ of $h$-isomorphisms.
\end{lem}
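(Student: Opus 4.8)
The plan is to check the three axioms of a calculus of right fractions for the class $W$ of $h$-isomorphisms and then to quote the standard (Gabriel--Zisman) description of the associated localization. The first axiom is immediate: every identity --- indeed every isomorphism --- is \'etale, restricts to a bijection on distinguished subsets, and is an $h$-isomorphism at each of its points (being locally the identity there), so it lies in $W$; closure under composition is part of Lemma~\ref{h-isom}(ii).

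For the Ore axiom I would argue as follows. Given $f\:(Y,T)\to(X,S)$ and $s\:(X',S')\to(X,S)$ with $s\in W$, form the pseudo-adic fiber product $(Z,R)=(Y,T)\times_{(X,S)}(X',S')$; it exists because $s$, being \'etale, is locally of finite type, and concretely $Z=Y\times_X X'$ with $R=\pi_Y^{-1}(T)\cap\pi_{X'}^{-1}(S')$. The projection $\pi_{X'}\:(Z,R)\to(X',S')$ is defined, the resulting square commutes by the universal property, and $\pi_Y\:(Z,R)\to(Y,T)$ is a base change of $s$, so $\pi_Y\in W$ by Lemma~\ref{h-isom}(ii); in particular $\pi_Y$ identifies $R$ with $T$ because $s|_{S'}\:S'\to S$ is a bijection and $s$ is strictly local-\'etale at each point of $S'$. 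This furnishes the required commutative square.

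For the cancellation axiom, suppose $g_1,g_2\:(X,R)\to(Y,T)$ and $s\:(Y,T)\to(Y',T')$ with $s\in W$ satisfy $sg_1=sg_2$. On $R$ the two maps already coincide, since $g_1(r),g_2(r)\in T$ have the same image under the injection $s|_T$. As $s$ is \'etale, its diagonal $Y\to Y\times_{Y'} Y$ is an open immersion, so the locus in the common domain where $g_1=g_2$ is open and contains $R$; hence $g_1=g_2$ already as morphisms of pseudo-adic spaces, and the identity of $(X,R)$ serves as the required $h$-isomorphism. Equivalently, $h$-isomorphisms are monomorphisms, so no genuine equalizing morphism is ever needed.

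Granting the axioms, the second assertion is formal. Since an $h$-equivalence is a finite zigzag of honest $h$-isomorphisms and formal inverses of such, induction on its length collapses it to a single span: the basic step turns an honest $a$ followed by a formal inverse $b^{-1}$ (with $a,b\in W$) into the Ore completion of the cospan of $a$ and $b$, whose two projections are base changes of $b$ and of $a$ and hence \emph{both} lie in $W$, and span composition --- again a fiber product, now over a cospan of two maps in $W$ --- preserves this. So every $h$-equivalence $(Y,T)\dashrightarrow(X,S)$ is realized by a span $(Y,T)\leftarrow(Z,R)\to(X,S)$ both of whose legs are $h$-isomorphisms. I expect the only delicate point to be the bookkeeping with the pseudo-adic fiber product in the Ore step --- that its distinguished subset is as described and that the projection off it is again an $h$-isomorphism, not merely an \'etale map --- which is precisely what Lemma~\ref{h-isom}(ii) supplies.
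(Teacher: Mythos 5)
Your treatment of the main content coincides with the paper's: the paper's entire proof is to reduce to a cospan $(Y,T)\rightarrow(Z',R')\leftarrow(X,S)$ of $h$-isomorphisms and take the pseudo-adic fiber product $(Z,R)=(Y,T)\times_{(Z',R')}(X,S)$, invoking Lemma~\ref{h-isom}(ii) for stability of $h$-isomorphisms under base change and composition. Your Ore step and the inductive collapse of a zigzag into a single span is exactly this argument (stated slightly more generally, with one leg of the cospan allowed to be arbitrary), so for the ``in particular'' clause you and the paper agree.

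Where you go beyond the paper --- verifying the equalization (cancellation) axiom --- there is a genuine gap. From $sg_1=sg_2$ and injectivity of $s|_T$ you only get that $g_1$ and $g_2$ agree \emph{pointwise} on $R$; this does not yet place $R$ inside the equalizer locus $E$ (the pullback of the open diagonal $Y\to Y\times_{Y'}Y$), because a point $r$ lies in $E$ only if the two induced embeddings $g_1^*,g_2^*\:k(g_1(r))\to k(r)$ coincide, and \'etaleness of $s$ alone permits them to differ by a nontrivial $k(s g_1(r))$-automorphism. What rescues the claim is the full $h$-isomorphism hypothesis at points of $T$: since $k(g_1(r))^h=k(sg_1(r))^h$ and the henselization maps uniquely into the henselian field $k(r)^h$, the two embeddings must agree on $k(g_1(r))$; this argument is missing from your write-up. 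Moreover, even with $R\subseteq E$ established, you only obtain $g_1=g_2$ on the open set $E$, not on all of the ambient space, so the conclusions ``$g_1=g_2$ already as morphisms of pseudo-adic spaces'' and ``$h$-isomorphisms are monomorphisms'' are false in general; the correct move is to take as the equalizing morphism the open immersion $(E,R)\hookrightarrow(X,R)$, which is an $h$-isomorphism, rather than the identity. (For what it is worth, the paper itself only proves the span realization and does not verify this axiom at all.)
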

\begin{proof}
We should show that an $h$-equivalence $(Y,T)\rightarrow (Z',R')\leftarrow (X,S)$ possesses such a realization. In view of Lemma~\ref{h-isom} we can take $$(Z,R)=(Y,T)\times_{(Z',R')}(X,S).$$
\end{proof}

\begin{rem}
In this paper, $h$-equivalences and local degrees naturally show up when working with points of higher rank. In fact, without them we even cannot formulate some results about the structure of adic curves. This indicates that there is a strong geometric incentive to localize the category of pseudo-adic spaces by inverting $h$-isomorphisms. An interesting direction to check is whether this operation can be naturally achieved just by replacing the structure sheaf by its henselization. However, in this paper we decided to solve the problem in a more ad hoc manner via considering $h$-equivalences when needed.
\end{rem}

\subsubsection{h-discs and h-annuli}
By an {\em h-disc} or an {\em h-annulus} over $s\in S$ we mean a pseudo-adic space $(C,T)$ which is $h$-equivalent to a disc $D_s(0,r)=(\PP^1_S,D)$ or an annulus $A_s(0;r_1,r_2)=(\PP^1_S,A)$ over $(S,s)$. If $(C,T)\to D_s(0,r)$ or $(C,T)\to A_s(0;r_1,r_2)$ is even an $h$-isomorphism, then the function $t\in\Gamma(\cO_C)$ inducing it will be called an {\em h-coordinate}.

\begin{rem}
In principle, one can extend this definition by allowing all pseudo-adic spaces h-isomorphic to a disc or an annulus, but it will be convenient to work with our more restrictive version. The difference between the two is analogous to the difference between normal crossings and strict normal crossings.
\end{rem}

\subsubsection{The skeleton}
Let $(C,T)$ be either an $h$-disc or an $h$-annulus. The {\em boundary} $\partial(T)$ of $T$ is the preimage of the sets of extremal Gauss points $x_r$ or $\{x_{r_1},x_{r_2}\}$. The {\em skeleton} of $T$ is either the boundary point or the $\Gamma$-segment connecting the two boundary points, respectively.

\subsubsection{Constructibility}
We will mainly work with constructible h-discs and h-annuli $(C,T)$. The constructibility is preserved by taking images and preimages hence $(C,T)$ is constructible if and only if the usual disc or annulus it is mapped onto in $\PP^1_s$ is constructible. The latter is easily interpreted in terms of radii, and the conclusion is that a constructible h-disc/h-annulus $T$ has divisorial boundary if and only if it is open and it has infinitesimal boundary without divisorial generization inside $T$ if and only if it is closed.

\subsubsection{Morphisms of h-discs}
We start with studying units on h-discs.

\begin{lem}\label{invdisc}
Let $(C,T)$ be a quasi-compact h-disc over $s\in S=\Spa(k)$ and $f\in\Gamma(\calO^\times_T)$ an invertible function on $T$. Then $|f|$ is constant on $T$.
\end{lem}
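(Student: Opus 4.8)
The plan is to reduce, via the $h$-equivalence, to the analysis of a single $\Gamma$-pl function on a $\Gamma$-tree, and then to run a maximum-principle argument. First I would realize the $h$-equivalence as a span $(C,T)\xleftarrow{g}(Z,R)\to D_s(0,r)$ of $h$-isomorphisms (calculus of right fractions for $h$-isomorphisms) and pull $f$ back to an invertible function $f_Z\in\Gamma(\calO_Z^\times)$; since $|f_Z|_z=|f|_{g(z)}$, it suffices to treat $(Z,R)$, so we may assume $(C,T)\to D_s(0,r)$ is itself an $h$-isomorphism, with $(C,T)$ quasi-compact. By Lemma~\ref{h-isom}(i) and Theorem~\ref{treeth} (or Theorem~\ref{geomtreeth}), the monoided space $(T,\Gamma_T)$ is a pinched $\Gamma_k$-tree: it has a unique maximal point $\tau$ (the preimage of the extremal Gauss point), its vertices are divisorial, its edge points are cut and infinitesimal, and its leaves are classical or unibranch — hence pinched — with the single possible exception of $\tau$ when $r$ is not divisorial. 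Consequently $\log|f|\in\Gamma_T(T)$ is a $\Gamma_k$-pl function on $T$.

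Next I would record the local behaviour of $\log|f|$. Near a classical leaf it is locally constant since $f$ does not vanish there; near a unibranch leaf it is locally constant because such leaves are pinched (exactly as in the proof of Theorem~\ref{geomtreeth}); so the slope of $\log|f|$ is $0$ near every pinched leaf. At a divisorial point $y$ we may normalize $f$ by a constant of absolute value $|f|_y$ and reduce it to a unit $\bar f_y\in\wt{k(y)}{}^\times$ of the residual curve $\tilC_y$; since the branches at $y$ correspond to the closed points of $\tilC_y$ (i.e.\ to the infinitesimal generizations of $y$), and the outgoing slope of $\log|f|$ along a branch equals the order of $\bar f_y$ at the corresponding point, these outgoing slopes sum to $-\deg(\operatorname{div}\bar f_y)=0$; that is, $\log|f|$ is harmonic at $y$.

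Finally I would conclude by a maximum-principle argument on the quasi-compact connected tree $T$. At any point where $\log|f|$ is locally maximal it is in fact locally constant: this was just shown at pinched leaves, and at a divisorial point local maximality forces every outgoing slope to be $\le 0$, whence harmonicity forces them all to vanish; the remaining case, the top leaf $\tau$ when $r$ is not divisorial, is handled by noting that the single downward slope of $\log|f|$ at $\tau$ is controlled by the structure of invertible power series on the actual disc $D_s(0,r)$ — an invertible series is dominated by its constant term, so $|f|$ acquires no infinitesimal part at $\tau$ — which transports back along the $h$-isomorphism. Thus the locus where $\log|f|$ attains its maximum is open and, by the pl structure on segments together with the finiteness of the set of corners of $\log|f|$, closed; being nonempty in a connected space it is all of $T$, so $|f|$ is constant. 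The step I expect to be the main obstacle is precisely this last one, the behaviour of an invertible function at the non-divisorial top point $\tau$ — equivalently, the Shilov-boundary property of an h-disc at its maximal point; once the boundary of $T$ is divisorial, what remains is the routine harmonic-function analysis recalled above. (An alternative, clean in the case of an honest disc, is to lift the section $\log|f|$ of $\Gamma_D=|\calO_D^\times|$ to a genuine invertible function using the vanishing of $H^1\!\big(D,(\calO_D^+)^\times\big)$ — which holds because $D$, over an algebraically closed field, is contractible and has no non-integral divisorial points, cf.\ the remark after Lemma~\ref{plem} — and then compute directly with its Newton polygon.)
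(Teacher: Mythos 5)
Your overall strategy (pl structure on the tree, harmonicity at divisorial points, maximum principle) is reasonable and close in spirit to how the paper later treats annuli, but the step you yourself flag as the main obstacle is a genuine gap, and your proposed fix for it does not work. At the non-divisorial maximal point $\tau$ you argue that the slope of $\log|f|$ vanishes because ``invertible power series on the actual disc are dominated by their constant term'' and then ``transport back along the $h$-isomorphism''. But an $h$-isomorphism only transports the topology, the sheaf of values $\Gamma_T$ and the residue fields (Lemma~\ref{h-isom}); it does \emph{not} identify the structure sheaves, so the unit $f$ on the h-disc does not correspond to any power series on $D_s(0,r)$. If functions did transport in this way, the whole lemma would be immediate (units on an honest disc have constant absolute value), which is exactly the difficulty the notion of h-disc creates -- as the paper remarks before Theorem~\ref{mordiscs}, one cannot describe functions on h-discs explicitly. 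The same objection applies to your parenthetical alternative via $H^1(D,(\calO^+_D)^\times)$: it concerns the honest disc, not the h-disc carrying $f$, and that vanishing is nowhere established. Note also that the vanishing of the slope at $\tau$ is not a local fact (on an annulus the unit $t$ has nonzero slope at every infinitesimal point), so it must come from a global argument about the whole h-disc below $\tau$ -- precisely the part you leave unproved.

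Two further soft spots: (a) openness of the maximum locus at a divisorial point $y$ is not immediate, because a neighborhood of $y$ in the weak tree topology contains \emph{entire} sub-discs for all but finitely many branches, while vanishing of the outgoing slopes only gives constancy of $\log|f|$ on germs of branches; attainment of the maximum on the quasi-compact tree also needs an argument; (b) the harmonicity computation (reduction of $f$ at $y$, slopes equal to orders on the residual $\PP^1_{\wks}$) and the pinched-tree description you invoke are set up for algebraically closed $k$, so you should first extend the ground field as the paper does (base change preserves h-discs by Lemma~\ref{h-isom}(ii) and constancy of $|f|$ descends). For comparison, the paper's proof avoids all of this: it considers the map $f\:T\to\PP^1_s$ induced by $f$, uses quasi-compactness of $f(T)$ to find the minimal point $p$ of $f(T)\cap[0,\infty]_s$, and derives a contradiction from the type of a preimage $x$ of $p$ -- nonzero slope if $p$ is a cut/infinitesimal point, surjectivity of $\ox=\PP^1_{\wks}\to\overline{\{p\}}$ if $x$ is divisorial -- handling the case where $x$ is the maximal point of $T$ by rerunning the argument for $g=1/f$. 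Your proof would become correct if you replaced the power-series appeal at $\tau$ by an argument of this kind (e.g.\ the $1/f$ trick together with your interior analysis), and patched the openness/attainment issues in the maximum principle.
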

\begin{proof}
Assume that, conversely, $|f|$ is not a constant. Extending the ground field we can assume that $k=k^a$. The image of the induced function $f\:T\to\PP^1_s$ is quasi-compact, does not contain 0 and intersects the $\Gamma$-segment $I=[0,\infty]_s$ because $\PP^1_s\setminus I$ is a disjoint union of closed discs on which $|f|$ is a constant. Therefore there exists a generalized Gauss point $p\in I$ of radius $r$, such that $f(T)\cap[0,p]=\{p\}$. In particular, $r=\min_{x\in T}|f|_x$. Let $x\in f^{-1}(p)$. Assume first that $x$ is not the maximal point of $T$.

If $p$ is a cut or infinitesimal point, then $|f|_x=r\in\cR_s\setminus|k^\times|$ and hence the slope of $\log|f|$ at $x$ is non-zero. It follows that for a $\Gamma$-segment $J$ though $x$ the function $|f|$ does not have a local minimum at $x$. A contradiction to the minimality of $r$. If $x$ is a divisorial point, then $\ox$ is identified with $\PP^1_{\wks}$ and hence it is mapped surjectively onto $\overline{\{p\}}=\PP^1_{\wks}$. In particular, $f(T)$ contains the point of $I$ of radius $r^-$, which is, again, a contradiction.

If $x$ is the maximal point of $T$, then the same arguments show that either $p$ is divisorial and $f(T)$ contains the point of radius $r^+$ or $p$ is an edge point and $f(T)$ contains Gauss points of larger radius. In either case, set $g=1/f$, consider the induced function $g\:T\to\PP^1_s$ and note that $g(x)$ is not the minimal point on $g(T)\cap I$. Therefore the same argument as earlier applied to $g$ yields a contradiction.
\end{proof}

Now we can describe morphisms between h-discs. The analogous result in Berkovich geometry is very simple, but we have to work a bit harder because it is harder to explicitly describe functions on h-discs.

\begin{theor}\label{mordiscs}
Let $S=\Spa(k)$ with an algebraically closed affinoid field $k$, let $f\:(C',T')\to(C,T)$ be a morphism between split $h$-discs over $s\in S$ and let $p'\in T'$ be the maximal point and $p=f(p')$, then

(i) $f(T')$ is an h-disc with maximal point $p$.

(ii) For any $\Gamma$-segment $I'=[a',p']\subset T'$ its image $I=f(I')$ is the $\Gamma$-segment $[a,p]$ where $a=f(a')$.

(iii) The map $I'\to f(I)$ is a $\Gamma$-pl homeomorphism and if $q'\in I'$ is a non-corner point of the map, then the slope at it coincides with $n_f(q')$. In particular, $n_f$ is locally constant with jumps at divisorial points. Furthermore, if $q'\neq a'$ and $W'$ is the disc with maximal point $q'$, then $n_f(q')$ is the number of preimages of $a=f(a')$ in $W$ counted with multiplicities (or the degree of the map $W'\to f(W')$).

(iv) For any $\Gamma$-segment $I=[a,p]\subset T$ its preimage $f^{-1}(I)$ is the union of the $\Gamma$-segments $[a'_i,p']$ where $f^{-1}(a)=\{a'_1\.a'_n\}$.
\end{theor}
\begin{proof}
Composing $f$ with a function induced by an h-coordinate on $T$ we reduce to the case when $T$ is a usual disc in $C=\PP^1_S$ with center at 0 and $f$ is given by a function $f\in\Gamma(\calO_{C'})$. We start with proving (ii). By continuity it suffices to check this for classical points $a'$. Replacing $f$ by $f-a$ we can assume that $a=0$. Let $a'=a'_1\.a'_n$ be the fiber $f^{-1}(0)$ listed with multiplicities. Choose an h-coordinate $t$ on $T$. Then $f=u\prod_{i=1}^n(t-c_i)$, where $c_i=t(a'_i)$ and $u$ is a unit, and hence $|u|$ is constant by Lemma~\ref{invdisc}, and up to a constant $|f|$ is the product of $|t-c_i|$. This implies that $|f|$ is strictly increasing on $I'$ and hence takes it onto $I=[0,p]$. Thus, (ii) is proved, and it immediately implies (iv).

Next, consider (iii). The above formula for $|f|$ implies that the slope at a point $q'\in(a',p']$ equals the number of those points from $\{a'_1\.a'_n\}$ that lie in $W'$. In particular, the slope function is monotonic on $I$. It remains to relate these numbers to $n_{q'/q}$, where $q=f(q')$.

If $q'$ is classical, then the slope at $q'$ equals the multiplicity $d$ of 0 in $f^{-1}(0)$. Note that $k(q')=k=k(q)$, and $e_{q'/q}=d$ because $t$ and $f$ are local uniformizers at $q'$ and $q$, respectively. So, $n_{q'/q}=d$.

If $q'$ is an edge point, then the same formula yields that $|f|=c|t|^d$ in  a neighborhood of $q'$ on $I'$, where $c\in|k^\times|$. Since $|k(q')^\times|=|k^\times|\oplus |t|_{q'}^\ZZ$ and $|k(q)^\times|=|k^\times|\oplus |f|_q^\ZZ$, we obtain that $d=e_{k(q')/k(q)}$ is the ramification index. Since $k(q)$ is defectless by the stability theorem, see \cite[Theorem~1.1]{defectless}, and $\wt{k(q)}=\tilk$ is algebraically closed, we have that $n_{q'/q}=[k(q')^h:k(q)^h]=e_{k(q')/k(q)}f_{k(q')/k(q)}=d$.

If $q'$ is a divisorial point, then the same argument with the stability theorem shows that $n_{q'/q}=[k(q')^h:k(q)^h]=[\wt{k(q')}:\wt{k(q)}]=f_{k(q')/k(q)}$. Let $r$ be the infinitesimal point on $[0,q]$ lying in the closure of $q$ and let $r'_1\.r'_l$ be its preimages in $T'$. Then the number $d$ of preimages of 0 in $W$ equals the sum of its preimages in the discs with maximal points $r_1\.r_l$. The latter is equal to $\sum_{i=1}^le_{r_i/r}$ by the case of edge points, and comparing the degrees at 0 and the generic point of the map $\PP^1_{\wks}\to\PP^1_{\wks}$ corresponding to $k(q')/k(q)$ we obtain that $d=f_{k(q')/k(q)}$.

Finally, if $q'=a'$ is a unibranch point, then the slope $d$ is constant in a neighborhood of $a'$ because it is non-decreasing on $I$. Shrinking the disc $T$ around $a$ and replacing $T'$ by the preimage we can achieve that the slope is precisely $d$ along the whole $I$. In particular the map is generically of degree $d$, and hence $n_{a'/a}\le d$. On the other hand, for any edge point $q'\in I'\cap U'$ with $q=f(q')$ we have that $n_{q'/q}=d$ by the case of edge points proved above. So, by Theorem~\ref{semicontth} we obtain that $n_{a'/a}\ge d$.

It remains to prove (i). By continuity, it suffices to prove that $f(T')$ contains each classical point $a\in T$. Translating $f$ by $a$ we can assume that $a=0$. If $0\notin f(T')$, then $f$ is invertible on $T'$ and hence $|f|$ is a constant by Lemma~\ref{invdisc}. It follows that $f(T)$ does not contain the infinitesimal point in the closure of $p$ pointing towards 0. This is impossible because in view of (ii) the map $\PP^1_{\wks}\to\PP^1_{\wks}$ corresponding to $k(p')/k(p)$ takes $\AA^1_\wks$ to $\AA^1_\wks$, and the induced map $\AA^1_\wks\to\AA^1_\wks$ is necessarily surjective.
\end{proof}

\subsubsection{Morphisms of h-annuli}
In the case of annuli there also exist units coming from h-coordinate functions.

\begin{lem}\label{invannulus}
Let $(C,T)$ be a quasi-compact h-annulus over $s\in S=\Spa(k)$ and fix an h-coordinate $t$ on $T$. Then for any unit $f\in\Gamma(\calO^\times_T)$ there exists $n\in\ZZ$ and $r\in $such that $|f|=r|t^n|$ on $T$. In particular, $\log|f|$ is linear on the skeleton $e$ of $T$.
\end{lem}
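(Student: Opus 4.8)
\emph{Proof sketch.}
The plan is to follow the template of the proof of Lemma~\ref{invdisc}: reduce to an algebraically closed ground field, use the h-coordinate to identify $(C,T)$ with an honest annulus, control $\log|f|$ on the skeleton by a reduction-curve computation, and then propagate the resulting formula off the skeleton.

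First I would enlarge the ground field and assume $k=k^a$. Indeed, $(\overline{C},\overline{T})=(C,T)\times_S\Spa(\widehat{k^a})$ is again a quasi-compact h-annulus with h-coordinate $t$ (h-isomorphisms are stable under base change, Lemma~\ref{h-isom}), every point of $T$ lifts to $\overline{T}$, and the values of $f$ and $t$ at a point are the restrictions of their values at any lift; so it suffices to treat $k=k^a$. The h-coordinate $t$ is then an h-isomorphism $\phi\:(C,T)\to A_s(0;r_1,r_2)=(\PP^1_S,A)$, and by Lemma~\ref{h-isom} it identifies $T$ with $A$ as a topological space, as a $\Gamma_k$-tree, and compatibly with the invariants $\wt{k(\cdot)}$ and $|k(\cdot)^\times|$; moreover $|t|_x=|w|_{\phi(x)}$ for the target coordinate $w$, and $|t-a|_x=|w-a|_{\phi(x)}$ for any $a\in k$. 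Thus the skeleton $e$ of $T$ is carried to the skeleton of $A$, on which $|t|=|w|$ is the tautological parametrisation by radius, so $\log|t|$ is a linear $\Gamma_k$-pl parametrisation of $e$. From the structure of the annulus $A$ (together with the classification of branches in a $\Gamma$-tree, Lemma~\ref{graphtypes}) each $x\in T\setminus e$ lies in a unique open h-disc $D$ hanging off $e$ at a necessarily divisorial point $p=p(x)\in e$; via $\phi$ this $D$ is $\phi^{-1}(D_s(a,\rho^-))$ with $a\in k$, $\rho=|t|_{p}$, and $|t|\equiv\rho$ on $D$.

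The heart of the matter is that $\log|f|$ has no corner at an interior divisorial point $q$ of $e$. I would first record the principle used in the divisorial case of the proof of Theorem~\ref{mordiscs}: for a divisorial point $p$ of $T$ the slope of $\log|f|$ into a branch $b$ at $p$ equals, up to the sign fixed by the orientation, the order $\mathrm{ord}_b(\bar f)$ of the reduction $\bar f=\wt{f/c}$ of $f$ at $p$ at the corresponding closed point of $\mathbf{C}_p=\overline{\{p\}}$ (here $c\in k$ with $|c|=|f|_p$, which exists as $|k(p)^\times|=|k^\times|$ by Theorem~\ref{classth}), and $\mathbf{C}_p\cong\PP^1_{\wt{k(s)}}$ again by Theorem~\ref{classth}. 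Now $\mathrm{ord}_b(\bar f)=0$ whenever $b$ points into a hanging h-disc $D$: applying Lemma~\ref{invdisc} to the quasi-compact subdiscs $\{|t-a|\le\sigma\}\subseteq D$ ($\sigma<\rho$) and passing to the limit shows $|f|$ is constant on $D$; since $D$ contains classical points, at which $|f|$ is $|k^\times|$-valued, this constant lies in $|k^\times|$, hence has no infinitesimal component, which is precisely the vanishing of the slope into $b$ (the same argument also yields $|f|_x=|f|_{p(x)}$ for $x\in D$). Hence at an interior divisorial $q\in e$: either $\bar f$ is constant on $\mathbf{C}_q$, so $\log|f|$ has zero slope in every branch at $q$ and there is no corner; or $\bar f$ is a nonconstant rational function on $\PP^1_{\wt{k(s)}}$ whose divisor, by the above, is supported on the two closed points $b_-,b_+$ attached to the two branches of $e$ at $q$, so $\mathrm{ord}_{b_-}(\bar f)=-\mathrm{ord}_{b_+}(\bar f)$, and therefore the two one-sided slopes of $\log|f|$ along $e$ at $q$ (namely $\mathrm{ord}_{b_-}(\bar f)$ and $-\mathrm{ord}_{b_+}(\bar f)$) coincide. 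Since $\log|f|$ restricts to a $\Gamma_k$-pl function on $e$ (Lemma~\ref{plem}) whose corners may only occur at divisorial points, it is linear on $e$; writing $\log|f|=n\log|t|+\log r$ there, $n\in\ZZ$ is the slope and $r\in|k^\times|$ by evaluation at a divisorial point of $e$. Propagating off $e$ by $|f|_x=|f|_{p(x)}=r\,|t|_{p(x)}^{\,n}=r\,|t|_x^{\,n}$ gives $|f|=r|t^n|$ on all of $T$; descent to an arbitrary ground field is immediate, and the final assertion follows since $\log|t|$ parametrises $e$.

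The main obstacle is exactly the no-corner step, and within it the fact that invertibility of $f$ forces $\mathrm{ord}_b(\bar f)=0$ in every off-skeleton direction. The mechanism is the one isolated above: the value of a unit on an h-disc is constant by Lemma~\ref{invdisc}, and that constant must lie in $|k^\times|$ because the disc contains classical points, so no infinitesimal slope can appear; a nonconstant rational function on $\PP^1$ whose divisor is supported at two points has opposite orders there, forcing the two skeletal slopes to agree. The remaining work is bookkeeping: checking that the h-coordinate really identifies $T$ with an annulus compatibly with all the invariants in play (Lemma~\ref{h-isom}), that the hanging discs become quasi-compact after truncation so that Lemma~\ref{invdisc} applies and the limit value reaches the boundary point, and that the slope of $\log|f|$ along $e$ is read off from $\bar f$ just as in the divisorial case of Theorem~\ref{mordiscs}.
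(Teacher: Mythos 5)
Your proof is correct and follows essentially the same route as the paper: constancy of $|f|$ on the h-disc components of $T\setminus e$ via Lemma~\ref{invdisc}, plus the fact that the slopes of $\log|f|$ at the branches of a divisorial point of $e$ are orders of a rational function on $\PP^1_{\wks}$ and hence sum to zero, forcing linearity along $e$. The paper phrases the skeletal step by contradiction (a corner would force a nonzero slope into a hanging disc, contradicting Lemma~\ref{invdisc}) and applies Lemma~\ref{invdisc} directly to the closed constructible hanging discs rather than via your truncation-and-limit detour, but the substance is the same.
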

\begin{proof}
By Lemma \ref{invdisc} $|f|$ is constant on each connected component of $T\setminus e$, hence it suffices to show that there exists $n$ such that $|f|=r|t|^n$ on $e$, that is, $\log|f|$ is linear on $e$. If this is not so, then there exists a point $x\in e$ such that $\log|f|$ has a break at $e$. In particular, $x$ is a divisorial point not on the boundary, and hence $\ox=\PP^1_{\wks}$. The sum of slopes of $\log|f|$ over all branches from $x$ is zero (this follows from the fact that the sum of orders of a meromorphic function on $\PP^1_{\wks}$ is zero), hence there exists an infinitesimal point $y\in\ox$ such that $y\notin e$ and the slope of $\log|f|$ along the branch given by $y$ is non-zero. It follows that $|f|$ is non-constant on the closed disc with maximal point $y$, and this contradicts Lemma~\ref{invdisc}
\end{proof}

\begin{cor}\label{annulimor}
Assume that $k=k^a$. Let $f\:(C',T')\to(C,T)$ be a morphism whose source is an $h$-annulus with skeleton $e'$ and whose target is either an h-annulus or an h-disc. Then $f(e')$ is a finite subgraph of $T$, the map $e'\to f(e')$ is $\Gamma$-pl and its dilation factor at a non-corner point $q'\in e'$ is $n_f(q')$.
\end{cor}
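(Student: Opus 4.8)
The plan is to follow the proof of Theorem~\ref{mordiscs}, with Lemma~\ref{invannulus} playing the role that Lemma~\ref{invdisc} plays there. We may assume $f$ is non-constant on the components of $C'$, since otherwise $n_f$ is not in play.

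First I would make two harmless reductions. Realizing the $h$-equivalence exhibiting $T'$ as an $h$-annulus as a span $(C',T')\leftarrow(Z,R)\to A_s(0;r_1,r_2)\subseteq\PP^1_S$ of $h$-isomorphisms (calculus of right fractions) and replacing $(C',T')$ by $(Z,R)$, I may assume that $T'$ carries an $h$-coordinate $t'$; by Lemma~\ref{h-isom} this changes neither the skeleton $e'$ with its $\Gamma$-pl structure, nor the completed residue fields, nor (since $\wky=\wkx$ along an $h$-isomorphism) the function $n_f$. In the same way, composing $f$ with an $h$-coordinate $w$ on the target, I may assume that the target is a genuine disc $D_s(0,\rho)$ or annulus $A_s(0;\rho_1,\rho_2)$ and that $f$ is induced by the section $\phi$ pulling back $w$. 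Since $w$ induces an isomorphism of $\Gamma$-monoided spaces from $T$ onto its (genuine disc or annulus) image, it carries $f(e')$ isometrically onto $\phi(e')\subseteq\PP^1_s$; hence it suffices to show that $\phi(e')$ is a finite $\Gamma$-subgraph and that $e'\to\phi(e')$ is $\Gamma$-pl of dilation $n_f$ at each non-corner point.

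Next I would analyze $\log|\phi|$ on the skeleton $e'=[p_1',p_2']$, whose points are the generalized Gauss points $p_{0,\tau}$ of $T'$. If the target is an annulus then $\phi$ is a unit on $T'$; if it is a disc then the zeros of $\phi$ on the quasi-compact $T'$ form a finite set, sitting at finitely many radii $\tau_1<\dots<\tau_m$ which subdivide $e'$. In either case, on (a quasi-compact sub-$h$-annulus lying over) any closed subinterval of $e'$ avoiding the $\tau_i$ the function $\phi$ is zero-free, hence invertible, so Lemma~\ref{invannulus} gives $|\phi|=\delta|t'|^{d}$ there with $\delta\in|k^\times|^\QQ$ and $d\in\ZZ$; thus $\log|\phi|_{p_{0,\tau}}$ is $\Gamma$-pl in $\log\tau$ with corners only at the $\tau_i$. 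On a subinterval of nonzero slope $d$, the map $\phi$ sends $p_{0,\tau}$ to $p_{0,\delta\tau^{d}}\in[0,\infty]_s$ — here one uses Theorem~\ref{classth} to see that a point at which the value of $|w|$ is a non-divisorial ranger must be the corresponding generalized Gauss point — so the image is the $\Gamma$-segment $\{p_{0,\delta\tau^{d}}\}$ and the map has dilation $|d|$. The identity $|d|=n_f(p_{0,\tau})$ on such a subinterval is exactly the computation in the proof of Theorem~\ref{mordiscs}(iii): writing $q'=p_{0,\tau}$ and $p=\phi(q')$, for non-divisorial $\tau$ one has $e_{q'/p}=|d|$ and $f_{q'/p}=1$, while for divisorial $\tau$ the reduction of $\phi$ is a rational function of degree $|d|$ on $\PP^1_{\wks}$, giving $f_{q'/p}=|d|$ and $e_{q'/p}=1$; defectlessness of the residue field of the monomial point $p$ over $k=k^a$ — the stability theorem \cite[Theorem~1.1]{defectless} — then yields $n_f(q')=e_{q'/p}f_{q'/p}=|d|$.

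The remaining case, the subintervals on which the slope $d$ vanishes, is the main obstacle. On such a subinterval $|\phi|$ is locally constant equal to some $\delta$, the reduction of $\phi$ is a constant $\bar b\in\tilk$, and $\phi$ maps the corresponding sub-$h$-annulus into the strictly smaller disc $\{|w-b|<\delta\}$ for a lift $b\in k$; I then replace $w$ by $w-b$ and recurse. At every point $q'$ of $e'$ the recursion terminates after finitely many steps: if it did not, the successive approximations to $\phi(q')$ would form a Cauchy sequence converging to a classical point of the target, forcing the skeleton point $q'$ to be classical, which is absurd. A compactness argument on $e'$ promotes this to a uniform bound, so only finitely many further corners are produced. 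Putting the pieces together, $\phi|_{e'}$ is $\Gamma$-pl with finitely many corners, $\phi(e')$ is a finite $\Gamma$-subgraph of $[0,\infty]_s$, and the dilation at every non-corner point equals $n_f$; transporting back along the $h$-coordinate $w$ of $T$ gives the statement. Everything apart from the uniform termination of this slope-zero recursion is a direct transcription of the disc case treated in Theorem~\ref{mordiscs}.
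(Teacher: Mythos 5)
Your overall reductions (passing to an $h$-coordinate on source and target, using Lemma~\ref{invannulus} on sub-annuli avoiding the zeros, and the dilation computation copied from Theorem~\ref{mordiscs}(iii)) are fine, but the slope-zero recursion at the heart of your argument has a genuine gap, precisely at the step you single out as the remaining obstacle. First, the assertion that on a zero-slope subinterval ``the reduction of $\phi$ is a constant $\bar b\in\tilk$'' is not proved: it can be extracted at interior divisorial points from the fact that $|\phi|$ is constant on the whole sub-annulus (Lemmas~\ref{invdisc} and \ref{invannulus} force the reduction to have no zeros or poles), but the constancy of $\bar b$ along the whole subinterval needs a further argument, and without it each recursion step may require additional subdivisions. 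More seriously, the termination argument fails. If the recursion never stops at a point $q'$, you obtain a strictly decreasing sequence of radii $\delta_1>\delta_2>\dots$ in $|k^\times|$ with $f(q')$ lying in the corresponding nested discs; since $f(q')$ is a monomial point of some radius $\rho>0$ (its type equals that of $q'$ by Lemma~\ref{geomtypelem}), all you can conclude is $\delta_m>\rho$ for every $m$. The intersection of the nested discs then contains the whole disc of radius $\rho$ around the center of $f(q')$, so it is neither a classical point nor a single unibranch point, and no contradiction with Lemma~\ref{type4lem} or with the type of $q'$ arises. On a genuine annulus one would rule this out by expanding $\phi$ as a Laurent series, but on an $h$-annulus no such description is available -- this is exactly the difficulty the paper flags -- and your subsequent ``compactness argument on $e'$'' for a uniform bound is not supplied and is not obviously available, since the number of potential corners is not bounded a priori.

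The paper's proof avoids the recursion altogether by working locally at each edge point $q'\in e'$ and recentering \emph{adapted to the image point}: choose $c\in k$ with $q=f(q')\in[0,\infty)_s$, shrink to a sub-$h$-annulus around $q'$ on which $\phi-c$ is a unit, and apply Lemma~\ref{invannulus} to get $|\phi-c|=r|t|^n$; here $n\neq 0$ is immediate, because $|\phi-c|_{q'}$ is a non-principal ranger (the radius of the edge point $q$) and hence cannot equal the constant $r\in|k^\times|$. Piecewise linearity at divisorial points is then automatic (they are open in the interval), quasi-compactness of $e'$ gives finiteness of corners, and the dilation is identified with $n_f$ as in Theorem~\ref{mordiscs}(iii). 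To repair your argument you would either have to import this pointwise recentering (at which point the global recursion becomes unnecessary) or prove a genuine finiteness statement for the zero-slope recursion on $h$-annuli, which is not a transcription of the disc case.
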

\begin{proof}
Composing $f$ with an h-coordinate we can assume that $f$ is a map to a usual disc. First, let us prove the claim locally at an edge point $q'\in e'$ with $q=f(q')$. Replacing $f$ by $f-c$ with $c\in k$ we can achieve that $q\in[0,\infty)_s$. Since $q'$ is the intersection of h-subannuli of $T'$, shrinking  $T'$ we can assume that $f$ does not vanish on $T'$ and then by Lemma~\ref{invannulus} $|f|=r|t|^n$ on $T'$, where $t$ is an h-coordinate on $T'$ and $r\in|k^\times|$. Also, $n\neq 0$ because $q\in[0,\infty]_s$, and replacing $f$ by $f^{-1}$ if needed we can achieve that $n>0$. Since $\log|t|$ is a coordinate on the $\Gamma$-interval $e'$ and $\log|f|=n\log|t|+\log(r)$, we obtain that $f$ maps $e'$ injectively into $[0,\infty]_s$ and with dilation $n$.

We have proved that $f$ is $\Gamma$-pl at edge points. It is also $\Gamma$-pl at divisorial points, as the latter are open. So, by quasi-compactness of $e'$ we obtain that $f(e')$ is a finite $\Gamma$-graph and $e'\to f(e')$ is a $\Gamma$-pl map. Comparison of the dilation factor and $n_f$ is done precisely as in the proof of Theorem~\ref{mordiscs}(iii).
\end{proof}

\subsection{Local uniformization of smooth adic curves}\label{locunifsec}
In this subsection we will describe a basis of neighborhoods of points on adic curves over a geometric point $(S,s)$. It is an analogue of the local description (or uniformization) of Berkovich curves over an algebraically closed $k$ via elementary neighborhoods, see \cite[\S3.6]{berihes}.

\subsubsection{Local uniformization of fields}
The really non-trivial ingredient we will use is the following uniformization result for one-dimensional extensions of valued fields  proved in \cite[Theorem~2.1.8]{Temkin-stable}:

\begin{theor}\label{valunif}
Assume that $K/k$ is a finitely generated extension of transcendence degree one of valued fields with an algebraically closed $k$. Then there exists an element $t\in K$ such that $K/k(t)$ is unramified.
\end{theor}

\subsubsection{Elementary neighborhoods}
An open quasi-compact subset $U\subseteq C_s$ is called {\em elementary} if there exists a divisorial point $x\in U$ such that $U\setminus x$ is a disjoint union of closed h-discs and at most one semi-open h-annulus. Open h-discs and h-annuli are particular cases of elementary sets.

\subsubsection{Local uniformization of adic points}
Now we can formulate the main result of \S\ref{locunifsec}.

\begin{theor}\label{locunif}
Let $C$ be a smooth adic curve over $S=\Spa(k)$ with an algebraically closed affinoid field $k$ and let $s\in S$. Then any point $x\in X=C_s$ possesses a basis of neighborhoods $\calU$ whose elements are affinoid and elementary. Moreover, depending on the type of $x$ one can achieve that each $U\in\calU$ is as follows:

(i) an $h$-disc whenever $x$ is classical, unibranch or unbounded,

(ii) an $h$-annulus whenever $x$ is a cut point,

(iv) such that $U\setminus\{x\}$ is a union of h-discs whenever $X$ is divisorial,

(v) such that $U\setminus\{y\}$ is a union of h-discs and an h-annulus containing $x$, if $x$ is infinitesimal and $y$ is its generization.
\end{theor}

The proof studies a few cases and takes quite a few subsections below, but let us deduce an important corollary first.

\begin{cor}\label{locunficorcor}
Under the assumptions of Theorem~\ref{locunif} $X$ and $\Gamma^+_X$ is a $\Gamma$-graph with pinched leaves.% and weak topology.
\end{cor}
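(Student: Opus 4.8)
The plan is to deduce Corollary~\ref{locunficorcor} from the local description in Theorem~\ref{locunif}, the structure of $\PP^1_s$ from Theorem~\ref{geomtreeth}, and the fact (Lemma~\ref{h-isom}) that an h-equivalence induces an isomorphism of monoided spaces. Since $k$ is algebraically closed, $\Gamma:=\Gamma_k=\Gamma_{k,\QQ}$ is divisible and $|k(x)^\times|=|k^\times|$ at every divisorial point by Theorem~\ref{classth}, so $\Gamma^+_X$ and $\oGamma^+_X$ agree and one works over the divisible group $\Gamma$. Being a $\Gamma$-graph with pinched leaves is a local condition, and a $\Gamma$-tree structure on an open set is determined by the restriction of $\Gamma^+_X$, so overlaps are compatible automatically; hence it suffices to show that every point of $X$ has an open neighborhood $U$ for which $(U,\Gamma^+_X|_U)$ is a pinched $\Gamma$-tree whose pinched leaves are exactly its classical and unibranch points. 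By Theorem~\ref{locunif} such $U$ may be taken elementary.

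For a classical, unibranch, unbounded or cut point, Theorem~\ref{locunif}(i)--(ii) lets us take $U$ to be an h-disc or an h-annulus. Such a $U$ is h-equivalent to a standard disc $D_s(a,\rho^-)$ or annulus $A_s(a;\sigma_1,\sigma_2)$ in $\PP^1_s$, and by Lemma~\ref{h-isom} this identifies $(U,\Gamma^+_X|_U)$ with the corresponding subspace of $(\PP^1_s,\Gamma^+_X)$ equipped with the restricted sheaf. Since $(\PP^1_s,\Gamma^+_X)$ is a pinched $\Gamma$-tree by Theorem~\ref{geomtreeth} and a connected, locally connected convex subspace of a pinched $\Gamma$-tree is again one --- new, non-pinched, leaves may appear where one cuts, but classical and unibranch points remain the only pinched leaves --- such a $U$ is a pinched $\Gamma$-tree of the required form.

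It remains to treat divisorial and infinitesimal points, where Theorem~\ref{locunif}(iv)--(v) gives an elementary $U$ with a distinguished divisorial point $x$ such that $U\setminus\{x\}$ is a disjoint union of closed h-discs $D_i$ and at most one semi-open h-annulus $A$ (the latter containing the point of interest in the infinitesimal case). Each $D_i$ and $A$ is a pinched $\Gamma$-tree as in the previous paragraph, and by the constructibility discussion the boundary point $y_i$ of $D_i$ (and the infinitesimal boundary of $A$) is an infinitesimal point whose immediate divisorial generization is $x$, so in the standard model $[y_i,x]=\{y_i,x\}$: an infinitesimal ranger and its principal partner with nothing in between. I would then check that gluing the blocks to $x$ along the $y_i$ yields a locally connected quasi-tree --- removing $x$ separates the blocks, removing $y_i$ separates $D_i\setminus\{y_i\}$ from the rest, and a neighborhood basis of $x$ is obtained by excising finitely many smaller h-discs and shrinking $A$ (elementary again) --- and that the $\Gamma$-metric patches: a quasi-interval in $U$ either lies in one block, or has the form $[p,y_i]\cup\{x\}\cup[y_j,q]$, and in the model $[p,y_i]\cup\{x\}$ is the single $\cR_\Gamma$-interval $[r(p),\rho_i]$ containing $\rho_i^-$ in its interior, so $\Gamma^+_X|_{[p,x]}$ is that of a $\Gamma$-segment pinched at $p$ iff $p$ is classical or unibranch; as $x$ is divisorial, hence open in $[p,x]$ and $[x,q]$, gluing at $x$ again gives a $\Gamma$-segment with the correct pinched endpoints. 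Letting $U$ run over the basis of Theorem~\ref{locunif} then finishes the proof.

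I expect the one genuinely delicate point to be this gluing at the divisorial point $x$: recognizing $[p,y_i]\cup\{x\}$ as a single interval of $\cR_\Gamma$ across the infinitesimal ranger $\rho_i^-$, and verifying that the $\Gamma$-segment structures on the blocks patch through the stalk $\Gamma^+_{X,x}=\Gamma_k^+$; everything else is a direct transport, via Lemma~\ref{h-isom}, of the structure of $\PP^1_s$ already established in Theorem~\ref{geomtreeth}.
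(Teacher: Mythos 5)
Your proposal is correct and follows essentially the same route as the paper: reduce to showing that an elementary domain is a pinched $\Gamma$-tree, note that the components of $U\setminus\{x\}$ are pinched $\Gamma$-trees via h-equivalence with discs/annuli in $\PP^1_s$ (Lemma~\ref{h-isom} plus Theorem~\ref{geomtreeth}), and glue them through the divisorial point $x$ using that $x$ is open in the glued quasi-interval and $\Gamma_{X,x}=\Gamma_k$. The "delicate point" you flag — patching intervals across the infinitesimal boundary point and through the stalk at $x$ — is exactly the one-line justification the paper gives, so your write-up is just a more detailed version of the same argument.
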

\begin{proof}
It suffices to show that an elementary domain $U$ is a $\Gamma$-tree with pinched leaves. We know that this is so for the connected components of $U\setminus\{x\}$, where $x$ is an appropriate divisorial point. Any two points $y,z$ lying in different components of $U\setminus\{x\}$ are connected by a unique quasi-interval $I=[y,y']\cup\{x\}\cup[z',z]$, where $y'$ and $z'$ are the specializations of $x$ lying in the components of $y$ and $z$, and $I$ is a $\Gamma$-interval because $\Gamma_{X,x}=\Gamma_k$ and $x$ is open in $I$.
\end{proof}

\subsubsection{Classical points}
Assume first that $x$ is classical. Choose a uniformizer $t\in\calO_{X,x}$ and consider the induced map $f:U\to W=\PP^1_S$ from a neighborhood of $x$. Shrinking $U$ and $W$ around $x$ and $0$ we can achieve that $W$ is a disc, the induced morphism $g\:U\to W$ is a finite morphism and $x$ is the only preimage of $0$. As~$t$ is a uniformizer of $\calO_{X,x}$, the morphism~$f$ is étale at~$x$. Its degree over~$0$ equals~$1$, so the overall degree is~$1$ and~$f$ is an isomorphism.

\subsubsection{Generic points: choice of a parameter}
Assume now that $x$ is generic. We will show below that there exists a dense subfield $K\subseteq k(x)$ finitely generated over $k$ and of transcendence degree one. Of course, this is an issue only in the microbial case, as otherwise one can just take $K=k(x)$. Given such a $K$ we will use the previous theorem to take any $t\in K$ such that $K/k(t)$ is unramified. We will then shrink $X$ so that $t$ induces a morphism $f\:X\to \PP^1_S$ and will study $x$ through its relation to $y=f(x)$. Note that $f$ is strongly \'etale at $y$, and if $y$ is not divisorial, then $\wt{k(x)}=\tilk$ and hence $f$ is even an $h$-isomorphism at $y$.

To construct $K$ start with any $t\in k(x)\setminus k$. Locally at $x$ it induces a quasi-finite morphism $f\:U\to \PP^1_S$. Set $z=f(x)$, then $k(x)$ is finite over $k(z)$. We claim that one can also achieve that $k(x)/k(z)$ is separable, so assume for a while that $p=\cha(k)>0$. The $p$-rank can only drop under completions, hence $[k(z):k(z)^p]\le p$. On the other hand, extracting a $p$-th root of $t$ induces a radicial cover of $\PP^1_k$, hence $t\notin k(z)^p$ and $[k(z):k(z)^p]=p$. It follows that also $[k(x):k(x)^p]=p$, and choosing now $t\in k(x)\setminus k(x)^p$ we achieve that $k(x)=k(x)^p(t)=k(x)^pk(z)$ and hence $k(x)/k(z)$ is separable. Now, it easily follows from Krasner's lemma that there exists a finite separable extension $K/k(t)$ such that $K$ is dense in $k(x)$.

\subsubsection{Cut, unibranch and unbounded points}
If $x$ is cut, unibranch or unbounded, then it is maximal and hence by Theorem \ref{hisomth} $f$ establishes an $h$-isomorphism $U\to V$ of a neighborhood of $x$ onto a neighborhood of $y$. If $y$ is unibranch, then it is an intersection of discs $V_i$ in $\PP^1_s$. If $y$ is a cut point, then it is an intersection of annuli $V_i$ whose skeleton contains $y$. If $y$ is an unbounded point, then it is an intersection of punched discs, where we punch out the specialization of $y$ in $\PP^1_s$. A cofinal family of these (punched) discs and annuli lies in $V$, and their preimages $U_i\subseteq U$ are (punched) h-discs or h-annuli as required.

\subsubsection{Divisorial points}
If $x$ is divisorial, then we can choose the coordinate $t$ so that $y$ is the Gauss point of $Y:=(\PP^1_k)_s$. Shrinking $X$ we can assume that $X=\Spa(B)$ is affinoid, $x$ is the only preimage of $y$, and $f(X)$ is contained in the unit disc $D=\Spa(A)$, where $(A,A^+)=(k\langle t\rangle,k^+\langle t\rangle)$. Consider the specialization (or reduction) map $D\to\tilD=\Spec(A^+/k^{\circ\circ}A^+)=\AA^1_{k_\eta^+}$. It restricts to the map $D_s\to\tilD_s=\AA^1_{\wt{k(s)}}$ whose fiber over the generic point is the Gauss point, and whose fiber over a closed point $\tilt=\tila$ is the closed disc $D(a,1^-)$ for a lift $a \in k$ of~$\tila$.

Note that $y$ is the intersection of affinoid neighborhoods of the form $V_i=\Spa(A_i)$ with $A_i=A\langle(t-a_1)^{-1}\.(t-a_n)^{-1}\rangle$.
In particular, replacing $Y$ by some $V_i$ and shrinking $Y$ accordingly, we achieve that $X\to Y$ is strongly \'etale.
The base change of $X$ to the henselization~$Y_y^h$ of~$Y$ at~$y$ has a connected component containing a preimage of~$x$ and this connected component is then finite over~$Y_y^h$ and contains precisely one preimage of~$x$.
We can descend this to a finite level, i.e., we can find a strongly étale morphism $Y' \to Y$ that is an $h$-isomorphism at~$y$ such that $Y' \times_Y X$ has a connected component~$X'$ with a unique preimage~$x'$ of~$x$ such that $f':X' \to Y'$ is finite and $[k(x'):k(y')] = [k(x)^h:k(y)^h]$.
In particular $f':X' \to Y'$ is finite strongly étale of degree $[k(x)^h:k(y)^h]$.
Using Theorem~\ref{hisomth} we shrink~$Y'$ to achieve that $Y' \to Y$ is an h-isomorphism.
Using Lemma~\ref{h-isom}~(ii) we check that $X' \to X$ is then an h-isomorphism as well.
The map $\tilf'\:\tilX'\to\tilY'$ of reductions is also finite, and shrinking $Y'$ further we can assume that it is unramified over $\tilY'_s$.
Now the preimage of any disc $D(a,1^-) \subseteq Y$ in~$Y'$ is an $h$-disc over $D(a,1^-)$ and in~$X'$ it splits completely into $n_{y/x}$ many h-discs over $D(a,1^-)$ corresponding to the preimages of $\tila \in \tilY'$ in $\tilX'$.
Since $X' \to X$ is an h-isomorphism, the preimage of $D(a,1^-)$ in~$X$ also is a disjoint union of h-discs over $D(a,1^-)$.
Therefore, $f$ splits over any disc of $Y\setminus\{y\}$ and all connected components of $X\setminus\{x\}$ are h-discs mapped h-isomorphically onto discs in $Y$.

\subsubsection{Infinitesimal point}
If $x$ is infinitesimal and $x'$ its divisorial generization, then we choose $t$ so that $x$ is mapped to the generalized Gauss point $y$ of radius $1^-$ and $x'$ is mapped to the Gauss point $y'$. By the same arguments as in the case of divisorial points, shrinking $X$ we can achieve that $X\setminus\{x'\}$ is a disjoint union of h-discs and the fiber over $D(0,1^-)$ (which we cannot remove because of $x$). However, $x$ is maximal in $T=f^{-1}(D(0,1^-))$, hence $f$ is an $h$-isomorphism in a neighborhood of $x$ in $T$. Since the annuli $A(0;r,1^-)$ with $r<1$ form a fundamental family of neighborhoods of $y$ in $D(0,1^-)$, for $r\in|k^\times|$ close enough to~$1$ the preimage $f^{-1}(A(0;r,1^-)$ contains a component $A$ which is a neighborhood of $x$ in $T$ and mapped h-isomorphically onto $A(0;r,1^-)$. The required neighborhood of $x$ is then of the form $(X \setminus T) \cup A$.

\subsection{Global structure of adic curves}
Our next task is to globalize the local description provided by the local uniformization theorem. We stick to the case when $k$ is microbial, since the $\Gamma$-graph structure is nicer in this case.

\subsubsection{Skeletons}
Assume that $(C,C_s)$ is a reduced curve over a geometric pseudo-adic point $(S,s)$. By a {\em skeleton} of $C_s$ we mean a finite $\Gamma$-graph $\Delta\subset C_s$ such that $C_s\setminus\Delta$ is a disjoint union of constructible closed h-discs and the leaves of $\Delta$ are either divisorial or classical points. In particular, $\Delta$ is a skeleton of the underlying $\Gamma$-graph of $C_s$ in the sense of \S\ref{skeletsec} because for any point $x\in C_s\setminus\Delta$ the connected component $Y$ containing it is a constructible disc whose maximal infinitesimal point $y^-$ has a generization $y\in\Delta$ and the $\Gamma$-segment $[x,y^-]$ extends to a $\Gamma$-segment $[x,y]$, which is the minimal $\Gamma$-segment connecting $x$ and $\Delta$.

\subsubsection{Triangulations}
For shortness, we will address punched h-discs as semi-infinite h-annuli; their {\em skeleton} is the interval connecting the maximal point of the disc with the punch. Following Ducros, by a {\em triangulation} of a fiber $C_s$ of an $S$-curve $C$ we mean a finite set $V$ of divisorial and classical points of $C_s$ such that $C_s\setminus V$ is a disjoint union of constructible closed h-discs and h-annuli. To any triangulation one canonically associates the skeleton $\Delta_V\subset C_s$ whose vertices are the points of $V$ and edges are $\Gamma$-segments, which are skeletons of the h-annular components of $C_s\setminus V$.

In fact, any skeleton $\Delta$ is associated with a triangulation $V$, but proving this is essentially equivalent to proving the triangulation theorem below. But if a triangulation of $V$ is given, then it is easy to see that adding to it any finite set of divisorial points of $\Delta$ we obtain another triangulation whose associated skeleton is $\Delta$.

\subsubsection{Refinements}
Also, constructing a refinement of a skeleton is easy.

\begin{lem}\label{skeletondisc}
Let $(S,s)$ be a geometric pseudo-adic point and $(C,C_s)$ a reduced $s$-curve with a triangulation $V$ and associated skeleton $\Delta$. Assume that $\Delta'\subset C_s$ is a finite $\Gamma$-subgraph and $V'\subset\Delta'$ is a finite set of classical and divisorial points such that $\Delta\subseteq\Delta'$, $\pi_0(\Delta)=\pi_0(\Delta')$, $V\subseteq V'$ and $V'$ contains all points of $\Delta'$ of valence different from 2. Then $V'$ is a triangulation of $C_s$ and $\Delta'$ is the associated skeleton.
\end{lem}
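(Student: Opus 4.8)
The plan is to first promote $\Delta'$ to a skeleton of $C_s$ and then read off the triangulation. Write $\Delta=\Delta_V$. As the skeleton associated to a triangulation, $\Delta$ is a skeleton of $C_s$, so $C_s\setminus\Delta=\coprod_j D_j$ with each $D_j$ a constructible closed h-disc whose maximal (infinitesimal) point $p_j$ has a divisorial generization $v_j\in\Delta$; moreover the edges of $\Delta$ are the skeletons of the h-annulus components of $C_s\setminus V$, and each $D_j$ is either one of the h-disc components of $C_s\setminus V$ (so $v_j\in V$) or one of the closed h-discs attached to an interior divisorial point of an edge of $\Delta$. Since $s$ is a geometric point and $k$ is microbial, Theorems~\ref{classth} and~\ref{geomtreeth} supply the explicit local picture of $C_s$ underlying all the geometric steps: the branches at a divisorial point form a projective line over the residue field with all but finitely many of them spanning constructible closed h-discs, infinitesimal and cut points are edge points of valence $2$, and classical points are leaves whose punctured neighbourhoods inside $C_s$ are (semi-infinite) h-annuli.

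\emph{Step 1: $\Delta'$ is a skeleton of $C_s$.} From $\Delta\subseteq\Delta'$ and $\pi_0(\Delta)=\pi_0(\Delta')$ we get $\Delta'\setminus\Delta\subseteq\coprod_j D_j$; put $T_j=\Delta'\cap D_j$. Since $p_j$, being non-divisorial, separates $D_j$ from $C_s\setminus D_j$ in $C_s$, and since every interior point of the quasi-segment joining two points of $T_j$ separates them in $C_s$, one checks that $T_j$ is a finite $\Gamma$-subtree of the tree $D_j$ containing $p_j$ whenever it is nonempty. The local structure of discs shows that deleting such a subtree from a constructible closed h-disc leaves a disjoint union of constructible closed h-discs (at each point of $T_j$ one retains exactly the branches disjoint from $T_j$, each spanning a closed h-disc). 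Hence $D_j\setminus T_j$, and therefore $C_s\setminus\Delta'=\coprod_j(D_j\setminus T_j)$, is a disjoint union of constructible closed h-discs whose maximal points are infinitesimal with generization in $\Delta'$. Since the leaves of $\Delta'$ have valence $1\neq 2$ they lie in $V'$, hence are divisorial or classical; so $\Delta'$ is a skeleton of $C_s$.

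\emph{Step 2: $V'$ is a triangulation with associated skeleton $\Delta'$.} Because $V'\subseteq\Delta'$ contains all points of $\Delta'$ of valence $\neq 2$, between two consecutive points of $V'$ the graph $\Delta'$ is a $\Gamma$-segment, so $\Delta'\setminus V'$ is the disjoint union of the interiors $\epsilon^{\circ}$ of finitely many ``edges'' $\epsilon$, each with endpoints in $V'$. Let $\Omega$ be a component of $C_s\setminus V'$. If $\Omega\cap\Delta'=\emptyset$ then $\Omega$ is one of the closed h-discs of $C_s\setminus\Delta'$. Otherwise $\Omega$ meets $\Delta'\setminus V'$; since each component of $C_s\setminus\Delta'$ is attached to $\Delta'$ at a single point and crossing a vertex of $\Delta'$ would leave $\Delta'\setminus V'$, the set $\Omega$ contains a unique edge interior $\epsilon^{\circ}$ and equals $\epsilon^{\circ}$ together with all components of $C_s\setminus\Delta'$ attached at interior points of $\epsilon$. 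The valence hypothesis enters once more: a disc attached at a point $u\in\epsilon^{\circ}$ has $u$ of valence $\ge 3$ in $C_s$ but $2$ in $\Delta'$, and the only $V'$-points adjacent to $\Omega$ are the two endpoints of $\epsilon$. Now $\epsilon^{\circ}$ lies either in an edge of $\Delta$ --- so $\Omega$ is a subinterval of the skeleton of the corresponding h-annulus component of $C_s\setminus V$ together with the closed h-discs attached to its interior points, a constructible h-annulus --- or inside some $D_j$ --- so $\Omega$ is a region of the same shape inside the constructible closed h-disc $D_j$, again a constructible h-annulus (push the picture through an h-coordinate on $D_j$, obtained, via the calculus of right fractions for $h$-isomorphisms, from an $h$-isomorphism $Z\to D_j$ with $Z$ also $h$-isomorphic to an actual disc, and compute in the monomial model); if $\epsilon$ straddles a root $v_j$ with $v_j\notin V'$, then $v_j$ has valence $2$ in $\Delta'$, forcing $T_j=\emptyset$, so $\Omega$ is once more a sub-region of a single h-annulus component. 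If an endpoint of $\epsilon$ is classical, $\Omega$ is the complement of that point in such an h-disc or h-annulus, i.e. a semi-infinite h-annulus. In every case $\Omega$ is a constructible closed h-disc or h-annulus, with skeleton $\epsilon$ when it is an annulus; so $V'$ is a triangulation, and its associated skeleton has vertex set $V'$ and edge set $\{\epsilon\}$, hence equals $V'\cup\bigcup_\epsilon\epsilon=\Delta'$.

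\emph{Main obstacle.} The substance is concentrated in the two ``local structure of discs/annuli'' facts invoked above: that deleting a subtree through the apex of a constructible closed h-disc (resp. a finite set of divisorial and classical points lying on such a subtree) leaves a disjoint union of constructible closed h-discs (resp. of constructible closed h-discs and h-annuli), and that a subinterval of the skeleton of a constructible closed h-disc or h-annulus together with the discs pendant on it is again a constructible h-annulus. Each reduces, after transport through an h-coordinate, to an explicit computation in a disc or annulus via Theorems~\ref{classth} and~\ref{geomtreeth}; the care needed is in tracking constructibility and the three kinds of boundary behaviour (divisorial, infinitesimal, classical puncture), so as to certify that every piece is exactly a closed h-disc or a (possibly semi-infinite) h-annulus and that the annular pieces carry precisely the edges of $\Delta_{V'}$ as skeletons.
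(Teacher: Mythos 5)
Your argument is correct, but it is organized differently from the paper's. The paper reduces everything to a single-point enlargement: using the explicit description of discs and annuli it asserts that for a point $x$ of $C_s$ (a divisorial point of the skeleton, or a divisorial/classical point of a disc component taken together with its attachment point $x'$) the set $V\cup\{x,x'\}$ is again a triangulation with associated skeleton $\Delta\cup[x,x']$, and then concludes by a simple induction on the number of points of $V'\setminus V$. You instead argue in one pass: first you show that $\Delta'$ is a skeleton, using that each nonempty $T_j=\Delta'\cap D_j$ is a subtree through the apex $p_j$ (this is exactly where $\pi_0(\Delta)=\pi_0(\Delta')$ and the separation property of the infinitesimal point $p_j$ enter, and the valence hypothesis rules out $p_j$ being a leaf of $\Delta'$), and then you identify every component of $C_s\setminus V'$ directly as a constructible closed h-disc or an h-annulus whose skeleton is the closure of an edge of $\Delta'\setminus V'$, the valence hypothesis again guaranteeing that an edge cannot cross an attachment point or a branch point outside $V'$. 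Both arguments bottom out in the same local facts --- that deleting a finite subtree or subsegment from a (transported, via an h-coordinate) disc or annulus leaves a disjoint union of constructible closed h-discs and h-annuli --- which the paper itself leaves at the level of ``the explicit description of discs and annuli,'' so your proof is at the same level of rigor as the paper's. What the induction buys the paper is brevity and a reusable one-point-enlargement statement; what your direct decomposition buys is an explicit description of all components of $C_s\setminus V'$ at once, with no need to order the new points or track the intermediate skeletons.
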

\begin{proof}
Since $C_s\setminus\Delta$ is a disjoint union of $\Gamma$-trees, for any point $x\in C_s$ there exists a unique $\Gamma$-segment $[x,x']$ (maybe with $x=x'$) such that $[x,x']\cap\Delta=\{x'\}$. Furthermore, the explicit description of discs and annuli implies that $V\cup\{x,x'\}$ is a triangulation and $\Delta\cup[x,x']$ is the associated skeleton. In particular, we can enlarge $V$ by adding any divisorial point of the skeleton or any divisorial or classical point of a disc component of $C_s\setminus\Delta$ together with the point of $\Delta$ of minimal distance from $C_s$. The claim of the lemma now follows by a simple induction on the number of points in $V'\setminus V$.
\end{proof}

\subsubsection{Reduction to the smooth case}
Given a reduced curve $C$, let $\pi\:\tilC\to C$ be its normalization. In particular, if $D$ is the singular locus of $C$, then $D_s=D\cap C_s$ and $\tilD_s=\pi^{-1}(D_s)$ are finite sets of classical points and $\tilC_s\setminus\tilD_s=C_s\setminus D_s$. Since any triangulation of $C_s$ must contain $D_s$ we obtain that there is a natural bijection between triangulations of $C_s$ and triangulations of $\tilC_s$ containing $\tilD_s$. For this reason in the sequel we will only study triangulations of smooth curves.

\subsubsection{The triangulation theorem}
Here is the main global result over a geometric point. Its analogue in Berkovich geometry is a version of the analytic semistable reduction sometimes called the triangulation theorem.

\begin{theor}\label{triangth}
Let $k$ be an algebraically closed affinoid field, $S=\Spa(k)$, $s\in S$ a point and $C$ a smooth $S$-curve. Then any finite subset $V_0\subset C_s$ of classical and divisorial points is contained in a triangulation $V$ of $C_s$. Furthermore, if $V_0$ itself contains a triangulation, then there exists a minimal triangulation $V$ containing $V_0$.
\end{theor}
\begin{proof}
Working separately with the connected components, we can assume that $C_s$ is connected. Assume first that $V_0$ contains a triangulation $V'$, and let $\Delta'$ be the associated skeleton of $C_s$. Let $\Delta$ be the minimal connected $\Gamma$-graph containing $\Delta'$ and the elements of $V_0$, it exists because $C_s\setminus\Delta'$ is a disjoint union of $\Gamma$-trees. Finally, let $V$ be the union of $V_0$ and all elements of $\Delta$ of valence different from 2. Then Lemma~\ref{skeletondisc} implies that $V$ is the minimal triangulation containing $V_0$ and $\Delta$ is its associated skeleton.

If $V_0$ is arbitrary we will have to use local uniformization. By quasi-compactness, we can find a finite cover $C_s=\cup_{i=1}^n U_i$ with each $U_i$ as in Theorem~\ref{locunif}. Each $U_i$ has a canonical triangulation: $V_i=\{x_i\}$ if $U\setminus\{x_i\}$ is a union of h-discs, and $V_i=\{x_i,y_i\}$ if $U\setminus\{x_i\}$ also contains an h-annulus $A$ with $y_i$ being the second end-point of $A$. Let $\Delta_i$ be the associated skeleton -- $\{x\}$ or $[x,y]$. Choose any connected finite graph $\Delta$ with divisorial vertices which contains $\cup_i\Delta_i$. For example, for any $U_i,U_j$ with a non-empty intersection we can choose a divisorial point $v_{ij}\in U_i\cap U_j$ and segments $l_{ij}=[v_{ij},v_i]\subset U_i$ and $l_{ji}=[v_{ij},v_j]\subset U_j$. Let $\Delta$ be the union of all $\Delta_i$ and $l_{ij},l_{ji}$ and let $V$ be the union of $V_0,V_1\.V_n$ and all elements of $\Delta$ of valence not equal to 2. We claim that $V$ is a triangulation the associated skeleton $\Delta$. This reduces to a local check in an h-disc or h-annulus lying in some $U_i$ and follows from the same claim as earlier.
\end{proof}

\subsubsection{Arbitrary ground fields}\label{deltasec}
Now, let us study curves over an arbitrary affinoid field $k$. As was noticed in the introduction, the natural sheaf of values $\Gamma_C$ can behave rather pathologically this time, so we will simply use the geometric metric $\oGamma_C$ induced from $\oC=C\times_S\oS$, see \S\ref{geomsheaf}. By a {\em geometric skeleton} we mean a finite $\oGamma$-graph $\Delta\subset C_s$ whose preimage $\oDelta$ in $\oC_\os$ is a skeleton. Note that $\oDelta$ is $G_{k^h}$-equivariant, hence the deformational retraction of $\oC_\os$ onto $\oDelta$ induced by the metric descends to the deformational retraction of $C_s$ onto $\Delta$. In particular, $\Delta$ is a graph skeleton of $C_s$ and the obtained retraction onto it is induced by the geometric metric on $C_s$.

A {\em geometric triangulation} $V$ of $C_s$ is a finite set of divisorial and classical points such that its preimage $\oV$ in $\oC_\os$ is a triangulation and the action of $G_{k^h}=G_{k^s/k^h}$ on the associated skeleton $\oDelta$ is such that for each edge $\overline{e}$ and $\sigma\in G_{k^h}$ either $\sigma$ takes $\overline{e}$ to another edge or acts trivially on $\overline{e}$. Since $\oDelta$ is $G_{k^h}$-equivariant, it descends to a geometric skeleton $\Delta$ {\em associated with} $V$. The condition on the action guarantees that $V$ contains all non-edge points of $\Delta$. In principle it is not necessary and is omitted, for example, in \cite[p.34]{berbook}.

\begin{theor}\label{curvesekeletonth}
Let $k$ be a microbial affinoid field, $S=\Spa(k)$, $s\in S$ a point, $\oGamma=\Gamma_{s,\QQ}$ and $C$ a smooth $S$-curve. Then:

(i) $(C_s,\oGamma_{C_s})$ is a $\oGamma$-graph with pinched leaves, with vertices being the divisorial points, the edge points being the cut and infinitesimal points, and the pinched leaves being the unibranch and classical points.

(ii) For any finite set of classical and divisorial points $V_0\subset C_s$ there exists a geometric triangulation $V$ containing $V_0$. If $V_0$ itself contains the image of a triangulation $\oV_0$ of $\oC_\oS$, then there exists a minimal such $V$.
\end{theor}
\begin{proof}
We start with proving (ii). Let $\pi\:\oC_s\to C_s$ be the projection. By Theorem~\ref{triangth}, enlarging $V_0$ we can assume that it contains the image of a triangulation $\oV_0$, so it suffices to prove only the second part of (ii). The stabilizers in $G_{k^h}$ of non-unibranch points are open, hence the set $\pi^{-1}(V_0)$ is finite and by Theorem~\ref{triangth} there exists a minimal triangulation $\oV'$ containing it. Clearly, $\oV'$ is $G_{k^h}$-equivariant. Since $G_{k^h}$ preserves the associated skeleton $\oDelta$ and the metric, any element $\sigma\in G_{k^h}$ taking an edge $\overline{e}$ of $\oDelta$ to itself either acts by identity or by reflection. For each $\overline{e}$ possessing such a reflecting $\sigma_{\overline{e}}\in G_{k^h}$ we add the midpoint of $\overline{e}$ to $\oV'$ obtaining a larger triangulation $\oV$ with the same skeleton $\oDelta$. Obviously, $\oV$ is the minimal triangulation of $\oC_s$ which descends to a triangulation $V$ of $C_s$.

Furthermore, $(\Delta=\oDelta/G_{k^h},\oGamma|_\Delta)$ is just the quotient of $(\oDelta,\Gamma|_\oDelta)$ obtained by identifying some vertices and some edges, so it is a finite $\oGamma$-graph. Moreover, each connected component $D$ of $C_s\setminus\Delta$ is a quotient of a corresponding component $\oD$ of $\oC_\os\setminus\oDelta$ by its stabilizer. The maximal infinitesimal point of $\oD$ is fixed by the action and the stabilizers of divisorial points are easily seen to be open (e.g. using a quasi-finite map to $\PP^1_s$). By Lemma~\ref{quotlem} $D$ is a pinched $\oGamma$-tree. This shows both that $T$ is a $\oGamma$-graph and $\Delta$ is its skeleton.
\end{proof}

\subsubsection{The closure of a divisorial point}

Let~$x$ be a divisorial point of a smooth $S$-curve~$C$ and write~$s$ for the image of~$x$ in~$S$.
In Corollary~\ref{specializations_as_curve} we identified $\ox \setminus C_s$ with the microbial curve~$\tilC^x_\an$.
Being a smooth curve over $(\widetilde{k(s)},\widetilde{k(s)}^+)$, the space $\tilC^x_\an$ comes with a geometric sheaf of values that we denote by~$\overline{\Gamma}_{\tilC^x_\an}$.
We can view it as a sheaf on $\ox \setminus C_s$.
On the other hand, we can restrict~$\overline{\Gamma}_C$, the geometric sheaf of values of~$C$, to $\overline{x}$.
These two sheaves are connected as follows.

\begin{prop}
 There is a natural short exact sequence of sheaves on $\ox$
 \[
  0	\longrightarrow \overline{\Gamma}_{\tilC^x_\an} \longrightarrow \overline{\Gamma}_C \longrightarrow \overline{\Gamma}_x \longrightarrow 0.
 \]
\end{prop}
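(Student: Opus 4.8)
The plan is to make all three sheaves and both maps on $\ox$ explicit and then to check exactness on stalks, the real content being the identification of the kernel of the second map with $\overline{\Gamma}_{\tilC^x_\an}$. Recall the set‑up: since $x$ is divisorial all its specializations are vertical, so by \S\ref{vertical_specializations} $\ox$ is identified with the space $\tilC^x$ of valuations $w_y$ of $\wkx$ attached to $x$, and the valuation of $k(y)=k(x)$ is the composite $v_y=v_x\circ w_y$, i.e.\ first the divisorial valuation $v_x$ of $k(x)$ and then $w_y$ on its residue field $\wkx$. Under this identification $\ox\cap C_s=\{x\}\cup\{v_\gtm\}$, where $w_x$ is the trivial valuation and each $w_{v_\gtm}=v_\gtm$ is the discrete valuation at a closed point $\gtm$ of the reduction curve $\tilC_x$ (the $v_\gtm$ being the infinitesimal points of $C_s$ specializing $x$), while by Corollary~\ref{specializations_as_curve} the open complement $\ox\setminus C_s$ is identified with $\tilC^x_\an$ via $\varphi_x$. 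Accordingly, ``$\overline{\Gamma}_{\tilC^x_\an}$ as a sheaf on $\ox$'' is the sheaf whose stalk at $y$ is the geometric value group of $w_y$ over $\wks$: it agrees with the geometric sheaf of values of $\tilC^x_\an$ on $\ox\setminus C_s$, vanishes at $x$, and equals $\ZZ$ at each $v_\gtm$.

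First I construct the surjection $\overline{\Gamma}_C|_{\ox}\to\overline{\Gamma}_x$ as evaluation at $x$. Since $x$ is the generic point of $\ox$, any germ $\log|f|$ of $\overline{\Gamma}_C$ (represented on $\oC$ and descended under $G_{k^h}$) is defined on a neighbourhood containing the lift of $x$ and may be restricted there, yielding $\log|f|_x\in\overline{\Gamma}_{C,x}$; here $\overline{\Gamma}_x$ is the constant sheaf with this stalk, which is the divisible hull $\Gamma_{k(s),\QQ}$ because $x$ is divisorial over $s$. On the stalk over $y$ this map is, after passing to geometric value groups, the canonical quotient $\Gamma_{v_y}\twoheadrightarrow\Gamma_{v_y}/\Gamma_{w_y}=\Gamma_{v_x}$: indeed $v_x$ is the coarsening of $v_y$ cut out by the convex subgroup $\Gamma_{w_y}$, and evaluating the coarser valuation is precisely this quotient. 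Hence the map is surjective on stalks, so as a sheaf map it is surjective, and its kernel has stalk at $y$ equal to $\Gamma_{w_y}$, again after passage to geometric value groups.

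It remains to identify this kernel with $\overline{\Gamma}_{\tilC^x_\an}$. The point requiring care is that the geometric construction is compatible with the decomposition $v_y=v_x\circ w_y$: one must verify that the residue field of the valuation on $\whka$ lying over $\os$ is the completed algebraic closure $\widehat{\wks^a}$, so that making $\Gamma_{v_y}$ geometric over $k$ induces making $\Gamma_{w_y}$ geometric over $\wks$. Granting this, the kernel stalk at $y$ is the geometric value group of $w_y$ over $\wks$, i.e.\ exactly the stalk of $\overline{\Gamma}_{\tilC^x_\an}$ on $\ox$: on $\ox\setminus C_s$ the identification sends the germ $\log|f|$ (with $|f|_x=1$) to $\log|\bar f|$, where $\bar f\in\wkx^\times$ is the reduction of $f$ modulo the maximal ideal of $v_x$ — this being a restatement of the function‑theoretic content of the preceding proposition; at $x$ both stalks vanish; and at each $v_\gtm$ both are $\ZZ$ (a discrete valuation stays discrete after base change to an algebraically closed residue field). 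Since all the maps in sight are functorial in the point, these stalkwise identifications assemble into an isomorphism of sheaves $\ker\cong\overline{\Gamma}_{\tilC^x_\an}$, which is the asserted exact sequence.

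The main obstacle is the compatibility invoked in the last step: tracking the passage to $\whka$ followed by $G_{k^h}$‑descent through the higher‑rank microbial situation and showing that, after restriction to residue fields, it reproduces the analogous geometric construction for $\tilC^x_\an$ over $(\wks,\wks^+)$, so that the convex subgroup $\Gamma_{w_y}\subseteq\Gamma_{v_y}$ lands on the correct geometric value group. Everything else — exactness of the value‑group sequence for a composite valuation, surjectivity of evaluation at $x$, and upgrading the stalkwise picture to a sheaf isomorphism — is routine given Corollary~\ref{specializations_as_curve}, the preceding proposition, and the classification of divisorial and infinitesimal points.
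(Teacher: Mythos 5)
Your stalkwise argument rests on the same valuation-theoretic content as the paper's proof: for $y\in\ox$ the valuation $v_y$ is composed from $v_x$ and a valuation $w_y$ of $\wkx$, the map to $\oGamma_x$ is evaluation at the generization $x$ (quotient by the convex subgroup $\Gamma_{w_y}$), and the kernel is realized by sending a germ $f$ with $|f|_x=1$ to its reduction $\bar f\in\wkx^\times$. The difference is in the packaging, and the packaging is exactly what disposes of the step you leave open. The paper first passes to $\oC=C\times_S\oS$, so that one may assume $k$ algebraically closed and work with the plain sheaves of values rather than the geometric ones; once the sequence is proved there, it descends because $\oGamma_C$, $\oGamma_x$ and $\oGamma_{\tilC^x_\an}$ are by construction descended sheaves, so the compatibility you flag as the ``main obstacle'' (that geometrization over $k$ restricts on the convex subgroup $\Gamma_{w_y}$ to geometrization over $\wks$) never has to be verified pointwise. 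Second, in the algebraically closed case the paper avoids stalks altogether: since every point of $\ox$ is a specialization of $x$, the constant sheaf $\cO_{C,x}^+$ maps into $\cO_C$ on $\ox$, and the asserted sequence is literally the sequence of unit-group quotients $(\cO_{C,x}^+)^\times/(\cO_C^+)^\times\into\cO_C^\times/(\cO_C^+)^\times\onto\cO_C^\times/(\cO_{C,x}^+)^\times$, with the subsheaf identified with $\Gamma_{\tilC^x_\an}$ via the reduction map $(\cO_{C,x}^+)^\times\onto\cO_{\tilC^x_\an}^\times$ --- the same map $f\mapsto\bar f$ you use, but applied once at the level of sheaves rather than stalk by stalk; your explicit description of the stalks (zero at $x$, $\ZZ$ at the infinitesimal points $v_\gtm$, the value group of $w_y$ on $\ox\setminus C_s$) then comes out as a consequence rather than an input. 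So your route is viable, but as written it is incomplete at the point you yourself identify; the cheapest repair is to reorder the argument as the paper does --- prove the exact sequence on $\oC$ for the non-geometric sheaves and then descend --- rather than to prove the deferred compatibility directly.
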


\begin{proof}
 We can prove the claim on $\overline{C} = C \times_S \overline{S}$, then it descends to~$C$.
 Therefore, we may assume that the base field~$k$ is algebraically closed.
 Since all points of $\ox$ are specializations of~$x$, there is a natural injection
 \[
  \cO_{C,x}^+ \hookrightarrow \cO_C
 \]
 of the constant sheaf $\cO_{C,x}^+$ into the sheaf~$\cO_{C}$ (restricted to $\ox \setminus C_s$).
 It induces an injection
 \[
  (\cO_{C,x}^+)^\times/(\cO_C^+)^\times \hookrightarrow \cO_C^\times/(\cO_C^+)^\times
 \]
 with cokernel $\cO_C^\times/(\cO_{C,x}^+)^\times$.
 We have
 \[
  \Gamma_x = \cO_C^\times/(\cO_{C,x}^+)^\times, \qquad \Gamma_C = \cO_C^\times/(\cO_C^+)^\times.
 \]
 Moreover, the projection $(\cO_{C,x}^+)^\times \twoheadrightarrow \cO_{\tilC^x_\an}^\times$ induces an isomorphism
 \[
  (\cO_{C,x}^+)^\times/(\cO_C^+)^\times \overset{\sim}{\longrightarrow} \cO_{\tilC^x_\an}^\times/(\cO_{\tilC^x_\an}^+)^\times = \Gamma_{\tilC^x_\an}.
 \]
\end{proof}

\subsubsection{Morphisms and the pl structure}
As a corollary of semistable reduction we can prove that morphisms respect the pl structure in the following sense.

\begin{theor}\label{morth}
Let $f\:Y\to X$ be a quasi-finite morphism of smooth pseudo-adic curves over $s\in S$ with a microbial $S=\Spa(k)$.

(i) If $\Delta\subset Y$ is a finite $\oGamma$-subgraph, then $\Delta_X=f(\Delta)$ is a finite $\oGamma$-subgraph of $X$, the induced map $\Delta\to\Delta_X$ is $\oGamma$-pl and for any non-divisorial point $y\in\Delta$ the dilation of $f|_\Delta$ at $y$ equals $\on_f(y)$.

(ii) If $\Delta\subset X$ is a finite $\oGamma$-subgraph, then $f^{-1}(\Delta)$ is a finite $\oGamma$-subgraph of $Y$.
\end{theor}
\begin{proof}
The claim easily reduces to the geometric case, so assume that $k=k^a$. Choose a triangulation $V$ of $X$ and a triangulation $W$ of $Y$ such that $f^{-1}(V)\subseteq W$. Then the claim reduces to the particular case of maps between h-annuli and h-discs, and this is covered by Theorem~\ref{mordiscs} and Corollary~\ref{annulimor}.
\end{proof}

\subsubsection{Simultaneous semistable reduction}
Also, we now obtain the following version of semistable reduction for a quasi-finite morphism of curves $f\:Y\to X$. By a {\em geometric triangulation} of $f_s$ we mean geometric triangulations $W$ and $V$ of $Y_s$ and $X_s$ such that $f^{-1}(V)=W$. It is easy to see that for any connected component $D$ of $X\setminus V$ its preimage $f^{-1}(D)$ is a disjoint union of finitely many connected components $D_i$ and the preimage of the geometric skeleton of $D$ is the union of the geometric skeletons of $D_i$. In particular, $f^{-1}(\Delta_V)=\Delta_W$. If $f$ is generically \'etale, one might also want $W$ to include all ramification points, but we do not impose this condition here.

\begin{theor}\label{simulth}
Let $k$ be an affinoid field, $S=\Spa(k)$, $s\in S$ a point and $f\:Y\to X$ a quasi-finite morphism of adic $S$-curves. Then for any finite sets of classical and divisorial points $V_0\subset X_s$ and $W_0\subset Y_s$ there exists a geometric triangulation $(W,V)$ of $f_s$ such that $V_0\subseteq V$ and $W_0\subseteq W$. Moreover, if $V_0$ contains a geometric triangulation of $X_s$ and $W_0$ contains a geometric triangulation of $Y_s$, then there exists a minimal such $(W,V)$.
\end{theor}
\begin{proof}
Enlarging $V_0$ and $W_0$ we can assume that they contain geometric triangulations $\tilV$ and $\tilW$ of $X_s$ and $Y_s$, respectively. So, it suffices to prove only the second claim. Let $V$ be the minimal geometric triangulation containing $V_0\cup f(W_0)$. To simplify the notation, we will increase it throughout the proof.

By Theorem~\ref{morth} $f(\Delta_\tilW)$ is a finite $\Gamma$-graph whose vertices are divisorial and classical points. It is contained in the union of $\Delta_V$ and finitely many connected components of $X_s\setminus\Delta_V$. Hence using Lemma~\ref{skeletondisc} it is easy to see that there exists a minimal way to replace $V$ by a larger geometric triangulation so that $f(\Delta_\tilW)\subset\Delta_V$. It then follows that $f^{-1}(\Delta_V)$ is a connected graph in $Y_s$ containing $\Delta_\tilW$ and with leaves at divisorial and classical points, and hence it is a geometric skeleton itself.

It still can happen that $f^{-1}(V)$ is not a triangulation, but we will now increase the triangulations without changing the skeletons: first we add to $V$ the images of all non-edge points of $f^{-1}(\Delta_V)$ and then define $W$ to be the preimage of $V$. It is easy to see that at this stage one finally gets a triangulation of $f$ and all our steps were forced -- any geometric triangulation containing $(W_0,V_0)$ has to contain all points we have added.
\end{proof}

\begin{rem}
It is not difficult to extend the results of this section to the case when $k$ is discrete. As explained in \S\ref{discrsec} one only has to include unbounded points at the end of unbounded segments. Also, one can extend our results to weak curves, since such a curve can be obtained from an adic curve by adding finitely many infinitesimal points of valence 1. For example, in the triangulation theorem one should also allow  connected components consisting of a single infinitesimal point, and, as a matter of definition, it makes sense to include it as a leaf into the skeleton.
\end{rem}

\appendix
\section{Quasi-graphs}\label{A}
To study adic curves we heavily used $\Gamma$-graphs whose metric is expressed in terms of an ordered group $\Gamma$. In fact, there exists a more general purely topological notion of generalized topological graphs, that we call quasi-graphs. Its segments correspond just to ordered sets rather than ordered groups. We call the related objects quasi-lines, quasi-trees, etc. These are natural notions which seem to be missing in the literature, so we feel free to choose a terminology of our own.

\subsection{Definitions}

\subsubsection{Quasi-trees}\label{quasitreesec}
A point $x$ of a topological space $T$ {\em separates} points $y,z\in T$ if they lie in two different connected components of $T\setminus\{x\}$. We say that a topological space $T$ is a {\em quasi-tree} if it satisfies the following conditions:

\begin{itemize}
\item[(0)] $T$ is $T_0$.
\item[(1)] Any two points $a,b\in T$ are contained in a connected subspace $[a,b]$ such that any element of $(a,b)=[a,b]\setminus\{a,b\}$ separates $a$ and $b$.
\item[(2)] If $x,y,z$ are such that $[x,y]\cap[x,z]=\{x\}$, then $[y,z]=[y,x]\cup[x,z]$.
\end{itemize}

\begin{rem}
(i) Axiom (1) is the main one. It implies that $[a,b]$ is the minimal connected subspace of $T$ containing $a$ and $b$. In particular, it is unique and the notation $[a,b]$ is non-ambiguous. We call it the {\em quasi-segment} connecting $a$ and $b$. Note also that $(a,b)$ is precisely the set of all elements separating $a$ and $b$.

(ii) In the Hausdorff case we are used to, (1) implies (2). However, it is important to allow topological spaces with non-trivial specializations ($x\prec y$ if any neighborhood of $x$ contains $y$) and this forced us to add Axiom (2). In fact, it is only needed to exclude the situation when one doubles the endpoint in an interval in an asymmetric way: for example, glue two copies of $[0,1]$ along $(0,1]$ and define the topology so that $0'$ is the specialization of $0$. Then $[0,0']$ and $[0,1]$ are quasi-intervals, but $0'\cup[0,1]$ is not. In a sense, the novelty of Axiom 2 is that it forbids loops of a degenerate form, which cannot be detected by Axiom (1) because $(0',0)=\emptyset$. In its turn, axiom (0) is not covered by (2) only in the case when $T$ consists of two points -- a sort of an even more degenerate loop, when $x$ specializes $y$ and $y$ specializes $x$.
\end{rem}

\subsubsection{Quasi-graphs and branches}
A {\em quasi-graph} is a topological space $G$ which is locally homeomorphic to a quasi-tree. A {\em branch} of a quasi-graph $G$ at a point $x$ is a germ of quasi-segments $[x,y]$ in $G$. By our convention $[x,y]$ and $[x,z]$ define the same germ if $[x,y]\cap[x,z]$ contains a quasi-segment $[x,t]$ (with $x\neq t$). In particular, we require that $t\neq x$ even in the case when $x$ is open in the quasi-segments. We call a point $x\in G$ a {\em leaf}, an {\em edge point} or a {\em vertex} if the number of branches at x is 1, 2 or at least 3, respectively.

\begin{lem}\label{branchlem}
If $G$ is a quasi-tree, then the branches at $x$ are in a one-to-one correspondence with the connected components of $G \setminus\{x\}$.
\end{lem}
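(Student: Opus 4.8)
The plan is to exhibit mutually inverse maps between the set of branches at $x$ and the set $\pi_0(G\setminus\{x\})$. In one direction, given a branch, i.e.\ the germ of a quasi-segment $[x,y]$ (with $y\neq x$), I would send it to the connected component of $G\setminus\{x\}$ containing the point $y'$ for any $y'\in(x,y)$ (or containing $y$ itself if $(x,y)=\emptyset$, in which case $y$ is the immediate generization/specialization partner of $x$). First I must check this is well defined: if $[x,y]$ and $[x,z]$ define the same germ, they share a quasi-segment $[x,t]$ with $t\neq x$, so a point of $(x,t)$ (or $t$ itself) lies in $[x,y]\cap[x,z]$ and is separated from $x$, hence lies in a single component; and $[x,y]\setminus\{x\}$ is connected by Axiom~(1) (removing the endpoint $x$ from the connected set $[x,y]$ still leaves something connected — here I would use that $(x,y]$ is connected, which follows since every point of $(x,y)$ separates $x$ from $y$, so $[x,y]\setminus\{x\}$ cannot be disconnected without separating off $x$). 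So the whole of $[x,y]\setminus\{x\}$ lands in one component, independent of the representative.

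For the inverse direction, given a connected component $C$ of $G\setminus\{x\}$, pick any point $y\in C$ and form the quasi-segment $[x,y]$; I send $C$ to the branch it defines. Well-definedness here amounts to: if $y,y'\in C$, then $[x,y]$ and $[x,y']$ define the same germ at $x$. This is where Axiom~(2) enters. Consider the point $w$ where the three quasi-segments $[x,y]$, $[x,y']$, $[y,y']$ meet; more precisely, using Axiom~(1) applied inside $[x,y]$ and Axiom~(2), one shows $[x,y]\cap[x,y']$ is itself a quasi-segment $[x,w]$ for some $w$, and then $[y,y']=[y,w]\cup[w,y']$. If $w=x$ then by Axiom~(2) we would get $[y,y']=[y,x]\cup[x,y']$, so $x\in[y,y']$, forcing $x$ to separate $y$ from $y'$ (as $x\in(y,y')$ unless $x$ is an endpoint, and $x$ being an endpoint of $[y,y']$ with $y,y'\neq x$ is impossible) — contradicting that $y,y'$ lie in the same component $C$ of $G\setminus\{x\}$. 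Hence $w\neq x$, so $[x,w]$ is a common quasi-segment through $x$ and the two branches agree.

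The remaining step is to verify the two maps are inverse to each other, which is essentially formal once well-definedness is in place: starting from a branch $[x,y]$, the component assigned to it contains $y$ (or a point of $(x,y)$), and the branch reconstructed from that component via $y$ is the original one since $[x,y]$ itself witnesses the germ; conversely starting from a component $C$ and a chosen $y\in C$, the branch of $[x,y]$ gets sent back to the component containing the points of $[x,y]\setminus\{x\}$, which is $C$ because $y\in C$. I expect the main obstacle to be the careful handling of the degenerate cases where quasi-segments have empty interior (the $x\prec y$ or $y\prec x$ situations), since there Axiom~(1) gives no separating points and one must fall back on Axioms~(0) and~(2) to rule out the pathological doubled-endpoint configurations described in the remark after the definition of quasi-trees; in particular establishing that $[x,y]\cap[x,z]$ is again a quasi-segment, which I used above, deserves a short separate lemma (or can be extracted from Axiom~(2) by a direct argument on connected subspaces).
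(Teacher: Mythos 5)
Your proposal is correct and takes essentially the same route as the paper: the decisive step in both is that if $[x,y]\cap[x,y']=\{x\}$ then Axiom~(2) gives $[y,y']=[y,x]\cup[x,y']$, so $x\in(y,y')$ separates $y$ from $y'$ by Axiom~(1), while two segments defining the same germ keep their points in a single component of $G\setminus\{x\}$. The only inessential extra is your intermediate claim that $[x,y]\cap[x,y']$ is a quasi-segment $[x,w]$; by minimality (uniqueness) of quasi-segments this intersection either equals $\{x\}$ or already contains a subsegment $[x,t]$ with $t\neq x$, and this dichotomy is all your case distinction actually uses.
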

\begin{proof}
If $[x,y]$ and $[x,z]$ define different germs, then $[x,y]\cap[x,z]=\{x\}$ and hence $[y,z]=[y,x]\cup[x,z]$. In particular, $x$ separates $y$ and $z$. The inverse implication is even simpler.
\end{proof}

\subsubsection{Quasi-intervals}
By a {\em quasi-interval} we mean a quasi-tree $I$ which does not contain vertices. For example, subspaces of the form $(a,b)$ and $[a,b]$ of a quasi-tree are quasi-intervals. Any leaf of $I$ is called an {\em endpoint}, and we will later show in Corollary~\ref{nonlinelem} that, as one might expect, $I$ contains at most two endpoints. We say that a quasi-interval $I$ is a {\em quasi-line} or a {\em quasi-segment} if it contains no endpoints or contains two endpoints, respectively. In the latter case we will use the notation $I=[a,b]$ or $I=[-\infty,\infty]$.

\subsubsection{The order}
By an orientation of a quasi-segment $I$ we mean an order of the endpoints. It induced a total order on the whole quasi-segment, with the endpoints being the minimal and maximal elements: $c\le d$ if $c\in [-\infty,d]$. The transitivity  easily follows using that for any $c\in I$ the quasi-intervals $[-\infty,c]$ and $[c,\infty]$ cover $I$ and have only $c$ in the intersection. Any quasi-interval is a union of quasi-segments, and once one of them is oriented there is a unique way to orient all the rest compatibly, and the induced orders agree. In particular, there are two ways to orient any quasi-interval (which is not a singleton), and they induce opposite total orders of the underlying sets.

\subsubsection{Ordered sets}
In the sequel, by an ordered set $S$ we always mean a totally ordered set. We freely use the interval notation $[a,b]$, $(c,d]$, etc. The notation like $(-\infty,a]$ are used for unbounded intervals given by a single inequality. For shortness, we use this notation also when the minimal element exists. The {\em standard topology} of $S$ is the topology whose base is formed by the intervals $(a,b)$, $(-\infty,a)$ and $(b,\infty)$. In particular, $S$ is disconnected if and only if one of the following holds: (1) there exists an {\em immediate successive pair}, that is, a pair $x<y$ such that $(x,y)$ is empty, or (2) there exists a {\em cut}, that is, a decomposition $S=A\coprod B$, where $B>A$, $B$ has no minimum and $A$ has no maximum.

\subsection{Quasi-lines}
Now, let us discuss the topology of quasi-intervals and its relation to the order. To avoid dealing with cases, we will discuss only quasi-lines.

\subsubsection{Non-closed points}
Since quasi-lines can be non-Hausdorff it makes sense to classify specialization relations. It turns out that they are of the simplest possible form only: specialization chains are of length one -- from open points to closed points. In fact, such a topology is not a bug, but the feature needed to provide connectedness when the ordered set has immediate successors.

\begin{lem}\label{qlinelem}
If $L$ is an oriented quasi-line and $x\in L$ is a point, then one of the following possibilities hold:

(i) If $x$ is closed, then intervals $(-\infty,x)$ and $(x,\infty)$ are open and their closures are $(-\infty,x]$ and $[x,\infty)$.

(ii) If $x$ is not closed, then $(-\infty,x)$ and $(x,\infty)$ are closed and so $x$ is open. In addition, the closure of $x$ is of the form $\ox=\{x,x^+,x^-\}$, where $x^+$ and $x^-$ are closed points, which are immediate successor and predecessor of $x$.

(iii) For any predecessor-successor pair in $L$ one point in the pair is open and the other is closed.
\end{lem}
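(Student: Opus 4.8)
The plan is to reduce the statement to the two--component structure of $L\setminus\{x\}$ and then keep careful track of closures, using Lemma~\ref{branchlem}, the fact that $(a,b)$ is exactly the set of points separating $a$ from $b$, and elementary topology (connected components are closed; a space with a generic point is connected; $L$ itself, being a union of the quasi-segments through a fixed point, is connected). After dealing with the trivial case $\#L\le1$, I would note that every $x\in L$ is an edge point --- it is not a vertex, since $L$ is a quasi-interval, and not a leaf, since $L$ is a quasi-line --- so by Lemma~\ref{branchlem} the space $L\setminus\{x\}$ has exactly two connected components. Each quasi-segment $[c,d]$ coincides with the order interval $\{e:c\le e\le d\}$ and is connected, while $x$ separates any $c<x<d$; hence the two components must be exactly $A:=(-\infty,x)$ and $B:=(x,\infty)$, both nonempty.

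I would then record two structural facts. Since a connected component is closed in its ambient space, $\overline A\in\{A,A\cup\{x\}\}$ and $\overline B\in\{B,B\cup\{x\}\}$. And since $\overline{\{x\}}$ is connected, it is convex in the totally ordered set $L$ (a connected set containing $a<b$ contains $(a,b)$, for a missing point of $(a,b)$ would separate $a$ from $b$), and it meets each of $A,B$ in at most one point: if $x<v<w$ all lay in $\overline{\{x\}}$, convexity would put the whole quasi-segment $Y=[x,w]$ into $\overline{\{x\}}$, making $x$ a generic point of $Y\setminus\{v\}$; but a space with a generic point is connected, so $Y\setminus\{v\}$ would lie in a single component of $L\setminus\{v\}$, contradicting that $v\in(x,w)$ separates $x$ and $w$. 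Hence $\overline{\{x\}}\subseteq\{x^-,x,x^+\}$, any $x^\pm$ appearing in $\overline{\{x\}}$ is an immediate neighbour of $x$, and such an $x^\pm$ is closed (a specialization $z\ne x^+$ of $x^+$ would lie in $\overline{\{x\}}$, hence equal $x$ or $x^-$, and either option yields $x\in\overline{\{x^+\}}$ alongside $x^+\in\overline{\{x\}}$, against $T_0$).

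The proof then runs on the four cases given by whether $x\in\overline A$ and whether $x\in\overline B$, and the main obstacle --- the only thing that is not a formality --- is ruling out the two one-sided limit cases: if $x\in\overline A$ but $x\notin\overline B$, then $\overline A=A\cup\{x\}$ and $\overline B=B$ are both closed, so $B=(x,\infty)$ is clopen and nonempty, contradicting connectedness of $L$. If $x\in\overline A$ and $x\in\overline B$, then $\overline A=(-\infty,x]$ and $\overline B=[x,\infty)$ are closed, hence $(x,\infty)$ and $(-\infty,x)$ are open and $\{x\}$ is closed --- this is (i). If $x\notin\overline A$ and $x\notin\overline B$, then $A,B$ are closed, so $L\setminus\{x\}$ is closed, $\{x\}$ is open, and $x$ is not closed; if moreover $x^-\notin\overline{\{x\}}$ then $L\setminus A=\{x\}\cup B$ would be closed and $A$ clopen, impossible, so $x^-\in\overline{\{x\}}$ and, by symmetry, $x^+\in\overline{\{x\}}$, giving $\overline{\{x\}}=\{x^-,x,x^+\}$ with $x^\pm$ closed --- this is (ii). Finally (iii) drops out of (i) and (ii): for an immediate pair $x<y$ one has $y=x^+$ and $x=y^-$; the two cannot both be closed (the closures in (i) for $x$ and $y$ would force $y\le x$) and cannot both be non-closed (by (ii) one would get $y\in\overline{\{x\}}$ and $x\in\overline{\{y\}}$, against $T_0$), so exactly one is closed, the other is open by (ii), and the closed one is not open since a clopen singleton would disconnect $L$.
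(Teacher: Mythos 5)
Your proof is correct and takes essentially the same route as the paper's: both treat $(-\infty,x)$ and $(x,\infty)$ as the two clopen pieces of $L\setminus\{x\}$, run the case analysis on whether $x$ lies in their closures, use connectedness of $L$ to exclude the mixed (one-sided) cases, and in the open case use connectedness again to find $\overline{\{x\}}=\{x^-,x,x^+\}$ with $x^\pm$ immediate neighbours. You simply supply more detail than the paper does --- invoking Lemma~\ref{branchlem} and convexity of connected subsets where the paper uses the non-discreteness of $\{x,x^+\}$, and writing out the closedness of $x^\pm$ and claim (iii), which the paper leaves to the reader --- so no further changes are needed.
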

\begin{proof}
Since $(-\infty,x)$ is clopen in $L\setminus\{x\}$ but not in $L$, we have that precisely one interval is open and one interval is closed in the pair $(-\infty,x)$ and $(-\infty,x]$. The same is true for $(x,\infty)$ and $[x,\infty)$ and using the connectedness once again we see that only two possibilities are possible: both $(-\infty,x]$ and $[x,\infty)$ are closed, or both are open. In the first case we obviously have that all conditions of (i) are satisfied. In the second case we have that $x$ is open and using the connectedness of $L$ we obtain that the closure $\ox$ of $x$ contains points $x^-$, $x^+$ such that $x^-<x$ and $x^+>x$. The two point subspace $\{x,x^+\}$ of $L$ is not discrete, and hence removing a point from $L$ cannot separate this pair. It follows that $x^+$ is the immediate successor of $x$, and in the same way $x^-$ is the immediate predecessor of $x$. Finally, claim (iii) follows from (i) and (ii).
\end{proof}

\begin{cor}\label{intervalcor}
Any bounded from below (resp. above) subset $S$ of an oriented quasi-line $L$ possesses an infimum (resp. supremum) in $L$. In particular, any convex subset $I$ is a quasi-interval. In addition, if the infimum (resp. supremum) $x$ of $S$ is not contained in $S$, then $x$ is a closed point.
\end{cor}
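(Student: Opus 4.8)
The plan is to prove the assertion about infima; the case of suprema follows by reversing the orientation of $L$, and the statement about convex subsets will come out as an easy byproduct. We may assume $S\neq\emptyset$, and if $S$ has a least element there is nothing to prove, so assume it does not. Let $A=\{y\in L:\ y\le s\text{ for all }s\in S\}$ be the set of lower bounds of $S$ and $B=L\setminus A$. Then $A$ is a down-set, $B$ is an up-set, $A\neq\emptyset$ because $S$ is bounded below, and $S\subseteq B$ since any element of $S\cap A$ would be a minimum of $S$; in particular $B\neq\emptyset$. Everything reduces to showing that $A$ has a greatest element $x$, for such an $x$ is then a lower bound of $S$ dominating every lower bound, i.e. $x=\inf S$.

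So suppose $A$ has no greatest element. I would first show that then $A$ is open: given $a\in A$ pick $a'\in A$ with $a'>a$; if $a'$ is closed then $(-\infty,a')$ is open by Lemma~\ref{qlinelem}(i) and is a neighbourhood of $a$ inside the down-set $A$, while if $a'$ is not closed then by Lemma~\ref{qlinelem}(ii) it has a closed immediate successor $(a')^{+}$, which again lies in $A$ because $A$ has no maximum, so $(-\infty,(a')^{+})$ is such a neighbourhood. Next, $B$ has no least element: a least element $x$ of the up-set $B$ would force $B=\{y:\ y\ge x\}$, and hence, since $S\subseteq B$, would make $x$ a lower bound of $S$, i.e. $x\in A\cap B=\emptyset$. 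The symmetric argument, using closed immediate predecessors, then shows that $B$ is open as well. But then $L=A\sqcup B$ is a partition into two non-empty open sets, contradicting connectedness of the quasi-line $L$ (which holds because, fixing $a\in L$, $L=\bigcup_{b\in L}[a,b]$ is a union of connected sets through a common point). Hence $A$ has a maximum $x=\inf S$.

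For the remaining claims: if $\inf S=x\notin S$ then $S$ has no minimum, so by the above $x=\max A$; were $x$ non-closed it would have an immediate successor $x^{+}$ by Lemma~\ref{qlinelem}(ii), and since $x$ is a lower bound with $x\notin S$ every $s\in S$ satisfies $s>x$, hence $s\ge x^{+}$, so $x^{+}$ would be a lower bound, contradicting $x=\max A$; thus $x$ is closed. Finally, for a convex subset $I\subseteq L$ one checks directly that $I$ is a quasi-interval: by convexity $[a,b]_{L}\subseteq I$ for all $a,b\in I$, and separation of points is inherited by subspaces, so $[a,b]_{L}$ remains a quasi-segment in $I$, which verifies axiom~(1); axioms~(0) and~(2) are inherited from $L$; and every branch of $I$ at a point is a branch of $L$ there, distinct branches staying distinct, so $I$ has at most two branches at each point and therefore no vertices. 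I expect the openness of $A$ and $B$ to be the main obstacle, since it is precisely there that one must invoke Lemma~\ref{qlinelem} to handle the non-closed points lying over immediate predecessor--successor pairs; once that is in place, connectedness of $L$ finishes the argument immediately.
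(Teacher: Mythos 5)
Your proof is correct and follows essentially the same route as the paper: a Dedekind-cut decomposition of $L$ whose pieces are shown to be open via Lemma~\ref{qlinelem}, so that connectedness of the quasi-line forces the cut point (the infimum) to exist. The paper's version is one-sided --- it shows the up-set $\bigcup_{s\in S}[s,\infty)$ is open and extracts the infimum as a point of its closure, which yields the closedness claim essentially for free --- whereas you show both halves of the cut are open and then verify the closedness and convex-subset claims separately; the difference is cosmetic.
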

\begin{proof}
Suppose a bounded from below set $S$ contains no minimal element. Then the set $I=\cup_{s\in S}[s,\infty)$ is open because for each $s\in S$ there exists $t\in S$ with $t<s$ and $[t,\infty)$ contains an open neighborhood of $[s,\infty)$ -- either $[t,\infty)$ itself or $(t,\infty)$. Since $I$ is not closed, its closure contains an additional point, which is easily seen to be the infimum of both $S$ and $I$.
\end{proof}

\subsubsection{Quasi-compactness}
As another consequence we obtain the expected quasi-compactness result.

\begin{lem}\label{qcomlem}
Any quasi-segment $I=[a,b]$ is quasi-compact.
\end{lem}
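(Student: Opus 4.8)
The plan is to run the classical Heine--Borel argument, with the least-upper-bound input supplied by Corollary~\ref{intervalcor} and the local picture of the topology supplied by Lemma~\ref{qlinelem}. Fix the orientation of $I=[a,b]$ in which $a$ is the minimum, let $\mathcal{U}$ be an open cover of $I$, and put
\[
 Z=\{\,x\in I \ :\ [a,x]\ \text{admits a finite subcover from}\ \mathcal{U}\,\}.
\]
First I would record the routine facts: $a\in Z$ (cover $\{a\}$ by a single member of $\mathcal{U}$); $Z$ is convex and contains $a$ (if $x\in Z$ and $a\le y\le x$ then $[a,y]\subseteq[a,x]$); and $Z$ is bounded above by $b$, so by Corollary~\ref{intervalcor} the supremum $c=\sup Z$ exists in $I$. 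Everything then reduces to the two assertions $c\in Z$ and $c=b$, the second of which yields $b\in Z$, i.e.\ that $I$ has a finite subcover.

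The key auxiliary statement I would isolate is a neighbourhood lemma: for a point $c\in I$ and an open set $U\ni c$, if $c\ne a$ there is $y<c$ with $(y,c]\subseteq U$, and if $c\ne b$ there is $z>c$ with $[c,z)\subseteq U$. This is extracted from Lemma~\ref{qlinelem}. If $c$ is a closed point, then the open half-intervals determined by closed points (and by $a$, $b$) generate the topology near $c$, so $c$ lies in a basic interval $(x,z)$ (or $[a,z)$, or $(x,b]$) contained in $U$, and one may take that interval's endpoints as $y$ and $z$. If $c$ is not closed, then by Lemma~\ref{qlinelem}(ii) it has an immediate predecessor $c^-$ and an immediate successor $c^+$, and one takes $y=c^-$, $z=c^+$, for which $(y,c]=[c,z)=\{c\}\subseteq U$ trivially.

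Granting this, I would prove $c\in Z$ as follows. Choose $U_0\in\mathcal{U}$ with $c\in U_0$; if $c=a$ this is immediate, so assume $c>a$. By the neighbourhood lemma pick $y<c$ with $(y,c]\subseteq U_0$; since $c=\sup Z$ there is $x\in Z$ with $y<x\le c$. A finite subcover of $[a,x]$ together with $U_0$ covers $[a,c]=[a,x]\cup(x,c]$, so $c\in Z$. Next, to see $c=b$, suppose $c<b$, choose $U_0\in\mathcal{U}$ with $c\in U_0$, and pick $z>c$ with $[c,z)\subseteq U_0$. If $(c,z)\ne\emptyset$, take any $c'\in(c,z)$; then a finite subcover of $[a,c]$ together with $U_0$ covers $[a,c']=[a,c]\cup(c,c']$, so $c'\in Z$ with $c'>c$, contradicting $c=\sup Z$. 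If $(c,z)=\emptyset$, then $z$ is the immediate successor of $c$ and $[a,z]=[a,c]\cup\{z\}$, so a finite subcover of $[a,c]$ together with any member of $\mathcal{U}$ containing $z$ covers $[a,z]$, giving $z\in Z$ with $z>c$, again a contradiction. Hence $c=b$, and $b=c\in Z$ finishes the proof.

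I expect the only genuine obstacle to be the neighbourhood lemma, i.e.\ understanding precisely how the non-closed specialization points and their neighbours $c^{\pm}$ sit inside open sets; once that is in hand the argument is just the standard proof of the Heine--Borel theorem. I would also note in passing that the tidy alternative of reducing to a Hausdorff quotient, used for $\cR_\Gamma$ in the proof of Theorem~\ref{segmenttheor}, does not transfer verbatim here: for a general quasi-line the fibres of the maximal Hausdorff quotient need not be quasi-compact, so one really does want the direct argument.
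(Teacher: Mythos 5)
Your argument is correct and is essentially the paper's own proof: both run the creeping/least-upper-bound argument on the set of points $x$ for which $[a,x]$ admits a finite subcover, use Corollary~\ref{intervalcor} to produce the supremum, and then push past it by means of an open set of the cover containing that supremum. The only caveat is that your ``neighbourhood lemma'' at a closed point is not a consequence of Lemma~\ref{qlinelem} alone but of the assertion, made only later in the appendix, that intervals with closed (or extreme) endpoints form a base of the topology of a quasi-interval; this is precisely the point the paper's proof also takes for granted when it declares the set $I_0$ open, so your write-up is at the same level of rigor as, and somewhat more explicit than, the paper's.
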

\begin{proof}
Assuming that $\{U_j\}_{j\in J}$ is an open cover of $I$, consider the subspace $I_0$ of all points $x\in I$ such that the interval $[a,x]$ has a covering by finitely many $U_j$'s. Clearly, $I_0$ is open and convex and hence either $I_0=I$ and we win, or $I_0=[a,y)$ for a closed point~$y$ by Corollary~\ref{intervalcor}. In the latter case choose $U_l$ containing $y$.
Then $U_l$ contains a point $z$ with $z<y$. Consequently, $U_l$ and a finite cover of $[a,z]$ yield a finite cover of $[a,y]$, contradicting that $y\notin I_0$.
\end{proof}

\subsubsection{Non-linear graphs}
Now we can provide a criterion for a quasi-tree to be not quasi-linear. We say that points $a,b,c$ in a quasi-tree $T$ are {\em non-collinear} if neither of them separates the other two.

\begin{lem}\label{triplelem}
Assume that $T$ is a quasi-tree and $a,b,c\in T$ three distinct points. Then $a,b,c$ are not collinear if and only if there exists a vertex $x$ such that these points lie in three distinct components of $T\setminus\{x\}$. If this is the case, then such a vertex is unique.
\end{lem}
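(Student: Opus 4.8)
The plan is to identify the ``median'' point $x$ at which the three quasi-segments $[a,b]$, $[a,c]$, $[b,c]$ meet, and to show that non-collinearity of $a,b,c$ is exactly the condition that prevents $x$ from degenerating onto one of $a,b,c$.

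For the easy direction $(\Leftarrow)$: suppose $x$ is a vertex and $a,b,c$ lie in three distinct connected components $U_a,U_b,U_c$ of $T\setminus\{x\}$. Then $x$ separates each of the three pairs, so $x$ lies in the interior of each of $[a,b]$, $[a,c]$, $[b,c]$; by (the proof of) Lemma~\ref{branchlem} the quasi-segments $[x,b]$ and $[x,c]$ are distinct branches at $x$, hence $[x,b]\cap[x,c]=\{x\}$ and, by axiom~(2) of \S\ref{quasitreesec}, $[b,c]=[b,x]\cup[x,c]$. If $a$ separated $b$ and $c$ we would have $a\in[b,c]\setminus\{x\}$, say $a\in[b,x]$; orienting $[b,x]$ with $b<x$ we get $b\le a<x$, so the subsegment $[b,a]$ is a connected subset of $[b,x]\setminus\{x\}\subseteq T\setminus\{x\}$ containing $b$ and $a$, forcing $U_a=U_b$ --- a contradiction. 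By symmetry none of $a,b,c$ separates the other two. Uniqueness also falls out here: any vertex $x'$ with the stated property lies in $(a,b)\cap(a,c)\cap(b,c)$, hence in $[a,b]\cap[a,c]$ and on the path $[b,c]=[b,x]\cup[x,c]$, and comparing order relations along these segments forces $x'=x$.

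For the main direction $(\Rightarrow)$: assume $a,b,c$ are non-collinear. First I would show $J:=[a,b]\cap[a,c]$ has the form $[a,x]$ for a unique $x$. Orient $[a,b]$ and $[a,c]$ with $a$ as minimum; then $J$ is a convex subset of $[a,b]$ with infimum $a$, so by Corollary~\ref{intervalcor} it is a quasi-interval, and since $b\notin J$ (otherwise $b\in(a,c)$ would separate $a$ and $c$) it is bounded above in $[a,b]$ and has a supremum $x$; a short check gives $J=[a,x]$. Non-collinearity rules out $x\in\{a,b,c\}$: $x=a$ would give $[a,b]\cap[a,c]=\{a\}$, hence $[b,c]=[b,a]\cup[a,c]$ and $a\in(b,c)$; $x=b$ would give $[a,b]\subseteq[a,c]$ and $b\in(a,c)$; $x=c$ would give $c\in[a,b]$, hence $c\in(a,b)$; all contradictions. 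Now $x$ is the required vertex: $[a,x]\cap[x,b]=\{x\}$ and $[a,x]\cap[x,c]=\{x\}$ because these are complementary subsegments of $[a,b]$ resp.\ $[a,c]$, while $[x,b]\cap[x,c]=\{x\}$ since any common point $y\ne x$ would lie in $[a,b]\cap[a,c]=[a,x]$, forcing $y\le x$, whereas $y\in[x,b]$ forces $y\ge x$. Hence $[x,a],[x,b],[x,c]$ are three pairwise distinct branches at $x$, so $x$ is a vertex, and by Lemma~\ref{branchlem} the points $a,b,c$ lie in three distinct components of $T\setminus\{x\}$. Uniqueness was shown above.

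The genuinely conceptual content is the identification $x=\sup\big([a,b]\cap[a,c]\big)$ together with the observation that non-collinearity is precisely what keeps $x$ off the set $\{a,b,c\}$. The main obstacle is not this argument but the accompanying point-set bookkeeping, which I would relegate to the preparatory lemmas of this appendix: that an intersection of two quasi-segments sharing an endpoint is again a quasi-segment, that subsegments of an oriented quasi-segment are exactly its order-intervals, and that a half-open segment $[p,q)$ is connected (used when placing $[b,a]$ inside a single component of $T\setminus\{x\}$). Each of these follows from Lemma~\ref{qlinelem} and Corollary~\ref{intervalcor}, but they are the fiddly steps that make the write-up longer than the idea.
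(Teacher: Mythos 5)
Your overall strategy is the same as the paper's (take $x$ to be the top of $I=[a,b]\cap[a,c]$), and your treatment of the reverse implication and of uniqueness is fine. But there is a genuine gap exactly at the sentence ``it is bounded above in $[a,b]$ and has a supremum $x$; a short check gives $J=[a,x]$.'' Corollary~\ref{intervalcor} only gives you the existence of the supremum $x$ of $J$ inside the quasi-line $[a,b]$; it does not give $x\in J$. Since quasi-trees are not Hausdorff and quasi-segments need not be closed in $T$, it is entirely possible a priori that $J$ has no maximum, i.e.\ that $J=[a,x)$ with $x\in[a,b]\setminus[a,c]$, and that the supremum $y$ of $J$ computed inside $[a,c]$ is a \emph{different} point of $T$. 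Ruling this out is not a point-set formality that ``follows from Lemma~\ref{qlinelem} and Corollary~\ref{intervalcor}'': those results are internal to a single quasi-line, whereas the obstruction lives in how the two segments sit inside $T$. In fact this is the bulk of the paper's proof: assuming $J$ has no maximum, one takes the two suprema $x\in[a,b]$ and $y\in[a,c]$ (both closed points lying in $\overline J$ but not in $J$), disposes of the degenerate case $[x,y]=\{x,y\}$ by Axiom~(2) of \S\ref{quasitreesec}, and otherwise picks a point $z$ separating $x$ from $y$ and shows that $z$ must lie in every tail $I_d=I\setminus[a,d]$ (because the connected set $I_d$ has both $x$ and $y$ in its closure, and components of $T\setminus\{z\}$ are closed there), contradicting $\bigcap_{d\in I}I_d=\emptyset$.

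So your write-up is not wrong in its conclusion, but the step you relegate to ``preparatory lemmas'' (``an intersection of two quasi-segments sharing an endpoint is again a quasi-segment'') is precisely the non-trivial content of this lemma, and the tools you propose to prove it with are insufficient. To repair the proposal you must add the argument excluding the half-open case, along the lines above (or an equivalent one using Axiom~(2) plus a separating point and the nested tails); note also that this part of the argument does not even need non-collinearity, which only enters to guarantee $I\neq\{a\}$ and $x\notin\{b,c\}$.
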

\begin{proof}
The inverse implication is obvious. Conversely, assume that the points are not collinear and consider the intersection $I=[a,b]\cap[a,c]$. Clearly, $I\neq\{a\}$, as otherwise $a\in [b,c]$. Therefore, $I$ is a non-singleton quasi-interval which is strictly smaller than $[a,b]$ and $[a,c]$. If $I=[a,x]$ and it is immediate from Axiom (2) of the quasi-trees that $x$ separates all three elements $a$, $b$ and $c$. The uniqueness is also clear, as $\{x\}=[a,b]\cap[a,c]\cap[b,c]$.

Assume now that $I$ contains no maximal point. By Corollary~\ref{intervalcor} the quasi-interval~$I$ admits suprema $x\in [a,b]$ and $y\in [a,c]$ that do not lie in $I$. Hence, $x,y$ lie in the closure of $I$ and $[a,x)=I=[a,y)$. If $[x,y]=\{x,y\}$ (i.e. one of the points specializes the other) then using that $y\notin[a,x]$ and $x\notin[a,y]$ we obtain a contradiction to  Axiom (2). Thus, $(x,y)\neq\emptyset$ and there exists a point $z\in T$ which separates $x$ and $y$. For any point $d\in I$ the closure of $I_d=I\setminus[a,d]$ contains both $x$ and $y$, hence $z\in I_d$. But by our assumption $\cap_dI_d=\emptyset$, a contradiction.
\end{proof}

\begin{cor}\label{nonlinelem}
A quasi-tree $T$ is not a quasi-interval if and only if there exist three non-collinear points $a,b,c\in T$. In particular, any quasi-interval contains at most two leaves.
\end{cor}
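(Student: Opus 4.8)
The plan is to deduce the corollary almost immediately from Lemma~\ref{triplelem} and Lemma~\ref{branchlem}, using only the observation that, by definition, a quasi-tree $T$ fails to be a quasi-interval precisely when it contains a vertex. So I would phrase both directions of the equivalence through the existence of a vertex.

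For the ``if'' direction of the equivalence, suppose there are three non-collinear points $a,b,c\in T$. Then Lemma~\ref{triplelem} produces a (unique) vertex $x$ such that $a,b,c$ lie in three distinct connected components of $T\setminus\{x\}$; in particular $x$ is a vertex, so $T$ is not a quasi-interval. Conversely, if $T$ is not a quasi-interval it contains a vertex $x$, and by Lemma~\ref{branchlem} the complement $T\setminus\{x\}$ has at least three connected components. Picking points $a,b,c$ in three distinct components and applying the easy (right-to-left) implication of Lemma~\ref{triplelem}, we conclude that $a,b,c$ are non-collinear.

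For the ``in particular'' clause, I would show that any three distinct leaves $a,b,c$ of a quasi-interval $I$ are automatically non-collinear, which contradicts the equivalence just established. The key point is that a leaf $c\in I$ has a single branch, so $I\setminus\{c\}$ is connected by Lemma~\ref{branchlem}; since $a,b\in I\setminus\{c\}$ and $[a,b]$ is the \emph{minimal} connected subspace containing $a$ and $b$ (Remark after the quasi-tree axioms), we get $[a,b]\subseteq I\setminus\{c\}$, i.e.\ $c\notin[a,b]=(a,b)\cup\{a,b\}$, so $c$ separates neither $a$ nor $b$. Running this argument with each of $a,b,c$ in the role of the removed leaf shows that none of the three separates the other two, i.e.\ they are non-collinear; the first part of the corollary then gives the contradiction.

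I do not anticipate a serious obstacle here: essentially all the work is already packaged in Lemma~\ref{triplelem}. The only step needing mild care is the bookkeeping in the forward direction (matching ``not a quasi-interval'' with ``has a vertex'' and checking that the points chosen in distinct components are genuinely non-collinear), and the small lemma in the ``in particular'' clause that removing a leaf preserves connectedness — both of which are routine consequences of the definitions together with Lemma~\ref{branchlem}.
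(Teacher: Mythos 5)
Your proposal is correct and follows exactly the route the paper intends: the equivalence is immediate from Lemma~\ref{triplelem} together with Lemma~\ref{branchlem} and the definition of a quasi-interval as a vertex-free quasi-tree, and the ``at most two leaves'' claim follows since removing a leaf leaves a connected space (one branch, hence one component), so three distinct leaves would be non-collinear. (Your appeal to minimality of $[a,b]$ is a slight detour -- connectedness of $I\setminus\{c\}$ already shows directly that $c$ separates no pair -- but it is valid.)
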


\subsection{Quasi-segment completions of ordered sets}
Finally, we will describe how a quasi-line can be characterized in terms of ordered sets, and how an ordered set can be completed to a quasi-segment in a minimalist way.

\subsubsection{A characterization of quasi-intervals}
The topology of a quasi-interval $S$ is determined by the order (with respect to one of the orientations) and the set $S_0$ of closed points. Indeed, the base of topology is formed by quasi-intervals of the form $(a,b)$, where $a$ (resp. $b$) is either a closed point or $-\infty$ (resp. $\infty$).

\begin{lem}\label{orderlem}
Assigning to an oriented quasi-interval $\calS$ the underlying ordered set $S$ with the subset of closed points gives rise to a one-to-one correspondence between isomorphisms classes of quasi-intervals and pairs $(S,S_0)$, where $S$ is an ordered set which does not have cuts and $S_0$ is a subset such that any pair $x,y\in S_0$ is separated by an element of $S$ and if a non-maximal (resp. non-minimal) element $x$ lies in $S\setminus S_0$, then it possesses an immediate successor $x^+$ (resp. predecessor $x^-$) and the latter lies in $S_0$.
\end{lem}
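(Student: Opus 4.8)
The plan is to exhibit the two maps and check they are mutually inverse, the substance being concentrated in the direction that builds a quasi-interval out of a pair $(S,S_0)$.

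In the direction $\calS\mapsto(S,S_0)$ almost everything is already on the table. A quasi-interval is connected, being a union of quasi-segments (each connected by Axiom~(1)) any two of which overlap; hence $S$ with its standard topology is connected, so it has no cut and no immediate pair both of whose members are closed. The dichotomy of Lemma~\ref{qlinelem} — stated there for quasi-lines, but applicable verbatim at interior points of a quasi-interval and with the obvious change at an endpoint — says that a non-closed point $x$ is open with $\ox=\{x,x^+,x^-\}$, the points $x^+,x^-$ being closed; thus a non-maximal (resp.\ non-minimal) $x\in S\setminus S_0$ has an immediate successor $x^+\in S_0$ (resp.\ predecessor $x^-\in S_0$), and part~(iii) of that lemma shows that in any immediate pair exactly one member is closed, so distinct elements of $S_0$ are separated by a point of $S$. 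That $S$ has no cut also drops out of Corollary~\ref{intervalcor}: a cut $S=A\sqcup B$ has $x=\inf B\in A$ (otherwise $x=\min B$), and then any $a\in A$ with $a>x$ is a lower bound for $B$ strictly above $\inf B$ — absurd.

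For the converse, given $(S,S_0)$ as in the statement I would put on $S$ the topology whose base is the set of intervals $(a,b)$ with $a\in S_0\cup\{-\infty\}$ and $b\in S_0\cup\{\infty\}$ (with $(-\infty,b)$ and $(a,\infty)$ denoting the intervals defined by a single inequality), and verify in turn: the space $\calS$ is $T_0$; its closed points are exactly $S_0$; $\calS$ satisfies the quasi-tree axioms; $\calS$ has no vertices. For the closed points: if $x\in S_0$ then $S\setminus\{x\}=(-\infty,x)\cup(x,\infty)$ is open; if $x\in S\setminus S_0$, say $x$ is non-maximal, then any $a\in S_0$ with $a<x^+$ also has $a<x$ (since $a\ne x$, and $a\ge x^+$ would contradict $a<x^+$), so every basic open containing $x^+$ contains $x$, i.e.\ $x^+\in\ox$ and $x$ is not closed; the non-minimal case is symmetric, and the same computation exhibits the advertised generizations of $x$. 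For the axioms: (0) is the $T_0$ property; for~(1), given $a<b$ the convex set $[a,b]$ is connected — this is precisely where one uses that $S$ has no cut and that every member of $S\setminus S_0$ has a neighbour in $S_0$ on each relevant side — and any $c\in(a,b)$ separates $a$ from $b$ because $\{w<c\}\cap[a,b]$ and $\{w>c\}\cap[a,b]$ are relatively clopen in $[a,b]\setminus\{c\}$, being cut out there by $(-\infty,c)$ and $(c,\infty)$ if $c\in S_0$ and by $(-\infty,c^+)$ and $(c^-,\infty)$ otherwise; and~(2) holds because $[x,y]\cap[x,z]=\{x\}$ forces $y$ and $z$ onto opposite sides of $x$, whence $[y,z]=[y,x]\cup[x,z]$ with the glued topology agreeing with the subspace topology. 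Finally $\calS\setminus\{x\}=\{w<x\}\sqcup\{w>x\}$ with each part connected or empty, so by Lemma~\ref{branchlem} there are at most two branches at $x$, and by Corollary~\ref{nonlinelem} this says $\calS$ has no vertex, i.e.\ is a quasi-interval (its quasi-segments are moreover quasi-compact by Lemma~\ref{qcomlem}).

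It remains to see the two constructions are inverse to one another. An oriented quasi-interval has, by the characterization recalled just before the lemma, a base of intervals $(a,b)$ with $a$ and $b$ closed points or $\pm\infty$, so reconstructing it from $(S,S_0)$ returns the same topology and orientation; conversely, the second step above shows that the closed points of the space built from $(S,S_0)$ are exactly $S_0$, and the chosen orientation gives back the given order (the opposite orientation giving the opposite order; the one-point space, where the point is forced to count as closed, is the lone degenerate exception). I expect the connectedness argument inside step~(1) of the quasi-tree axioms to be the real work: one must rule out disconnecting $[a,b]$ by an interior cut (forbidden by the no-cut hypothesis) and by an interior immediate pair (harmless exactly because one member of such a pair is open with the other in its closure, so no point-removal separates them), and then confirm that the various relatively clopen decompositions behave as claimed.
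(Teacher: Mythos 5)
Your architecture is the same as the paper's: the paper also builds $\calS$ from $(S,S_0)$ using the base of intervals whose endpoints lie in $S_0\cup\{\pm\infty\}$, relegates mutual inverseness to the remark preceding the lemma (the topology of a quasi-interval is generated by intervals with closed or unbounded endpoints), and reduces the whole statement to showing that $\calS$ is $T_0$ and connected exactly when the listed conditions hold. You are in fact more careful than the paper on the points it waves through (``clearly, $\calS$ is a quasi-interval if and only if it is $T_0$ and connected''): your identification of the closed points, the verification of axioms (1)--(2) via the relatively clopen decompositions, and the bound of two branches via Lemma~\ref{branchlem} are all correct, as is the forward direction via Lemma~\ref{qlinelem} and Corollary~\ref{intervalcor} (extended from quasi-lines to quasi-intervals, which is a harmless adaptation the paper also makes implicitly).

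The gap sits precisely where you flag ``the real work'': connectedness of the space built from $(S,S_0)$. Saying that a disconnection must be ``by an interior cut or by an interior immediate pair'' is the assertion to be proved, not an argument -- an arbitrary open decomposition $\calS=X\coprod Y$ is not a priori localized at any cut or pair. The paper closes this as follows, and some such argument is what your proposal is missing. Pick $x\in X$, $y\in Y$ with $x<y$ and set $T=\{z\in[x,y]:\,[x,z]\subseteq X\}$; this is an open initial segment of $[x,y]$, so the no-cut hypothesis gives $T=[x,t]$ or $T=[x,t)$. In the first case one checks $t\notin S_0$: otherwise a basic neighborhood $(a,b)\subseteq X$ of $t$ has $b\in S_0$ with $b\le y$, and either some $w\in(t,b)$ again lies in $T$, contradicting maximality of $t$, or $(t,b)=\emptyset$ and $t,b$ is an immediate pair inside $S_0$. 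Hence $t^+$ exists, lies in $S_0$, lies in $Y$ (as $t^+\notin T$), and $t$ is a generization of $t^+$, so $t\in Y$ -- a contradiction. In the second case $t\in Y$, and a basic neighborhood of $t$ inside $Y$ must miss $T\subseteq X$, which forces its left endpoint to be an immediate predecessor $t^-\in S_0$ of $t$; then either $t\in S_0$, producing an immediate pair inside $S_0$, or $t\notin S_0$ and $t$ is a generization of $t^-\in X$, so $t\in X$ -- again a contradiction. Supplying this (or an equivalent) argument turns your sketch into a complete proof; the rest of your proposal stands.
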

\begin{proof}
Given an ordered set $S$ with a subset $S_0$ we construct a topological space $\calS$ as explained above the lemma. Clearly, $\calS$ is a quasi-interval if and only if it is $T_0$ and connected, so we should check that the latter happens if and only if the conditions about cuts and $S_0$ are satisfied. Clearly, the topology is $T_0$ if and only if any pair $x,y\in S\setminus S_0$ is separated by an element of $S_0$ as otherwise $x$ and $y$ are indistinguishable for the topology. So, we assume in the sequel that $\calS$ is $T_0$.

If $x<y$ lie in $S_0$ and form an immediate succession pair, then $\calS=(-\infty,x)\coprod(y,\infty)$ is disconnected. If $S=S^-\coprod S^+$ is a cut, then $S_-=\cup_{x\in S^-}(-\infty,x)$ and by the $T_0$ hypothesis, one can only take quasi-intervals with $x\in S^-\cap S_0$. Thus, $S^-$ is open, and similarly $S^+$ is open, proving that $\calS$ is disconnected. A similar argument shows that if $x$ is non-maximal, lies in $S\setminus S_0$ and has no immediate successor, then both $(-\infty,x]$ and $(x,\infty)$ are open.

Conversely, assume that the conditions on $S$ and $S_0$ are satisfies, but $\calS$ is disconnected, say, $\calS=X\coprod Y$ with non-empty open $X$ and $Y$. Choose $x\in X$ and $y\in Y$ and assume without restriction of generality that $x<y$. Let $T$ be the set of points $z\in[x,y]$ such that $[x,z]\subseteq X$. Clearly, $T$ is open. By the assumption on cuts, either $T=[x,t]$ or $T=[x,t)$ for some $t\in[x,y]$. In the first case, $t\notin S_0$, hence $t^+$ exists an lies in $Y$. But by definition of the topology, $t$ is a generization of $t^+$, hence $t\in Y$, a contradiction. If $T=[x,t)$, then $t\in Y$, but any of its neighborhoods intersects $T$ because otherwise we must have that $t^-$ exists and lies in $S_0$ yielding a pair $t^-,t\in S_0$ which is an immediate succession pair.
\end{proof}

\subsubsection{Minimal completion}
The lemma provides the following canonical way to complete an ordered set to a quasi-interval so that the induced topology on $S$ is the order topology. All points of $S$ are closed in the completion and open points are added to connect immediate succession pairs.

\begin{cor}
Given an unbounded from both sides ordered set $S$, let $\oS$ be the naturally ordered set obtained by adding to $S$ the set of cuts of $S$ and the set $S^0$ of intermediate points for each immediate successive pair $x<y$. Then $(\oS,S^0)$ corresponds to a quasi-line and $S\into\oS$ is the universal embedding of $S$ into a quasi-line which induces the canonical topology on $S$ (for any other such embedding $S\into\oS'$ there exists a unique order preserving retraction $\oS'\onto\oS$).
\end{cor}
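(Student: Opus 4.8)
The plan is to derive the first assertion from Lemma~\ref{orderlem} and to build the retraction by sending a point $y\in\oS'$ to the element of $\oS$ that occupies its position relative to $S$.

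\emph{The completion is a quasi-line.} Declare the elements of $S$ together with the cuts of $S$ to be closed and the elements of $S^0$ to be open, and apply Lemma~\ref{orderlem} to the pair $(\oS,\oS\setminus S^0)$. One checks: (a) $\oS$ has no cuts, since restricting a hypothetical cut of $\oS$ to $S$ yields either a cut of $S$ (already realized by a cut point) or a splitting of $S$ with an extremal element, whence the splitting of $\oS$ has an extremal element; (b) any $c<c'$ in $\oS\setminus S^0$ are separated by an element of $\oS$ — by an intermediate point $z_{c,c'}$ or an element of $S$ when $c,c'\in S$, and using that the $B$-side of a cut has no minimum in the remaining cases; (c) every $z=z_{a,b}\in S^0$ is non-maximal and non-minimal, and has immediate predecessor $a$ and successor $b$ in $\oS$, because $(a,b)_S=\emptyset$ leaves no room for a cut or intermediate point between $a$ and $z$ or between $z$ and $b$. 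Thus $(\oS,\oS\setminus S^0)$ corresponds to a quasi-interval; since $S$ is unbounded from both sides $\oS$ has neither maximum nor minimum, hence no leaves, i.e.\ it is a quasi-line. A parallel accounting of the basic opens shows that the subspace topology on $S$ is its order topology: all of $S$ is closed in $\oS$, so every order-open interval of $S$ is a trace $U\cap S$, and conversely each $(c,c')_{\oS}\cap S$ is order-open.

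\emph{Construction and well-definedness of the retraction.} Let $S\into\oS'$ be an embedding into a quasi-line inducing the order topology on $S$; orient $\oS'$ compatibly and identify $S$ with its image. For $y\in\oS'$ put $A_y=\{s\in S:s<y\}$ and $B_y=\{s\in S:s>y\}$. The crucial step is the \emph{dichotomy}: for $y\in\oS'\setminus S$ (so $A_y\amalg B_y=S$) either $(A_y,B_y)$ is a cut of $S$, or $A_y$ has a maximum $a$, $B_y$ a minimum $b$, and $(a,b)_S=\emptyset$. To prove it I rule out the one-sided positions: if $A_y=(-\infty,a]_S$ has a maximum while $B_y=(a,\infty)_S$ has no minimum, take $U=(-\infty,y)_{\oS'}$ when $y$ is closed in $\oS'$ and $U=(-\infty,y^+)_{\oS'}$ (with $y^+$ the closed immediate successor supplied by Lemma~\ref{qlinelem}) when $y$ is not closed; in either case $U$ is open in $\oS'$ and $U\cap S=(-\infty,a]_S$, so $(-\infty,a]_S$ is open in the order topology of $S$, forcing $a$ to have an immediate successor in $S$ — a contradiction. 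The opposite one-sided case is symmetric, and the remaining possibility gives $(a,b)_S=\emptyset$ because $A_y\amalg B_y=S$. One then defines $\rho\colon\oS'\to\oS$ to fix $S$, to send $y$ with $(A_y,B_y)$ a cut to that cut, and to send $y$ lying over an immediate-succession gap $(a,b)$ to $z_{a,b}$, $a$ or $b$ according to whether $y$ is a non-closed point of $\oS'$ or specializes a non-closed point from the lower or upper side — this last refinement being what makes $\rho$ continuous at the points of $\oS'$ that specialize elements of $S$.

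\emph{Properties, uniqueness, and the main obstacle.} By construction $\rho$ fixes $S$; it is order preserving, since $y_1<y_2$ gives $A_{y_1}\subseteq A_{y_2}$, $B_{y_1}\supseteq B_{y_2}$ and the order on $\oS$ is just comparison of positions relative to $S$; it is continuous by a direct check on basic opens (using the refinement above); and it is surjective — each $s\in S$ is hit, a cut $c=(A\mid B)$ equals $\rho(\sup_{\oS'}A)$ (the supremum existing by Corollary~\ref{intervalcor} and satisfying $A_{\sup A}=A$), and for an immediate-succession pair $a<b$ of $S$ the two-point subspace $\{a,b\}$ of $\oS'$ carries its $T_1$ order topology, hence by Lemma~\ref{qlinelem}(iii) it is not an immediate-succession pair of $\oS'$, so $(a,b)_{\oS'}\neq\emptyset$ and a non-closed point there maps to $z_{a,b}$. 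Finally $\rho$ is the unique order-preserving retraction: it is forced on $S$, and for $y\notin S$ its value is pinned down by order-preservation and continuity to the element of $\oS$ at the position of $y$ with the matching topological germ. The heart of the proof — and the only place where the hypothesis that $S\subseteq\oS'$ carries its order topology is genuinely used — is the dichotomy; everything else (the hypotheses of Lemma~\ref{orderlem}, order-preservation, continuity of the refined $\rho$, surjectivity, uniqueness) is routine bookkeeping.
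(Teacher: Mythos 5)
The paper offers no written proof of this corollary: it is stated as an immediate consequence of Lemma~\ref{orderlem}, and your first half is exactly that intended argument — checking for $(\oS,\oS\setminus S^0)$ that $\oS$ has no cuts, that no two closed points are adjacent, that each intermediate point $z_{a,b}$ has the closed immediate neighbours $a$ and $b$, that unboundedness of $S$ rules out leaves, and that the subspace topology on $S$ is the order topology. That part is correct.

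The universal-property half has genuine gaps. First, your dichotomy tacitly assumes $A_y\neq\emptyset\neq B_y$. Nothing in the hypotheses makes $S$ coinitial and cofinal in $\oS'$: embed $S=\ZZ$ in $\RR$ as $\{e^n\}_{n\in\ZZ}$ — the subspace topology is the order (discrete) topology, yet the points of $\RR$ lying below the whole image admit no possible value, since $\oS$ contains no lower bound for $S$. So at such unbounded positions your construction (indeed the literal statement) collapses; this must be assumed away or repaired (convex hull, or the quasi-segment completion with endpoints), and you never address it. Second, your ``refined'' rule over a gap $(a,b)$ is not well defined: a closed point of $(a,b)_{\oS'}$ need not specialize any non-closed point (for the standard $\ZZ\subset\RR$ the gap is a whole interval of closed points), it may specialize non-closed points on both sides, and distributing a single gap among the values $a$, $z_{a,b}$, $b$ by that rule need not even be monotone. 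It is moreover aimed at a continuity requirement the statement does not impose and which cannot always be met: embedding $\ZZ$ as the set of open intermediate points of the completion of $\ZZ$, the forced retraction sends each integer point to an open point of $\oS$ and is discontinuous. Third, uniqueness is precisely the non-routine point you wave away: among merely order-preserving surjective retractions it fails — over the gap $(0,1)$ of $\ZZ\subset\RR$ one may send $(0,\tfrac12)$ to $0$, the point $\tfrac12$ to $z_{0,1}$ and $(\tfrac12,1)$ to $1$ — so ``pinned down by order-preservation and continuity'' proves nothing until one fixes the class of maps in which the retraction is claimed to be unique and verifies existence in that class. Part of this traces back to the informality of the statement itself, but as written your argument does not establish the parenthetical claim.
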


\subsubsection{Ranger completion}
Rangers (see \S\ref{rangersubsec}) provide another, in a sense dual, canonical way to complete an unbounded ordered set $S$ to a quasi-line. This time we only add closed points -- cut points and infinitesimal points, immediate succession pairs $x<y$ are saturated with a single infinitesimal element $x^+=y^-$ (of course, this never happens for ordered groups). The induced topology on $S$ is discrete.

\begin{cor}\label{rangercomplete}
Let $S$ be an unbounded ordered set. Then the set of bounded rangers $\cR^b(S)$ is a quasi-line and the set of all rangers $\cR(S)$ is a quasi-segment. In addition, $S\into\cR^b(S)$ is the universal embedding of $S$ into a quasi-line which induces the discrete topology on $S$.
\end{cor}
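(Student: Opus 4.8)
The plan is to reduce both assertions to Lemma~\ref{orderlem}. By the classification of \S\ref{classrenagers}, $\cR(S)$ is a totally ordered set with least element $-\infty$, greatest element $\infty$, and $\cR^b(S)=\cR(S)\setminus\{-\infty,\infty\}$. Put $S_0\subseteq\cR(S)$ equal to the set of non-principal rangers (the cut, infinitesimal and unbounded rangers); I would check that $(\cR(S),S_0)$ satisfies the three hypotheses of Lemma~\ref{orderlem}. Granting this, $\cR(S)$ is a quasi-interval, its only leaves are $-\infty$ and $\infty$ (every other point has rangers of $\cR(S)$ on both sides of it, hence separates), so $\cR(S)$ is the quasi-segment $[-\infty,\infty]$; and $\cR^b(S)=(-\infty,\infty)$, a subspace of the form $(a,b)$, is then a quasi-interval with no leaves, i.e.\ a quasi-line. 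This gives the first two assertions.

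The real content is the verification of the three conditions of Lemma~\ref{orderlem}. \emph{No cuts in $\cR(S)$:} given a purported cut $\cR(S)=\calA\sqcup\calB$, restrict it to $S$; the sets $A=\calA\cap S$ and $B=\calB\cap S$ are both nonempty (otherwise $\calA=\{-\infty\}$ or $\calB=\{\infty\}$ would have an extremum), so $S=A\sqcup B$ is a partition into an initial and a final segment. The gap so determined is filled by a \emph{unique} ranger $r$ — the successor $\gamma^+$ if $\gamma=\max A$ exists, the predecessor $\gamma^-$ if $\gamma=\min B$ exists, the associated cut ranger otherwise — uniqueness being immediate from the defining inequalities of these ranger types; then whichever of $\calA,\calB$ contains $r$ is forced to have an extremum, a contradiction. \emph{No immediate succession pair inside $S_0$:} for $r<r'$ in $S_0$ one exhibits a principal ranger strictly between them by a routine case analysis over the types of $r$ and $r'$ (and $\pm\infty$ cause no trouble, having no immediate successor/predecessor). \emph{Successors and predecessors:} for $\gamma\in S$ any ranger strictly between $\gamma$ and $\gamma^+$ would itself satisfy the successor property, hence equal $\gamma^+$, so $\gamma^+\in S_0$ is the immediate successor of $\gamma$, and $\gamma^\pm\ne\pm\infty$ since $S$ is unbounded — likewise for $\gamma^-$.

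For the last assertion I would first record that $S\into\cR^b(S)$ induces the discrete topology: a principal ranger $\gamma$ is a non-closed (hence open) point by Lemma~\ref{qlinelem}, and an open point satisfies $\{\gamma\}=\cR^b(S)\setminus\big((-\infty,\gamma^-]\cup[\gamma^+,\infty)\big)$, so $\{\gamma\}$ is open. For universality, let $S\into L'$ be any embedding into a quasi-line inducing the discrete topology — read, consistently with the ``dual'' phrasing and with the preceding corollary, as the condition that $S$ be exactly the set of non-closed points of $L'$. By Lemma~\ref{orderlem} applied to $(L',L'\setminus S)$, each $\gamma\in S$ gets an immediate successor $\gamma^+$ and predecessor $\gamma^-$ in $L'\setminus S$, and by Corollary~\ref{intervalcor} every cut of $S$ is realized by a closed point of $L'$ (obtained as a supremum); identifying these added points with the corresponding rangers produces the unique order isomorphism $\cR^b(S)\toisom L'$ over $S$, which is the universal property. (Under the weaker reading, that merely the subspace topology on $S$ is discrete, one instead constructs, exactly as for the minimal completion, the unique order-preserving retraction $L'\onto\cR^b(S)$ collapsing each maximal interval of $L'$ lying strictly between two consecutive rangers.)

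I expect the main obstacle to be the second condition of Lemma~\ref{orderlem} — excluding immediate succession pairs among the closed rangers — simply because it splits into many type combinations; the one slightly delicate point is the uniqueness of the ranger filling a prescribed gap of $S$, which is what makes the no-cuts argument go through.
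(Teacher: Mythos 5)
Your verification of the first two assertions is correct and is exactly the intended route: the paper gives no separate argument for this corollary, and it is meant to follow from Lemma~\ref{orderlem} precisely as you do it, with $S_0$ the set of non-principal rangers; your no-cuts argument (restrict a purported cut of $\cR(S)$ to $S$, fill the resulting gap by the unique ranger, and force an extremum) and your treatment of successors/predecessors are sound, and the discreteness of the induced topology on $S$ via openness of principal rangers is also the intended reading.

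The one genuine problem is the ``stronger reading'' of the universality clause. It is not true that a quasi-line $L'$ whose set of non-closed points is exactly $S$ must be order-isomorphic to $\cR^b(S)$: take $S=\ZZ$ and let $L'$ consist of the points $n$, $n^-$, $n^+$ for $n\in\ZZ$ together with a copy of the real interval $(0,1)$ inserted between $n^+$ and $(n+1)^-$ for each $n$. One checks with Lemma~\ref{orderlem} that this is a quasi-line, its open points are exactly $\ZZ$ and the induced topology on $\ZZ$ is discrete, yet $L'$ is uncountable while $\cR^b(\ZZ)$ is countable; so the claimed isomorphism $\cR^b(S)\toisom L'$ fails, and no uniqueness statement for maps out of $\cR^b(S)$ into such an $L'$ can hold either. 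The intended meaning of ``universal'' is the one the paper spells out for the minimal completion: for any embedding $S\into L'$ into a quasi-line inducing the discrete topology there is a unique order-preserving retraction of $L'$ onto the ranger completion, obtained by sending $x\in L'$ to the ranger recording its position relative to $S$ (equivalently, collapsing each maximal subinterval of $L'$ disjoint from $S$), which is your ``weaker reading''; so that branch of your argument should be kept and the isomorphism claim discarded. Note also the minor caveat, shared with the paper's own informal statement, that a point of $L'$ lying below or above all of $S$ is sent to an unbounded ranger, so the retraction naturally lands in $\cR(S)$ unless one assumes $S$ coinitial and cofinal in $L'$.
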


\bibliographystyle{amsalpha}
\bibliography{wild_multiplicity}

\end{document}